\newcommand*{\relrelbarsep}{.386ex}
\newcommand*{\relrelbar}{%
  \mathrel{%
    \mathpalette\@relrelbar\relrelbarsep
  }%
}
\newcommand*{\@relrelbar}[2]{%
  \raise#2\hbox to 0pt{$\m@th#1\relbar$\hss}%
  \lower#2\hbox{$\m@th#1\relbar$}%
}
\providecommand*{\rightrightarrowsfill@}{%
  \arrowfill@\relrelbar\relrelbar\rightrightarrows
}
\providecommand*{\leftleftarrowsfill@}{%
  \arrowfill@\leftleftarrows\relrelbar\relrelbar
}
\providecommand*{\xrightrightarrows}[2][]{%
  \ext@arrow 0359\rightrightarrowsfill@{#1}{#2}%
}
\providecommand*{\xleftleftarrows}[2][]{%
  \ext@arrow 3095\leftleftarrowsfill@{#1}{#2}%
}
\newcommand{\stacksproj}[1]{{\cite[Tag~{#1}]{stacks-project}}}
\newcommand{\Zpp}{\mathbb{Z}_{(p)}}
\newcommand{\mbA}{\mathbb{A}}
\newcommand{\mbN}{\mathbb{N}}
\newcommand{\mbQ}{\mathbb{Q}}
\newcommand{\mbF}{\mathbb{F}}
\newcommand{\mbC}{\mathbb{C}}
\newcommand{\mbZ}{\mathbb{Z}}
\newcommand{\mcE}{\mathcal{E}}
\newcommand{\mcF}{\mathcal{F}}
\newcommand{\mcI}{\mathcal{I}}
\newcommand{\mcK}{\mathcal{K}}
\newcommand{\mcL}{\mathcal{L}}
\newcommand{\mcN}{\mathcal{N}}
\newcommand{\mcM}{\mathcal{M}}
\newcommand{\mcO}{\mathcal{O}}
\newcommand{\red}{\mathrm{red}}
\newcommand{\perf}{\mathrm{perf}}
\DeclareMathOperator{\Mor}{Mor}
\DeclareMathOperator{\Mob}{Mob}
\DeclareMathOperator{\Supp}{Supp}
\DeclareMathOperator{\Spec}{Spec}
\DeclareMathOperator{\Hom}{Hom}
\DeclareMathOperator{\Pic}{Pic}
\DeclareMathOperator{\ex}{\mathbb{E}}
\newcommand*{\coloneq}{\mathrel{\mathop:}=}
\newcommand*{\PicS}{\mathcal{P}\! \mathit{ic}}
\theoremstyle{plain}
\newtheorem{theorem}{Theorem}[section]
\newtheorem{workinprogress}[theorem]{Work in progress}
\newtheorem{proposition}[theorem]{Proposition}
\newtheorem{lemma}[theorem]{Lemma}
\newtheorem{corollary}[theorem]{Corollary}
\newtheorem{conj}[theorem]{Conjecture}
\newtheorem{claim}[theorem]{Claim}
\theoremstyle{definition}
\newtheorem{definition}[theorem]{Definition}
\newtheorem{example}[theorem]{Example}
\theoremstyle{remark}
\newtheorem{remark}[theorem]{Remark}
\title[Keel's theorem and quotients in mixed characteristic]{Keel's base point free theorem and quotients in mixed characteristic}
\author{Jakub Witaszek} 
\address{Department of Mathematics \\  
University of Michigan\\  
Ann Arbor, MI 48109, USA}
\email{jakubw@umich.edu}
\begin{document}

\begin{abstract}
We develop techniques of mimicking the Frobenius action in the study of universal homeomorphisms in mixed characteristic. As a consequence, we show a mixed characteristic Keel's base point free theorem obtaining applications towards the mixed characteristic Minimal Model Program, we generalise Koll\'ar's theorem on the existence of quotients by finite equivalence relations to mixed characteristic, and we provide a new proof of the existence of quotients by affine group schemes.
\end{abstract}

\subjclass[2010]{14C20, 14L30, 14G99, 14E30}
\keywords{semi-ample, mixed characteristic, universal homeomorphisms, quotients, pushouts}

\maketitle

\section{Introduction}

There are three natural classes of algebraic varieties: of characteristic zero, of positive characteristic, and of mixed characteristic. In trying to understand characteristic zero varieties one can apply a wide range of techniques coming from analytic methods like  vanishing theorems. More complicated though they are, positive characteristic varieties come naturally with the Frobenius action which often allows for imitating analytic proofs or sometimes even showing results which are false over $\mbC$. Of all the three classes, the mixed characteristic varieties are the most difficult to understand as they represent the worst of both worlds: one lacks the analytic methods and the Frobenius action when working with them. Recent years have seen a surge of interest in the study of geometry and commutative algebra of mixed characteristic varieties (cf.\ \cites{andre18,bhatt16,ms18,maschwede18,tanaka16_excellent,EH16}) as they bridge the gap between positive and zero characteristics and play a central role in  number theory. %The importance of mixed characteristic varieties stem from the fact that they bridge the gap between positive and zero characteristics and arise naturally in the context of number theory or the study of liftability.

What allows for many of the applications of Frobenius is the following observation: if $f \colon X \to Y$ is a universal homeomorphism  of positive characteristic schemes (for example, a thickening), then its perfection $f_{\perf} \colon X_{\perf} \to Y_{\perf}$ is an isomorphism. The goal of this article is to introduce analogues of this fact in mixed characteristic and employ them to generalise many positive characteristic results, with focus on two main sources of applications: the study of base point freeness and constructing quotients.

Before moving on to mixed characteristics, let us give one prominent example of the efficacy of the Frobenius action in positive characteristic: Keel's base point free theorem.
\begin{theorem}[\cite{keel99}] Let $L$ be a nef line bundle on a projective scheme $X$  defined over a positive 
characteristic field $k$. Let $\mathbb{E}(L)$ be the union of all integral subschemes on which $L$ is not big. Then $L$ is semiample if and only if $L|_{\mathbb{E}(L)}$ is so.
\end{theorem}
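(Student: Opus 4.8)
The forward implication is immediate, since the restriction of a semiample line bundle to any closed subscheme is again semiample; the whole content lies in the converse, so assume $L|_{\mathbb{E}(L)}$ is semiample. My plan follows the philosophy that, for a nef $L$, the failure of base point freeness is concentrated along $\mathbb{E}(L)$: on the open complement $U = X \setminus \mathbb{E}(L)$ the bundle $L$ is big, so the rational map defined by $|mL|$ is already generically finite there, and the only obstruction to semiampleness should come from the non-big locus. First I would reduce to the case where $X$ is reduced, using that $L$ is semiample if and only if $L|_{X_{\mathrm{red}}}$ is and that $\mathbb{E}(L)$ depends only on the integral subschemes of $X$.

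The core of the argument is to produce first not a morphism to a projective scheme but a contraction to an algebraic space. I would say $L$ is \emph{endowed with a map} (EWM) if there is a proper surjective $f\colon X \to Y$ onto an algebraic space contracting an integral curve $C$ precisely when $L \cdot C = 0$. The first main step is to show that a nef $L$ with $L|_{\mathbb{E}(L)}$ semiample is EWM: the semiample structure on $\mathbb{E}(L)$ supplies a contraction of that locus, bigness of $L$ on $U$ makes $L$ behave like an ample class away from $\mathbb{E}(L)$, and one glues these along $\mathbb{E}(L)$ to build $Y$. Allowing $Y$ to be merely an algebraic space is what lets this construction proceed without already knowing projectivity.

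The decisive, positive-characteristic step is then to upgrade EWM to semiampleness, i.e.\ to descend a power of $L$ to an ample line bundle on $Y$. The key input I would use is the Frobenius descent lemma: for a finite universal homeomorphism $g\colon X' \to Y'$ of schemes of characteristic $p$, the pullback $g^*\colon \Pic(Y') \to \Pic(X')$ becomes an isomorphism after inverting $p$, because $g$ is sandwiched by Frobenius, a high power $F^n$ of the absolute Frobenius factoring through $g$. Applied to the comparison maps built into the contraction, this lets me take the line bundle on $Y$ restricting to the ample bundles on $f(U)$ (from bigness) and on $f(\mathbb{E}(L))$ (from the semiample structure there) and, after replacing it by a $p$-th power, realize it as a genuine ample line bundle $A$ on $Y$ with $f^*A \cong L^{\otimes p^n}$. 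Ampleness of $A$ then forces the proper algebraic space $Y$ to be a projective scheme, so $L^{\otimes p^n} = f^*A$, and hence $L$, is semiample. The step I expect to be the main obstacle is exactly this descent: verifying that the contraction exists as an algebraic space, that the relevant comparison maps are universal homeomorphisms to which the Frobenius sandwich applies, and that the ample data glue across $f(\mathbb{E}(L))$. This is the one place where positive characteristic is used essentially, and it is precisely the input whose mixed-characteristic replacement the rest of the paper is devoted to constructing.
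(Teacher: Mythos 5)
Your forward implication and the reduction to $X$ reduced are fine (the latter is Keel's Lemma 1.4, recovered in this paper as Theorem \ref{thm:reduced_semiample}). The genuine gap lies in your main route, ``prove EWM first, then descend a $p$-power of $L$ to an ample bundle on $Y$''. The Frobenius-sandwich lemma you invoke is true and is indeed used in the paper (it is essentially Step 1 of the proof of Theorem \ref{thm:univ_homeo_and_Pic}: for a finite universal homeomorphism $g$ in characteristic $p$, the pullback $g^*$ is an equivalence on $\PicS[\frac{1}{p}]$), but it does not apply to the map you want to apply it to: the EWM contraction $f \colon X \to Y$ is \emph{not} a universal homeomorphism --- it has positive-dimensional fibres along $\mathbb{E}(L)$ --- and descending $L^{p^n}$ along a contraction is a much stronger statement. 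It would require, at the very least, that a \emph{single} power $L^{p^N}$ be trivial on every infinitesimal thickening $F_n$ of every contracted fibre $F$, followed by formal functions and an existence/descent theorem on the algebraic space $Y$. Frobenius only kills the kernel of $\Pic(F_n) \to \Pic(F)$ by a power of $p$ that grows with $n$ (in characteristic $p$ one has $(1+x)^{p^e} = 1 + x^{p^e}$, so raising to the $p^e$-th power maps $1+I$ into $1+I^{p^e}$), so no uniform $N$ comes out of your lemma. Likewise, there is no ``ample bundle on $f(U)$ coming from bigness'': bigness of $L|_U$ produces no line bundle on the image, and a line bundle on $Y$ cannot be assembled from data on the open subspace $f(U)$ and the closed subspace $f(\mathbb{E}(L))$ --- that is not a gluing situation. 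This is precisely why neither Keel nor this paper argues this way: the passage from EWM to semiample is not a descent formality in positive characteristic (EWM is genuinely weaker, e.g.\ any numerically trivial non-torsion line bundle is EWM but not semiample).

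What the actual proof does (Keel's, reproduced in this paper as the proof of Theorem \ref{theorem:main}, which contains the statement as the special case $X_{\mbQ} = \emptyset$) is a Noetherian induction in which bigness is used to \emph{lift sections}, not to descend bundles along the contraction. After reducing to $X$ reduced, one reduces to $X$ irreducible by gluing the semiample fibrations of the components along their intersection; this is where universal-homeomorphism descent of line bundles genuinely enters, via conductor/Milnor squares and pushouts of the images of the overlaps (Propositions \ref{prop:gluing} and \ref{prop:gluing_normalisation}), not along $f$ itself. Then, for $X$ irreducible with $L$ big, one writes $L^r \simeq A \otimes E$ with $A$ ample and $D$ the zero scheme of a section of $E$; by Noetherian induction $L|_{mD}$ is semiample, and sections lift to $X$ via the exact sequence $H^0(X, L^k) \to H^0(mD, L^k|_{mD}) \to H^1\bigl(X, A^m \otimes L^{k-mr}\bigr) = 0$, the vanishing being Fujita vanishing. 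Your proposal never performs this section-lifting step, so the bigness hypothesis is never actually exploited. Your EWM step is correct in spirit --- it is Keel's Proposition 1.6 via Artin's contraction theorems, and this is exactly how the paper handles the EWM case --- but it only yields the weaker conclusion, and the upgrade you propose on top of it does not go through.
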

Here, a line bundle $L$ is nef if $L \cdot C \geq 0$ for every proper curve $C \subseteq X$, it is semiample if some multiple of it is base point free, and it is big {(in the case of the scheme being integral)} if $L^{\otimes m} \otimes A^{-1}$ admits a section for some ample line bundle $A$ and some $m \in \mbN$. %Semiample line bundles are important in the study of the geometry of algebraic varieties as they correspond naturally to morphisms with connected fibres. 

This seminal result plays a vital role in the study of positive characteristic geometry as it allows for showing base point freeness by an inductive argument.  It is indispensable in the positive characteristic Minimal Model Program (\cite{hx13}), but has many other applications: to birational geometry (e.g.\ \cite{CMM14,birkar17,ct17,MNW15}), moduli spaces of curves (e.g.\ \cite{keel99,keel03}), arithmetic moduli (\cite{BS17}), or Mumford's conjecture (\cite{SP11}) to mention a few. Surprisingly, this result is false in characteristic zero.

%In order to study the geometry of algebraic varieties, one needs to understand morphisms between them, especially those morphisms which have connected fibres. Such morphisms correspond naturally to semiample line bundles, that is admitting a multiple which is base point free. Every semiample line bundle is nef, but the converse is not true.

%It is often possible to show that a given line bundle on a characteristic zero variety is semiample by employing vanishing theorems or analytic methods. In positive characteristic, vanishing theorems are often false, but one can take advantage of the Frobenius morphism, as used by Keel in the proof of the following theorem.  

%Apart from characteristic zero and characteristic $p>0$, an important class of schemes is that of mixed characteristic. Mixed characteristic schemes bridge the gap between positive and zero characteristics and come naturally in the context of number theory or the study of liftability. The main problem with understanding the geometry of mixed characteristic varieties has, hitherto, been the lack of appropriate techniques as both the analytic methods and the Frobenius ones cannot be applied here. 

%The goal of this article is to extend Keel's and some other positive characteristic results to mixed characteristic. To this end, we develop a series of techniques which allow for mimicking the action of the Frobenius morphism.\
In this article, we generalise Keel's theorem to mixed characteristics. In particular, this provides a positive answer to a problem posed by Seshadri (\cite[Remark 2]{seshadri05}).
\begin{theorem}[Theorem \ref{theorem:main}] \label{theorem:main_intro}  Let $L$ be a nef line bundle on a scheme $X$ projective over an excellent base scheme $S$. Then $L$ is semiample over $S$ if and only if both $L|_{\mathbb{E}(L)}$ and $L|_{X_{\mbQ}}$ are so.
\end{theorem}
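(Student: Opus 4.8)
The plan is to follow the architecture of Keel's original argument, which factors the problem into a purely geometric step (constructing a contraction) and an arithmetic step (descending $L$ along it), and to replace Keel's use of the Frobenius in the second step by the mixed-characteristic universal-homeomorphism techniques advertised in the abstract.

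The ``only if'' direction is immediate, since semiampleness over $S$ is inherited by restriction to any closed subscheme, in particular to $\mathbb{E}(L)$ and to $X_{\mbQ}$. For the converse, I would first show that $L$ is \emph{endowed with a map} (EWM): that there exists a proper $S$-morphism $g \colon X \to Z$ contracting exactly the curves $C$ with $L \cdot C = 0$, together with a relation $L \Lq g^* M$ for some relatively ample $M$ on $Z$. This step is essentially characteristic-free. Keel's non-big-locus analysis produces such a $g$ from the datum that $L|_{\mathbb{E}(L)}$ is already semiample (hence, a fortiori, EWM on the $L$-trivial curves contained in $\mathbb{E}(L)$); the semiampleness of $L|_{X_{\mbQ}}$ then guarantees that the construction is consistent on the generic fibre, so that $g$ is defined over all of $S$ and not merely over a dense open. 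Care is needed here in formulating $\mathbb{E}(L)$ and bigness relatively over the excellent base $S$ rather than over a field.

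The substance of the theorem lies in upgrading this contraction to genuine semiampleness, i.e.\ in showing that $M$ descends to an honest ample line bundle and that a power of $L$ is globally generated over $S$. In positive characteristic this is exactly where Keel invokes the Frobenius: the comparison map between $X$ and $\operatorname{Proj}$ of the section ring of $L$ is a universal homeomorphism, and taking perfection turns it into an isomorphism, whence semiampleness. In mixed characteristic no Frobenius is available, and this is the main obstacle. Here I would apply the Frobenius-mimicking formalism for universal homeomorphisms developed earlier in the paper: the relevant universal homeomorphism should become an isomorphism after the mixed-characteristic surrogate for perfection, which suffices to descend ampleness.

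The delicate point is the interaction between the two hypotheses across all residue characteristics. On $X_{\mbQ}$ semiampleness is given outright; on the positive-characteristic fibres one has it through $\mathbb{E}(L)$ and Keel's theorem, i.e.\ through the genuine Frobenius. The task is to glue these two regimes integrally over $S$: one must verify that the section ring is finitely generated over the base and that the descended bundle is ample uniformly, rather than merely nef and big on each fibre separately. Controlling this gluing---ensuring that the arithmetic of $S$ does not obstruct the passage from fibrewise semiampleness to semiampleness over $S$---is precisely what the universal-homeomorphism machinery is designed to accomplish, and it is the crux of the argument.
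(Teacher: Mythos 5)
Your proposal has a genuine gap, and it sits exactly at the step you declare to be easy. You assert that constructing the contraction $g \colon X \to Z$ is ``essentially characteristic-free'' and credit it to ``Keel's non-big-locus analysis.'' But Keel's theorem is \emph{false} in characteristic zero---that is precisely why the hypothesis on $L|_{X_{\mbQ}}$ must appear in the statement---so no characteristic-free construction of $g$ can exist: in positive characteristic Keel's construction of the contraction uses Frobenius throughout, and replacing Frobenius is the content of the theorem, not a preliminary to it. You offer no mechanism for this step in mixed characteristic beyond the assertion. There is also an internal inconsistency: your ``EWM'' step is defined to include $L \Lq g^*M$ with $M$ relatively ample, which is not what EWM means (EWM only asks for a proper map to an algebraic space contracting the integral subschemes on which $L$ is not relatively big, with no descent of $L$ attached) and is in fact already semiampleness, so your second step is circular as written. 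Finally, an ``EWM first, then descend'' route cannot reach the statement in question: the EWM half of Theorem \ref{theorem:main} requires Artin's algebraization theorems and is only established for $S$ of finite type over a mixed-characteristic Dedekind domain, whereas the semiample statement holds over an arbitrary excellent base.

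For comparison, the paper's proof never constructs a contraction of all of $X$ in the semiample case. It is a Noetherian induction in three moves: (i) reduce to $X$ reduced via Theorem \ref{thm:reduced_semiample}---this is where the multiplicative perfection, the pushout theorem (Theorem \ref{thm:general_pushouts_in_mixed_char}) and the Picard descent (Theorem \ref{thm:univ_homeo_and_Pic}) do their work, by transporting the semiample fibrations of $L|_{X^{\red}}$ and $L|_{X_{\mbQ}}$ into one for $L$; (ii) reduce to $X$ irreducible by gluing semiampleness across components (Proposition \ref{prop:gluing}), using that the map associated to $L$ on a big component has connected fibres along the intersection with $\mathbb{E}(L)$; (iii) for $X$ irreducible, $L$ is big, so $L^r \simeq A \otimes E$ with $A$ ample and $E$ effective with zero divisor $D$; since $\mathbb{E}(L|_{mD}) \subseteq \mathbb{E}(L)$, Noetherian induction gives semiampleness of $L|_{mD}$, and Fujita vanishing, $H^1(X, L^k(-mD)) = 0$ for $k \gg m \gg 0$, lifts sections from $mD$ to kill the base locus. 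Your proposal engages with none of the non-reduced structure, the reducible case, or this inductive bigness-plus-vanishing argument, which is where the proof actually lives.
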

%For example, the theorem holds if $S$ is of finite type over $\mathbb{Z}$ or a Witt ring $W(k)$ for a perfect field $k$ of positive characteristic. 
Here $X_{\mbQ} \coloneq X \times_{\Spec \mbZ} \Spec \mbQ$. Note that the assumption that $L|_{X_{\mbQ}}$ is semiample is necessary, because Keel's result by itself is false in characteristic zero. Further, we prove an analogous result for semiampleness replaced by EWM (\emph{endowed-with-a-map}, see Section \ref{s:preliminaries}).
%We write $\pi \colon X \to \Spec R$ for the structural morphism. We denote by $p$ the special point of $\Spec R$, and by $\mu$ the generic point; the corresponding fibres of $\pi$ are denoted by $X_p$ and $X_{\mu}$, respectively. Note that the assumption that $L|_{X_{\mu}}$ is semiample is necessary, because Keel's result by itself is false in characteristic zero.

As a corollary of Theorem \ref{theorem:main_intro}, we show that contractions exist in the mixed characteristic Minimal Model Program (see Corollary \ref{cor:bpf_plt}) and prove the following base point free theorem.
\begin{corollary}[{Corollary \ref{cor:bpf}}] \label{cor:bpf_intro} Let $S$ be a spectrum of a mixed characteristic Dedekind domain with residue fields of closed points {being locally finite}. Let $(X,\Delta)$ be a klt pair on a normal integral scheme $X$ of (absolute) dimension three which is projective {and surjective} over $S$ and let $L$ be a nef {and big} Cartier divisor on $X$ such that $L-(K_X+\Delta)$ is nef and big. Then $L$ is semiample.
\end{corollary}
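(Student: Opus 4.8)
The plan is to deduce the statement from the mixed characteristic Keel's theorem (Theorem~\ref{theorem:main_intro}), which reduces the semiampleness of $L$ to two separate problems: the semiampleness of $L|_{X_\mbQ}$ on the characteristic-zero fibre and the semiampleness of $L|_{\mathbb{E}(L)}$ on the non-big locus. Since $L$ is big on the integral threefold $X$, the locus $\mathbb{E}(L)$ cannot contain $X$, so it is a proper closed subscheme with $\dim \mathbb{E}(L) \le 2$; this dimension drop is what makes the reduction useful and will organise the whole argument.

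First I would dispose of $X_\mbQ$. Because $S$ is the spectrum of a mixed characteristic Dedekind domain, every closed point has positive residue characteristic, so inverting the integers collapses $S_\mbQ$ to the generic point and $X_\mbQ$ is simply the generic fibre: a normal integral surface projective over the characteristic-zero field $K=\operatorname{Frac}(R)$. Restricting the hypotheses, $(X_\mbQ,\Delta_\mbQ)$ is klt, $L|_{X_\mbQ}$ is nef, and $L|_{X_\mbQ}-(K_{X_\mbQ}+\Delta_\mbQ)$ remains nef and big (bigness passing to the generic fibre). The classical Kawamata--Shokurov base point free theorem over $K$ then gives that $L|_{X_\mbQ}$ is semiample; no positive-characteristic input is required here, since characteristic-zero vanishing applies directly and covers even the non-big situation.

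The heart of the argument, and the source of the main difficulty, is the semiampleness of $L|_{\mathbb{E}(L)}$. One cannot simply reapply Keel's theorem to $\mathbb{E}(L)$: on each component $V$ the divisor $L|_V$ is by definition \emph{not} big, so the non-big locus of $L|_V$ is all of $V$ and the reduction is circular. In other words, on $\mathbb{E}(L)$ one is in the abundance regime rather than the big regime. I would therefore split $\mathbb{E}(L)$ according to the characteristic of the points of $S$ it lies over. The strata contained in the generic fibre are already controlled by the previous paragraph, since the restriction of a semiample divisor is semiample. The strata lying over the positive-characteristic closed points of $S$ are the genuinely new contribution: these are (at most two-dimensional) schemes over fields that, by hypothesis, are locally finite. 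On the one-dimensional strata $L$ is numerically trivial (a nef divisor that is not big on a curve has degree zero), and here the locally-finite residue field assumption is exactly what forces a numerically trivial nef line bundle to be torsion, hence semiample, precisely as in Keel's original positive-characteristic mechanism that the Frobenius-mimicking techniques of this paper reproduce. The two-dimensional positive-characteristic strata are handled by feeding them back into Theorem~\ref{theorem:main_intro} together with the known abundance/base point free results for surfaces in positive characteristic, using subadjunction to retain enough positivity from the assumption that $L-(K_X+\Delta)$ is nef and big.

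I expect the principal obstacle to be precisely this non-big behaviour of $L$ on the surface $\mathbb{E}(L)$: establishing that $L$ descends to a semiample class there requires the positive-characteristic surface abundance statement and the torsion property over locally finite fields, rather than Keel's reduction alone. The bookkeeping needed to ensure that nefness and the residual positivity survive each restriction, and that every positive-characteristic stratum genuinely falls under the locally-finite-field torsion statement, is the delicate part. Once it is in place, Theorem~\ref{theorem:main_intro}, the Kawamata--Shokurov theorem on $X_\mbQ$, and the torsion property combine to yield that $L$ is semiample.
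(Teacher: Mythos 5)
Your top-level skeleton coincides with the paper's: apply Theorem \ref{theorem:main_intro}, dispose of $X_{\mbQ}$ via the characteristic-zero base point free theorem (your observation that $X_\mbQ$ is the generic fibre is correct here, though it follows from the locally-finite residue field hypothesis rather than from mixed characteristic alone), and handle the remaining locus with locally-finite-field torsion arguments in dimension one and Keel-type arguments in dimension two. But there are two genuine gaps in how you treat the exceptional locus. First, your stratification of $\mathbb{E}(L)$ by characteristic is not exhaustive: a component of $\mathbb{E}(L)$ can be \emph{horizontal}, i.e.\ a curve or surface dominating $S$, and such a component is neither contained in the generic fibre nor lies over positive-characteristic closed points --- it is itself a mixed-characteristic scheme. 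For such a component $V$, semiampleness of $L|_{V_{\mbQ}}$ does not give semiampleness of $L|_V$; closing exactly this implication is what the paper's machinery is for, and the corollary's proof needs a dedicated statement for two-dimensional schemes projective over $S$ (Proposition \ref{prop:arithmetic_surfaces}): when $L|_V$ is big one re-applies Theorem \ref{theorem:main}, and when it is not big one shows $L|_V$ is relatively torsion over the one-dimensional base. This case is absent from your proposal.

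Second, and more seriously, you never address gluing. Even once $L$ is semiample on every irreducible component, semiampleness on the union is not automatic: the fibrations associated to the individual components must be glued, which requires geometric connectedness of fibres over the pairwise intersections (Propositions \ref{prop:gluing}, \ref{prop:gluing_normalisation}, \ref{prop:gluing_residue_field_finite}). The paper in fact does not work with $\mathbb{E}(L)$ directly but writes $L \sim_{\mbQ} A + D$ and reduces, via Theorem \ref{theorem:main}, to $L|_{D_{\red}}$, precisely so that the pieces are prime \emph{divisors} $D_i$ and adjunction is available; the proof is then an induction $U_i = U_{i-1} \cup D_i$ with the components ordered by the thresholds $\lambda_i$ for which $\Delta + \lambda_i D$ contains $D_i$ with coefficient one. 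The decisive, quantitative use of the klt hypothesis is Lemma \ref{lemma:FDelta<2}: on a generic fibre $F$ of the fibration of the normalisation $\overline{D}_i$ one has $\deg \Delta_{\overline{D}_i}|_F < 2$, which bounds the conductor and the intersection with $U_{i-1}$ and yields the connectedness needed to invoke the gluing propositions (whose exceptional disconnected fibres over finitely many closed points are where the locally-finite-field torsion statement actually enters). In your proposal the klt condition only appears through the characteristic-zero theorem and an unspecified "subadjunction for positivity"; the "bookkeeping" you defer is not bookkeeping but the core of the proof, and without it the argument does not close.
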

\noindent {A field is \emph{locally finite} if it is a subfield of $\overline{\mathbb{F}}_p$ for some prime number $p>0$. When the residue fields of $S$ are not locally finite, we prove that $L$ is EWM.}

%Assume that $S_{\overline{\mbF}_p}$ is of finite type over $\overline{\mathbb{F}}_p$ for every prime $p$ and 
 
%Recently, Schwede and Ma (\cite{ms18}), motivated by the work of Andre (\cite{andre18}), Bhatt (\cite{bhatt16}), and Scholze (\cite{Scholze}), introduced mixed characteristic analogues of F-splittings. One could hope that these could be used to mimick Hacon-and-Xu's proof of the existence of flips which combined with our results would yield the validity of the MMP for mixed characteristic threefolds.\\  

%Here $S_{\mathbb F_p} \coloneq S \times_{\Spec \mbZ} \Spec \mbF_p$.

\begin{comment}
\begin{corollary} \label{cor:bpf_plt} Let $(X,D+B)$ be a plt pair on a normal integral scheme $X$ of dimension three which is projective over a quasi-projective base scheme $S$, where $D$ is a normal irreducible divisor. Let $L$ be a nef line bundle on $X$ such that $L-(K_X+S+B)$ is nef and big and $\mathbb{E}(L) \subseteq D$. Then $L$ is semiample.
\end{corollary}
\end{comment}
We move on to constructing quotients of schemes. The following result in positive characteristic has been shown by Koll\'ar (see \cite{kollar12}).
\begin{theorem} \label{thm:quotients_intro} Let $X$ be a separated algebraic space of finite type over an excellent base scheme $S$. Let $\sigma \colon E \rightrightarrows X$ be a finite, set theoretical equivalence relation and assume that the geometric quotient $X_{\mbQ} / E_{\mbQ}$ exists as a separated algebraic space of finite type over $S$. Then the geometric quotient $X / E$ exists as a separated algebraic space of finite type over $S$.
\end{theorem}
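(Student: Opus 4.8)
The plan is to follow Kollár's strategy and reduce the existence of the geometric quotient to a structure-sheaf descent problem, which is then resolved by the paper's Frobenius-mimicking machinery. Since $\sigma=(\sigma_1,\sigma_2)\colon E\rightrightarrows X$ is finite, the topological quotient $|X|/E$ exists with the quotient topology, and $\pi\colon X\to X/E$ is forced to be integral; the candidate structure sheaf is $\mathcal{O}_{X/E}=\ker\bigl(\pi_*\mathcal{O}_X\rightrightarrows \sigma_*\mathcal{O}_E\bigr)$. The entire problem is to show that this ringed space is a separated algebraic space of finite type over $S$, i.e.\ that $\pi$ is genuinely the geometric quotient. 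I would set this up as a Noetherian induction on $\dim X$, assuming the statement for every $E$-invariant closed subspace of strictly smaller dimension, and supplement it with a sequence of standard reductions (replacing $X$ by its reduction, treating irreducible components and the induced correspondence between them, passing to the normalization, and using Chow's lemma to reduce to the case where $X$ is projective over an affine piece of $S$). Throughout, the hypothesis that $X_{\mbQ}/E_{\mbQ}$ exists restricts compatibly to all the subspaces appearing in the induction.

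The next step is a dévissage realizing $X/E$ through amalgamated sums (pinchings): the quotient is built from pushouts of $X$ along a finite map out of a closed subspace — the conductor of the correspondence — and the inductive hypothesis disposes of the lower-dimensional conductor locus. On the normalization the quotient is comparatively easy to produce; the content is in descending this quotient back through the seminormalization and reglueing it to the non-normal structure of $X$. This is exactly the point where the two fibres behave differently: over $X_{\mbQ}$ the relevant comparison maps are universal homeomorphisms of characteristic-zero schemes and hence isomorphisms, so the hypothesis directly supplies the quotient there, whereas over the positive-characteristic fibres one meets genuine finite universal homeomorphisms that must be resolved.

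The key step, and where the machinery behind Theorem \ref{theorem:main_intro} enters, is descending $E$-invariant functions (equivalently, sections of an $E$-invariant, pullback-ample line bundle on $X$) across these universal homeomorphisms. The natural map from $X$ to the image cut out by invariant sections fails to be the quotient only up to a finite universal homeomorphism: in positive characteristic Kollár closes this gap using Frobenius, since a $p^e$-th power of any function descends and the perfection of a universal homeomorphism is an isomorphism. In mixed characteristic there is no Frobenius, so I would substitute the paper's techniques for mimicking the Frobenius action on universal homeomorphisms to produce the required invariant functions, and where Kollár invokes Keel's base point free theorem I would invoke its mixed characteristic form, Theorem \ref{theorem:main_intro} (in its EWM version, since the quotient is only asked for as an algebraic space). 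Gluing the positive-characteristic structure so obtained with the characteristic-zero quotient furnished by the hypothesis yields the sheaf $\mathcal{O}_{X/E}$ and identifies $\pi$ with the geometric quotient.

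The main obstacle is precisely this structure-sheaf descent in mixed characteristic: proving that $\ker\bigl(\pi_*\mathcal{O}_X\rightrightarrows\sigma_*\mathcal{O}_E\bigr)$ is finitely generated and defines a finite-type algebraic space, despite the absence of Frobenius, and doing so compatibly with the characteristic-zero picture coming from $X_{\mbQ}/E_{\mbQ}$. Once the descent is available, verifying separatedness, finiteness of $\pi$, and the universal property of the geometric quotient should be routine, as should the bookkeeping in the reductions (normalization, Chow's lemma, and the passage between schemes and algebraic spaces).
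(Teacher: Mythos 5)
Your plan is, in its essentials, the paper's own proof: Noetherian induction on dimension, reduction to the reduced case, Koll\'ar's pinching d\'evissage through the normalization with the inductive hypothesis absorbing the lower-dimensional locus, and the mixed-characteristic pushout machinery (Theorem \ref{thm:general_pushouts_in_mixed_char_intro}) standing in for Koll\'ar's Frobenius argument, with Koll\'ar's criterion (Proposition \ref{thm:kollar_quotients_exist}: the quotient exists once $E \rightrightarrows X$ is equalized by an integral morphism, and its structure sheaf is the kernel of $\sigma_1^*-\sigma_2^*$) closing the argument. So the strategy is right; two specific points in your write-up, however, are off.

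First, the invocation of Theorem \ref{theorem:main_intro} (even in EWM form) is spurious: neither Koll\'ar's original proof nor the paper's proof of this theorem uses Keel's theorem anywhere. A geometric quotient by a \emph{finite} equivalence relation is a finite morphism, so no contraction theorem for positive-dimensional loci can ever be relevant; what replaces Frobenius is instead Lemma \ref{lem:make_coequiliser_uni_homeo} (producing a universal homeomorphism out of $X$ that equalizes the two maps $E \rightrightarrows X$) combined with Theorem \ref{thm:general_pushouts_in_mixed_char_intro}. Second, your justification for the characteristic-zero side --- that universal homeomorphisms of $\mbQ$-schemes are isomorphisms --- is false: the normalization of a cuspidal curve over $\mbQ$ is a universal homeomorphism that is not an isomorphism; in characteristic zero such maps only induce isomorphisms on residue fields. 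What the argument actually uses is different: the hypothesis that $X_{\mbQ}/E_{\mbQ}$ exists supplies a \emph{topological pushout} over $\mbQ$ (e.g.\ of $\tilde X_{\mbQ}/\tilde E_{\mbQ} \leftarrow X^*_{\mbQ} \rightarrow X_{\mbQ}$, where the comparison maps are finite universal homeomorphisms, not isomorphisms), and the pushout theorem needs only this as input. Relatedly, the passage to the reduced case is not a ``standard reduction'': in the paper it is precisely where the pushout theorem and the equalizing lemma are first needed (Lemma \ref{lem:finite_quotients_after_reduction}). Chow's lemma and the seminormalization play no role here (the latter belongs to the proof of the group-quotient theorem, Theorem \ref{thm:quotients_algebraic_groups_intro}).
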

Note that as with Keel's theorem, the quotients by set theoretical finite equivalence relations need not exist in characteristic zero. However, one can construct them in many important cases (see \cite{kollar12,kollar13}).

Lastly, we provide a new proof of the following result (cf.\ \cite[Conjecture 1.1]{kollar97}).
\begin{theorem}[{\cite[Theorem 1.1 and Corollary 1.2]{km97}}] \label{thm:quotients_algebraic_groups_intro}Let $G$ be an affine  algebraic group scheme of finite type and smooth over an excellent base scheme $S$ and let $X$ be a separated algebraic space of finite type over $S$. Further, let $m \colon G \times X \to X$ be a proper $G$-action on $X$. Then a geometric quotient $X/G$ exists and is a separated algebraic space of finite type over $S$.
\end{theorem}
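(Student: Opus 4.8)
The plan is to deduce Theorem~\ref{thm:quotients_algebraic_groups_intro} from the quotient theorem for finite set-theoretic equivalence relations (Theorem~\ref{thm:quotients_intro}), so that every genuinely mixed characteristic ingredient is absorbed into that theorem and the passage from finite equivalence relations to proper group actions becomes a formal, characteristic-independent reduction of Koll\'ar's type (\cite{kollar12}). The first observation is that the $X_{\mbQ}$-hypothesis of Theorem~\ref{thm:quotients_intro} is harmless here: $G_{\mbQ}$ is an affine group scheme of finite type acting properly on the characteristic-zero algebraic space $X_{\mbQ}$, and geometric quotients by proper actions of group schemes over a field of characteristic zero are known (via the theory of quotients of algebraic spaces, or the characteristic-zero form of \cite{km97}). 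Thus $X_{\mbQ}/G_{\mbQ}$ exists as a separated algebraic space of finite type over $S$, and it is exactly this that will verify the characteristic-zero input once the relevant equivalence relation is set up.

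The heart of the matter is to replace the orbit relation, which is not finite, by a finite one. Set
\[
R \coloneq \operatorname{image}\bigl(G\times_S X \xrightarrow{(m,\,\mathrm{pr}_2)} X\times_S X\bigr),
\]
a closed, set-theoretic equivalence relation on $X$ precisely because the action is proper; its two projections to $X$ are proper but have positive-dimensional fibres (the orbits), so Theorem~\ref{thm:quotients_intro} cannot be applied to $R$ directly. To remedy this I would work \'etale-locally on the prospective quotient and produce a \emph{finite quasi-section}: a closed subspace $Z \hookrightarrow X$ meeting every orbit in a finite, nonempty set. Using that $G$ is affine one embeds $G$ into $\GL_N$ and locally linearises the action, after which a transversal (slice) construction, applied fibrewise over $S$ and spread out, yields such a $Z$; properness guarantees that $Z$ can be chosen to meet each orbit in a finite set. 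Restricting the relation gives $E_Z \coloneq R \cap (Z\times_S Z) \rightrightarrows Z$, which is now a genuine finite set-theoretic equivalence relation on $Z$, finite over $Z$ because each fibre $Z \cap Gz$ is finite, and whose characteristic-zero quotient $(Z/E_Z)_{\mbQ}$ is carved out of $X_{\mbQ}/G_{\mbQ}$. Theorem~\ref{thm:quotients_intro} then produces $Z/E_Z$ as a separated algebraic space of finite type over $S$, and one verifies that $Z/E_Z$ carries the universal property of the geometric quotient $X/G$ over the chosen \'etale chart.

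I expect the construction and the gluing of these quasi-sections to be the main obstacle. Concretely one must: (i) build $Z$ so that $E_Z$ is an equivalence relation in families over the mixed characteristic base $S$, where orbit dimensions may jump along special fibres; and (ii) patch the local quotients $Z/E_Z$ into a single separated algebraic space. For (ii) I would use that the quotients furnished by Theorem~\ref{thm:quotients_intro} are unique and compatible with \'etale base change, so the local pieces glue by descent; separatedness of $X/G$ is then inherited from properness of the action through the valuative criterion, and finite-typeness is immediate from the construction. The essential, and the only characteristic-sensitive, point is the existence of $Z/E_Z$, which is exactly Theorem~\ref{thm:quotients_intro} and where the Frobenius-mimicking and pushout techniques of the paper do the actual work; everything else is a reduction that runs uniformly in all characteristics, which is what makes this a genuinely new proof of \cite[Theorem 1.1 and Corollary 1.2]{km97}.
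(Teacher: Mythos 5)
Your route is genuinely different from the paper's. The paper follows Koll\'ar's conductor strategy: it reduces to seminormal $X$ via Lemma \ref{lemma:quotients_group_schemes_universal_homeomorphism}, quotes \cite[Theorem 4.3]{kollar97} as a black box for the normal case $X^n/G$, and then glues $X^n/G$ along the conductor squares $C/G \leftarrow D/G \to D_{X^n/G}$ using Noetherian induction and the new pushout theorem (Theorem \ref{thm:general_pushouts_in_mixed_char}); it never invokes Theorem \ref{thm:quotients_intro}. You instead propose to reduce the group action directly to a finite equivalence relation on a finite quasi-section and feed that into Theorem \ref{thm:quotients_intro}. This is an attractive idea, but as written it has genuine gaps.

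First, your quasi-section is under-specified in a way that matters. Finiteness of the intersections $Gz \cap Z$ does \emph{not} make the projections $E_Z \rightrightarrows Z$ finite: properness of the action lets you extend $g$ once both $x$ and $gx$ extend over a DVR, but it does not produce the missing extension of one leg of a point of $E_Z$, so closedness of $E_Z$ in $Z\times_S Z$ plus finite fibres is not enough. What is needed is that the sweep $\sigma\colon G\times_S Z \to X$, $(g,z)\mapsto gz$, be finite (then finiteness of $E_Z\rightrightarrows Z$ and of $W \coloneq \mathrm{im}\,(G\times_S Z \to X\times_S Z) \to X$ follow). Koll\'ar constructs such quasi-sections only for \emph{normal} spaces; your ``embed $G$ in $\GL_N$ and slice'' sketch does not obviously work for a non-normal algebraic space over a mixed characteristic base, and the natural fix (build $Z$ on the normalisation and take its image) already requires the equivariant lifting \cite[Proposition 4.1]{kollar97}, i.e.\ part of the machinery you were trying to avoid.

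Second, and more seriously, even granting $Z$, $E_Z$ finite, and the quotient $Z/E_Z$ from Theorem \ref{thm:quotients_intro}, there is no natural morphism $X \to Z/E_Z$, and producing one is not a formal step. The candidate is to descend $W \to Z \to Z/E_Z$ along the finite surjection $W \to X$; but the two composites $W\times_X W \rightrightarrows Z/E_Z$ are only equal on \emph{geometric points}, and in positive or mixed characteristic this does not imply equality of morphisms (nilpotents, inseparability). This is exactly the coequaliser problem that the paper's Lemma \ref{lem:make_coequiliser_uni_homeo} (which itself consumes the characteristic-zero quotient and Theorem \ref{thm:general_pushouts_in_mixed_char}) is designed to repair, after which one concludes with Theorem \ref{thm:kollar_quotients_group_schemes_exist}. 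So your closing claim that everything outside Theorem \ref{thm:quotients_intro} ``runs uniformly in all characteristics'' is precisely where the proposal breaks: the descent from the quasi-section to $X$ is the characteristic-sensitive step, and without an argument for it (or a substitute such as the paper's conductor gluing) the proof is incomplete.
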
 
The assumption on the smoothness of $G$ can be weakened (cf.\ Remark \ref{rem:flat_group_quotients}). Over $\mbC$ the above fundamental theorem was proved in \cite{popp73}. Building on the results of Seshadri (\cite{seshadri72}), Koll\'ar showed this theorem for algebraic spaces over positive characteristic fields, and also for mixed characteristic ones when the group scheme is reductive (\cite{kollar97}). Finally, the conjecture has been settled in \cite{km97}, where it was shown that quotients by flat groupoids with finite stabilisers exist. Although the above result is known to hold for the last two decades, we believe it is interesting to provide a new proof of it, one which follows Koll\'ar's original strategy.

%Last, A different approach to generalising the aforementioned observation has been undertaken in \cite{BS19}, but as far as we understand the scope of applicatibility of their theory is different from ours (cf.\ \cite[Appendix]{MSTWW19}).\\

We finish this part of the introduction by explaining an important recurring theme in the proofs of all the above results:  constructing pushouts of diagrams $X \xleftarrow{p} Y \xrightarrow{g} Y'$ where $g$ is a universal homeomorphism. % Such a pushout need not exist in general, but one can construct it under some additional assumptions: for example when $g$ is affine and $f$ is a thickening (cf.\ \stacksproj{07RS}, see also \stacksproj{0ECH}). 
In order to prove his remarkable result (\cite{kollar97}), Koll\'ar showed that such pushouts exist in positive characteristic, and in mixed characteristic as well if $g$ is, in addition, an isomorphism over $\mbQ$ and $p$ is finite. The following generalisation of his result plays a vital role in the proofs of the above theorems, and we believe is interesting in itself.
%Theorem \ref{theorem:main}, Theorem \ref{thm:quotients_intro}, and Theorem \ref{thm:quotients_algebraic_groups_intro}

\begin{theorem}[{cf.\ Theorem \ref{thm:general_pushouts_in_mixed_char}}] \label{thm:general_pushouts_in_mixed_char_intro} Let $X \xleftarrow{p} Y \xrightarrow{g} Y'$ be a diagram of schemes or algebraic spaces such that $p$ is representable, quasi-compact, and \emph{separated}, and $g$ is a representable universal homeomorphism. Assume that a pushout of $X_{\mbQ} \leftarrow Y_{\mbQ} \rightarrow Y'_{\mbQ}$ exists. Then so does a pushout of $X \leftarrow Y \to Y'$.
\end{theorem}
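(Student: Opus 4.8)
The plan is to construct the pushout explicitly as a ringed space and then verify that it is representable by a scheme or algebraic space with the required finiteness properties. Since $g$ is a universal homeomorphism it induces a homeomorphism $|Y| \xrightarrow{\sim} |Y'|$, and a direct check shows that the pushout of underlying topological spaces of $X \xleftarrow{p} Y \xrightarrow{g} Y'$ is simply $|X|$, with the structural map $j \colon |Y'| \to |X|$ given by $p \circ g^{-1}$ and the map from $X$ being the identity. I would therefore define the candidate $W$ to have underlying space $|X|$ and structure sheaf the fibre product
$$\OO_W \coloneq \OO_X \times_{p_* \OO_Y} p_* \OO_{Y'},$$
where $\OO_X \to p_* \OO_Y$ is $p^\#$ and $p_* \OO_{Y'} \to p_* \OO_Y$ is $p_*(g^\#)$ (using $|Y'| = |Y|$). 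The two projections out of the fibre product furnish morphisms $X \to W$ and $Y' \to W$ whose pullbacks to $Y$ agree, so everything reduces to two points: (i) that $(|X|, \OO_W)$ is a scheme, resp.\ algebraic space, separated and of finite type over $S$, and (ii) that it satisfies the universal property of the pushout. Because $\mbQ$ is flat over $\mbZ$, the formation of $\OO_W$ commutes with $- \otimes_{\mbZ} \mbQ$, so $W_{\mbQ}$ is canonically the given pushout of $X_{\mbQ} \leftarrow Y_{\mbQ} \to Y'_{\mbQ}$; this is what lets the hypothesis feed into the construction.

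For representability I would argue locally. A universal homeomorphism induces an equivalence of small \'etale sites, and since $p$ is representable, quasi-compact, and separated, the question of whether $\OO_W$ is the structure sheaf of a scheme (resp.\ algebraic space) is local on $|X| = |W|$; this reduces the statement to the affine situation of ring maps $A \to C \leftarrow B$ with $B \to C$ integral, injective and radicial (so that $\Spec C \to \Spec B$ is a universal homeomorphism), where the candidate becomes $\Spec(A \times_C B)$. The content to establish is then that $A \times_C B \to A$ is again integral, radicial and surjective—so that $\Spec(A \times_C B)$ has underlying space $|X|$ as predicted—and that $A \times_C B$ is of finite type over $S$ with $\Spec(A \times_C B)$ separated, after which these affine pushouts glue along the \'etale (equivalently Zariski, on the level of spaces) topology to the global $W$.

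The main obstacle is exactly this affine step, and it is where the technique of mimicking the Frobenius is essential. In equal characteristic $p$ one controls $C$ over $B$ by the inclusion $C^{[p^e]} \subseteq B$ coming from radiciality, which both makes $A \times_C B \to A$ a universal homeomorphism and forces finite generation; in mixed characteristic no such global Frobenius exists, so one must substitute the Frobenius-mimicking constructions of the earlier sections to ``reverse'' $g$ up to a controlled operation on the positive and integral parts, while the potentially pathological characteristic-zero behaviour—where a universal homeomorphism need not be reversible and fibre products of finite-type algebras can fail to be of finite type—is sidestepped by invoking the assumed pushout $W_{\mbQ}$. Concretely, I expect to verify the finiteness and the topological identification after base change to each residue characteristic separately, gluing the mod-$p$ and integral data with the prescribed generic fibre $W_{\mbQ}$.

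Finally I would check the universal property. Given a test object $T$ with compatible morphisms $X \to T$ and $Y' \to T$ restricting to the same morphism on $Y$, the equalizer description of $\OO_W$ as an honest fibre product of sheaves of rings produces a unique morphism of locally ringed spaces $W \to T$; representability of $W$ from the previous step upgrades this to a morphism of schemes (resp.\ algebraic spaces), and separatedness is inherited from that of $p$. Since the construction is compatible with the given pushout after $- \otimes_{\mbZ} \mbQ$ and with Koll\'ar-type pushouts on the characteristic-$p$ loci, the resulting $W$ is the desired pushout of $X \leftarrow Y \to Y'$.
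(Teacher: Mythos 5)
Your scaffolding is the same as the paper's: you define the candidate as the ringed space with underlying space $|X|$ and structure sheaf $\mcO_X \times_{p_*\mcO_Y} p_*\mcO_{Y'}$ (this is exactly the paper's notion of geometric pushout, Definition \ref{definition:geo_pushout}), you check representability locally (Lemma \ref{lem:construct_pushout_locally}), and you use flatness of $\mbZ \to \mbQ$ to compare with the characteristic-zero pushout. But there are two genuine gaps where the mathematical content should be. First, your reduction to ring maps $A \to C \leftarrow B$ is faulty: $p$ is only assumed quasi-compact and \emph{separated}, so after shrinking $X$ to an affine, $Y$ need not be affine at all (it could be projective over $X$), and $p_*\mcO_Y$ is not a single ring. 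This is precisely why the paper's proof of Theorem \ref{thm:general_pushouts_in_mixed_char} needs its Step 2: the Zariski--Nagata compactification and Stein factorization are used to factor $p$ into affine morphisms and a contraction, and the contraction case is handled by an entirely different construction, $X' \coloneq \Spec H^0(Y',\mcO_{Y'})$, not by a ring-theoretic fibre product.

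Second, and more seriously, the affine case itself---which you correctly identify as the main obstacle---is asserted rather than proven. The perfection machinery (Lemma \ref{lem:univ_homeo_iso_generic}) that shows $\Spec A \to \Spec(A\times_C B)$ is a universal homeomorphism only applies when the universal homeomorphism $g$ is an \emph{isomorphism} in characteristic zero; until that holds, no amount of localizing at primes helps, because the characteristic-zero obstruction does not localize away (this is the content of Example \ref{example:extending_fails_in_char_0}: over $\mbC[t]$ the analogous fibre product is not even finitely generated). The paper resolves this with its Step 1: it uses the hypothesized pushout $Z$ of $X_{\mbQ} \leftarrow Y_{\mbQ} \to Y'_{\mbQ}$ \emph{together with} the unconditional extension result Proposition \ref{prop:extending_universal_homeo_algebras} (via Corollary \ref{cor:extending_uh}) to replace $X$ by the pushout $\tilde X$ of $Z \leftarrow X_{\mbQ} \to X$ and $Y$ by the image $\tilde Y \subseteq \tilde X \times Y'$, after which $\tilde Y_{\mbQ} \to Y'_{\mbQ}$ is an isomorphism and the perfection arguments close both the affine and the contraction cases. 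Proposition \ref{prop:extending_universal_homeo_algebras}---decomposing the universal homeomorphism into elementary extensions $A'[b]$ with $b^2, b^3 \in A'$ and recovering integral elements via explicit binomial-coefficient and saturation computations---is the actual Frobenius-mimicking core; your proposal invokes "the constructions of the earlier sections" but never deploys any specific one, and the plan to "verify after base change to each residue characteristic separately, gluing with the prescribed generic fibre" is exactly the step with no argument behind it. A smaller misdirection: you set out to prove that $A\times_C B$ is of finite type and separated over a base $S$, but the statement requires no finiteness, and finiteness is false in general---by Remark \ref{remark:pushout_need_not_be_noetherian}, the pushout of $\Spec\mbZ \leftarrow \Spec\mbQ \to \Spec\mbQ[x]/x^2$ is $\Spec(\mbZ\oplus x\mbQ)$, which is not Noetherian. (Finite-typeness is recovered only up to replacing the pushout by an approximation, Lemma \ref{lem:fin_gen_pushouts}, which is a separate statement.)
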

An important case of this theorem is constructing pushouts of $X \xleftarrow{p} X_{\mbQ} \xrightarrow{g} X'_{\mbQ}$, where $g$ is a representable universal homeomorphism, in other words, extending a universal homeomorphism from characteristic zero to mixed characteristic. 

Having constructed such pushouts, we need to study their properties and the following theorem allows for finding line bundles on them. Here, $\PicS_X$ denotes the groupoid of line bundles on $X$.  
\begin{theorem}  \label{thm:univ_homeo_and_Pic} \label{thm:univ_homeo_and_Pic_intro}
Let $f \colon X \to Y$ be a finite universal homeomorphism of Noetherian schemes over $\Zpp$. Then the following diagram 
\begin{center}
\begin{tikzcd}
\PicS_Y[\frac{1}{p}] \arrow{r}{f^*} \arrow{d} & \PicS_X[\frac{1}{p}] \arrow{d} \\
\PicS_{Y_{\mbQ}}[\frac{1}{p}] \arrow{r} & \PicS_{X_{\mbQ}}[\frac{1}{p}],
\end{tikzcd}
\end{center}
is Cartesian in the $2$-category of groupoids. %Here $\PicS^{\perf} = \varinjlim( \PicS \xrightarrow{\phi} \PicS \xrightarrow{\phi} \ldots)$ where $\phi$ sends a line bundle to its $p$-th power.
\end{theorem}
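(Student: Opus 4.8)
The plan is to rephrase the assertion homotopy-theoretically and then localise the problem to characteristic $p$, where the perfection principle quoted in the introduction applies. Recall that a commuting square of (Picard) groupoids is $2$-Cartesian precisely when the functor induced by $f^*$ on the homotopy fibres of the two vertical restriction maps is an equivalence. Thus I would first reduce the theorem to the single statement that
\[ f^* \colon \operatorname{fib}\!\big(\PicS_Y[\tfrac1p] \to \PicS_{Y_\mbQ}[\tfrac1p]\big) \longrightarrow \operatorname{fib}\!\big(\PicS_X[\tfrac1p] \to \PicS_{X_\mbQ}[\tfrac1p]\big) \]
is an equivalence of groupoids. Since a Picard groupoid is determined by $\pi_0$ and $\pi_1$ (here $\Pic(-)[\tfrac1p]$ and $\Gamma(-,\mcO^\times)[\tfrac1p]$), this is in turn equivalent to the exactness of the Mayer--Vietoris sequence
\[
\begin{aligned}
0 \to \Gamma(\mcO_Y^\times)[\tfrac1p] &\to \Gamma(\mcO_X^\times)[\tfrac1p] \oplus \Gamma(\mcO_{Y_\mbQ}^\times)[\tfrac1p] \to \Gamma(\mcO_{X_\mbQ}^\times)[\tfrac1p] \\
&\to \Pic(Y)[\tfrac1p] \to \Pic(X)[\tfrac1p]\oplus \Pic(Y_\mbQ)[\tfrac1p] \to \Pic(X_\mbQ)[\tfrac1p],
\end{aligned}
\]
and the homotopy fibre above is nothing but the part of $\PicS[\tfrac1p]$ supported on the special fibre, i.e.\ the cohomology of $\mathbb{G}_m$ with supports in $X_{\mbF_p}\coloneq X\times_{\Spec\mbZ}\Spec\mbF_p$ after inverting $p$. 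The point of this reformulation is that the four corners have been traded for a single object that is manifestly a characteristic-$p$ invariant.

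The positive-characteristic input is the following incarnation of the perfection fact: if $g\colon W'\to W$ is a finite universal homeomorphism of Noetherian $\mbF_p$-schemes, then $g^*\colon \PicS_W[\tfrac1p]\to\PicS_{W'}[\tfrac1p]$ is an equivalence. Indeed, the absolute Frobenius acts on $H^i(-,\mathbb{G}_m)$ (for $i=0,1$) by multiplication by $p$, so passing to the perfection computes exactly $\Gamma(\mcO^\times)[\tfrac1p]$ and $\Pic(-)[\tfrac1p]$; since $g_{\perf}$ is an isomorphism, $g^*$ becomes an equivalence after inverting $p$. I would apply this to the restriction $f_{\mbF_p}\colon X_{\mbF_p}\to Y_{\mbF_p}$, which is again a finite universal homeomorphism because such morphisms are stable under base change and $X_{\mbF_p}=X\times_Y Y_{\mbF_p}$.

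It remains to descend from this characteristic-$p$ equivalence to the equivalence of the supported pieces over $\Zpp$. Here I would work locally: as $f$ is finite (hence affine) and cohomology with supports in $V(p)$ is insensitive to the complement of $V(p)$, the assertion localises to $Y=\Spec A$, $X=\Spec B$ with $A\to B$ a finite radicial extension of $\Zpp$-algebras, and depends only on the $p$-adic formal completions. The engine mimicking Frobenius is that the reduction mod $p$ of $A\to B$ is a finite universal homeomorphism of $\mbF_p$-algebras, so there is $n$ with $b^{p^n}\in A+pB$ for every $b\in B$; iterating this and bootstrapping along the thickenings $\Spec B/p^m$ upgrades the clean statement on $X_{\mbF_p}$ to one on each infinitesimal neighbourhood, the error terms at each stage being $p$-power torsion and hence killed by $[\tfrac1p]$. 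Passing to the limit over $m$ and invoking the arithmetic fracture square over $\Zpp$ (which exhibits $\PicS_W[\tfrac1p]$ as the $2$-fibre product of its generic fibre and its $p$-adic completion) then identifies these supported pieces and yields the exactness above.

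\textbf{Main obstacle.} The genuinely delicate step is this last d\'evissage: the perfection statement is clean only on the reduced special fibre, whereas the homotopy fibres $\operatorname{fib}(\PicS_W[\tfrac1p]\to\PicS_{W_\mbQ}[\tfrac1p])$ remember the entire formal neighbourhood of $V(p)$ and, crucially, the possible non-flatness of $X$ over $\Zpp$. Controlling the passage from the factorisation $b^{p^n}\in A+pB$ (rather than the cleaner $b^{p^n}\in A$ available in equal characteristic) through the tower $\{B/p^m\}$, and verifying that all discrepancies are uniformly $p$-power torsion so that inverting $p$ produces honest isomorphisms, is where the real work lies; this is precisely the technique of mimicking the Frobenius action in mixed characteristic advertised in the introduction.
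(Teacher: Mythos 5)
Your opening reduction (to showing that $f^*$ is an equivalence on the homotopy fibres of the two restriction functors) is fine, and your characteristic-$p$ input is correct --- it is essentially Step~1 of the paper's proof. The gap is in the final d\'evissage, exactly where you place your ``main obstacle'', and the resolution you sketch there rests on a false premise. A first, repairable, inaccuracy: the fracture square is not a fibre product over the generic fibre and the $p$-adic completion alone; formal glueing (Beauville--Laszlo/Ferrand--Raynaud) gives $\PicS_{\Spec A} \simeq \PicS_{\Spec A[\frac{1}{p}]} \times_{\PicS_{\Spec \hat{A}[\frac{1}{p}]}} \PicS_{\Spec \hat{A}}$, and the third corner, the \emph{punctured} completion, is a characteristic-zero object on which finite universal homeomorphisms do \emph{not} induce equivalences of $\PicS[\frac{1}{p}]$. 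The fatal issue is the claim that the error terms are ``uniformly $p$-power torsion''. Along the tower $\{B/p^m\}$ the layer-wise discrepancies are killed by $p$, but the discrepancy between level $m$ and level $1$ is only killed by $p^{m-1}$, and inverting $p$ (a filtered colimit) does not commute with the inverse limit over the tower. In the limit the discrepancy is torsion-free and survives inverting $p$: for instance $\ker\bigl(\hat{A}^{*} \to (A/p)^{*}\bigr) = 1 + p\hat{A}$, or, in the simplest thickening $\Spec \Zpp \to \Spec \Zpp[\epsilon]$, the automorphism discrepancy $1 + \epsilon\Zpp \cong \Zpp$, whose $p$-inversion is $\mbQ \neq 0$. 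Concretely, your level-wise equivalences control $\varprojlim_m\bigl(\PicS_{B/p^m}[\frac{1}{p}]\bigr)$, which is not $\bigl(\varprojlim_m \PicS_{B/p^m}\bigr)[\frac{1}{p}] \simeq \PicS_{\Spec\hat{B}}[\frac{1}{p}]$; already for $\hat{B}=\mbZ_p$ one has $\varprojlim_m\bigl((\mbZ/p^m)^{*}[\frac{1}{p}]\bigr) = \mbF_p^{*}$ while $\mbZ_p^{*}[\frac{1}{p}] \supseteq \mbZ_p^{*}$.

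What actually makes the theorem true is not that these integral discrepancies die after inverting $p$, but that they are \emph{matched} by the corresponding discrepancies on the generic fibre (in the example above, $1+\epsilon\Zpp$ and $1+\epsilon\mbQ$ both become $\mbQ$ after inverting $p$, and the map between them is an isomorphism precisely because $\Zpp[\frac{1}{p}]=\mbQ$). Establishing this matching for an arbitrary finite universal homeomorphism is the real content, and it is supplied in the paper by Lemma~\ref{lem:univ_homeo_iso_generic} (a universal homeomorphism which is an isomorphism over $\mbQ$ induces an isomorphism on $\mcO^{\perf}$), fed into a quite different structural argument: Noetherian induction in which thickenings are handled by forming the pushout $Z = X \sqcup_{X_{\mbQ}} Y_{\mbQ}$ and invoking \stacksproj{08KU}, and the reduced case is handled by Milnor patching on the generalised conductor square (Lemma~\ref{lemma:Milnor_Pic}) together with induction on the conductor locus. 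That route never completes at $p$ and so never meets the $\varprojlim$-versus-$[\frac{1}{p}]$ problem; your outline, as it stands, has no substitute for this input, so the proposal is incomplete at its central step.
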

\begin{comment}
\begin{theorem}[cf.\ Theorem \ref{thm:univ_homeo_and_Pic}]  \label{thm:univ_homeo_and_Pic_intro}
Let $f \colon X \to Y$ be a finite universal homeomorphism of noetherian schemes. Then the following diagram of absolute Picard stacks
\begin{center}
\begin{tikzcd}
\PicS_Y \otimes \mbQ \arrow{r}{f^*} \arrow{d} & \PicS_X \otimes \mbQ \arrow{d} \\
\PicS_{Y_{\mbQ}} \otimes \mbQ \arrow{r} & \PicS_{X_{\mbQ}} \otimes \mbQ,
\end{tikzcd}
\end{center}
is a homotopy pullback square. %Here $\PicS^{\perf} = \varinjlim( \PicS \xrightarrow{\phi} \PicS \xrightarrow{\phi} \ldots)$ where $\phi$ sends a line bundle to its $p$-th power.
\end{theorem}
\end{comment}
%\begin{proposition} \label{prop:univ_homeo_and_Pic}
%Let $f \colon X \to Y$ be a finite universal homeomorphism of noetherian schemes. Then $\Pic_X \otimes\, \mbQ \simeq \Pic_Y \otimes\, \mbQ$.
%\end{proposition}
%As before, such pushouts need not exist in characteristic zero even when $g$ is finite (\cite[Example 8.5]{kollar97}, see also Remark \ref{remark:pushout_need_not_be_noetherian}). If $g$ is not affine, they need not exist even when $f$ is a thickening. Be aware that a pushout of finitely generated schemes might not be Noetherian (see Remark \ref{remark:pushout_need_not_be_noetherian}), but we can always approximate it by a finitely generated scheme (see Lemma \ref{lem:fin_gen_pushouts}).

%This result is exactly what allowed us to generalise Kollar's results (Theorem \ref{thm:kollar_quotients_exist} and Theorem \ref{thm:quotients_algebraic_groups_intro}), and was an important component in the proof of Theorem \ref{theorem:main}. 

\subsection{Further discussion} \label{ss:fdiscussion}
In this subsection we summarise other topics related to our study of Keel's theorem and quotients in mixed characteristic. In order to prevent this paper from becoming too long, we decided not to pursue them in detail here. Instead, we hope to address some of  them in forthcoming articles (e.g.\ \cite{witaszekinprogress}).

\subsubsection*{Mumford conjecture (Haboush's theorem)}

Given an affine scheme $\Spec A$ which is finitely generated over a characteristic zero field, and a reductive group $G$ acting on $\Spec A$, it is easy to show using the averaging operator that $A^G$ is finitely generated as well. However, in general this has been an open problem for many years (known as Mumford's conjecture), eventually settled in positive characteristic by Haboush using Steinberg's representations (\cite{haboush75}), and extended to mixed characteristic using similar methods by Seshadri (\cite{seshadri77}). Before Haboush's seminal paper, Seshadri set up a program for showing Mumford's conjecture by geometric means (cf.\ \cite{seshadri72}). After the announcement of \cite{keel99}, Seshadri realised (\cite{seshadri05}) that Keel's base point free theorem is exactly what is needed to conclude his program in positive characteristic and yield a geometric proof of Mumford's conjecture in this setting (this was eventually proven together with Sastry in \cite{SP11}). As remarked by Seshadri, the missing component for concluding his program in full generality is a mixed characteristic variant of Keel's theorem. Hence, the results of our paper should possibly allow for a geometric proof of Mumford's conjecture in this general setting.

\subsubsection*{Mixed characteristic Minimal Model Program}
%We were motivated to seek the mixed characteristic Keel's theorem by our study of higher dimensional mixed characteristic Minimal Model Program (see \cite{tanaka16_excellent} for the two-dimensional case). Our result is a significant first step towards extending the recent results on positive characteristic three-dimensional MMP (see \cite{hx13}) to mixed characteristics

 We were motivated to seek a mixed characteristic variant of Keel's theorem by our study of the higher dimensional mixed characteristic Minimal Model Program (see \cite{tanaka16_excellent} for the two-dimensional case). The recent breakthrough proof of the validity of the Minimal Model Program for positive characteristic threefolds by Hacon and Xu (see \cite{hx13}) is based on two main components: Keel's theorem used to construct contractions, and the Frobenius regularity used to construct flips. Our mixed characteristic Keel's theorem provides exactly what is needed to generalise the former component to mixed characteristic. Note that, recently, Schwede and Ma (\cite{ms18}), motivated by the work of Andr\'e, Bhatt, and Scholze (\cites{andre18,bhatt16,Scholze}), introduced a mixed characteristic analogue of F-regularity. We hope that this could be used to mimick Hacon-and-Xu's proof of the existence of flips (see \cite{MSTWW19} for the first step in this direction) which combined with our results would yield the validity of the MMP for mixed characteristic threefolds.

%An important motivation for our study of Keel's theorem in mixed characteristic arouse from the Minimal Model Program. We believe that our result is a significant first step allowing for extension of recent breakthrough results on positive characteristic three-dimensional MMP (see \cite{hx13}) to mixed characteristics. In \cite{hx13} Hacon and Xu employed Keel's theorem to show the existence of pl-contractions, and F-singularities to obtain the existence of flips. Thanks to Theorem \ref{theorem:main}, three-dimensional pl-contractions exist in mixed characteristic, and so potentially some mixed characteristic variants of F-singularities could allow for showing that flips exist.

\subsubsection*{Relative semiampleness} Cascini and Tanaka have shown that given a projective morphism $f \colon X \to Y$ of positive characteristic Noetherian schemes, the relative semi-ampleness of a line bundle $L$ on $X$ may be verified fibrewise (\cite{ct17}, see \cite[Theorem 1.3]{BS17} for a similar result). The three main components of their proof are: Keel's base point free theorem, Kollar's existence of quotients by finite equivalence relations, and ``gluing'' of semiampleness of line bundles. In this article we generalise these components to mixed characteristic (Theorem \ref{theorem:main_intro}, Theorem \ref{thm:quotients_intro}, and Subsection \ref{ss:gluing}) and, as far as we understand, this will be enough to extend the result of Cascini and Tanaka to morphisms over $\mbZ$. As a corollary, one gets the following.
\begin{workinprogress} \label{thm:relative_semiampleness_mixed} Let $L$ be a nef line bundle on a scheme $X$ projective over a Noetherian base scheme $S$. Assume that $L|_{X_{\mbF_p}}$ and $L|_{X_{\mbQ}}$ are semiample over $S$. Then $L$ is semiample over $S$.
\end{workinprogress}
\noindent Due to the amount of technical details, we do not sort out the proof here, but instead postpone it to a separate article (\cite{witaszekinprogress}). 

\subsubsection*{Moduli spaces of curves}
One of the consequences of Keel's seminal paper was the proof that the relative canonical divisor on the universal family of curves over $\overline{M}_{g,n}$ is always semiample in positive characteristic (\cite[Theorem 0.4]{keel99}). In \cite{keel03} it was shown that many other nef line bundles on $\overline{M}_{g,n}$ in positive characteristic are semiample and the results of our paper should allow for proving that some of these line bundles (for example corresponding to $K_X$-negative extremal rays, cf.\ \cite[Section 7]{gibney09}) are semiample in mixed characteristics as well. Theorem \ref{thm:relative_semiampleness_mixed} will reduce this problem to the independent study of the characteristic zero and the positive characteristic cases, thus we postpone writing any proofs to \cite{witaszekinprogress}. 

In general, Keel conjectured that every nef line bundle on $\overline{M}_{g,n}$ in positive characteristic is semiample. If this is true, then Theorem \ref{thm:relative_semiampleness_mixed} will imply the following. 
%by \cite{keel99} and \cite{gkm02} every nef line bundle on $\overline{M}_{g,n}$ is semiample in this setting (cf.\ \cite[Section 7]{gibney09}), in contrast to characteristic zero where this is blatantly false. Combining their argument with our mixed characteristic Keel's theorem we get the following.      
\begin{conj}\label{conj:mgn} Let $\overline{M}_{g,n}$ be the moduli space of genus $g$ curves with $n$ marked points over $\Spec \mbZ$, and let $L$ be a nef line bundle on it. If $L$ is semiample on $\overline{M}_{g,n} \times \Spec \mbQ$, then it is semiample.
\end{conj}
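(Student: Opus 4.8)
The plan is to deduce the conjecture as a formal consequence of Theorem \ref{thm:relative_semiampleness_mixed}, once one grants the two inputs singled out in the surrounding discussion: the ``Work in progress'' relative semiampleness statement itself, and Keel's conjecture that every nef line bundle on $\overline{M}_{g,n}$ over a positive characteristic field is semiample. Accordingly I would set $X = \overline{M}_{g,n}$ and $S = \Spec \mbZ$, viewing $L$ as a nef line bundle on the coarse moduli space $X$, which is projective over the Noetherian scheme $S$. The whole task then reduces to checking that the two hypotheses of Theorem \ref{thm:relative_semiampleness_mixed} are met, namely that $L|_{X_{\mbQ}}$ and $L|_{X_{\mbF_p}}$ (for every prime $p$) are semiample over $S$.

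First I would dispose of the characteristic zero fibre. There is a canonical identification $X_{\mbQ} = \overline{M}_{g,n} \times_{\Spec \mbZ} \Spec \mbQ$, and $L|_{X_{\mbQ}}$ is precisely the line bundle assumed semiample in the hypothesis of the conjecture. Since $\Spec \mbQ$ is affine over $S$, semiampleness over $\mbQ$ coincides with semiampleness over $S$ for this fibre, so this hypothesis holds by assumption.

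Next I would treat the positive characteristic fibres. For a prime $p$, base change of the moduli problem identifies $X_{\mbF_p} = X \times_{\Spec \mbZ} \Spec \mbF_p$ with $\overline{M}_{g,n}$ over $\mbF_p$. The restriction $L|_{X_{\mbF_p}}$ stays nef: nefness is tested on proper curves, and every proper curve in $X_{\mbF_p}$ is a proper curve in $X$ on which $L$ has nonnegative degree. Keel's positive characteristic conjecture then yields that $L|_{X_{\mbF_p}}$ is semiample on $\overline{M}_{g,n}$ over $\mbF_p$, and, as the base $\Spec \mbF_p$ is affine over $S$, this coincides with semiampleness over $S$. With both hypotheses verified, Theorem \ref{thm:relative_semiampleness_mixed} shows that $L$ is semiample over $S = \Spec \mbZ$; affineness of $S$ upgrades relative semiampleness to absolute semiampleness, yielding the conclusion.

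I expect the genuine difficulty to lie entirely outside this formal reduction, since the conjecture is intrinsically conditional on two as-yet-unproved ingredients. The more serious is Keel's conjecture in positive characteristic, which remains wide open and is not touched by the present paper; the second is the relative semiampleness statement Theorem \ref{thm:relative_semiampleness_mixed}, whose proof is deferred to \cite{witaszekinprogress}. Beyond these, the only technical points are the routine compatibilities of ``semiample'' with ``semiample over $S$'' across the affine base $\Spec \mbZ$ and its fibres, together with the stability of nefness under restriction to $X_{\mbF_p}$; none of these constitutes a real obstacle.
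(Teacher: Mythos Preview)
Your proposal is correct and follows exactly the route the paper itself indicates: the paper does not prove this statement (it is a conjecture), but explicitly says that Keel's positive-characteristic conjecture together with Theorem \ref{thm:relative_semiampleness_mixed} would imply it, which is precisely the reduction you carry out. Your identification of the two conditional inputs and of the routine compatibilities (nefness under restriction, relative versus absolute semiampleness over an affine base) matches the paper's reasoning.
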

%\noindent In particular, all Mori log contractions exist on $\overline{M}_{g,n}$ over $\Spec \mbZ$. Since this is a direct consequence of Theorem \ref{thm:relative_semiampleness_mixed} and the positive characteristic result, we postpone writing down a formal proof of it to \cite{witaszekinprogress}. 

\begin{comment}
\subsubsection*{The augmented base locus}
One of interesting applications of the ideas of Keel was the proof that the augmented base loci and the exceptional loci of line bundles on projective schemes coincide in positive characteristic (\cite{CMM14}). In characteristic zero, this result is known to hold for smooth varieties by Nakamaye (\cite{nakamaye00}). In view of that, it seems that our work combined with the ideas of \cite{CMM14} should allow for extending Nakamaye's theorem to mixed characteristic. We plan to address this in \cite{witaszekinprogress}. 
\end{comment}
%\subsubsection*{Constructions of moduli spaces}

\subsubsection*{K-theory} It is natural to enquire if the constructions of this paper can be extended to a more general framework. In fact, motivated by some of the ideas contained here, one can provide results towards the study of mixed characteristic K-theory (\cite{aemw}). This, as well as Theorem \ref{thm:univ_homeo_and_Pic_intro} and Lemma \ref{lemma:main}, suggest to explore ind-objects of the form $\varinjlim_{\tilde F} W$, where $W$ is a (derived) $\mathbb{Z}_{p}$-stack endowed with a lift $\tilde F$ of Frobenius. Such objects are in some sense complementary to perfectoid spaces, which often come from a similar construction but with the direct limit replaced by the inverse limit. We hope this and other related problems, such as the behaviour of derived Brauer stacks under universal homeomorphisms in mixed charactersitic, to be addressed in a separate article.

\subsection{The idea of the proof of Theorem \ref{theorem:main_intro}}
The key components in the proofs of the main results are Theorem \ref{thm:general_pushouts_in_mixed_char_intro}, Theorem \ref{thm:univ_homeo_and_Pic_intro}, and the ``mixed characteristic multiplicative perfection''. In what follows we explain the last concept by giving a sketch of the proof of Theorem \ref{theorem:main_intro}. Note that \cite{BS19} defined a mixed characteristic perfection in the category of derived schemes; however, their theory seems useful for a different type of geometric applications (see e.g.\ \cite[Appendix]{MSTWW19}).

%We start by explaining the idea of the proof of Theorem \ref{theorem:main}, Theorem \ref{thm:kollar_quotients_exist}, and Theorem \ref{thm:quotients_algebraic_groups_intro} contingent upon Theorem \ref{thm:general_pushouts_in_mixed_char}.
%\subsubsection*{Proof of Theorem \ref{theorem:main}}
Using Keel's strategy and Theorem \ref{thm:univ_homeo_and_Pic}, we can deduce Theorem \ref{theorem:main_intro} from the following result. %In positive characteristic, a line bundle $M$ on a scheme $Z$ is semiample if and only if $L|_{Z^{\rm red}}$ is semiample where $Z^{\rm red}$ is the reduction of $Z$. This follows from the fact that the morphism $Z^{\rm red} \to Z$ factorises through $F^k$ for $k \gg 0$. Hence, Birkar's result implies Keel's theorem.
%Now, Theorem \ref{theorem:main} is a consequence of the following.
\begin{theorem} \label{thm:reduced_semiample} Let $L$ be a nef line bundle on a scheme $X$ projective over a Noetherian base scheme $S$. Then $L$ is semiample (or EWM) if and only if both $L|_{X^{\red}}$ and $L|_{X_{\mbQ}}$ are so, where $X^{\red}$ is the reduction of $X$.
\end{theorem}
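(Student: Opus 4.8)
The forward implication is immediate: the restriction of a semiample (resp.\ EWM) line bundle to any closed subscheme, in particular to $X^{\red}$ or to $X_{\mbQ}$, is again semiample (resp.\ EWM). All the content lies in the converse, and the point to overcome is the presence of nilpotents: the reduction map $i\colon X^{\red}\hookrightarrow X$ is a finite universal homeomorphism, but in mixed characteristic there is no global Frobenius available to ``perfect away'' the thickening. I would prove the EWM statement first and deduce semiampleness from it by gluing a line bundle. So assume $L|_{X^{\red}}$ and $L|_{X_{\mbQ}}$ are EWM, realised by proper contractions $h\colon X^{\red}\to V$ and $h'\colon X_{\mbQ}\to V'$; for the semiample case choose $m$ divisible enough that $L^{\otimes m}|_{X^{\red}}=h^{*}B$ and $L^{\otimes m}|_{X_{\mbQ}}=(h')^{*}B'$ with $B,B'$ ample. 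Since EWM and semiample models are canonical (the EWM map is unique, and the semiample fibration stabilises for large multiples), the two models agree over the overlap $X^{\red}_{\mbQ}=(X_{\mbQ})^{\red}$ --- here I use that $-\times_{\Spec\mbZ}\Spec\mbQ$ is a localisation and hence preserves reducedness. Recording this mutual compatibility of $h$, $h'$ and the induced contraction of $X^{\red}_{\mbQ}$ is the first step.

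Next I would build the target as a pushout. Applying Theorem \ref{thm:general_pushouts_in_mixed_char_intro} to the diagram $V\xleftarrow{h}X^{\red}\xrightarrow{i}X$, in which $h$ is projective (hence representable, quasi-compact and separated) and $i$ is a representable universal homeomorphism, produces a pushout $Z\coloneq V\sqcup_{X^{\red}}X$ together with the desired map $f\colon X\to Z$ and a universal homeomorphism $V\to Z$ (the pushout of $i$), identifying $V$ with $Z^{\red}$. The hypothesis of that theorem is the existence of the pushout over $\mbQ$, and here I would show that $V'$ \emph{is} the pushout of $V_{\mbQ}\xleftarrow{h_{\mbQ}}X^{\red}_{\mbQ}\to X_{\mbQ}$, using the compatibility of the previous paragraph together with the facts that $X^{\red}_{\mbQ}\to X_{\mbQ}$ is a nilpotent thickening and $h'$ is the canonical model of $X_{\mbQ}$. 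Because $i$ is a homeomorphism, $f$ is identified topologically with $h$, so it contracts precisely the integral curves $C$ with $L\cdot C=0$; checking properness and connectedness of fibres then yields that $L$ is EWM.

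For semiampleness it remains to produce an ample line bundle $A$ on $Z$ with $f^{*}A\cong L^{\otimes mk}$ for some $k$. Here I would invoke Theorem \ref{thm:univ_homeo_and_Pic}, working one residue characteristic $p$ at a time, for the finite universal homeomorphism $Z^{\red}=V\to Z$: the bundles $B$ on $V$ and $B'$ on $Z_{\mbQ}=V'$ agree on $(Z^{\red})_{\mbQ}$ by the compatibility above, so they glue to a line bundle $A$ on $Z$ after inverting $p$. As the square is Cartesian only in $\PicS[\tfrac1p]$, I obtain $A$ and the identity $f^{*}A\cong L^{\otimes m}$ only up to a $p$-power; but this is harmless, since semiampleness requires merely that \emph{some} multiple be the pullback of an ample bundle, and passing to $L^{\otimes mp^{e}}$ for suitable $e$ absorbs the discrepancy --- precisely the role of the ``multiplicative perfection''. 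Ampleness of $A$ then follows from ampleness of $A|_{Z^{\red}}=B$, since ampleness is invariant under the universal homeomorphism $Z^{\red}\to Z$, together with ampleness of $A|_{Z_{\mbQ}}=B'$.

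The main obstacle I anticipate is the construction and, above all, the \emph{identification over $\mbQ$} of the pushout: one must verify that the canonical characteristic-zero model $V'$ really is $V_{\mbQ}\sqcup_{X^{\red}_{\mbQ}}X_{\mbQ}$, so that Theorem \ref{thm:general_pushouts_in_mixed_char_intro} applies, that $Z$ is projective over $S$, and that $V\to Z$ is \emph{finite} rather than merely a universal homeomorphism, as needed to invoke Theorem \ref{thm:univ_homeo_and_Pic}. Establishing the canonicity and mutual compatibility of the EWM and semiample models on $X^{\red}$, $X_{\mbQ}$, and their common reduction, which underlies every gluing step, is the technical heart of the argument.
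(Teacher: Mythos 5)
Your EWM argument coincides with the paper's own proof: the paper forms the very same pushout of $V \xleftarrow{h} X^{\red} \xrightarrow{i} X$ via Theorem \ref{thm:general_pushouts_in_mixed_char}, verifies the hypothesis over $\mbQ$ exactly as you propose (the characteristic-zero model is a topological pushout because the induced map from $V_{\mbQ}$ to it is proper and bijective on geometric points, hence a finite universal homeomorphism), and settles your worry about properness and finite type of $Z$ by the Noetherian-approximation Lemma \ref{lem:fin_gen_pushouts} --- the raw pushout need not be Noetherian, so this step is genuinely needed. Finiteness of $V \to Z$ then comes for free: a representable universal homeomorphism is integral, and an integral morphism between spaces of finite type over a Noetherian base is finite.

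For semiampleness you take a genuinely different route from the published proof, but it is precisely the alternative the paper acknowledges in the remark following its proof (``one could also tackle the semiample case by Theorem \ref{thm:general_pushouts_in_mixed_char} and Corollary \ref{cor:pic_for_general_pushouts}''). The paper instead pushes out the polarised \emph{models}: it glues $Z \leftarrow Z_{\mbQ} \to Z'_{\mbQ}$ by Corollary \ref{cor:extending_uh}, applies Theorem \ref{thm:univ_homeo_and_Pic} to the finite universal homeomorphism $Z \to Z'$ to obtain an ample $A'$, and then lifts sections of $A'$ to $X$ through the Cartesian square of perfected sections (Proposition \ref{prop:sections_on_thickenings}) --- notably without ever constructing a morphism from $X$ to $Z'$. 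Your version builds the contraction $f \colon X \to Z$ first and descends the bundle to $Z$; this is more geometric and treats EWM and semiampleness uniformly, but it is exactly where your flagged caveats live, and two of your assertions need repair. First, $V$ need not equal $Z^{\red}$: the pushout map $V \to Z$ is a finite universal homeomorphism but need not be a closed immersion (already over $\mbQ$ the map $V_{\mbQ} \to V'$ can fail to be a thickening --- it can be a pinching-type map). This is harmless, because Theorem \ref{thm:univ_homeo_and_Pic} applies to any finite universal homeomorphism of Noetherian $\Zpp$-schemes, and ampleness descends along finite surjections (\stacksproj{0B5V}); it does not, however, follow from ampleness being ``invariant under universal homeomorphisms'' as you state. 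Second, $Z_{\mbQ}$ is not literally $V'$; the two agree only up to a finite universal homeomorphism, so $B'$ must be pulled back --- again costing nothing since pullback of ample along a finite morphism is ample. With these repairs your argument goes through; what the paper's published proof buys by comparison is that it sidesteps both issues and, in the author's words, shows better what is really happening: the sections of $L$ are produced directly by the multiplicative-perfection gluing rather than by pulling back along a contraction.
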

In fact, Birkar showed that {for varieties over a field} there exists a thickening $\mathbb{E}(L)^{\rm th}$ of $\mathbb{E}(L)$ such that $L$ is semiample if and only if $L|_{\mathbb{E}(L)^{\rm th}}$ is so (\cite[{Theorem 1.5}]{birkar17}). However, the main difficulty with applying this result in practice is that it is usually difficult to verify that a line bundle on a non-reduced scheme is semiample.

By localising at primes $p \in \Spec \mbZ$, we can assume that $X$ is defined over $\Zpp$. Let us explain the proof of Theorem \ref{thm:reduced_semiample} under the assumption that $X_{\mbQ} = \emptyset$ that is $X$ is defined over $\mbZ/p^m\mbZ$ for some $m>0$. Therewith, we claim that $\mcO_X \to \mcO_{X^{\red}}$ is an isomorphism up to raising the sections to the $p^n$-th power for some $n\gg 0$. In particular, when $k$ is divisible by high enough power of $p$ the same holds for 
\[
H^0(X, {L^k}) \to H^0(X^{\rm red}, {L^k|_{X^{\rm red}}}),
\]
and so $L$ is semiample if and only if $L|_{X^{\red}}$ is so (the idea is that we lift sections locally but then these different local lifts glue up to $p^n$-th power by the claim).

To prove the claim, we can work affine locally. Let $\pi \colon R \to R/I$ be a morphism of rings such that $I$ is a locally nilpotent ideal and $R[\frac{1}{p}]=0$, that is $p^m=0$ for some $m>0$. Since $\pi$ is clearly surjective, it is enough to check that it is injective up to raising the sections to some $p^n$-th power, that is:
\begin{center} \emph{for every $r_1, r_2 \in R$ such that $\pi(r_1)=\pi(r_2)$ we have $r_1^{p^n} = r_2^{p^n}$ for some $n>0$ depending on $r_1$ and $r_2$.}
\end{center}
The first condition stipulates that $r_2 = r_1 + t$ for some $t \in I$. Since $I$ is locally nilpotent, $t^{r+1}=0$ for some $r>0$, and so
\[
r_2^{p^n} = r_1^{p^n} + \sum_{i=1}^r {p^n \choose i} r_1^{p^n-i}t^i.
\]
By taking $n \gg 0$ we can assume that $p^m \mid {p^n \choose i}$ for $i\leq r$, and so $r_2^{p^n} = r_1^{p^n}$, concluding the proof of the claim and the theorem when $X_{\mbQ} = \emptyset$. \\

We can formalise the concept of the validity ``up to some $p^n$-th power'' by introducing a handy notion of a perfection of the sheaf $\mcO_X$; we set 
\[
\mcO^{\perf}_X \coloneq \varinjlim_{s \mapsto s^p} \mcO_X.
\]
Since the $p$-th power map is not additive, this object is only a sheaf of multiplicative monoids. 

Given a universal homeomorphism $f \colon X \to Y$ such that $f_{\mbQ}$ is an isomorphism, we show that $\mcO^{\perf}_Y \to f_*\mcO^{\perf}_X$ is an isomorphism (see Lemma \ref{lem:univ_homeo_iso_generic}), from which we infer the following.
\begin{lemma} \label{lemma:main} Let $f \colon X \to Y$ be a universal homeomorphism of schemes  over $\Zpp$. Then the following diagram is Cartesian
\begin{center}
\begin{tikzcd}
\mcO^{\perf}_Y \arrow{r}{f^*} \arrow{d} & \mcO^{\perf}_X \arrow{d} \\
\mcO^{\perf}_{Y_{\mbQ}} \arrow{r} & \mcO^{\perf}_{X_{\mbQ}}
\end{tikzcd}
\end{center}
{in the category of sheaves of monoids.}
\end{lemma}
\noindent This shows that in mixed characteristic the $p$-th power map behaves to some extent as if it was additive. %It would be interesting to understand better the connection between our multiplicative perfection and the derived mixed characteristic perfection of Bhatt and Scholze (\cite{BS19}).

Theorem \ref{thm:reduced_semiample} can be proved using Lemma \ref{lemma:main}, Theorem \ref{thm:general_pushouts_in_mixed_char_intro}, and Theorem~\ref{thm:univ_homeo_and_Pic_intro}. %(see also Remark \ref{rem:different_proof_of_main_thm}).

\section{Preliminaries} \label{s:preliminaries}
We refer to \cite{stacks-project} for basic definitions in scheme theory and to \cite{km98} for basic definitions in birational geometry (see also \cite{kollar13,tanaka16_excellent,ct17}). {We say that $(X,B)$ is a \emph{log pair} if $X$ is a normal excellent scheme of finite dimension admitting a dualising complex, $B$ is an effective $\mbQ$-divisor, and $K_X+B$ is $\mbQ$-Cartier. Given a scheme $X$ we write $X_{\mbQ} \coloneq X \times_{\Spec \mbZ} \Spec \mbQ$ and $X_{\mbF_p} \coloneq X \times_{\Spec \mbZ} \Spec \mbF_p$ for a prime number $p>0$. We say that a connected scheme $X$ is of \emph{mixed characteristic} if $X_{\mbQ} \neq \emptyset$ and $X_{\mbF_p} \neq \emptyset$ for some prime number $p>0$.} Note that every mixed characteristic Dedekind domain (Noetherian normal domain of dimension at most one) is excellent (\stacksproj{07QW}). {Furthermore, schemes of finite type over Noetherian (resp.\ excellent) base schemes are Noetherian (resp.\ excellent), \stacksproj{01T6} (resp.\ \stacksproj{07QU}), and hence quasi-compact and quasi-separated, \stacksproj{01OY,01T7}. If a scheme is excellent, then its normalisation is finite (\stacksproj{0BB5}). Recall that excellent schemes are Noetherian by definition.}

We say that a morphism of schemes $f \colon X \to Y$ is a \emph{contraction} if it is proper, surjective, and $f_* \mcO_X = \mcO_Y$. Let $X$ be a proper scheme over a Noetherian base scheme $S$, let $\pi \colon X \to S$ be the projection, and let $L$ be a line bundle on $X$. If the base scheme is fixed, we drop the prefix ``relatively'' when referring to notions below. We say that $L$ is relatively \emph{nef} if {$\mathrm{deg}(L|_C) \geq 0$} for every proper curve $C \subseteq X$ over $S$, 
it is relatively \emph{base point free} if the natural map $\pi^*\pi_*L \to L$ is surjective, it is relatively \emph{semiample} if some multiple of it is base point free, and it is relatively \emph{big} if {$L|_{X_{\eta}}$ is big for some generic point $\eta \in f(X)$ and the fibre $X_{\eta}$ over $\eta$} {(that is, $h^0(X_{\eta}, L^k|_{X_{\eta}}) > ck^{\dim X_{\eta}}$ for some constant $c$ and all $k$ divisible enough). The notion of bigness is subtle for non-irreducible schemes and so we will essentially use it only when $X$ is integral.}

{We make a few observations. By definition, $L$ is relatively nef if and only if $L|_{X_y}$ is nef for every closed point $y \in Y$ and the fibre $X_y$ over $y$. By \cite[Lemma 2.6]{ct17}, this is equivalent to $L|_{X_y}$ being nef for every point $y \in Y$. Semiampleness of $L$ may be verified Zariski locally (cf.\ \cite[Lemma 2.12]{ct17}). If $\pi$ is projective {and $X$ is integral}, then $L$ is relatively big if and only if  $\pi_*(L^{\otimes m} \otimes A^{-1}) \neq 0$ for a relatively ample line bundle $A$ and some $m>0$. %If $L$ is relative nef and $X$ is irreducible, then $L$ is relatively big if and only if $L^{\dim X_y}|_{X_y} > 0$ for every point (equivalently, closed point) $y \in Y$. (cf.\ \cite[{lazarsfeld04a})} \\

\begin{lemma} \label{lem:pullback-of-big} Let $f \colon X \to Y$ be a finite map of integral proper schemes over a Noetherian base scheme $S$. Let $L$ be a relatively nef line bundle on $Y$. Then $L$ is relatively big over $S$ if and only if $f^*L$ is relatively big over $S$.
\end{lemma}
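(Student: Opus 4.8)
The plan is to reduce the statement to the fibre over the generic point of the image in $S$, where it becomes the classical fact that bigness of a nef line bundle is detected by its top self-intersection number. First I would observe that a finite morphism is closed, so if $f$ is dominant then it is surjective; since the lemma only has content in this case (a non-dominant $f$ would factor through a proper closed subscheme of $Y$ and the two bigness conditions would be unrelated), I assume $f$ surjective. Then $\pi_X(X)=\pi_Y(f(X))=\pi_Y(Y)$, so $X$ and $Y$ have a common generic point $\eta$ of their image in $S$, and restricting to the fibre over $\eta$ we obtain a finite surjective morphism $f_\eta\colon X_\eta\to Y_\eta$ of integral proper schemes over the field $k\coloneq\kappa(\eta)$ (properness is preserved by base change, and the generic fibres of the integral schemes $X,Y$ over their images are integral). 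By the very definition of relative bigness recalled above, $L$ (resp.\ $f^*L$) is relatively big over $S$ if and only if $L_\eta\coloneq L|_{Y_\eta}$ (resp.\ $f_\eta^*L_\eta$) is big over $k$, so it suffices to prove the equivalence on the fibre.

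On the fibre I would argue as follows. Set $n=\dim Y_\eta=\dim X_\eta$ (equal since $f_\eta$ is finite surjective) and $r=\deg f_\eta=[\kappa(X_\eta):\kappa(Y_\eta)]>0$. Since $f_\eta$ is finite, the pullback $f_\eta^*L_\eta$ of the nef bundle $L_\eta$ is again nef: any integral curve $C\subseteq X_\eta$ maps finitely onto a curve in $Y_\eta$, and the projection formula gives $(f_\eta^*L_\eta\cdot C)=(\deg(C/f_\eta(C)))\,(L_\eta\cdot f_\eta(C))\ge 0$. The key input is the standard criterion that for a nef line bundle $M$ on an integral proper scheme of dimension $n$ over a field one has, by asymptotic Riemann--Roch, $h^0(M^{\otimes k})=\tfrac{M^{n}}{n!}k^{n}+o(k^{n})$, so that $M$ is big if and only if the top self-intersection $M^{n}$ is positive. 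Applying this to both $L_\eta$ and $f_\eta^*L_\eta$, and using the projection formula for intersection numbers under the finite surjective morphism $f_\eta$, namely $(f_\eta^*L_\eta)^{n}=\deg(f_\eta)\cdot L_\eta^{n}=r\,L_\eta^{n}$, we conclude $(f_\eta^*L_\eta)^{n}>0\iff L_\eta^{n}>0$, which is exactly the desired equivalence.

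The main obstacle is not the algebra but ensuring the two foundational facts hold in the required generality: the nef-and-big self-intersection criterion and the projection formula for intersection numbers on \emph{proper} (not a priori projective) integral schemes over a field. I would handle this either by invoking intersection theory for proper schemes directly, or, to stay safe, by applying Chow's lemma to dominate $Y_\eta$ and $X_\eta$ by projective integral schemes, where both facts are entirely standard; bigness, intersection numbers, and the hypothesis of nefness are all preserved under such generically finite modifications, so no leading-order information is lost. One direction is in fact elementary and nef-free: the inclusion $\mcO_{Y_\eta}\hookrightarrow f_{\eta*}\mcO_{X_\eta}$ together with the projection formula yields $h^0(Y_\eta,L_\eta^{\otimes k})\le h^0(X_\eta,(f_\eta^*L_\eta)^{\otimes k})$, so bigness of $L_\eta$ immediately implies bigness of $f_\eta^*L_\eta$; it is the reverse implication that genuinely requires the nef hypothesis, which is why it appears in the statement.
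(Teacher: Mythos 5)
Your proof is correct and takes essentially the same route as the paper's: both restrict to the fibre over the generic point of the image in $S$, invoke the criterion that a nef line bundle on an integral proper scheme over a field is big if and only if its top self-intersection is positive (the paper cites \cite[Theorem 2.2.16]{lazarsfeld04a}), and conclude from the projection formula $(f^*L)^{\dim X} = (\deg f)\,L^{\dim Y}$. Your explicit treatment of surjectivity — which the paper leaves implicit but which is genuinely needed for the ``$f^*L$ big $\Rightarrow$ $L$ big'' direction — is a sensible supplement, not a different argument.
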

\begin{proof}
By restricting to the generic point of the image of $X$ (and $Y$) in $S$, we may assume that $X$ and $Y$ are defined over a field.
Since $Y$ is irreducible, $L$ is big if and only if $L^{\dim Y}>0$ (cf.\ \cite[Theorem 2.2.16]{lazarsfeld04a}), and analogously for $f^*L$. Since $(f^*L)^{\dim X} = (\deg f)L^{\dim Y}$, the lemma follows.
\end{proof}
}

Further, following \cite{keel99} (cf.\ \cite[p.7]{ct17}), we say that $L$ is relatively \emph{EWM} (\emph{endowed with a map}) if there exists a proper $S$-morphism $f \colon X \to Y$ to an algebraic space $Y$ proper over $S$  such that {an integral closed subscheme $V \subseteq X$ is contracted (that is, $\dim V < \dim f(V)$) if and only if $L|_V$ is not relatively big. In particular, $f$ satifies this condition if and only if its restriction to $X_y$ satisfies this condition for every point $y \in Y$.}

%any a proper curve $C \subseteq X$ over $S$ is contracted by $f$ if and only if {$\mathrm{deg}(L|_C) \geq 0$}. 

Note that the property of $L$ being EWM can be checked affine locally on $S$. Indeed, given a surjective contraction $f \colon X \to Y$ and a morphism $h \colon X \to Z$ contracting all the geometric fibres of $f$, where $X$, $Y$, $Z$ are proper algebraic spaces over $S$, there exists a unique map $g \colon Y \to Z$ such that $g \circ f = h$ (cf.\ the proof of Proposition \ref{thm:kollar_quotients_exist}). In particular, if $f \colon X \to Y$ is a {contraction} associated to $L$, then $f$ and $Y$ are uniquely determined up to a canonical isomorphism, so any local constructions of maps associated to a line bundle must glue. {The same argument shows that} the Stein factorisation of $f$ is unique.

{For a relatively nef line bundle $L$ on $X$ as above, we define $\mathbb{E}(L)$ to be the union of all closed integral subschemes $V \subseteq X$ such that $L|_V$ is not relatively big over $S$.
\begin{lemma} {Let $X$ be a projective scheme over a Noetherian base scheme $S$.} Then $\mathbb{E}(L)$ is a closed {subset} of $X$.
\end{lemma}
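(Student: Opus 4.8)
The plan is to argue by Noetherian induction on the closed subschemes of $X$, the point being that each inductive step confines every subvariety on which $L$ fails to be big to a proper closed subset.

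First I would reduce to the case where $X$ is integral. Since $\mathbb{E}(L)$ is unchanged by passing to $X^{\red}$ (every integral $V$ is reduced, and relative bigness of $L|_V$ depends only on the reduced structure), and since for reducible $X$ every integral $V$ lies in a single irreducible component $X_i$, so that $\mathbb{E}(L) = \bigcup_i \mathbb{E}(L|_{X_i})$ is a finite union of closed sets, it suffices to treat $X$ integral. Replacing $S$ by the integral scheme $\overline{\pi(X)}$ with its reduced structure — which affects neither projectivity nor relative bigness, the latter being tested on generic points of images — I may assume $\pi \colon X \to S$ is dominant with $S$ integral; let $\eta$ be the generic point of $S$.

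The heart of the argument is a dichotomy on the generic fibre $X_\eta$, which is integral and projective over the field $\kappa(\eta)$. If $L|_{X_\eta}$ is not big, then $L$ is not relatively big, so $X \subseteq \mathbb{E}(L)$ and $\mathbb{E}(L) = X$ is closed. Otherwise $L|_{X_\eta}$ is big, and I would exploit effectivity. Fixing a relatively ample $A$, there is $m > 0$ with $\pi_*(L^{\otimes m} \otimes A^{-1}) \neq 0$, so after shrinking to an open $U \ni \eta$ there is a nonzero section $s$ of $L^{\otimes m} \otimes A^{-1}$ over $\pi^{-1}(U)$. Shrinking $U$ further, I may assume $s$ does not vanish identically on any fibre $X_u$, $u \in U$ (the locus of such $u$ is closed and omits $\eta$). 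Let $E = \Zeros(s) \subsetneq \pi^{-1}(U)$ and let $\bar E$ be its closure in $X$; since $E$ contains no whole fibre over $U$, the set $\bar E$ does not contain the generic fibre and is therefore a proper closed subset. The key containment is now this: for any integral $V$ meeting $\pi^{-1}(U)$, the generic point $\zeta$ of the image of $V$ in $S$ lies in $U$, and if $V \not\subseteq \bar E$ then $s|_{V_\zeta} \neq 0$, which exhibits a section of $(L^{\otimes m} \otimes A^{-1})|_{V_\zeta}$ and shows that $L|_{V_\zeta}$, hence $L|_V$, is relatively big. Consequently every integral $V$ with $L|_V$ not relatively big lies either in $\bar E$, or, if it misses $\pi^{-1}(U)$, in $X_T \coloneq \pi^{-1}(S \setminus U)$. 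Writing $W = \bar E \cup X_T$, which is a proper closed subset of $X$ because $X$ dominates $S$, I obtain $\mathbb{E}(L) \subseteq W$ and therefore $\mathbb{E}(L) = \mathbb{E}(L|_W)$. Since $W \subsetneq X$ and $W$ is projective over $S$, Noetherian induction concludes the proof.

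I expect the main obstacle to be the relative bookkeeping in the middle step: spreading the generic-fibre section out to an open $U \subseteq S$ and arranging that it remains fibrewise nonzero there, together with the care needed to classify subvarieties according to whether their image meets $U$. The purely field-theoretic core — that bigness produces an effective divisor avoiding which forces a subvariety to be big — is immediate from the definition of bigness; the real work lies in making this uniform over the base.
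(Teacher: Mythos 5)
Your proposal is correct and follows essentially the same route as the paper's own proof (which adapts Cascini--Tanaka's Lemma 2.18): Noetherian induction, where the inductive step writes $L^{\otimes m} \simeq A \otimes E$ with $A$ relatively ample and $E$ admitting a nonzero section $s$, and observes that every integral subvariety not contained in $\Zeros(s)$ retains relative bigness, so that $\mathbb{E}(L)$ is trapped in a proper closed subset to which the induction hypothesis applies. The only differences are bookkeeping: the paper localizes to affine $S$ and isolates a single big irreducible component (lumping the rest into the inductive locus), whereas you reduce to $X$ integral, spread a generic-fibre section over an open $U \ni \eta$, and add $\pi^{-1}(S \setminus U)$ to the inductive locus --- both implementations of the same idea.
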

\noindent {In particular, this endows $\mathbb{E}(L)$  with a scheme structure of a reduced subscheme of $X$.}
\begin{proof}
The proof is exactly the same as in \cite[Lemma 2.18]{ct17}. We may assume that $X$ is reduced and, since the problem is local, that $S$ is affine. If $\mathbb{E}(L)=X$, then there is nothing to prove. Thus, we may assume that there exists an irreducible component $X' \subseteq X$ such that $L|_{X'}$ is big. Hence, there exists a relatively ample line bundle $A$ and a line bundle $E$ on $X'$ such that $H^0(X',E) \neq 0$ and $L^{\otimes m}|_{X'} \simeq A \otimes E$. Let $Z$ be a reduced closed subscheme equal to the reduction of the zero set of a section $0 \neq s \in H^0(X',E)$. Then $\mathbb{E}(L) \subseteq Z \cup X''$, where $X = X' \cup X''$ and $X' \not \subseteq X''$. Indeed, if $V \subseteq X'$ is a closed integral subscheme of $X'$ such that $V \not \subseteq Z$, then $L^{\otimes m}|_V \simeq A|_V \otimes E|_V$ is big as $s|_V \in H^0(V, E|_V)$ is non-zero.

In particular, $\mathbb{E}(L) = \mathbb{E}(L|_{Z \cup X''})$ is closed by Noetherian induction.   
\end{proof}}

 % {\begin{remark}
 % In the first version for this article, we defined EWM by only requiring that a proper curve $C \subseteq X$ over $S$ is contracted by $f$ exactly when $\mathrm{deg}(L|_C) = 0$. This old definition was not equivalent to Keel's definition of EWM (as stated above): indeed, take a strictly nef (positive on every curve) but not big line bundle $L$ on a variety over a field; then $L$ is EWM (with $f = \mathrm{id}$) according to the old definition, but it is not EWM according to the old one. However, since, in the meantime, it turned out that for applications to the Minimal Model Program one does need the stronger of these notions, we decide to adjust our definition of EWM. This does not change any of our proofs (except for a small modification in Corollary \ref{cor:bpf}). Note that, up to taking the Stein factorisation, the morphism $f$ is uniquely determined by the set of curves $C \subseteq X_y$ over all points $y \in Y$ such that $\mathrm{deg}(L|_C) = 0$. Our original proofs of Theorem \ref{theorem:main}, Theorem \ref{thm:reduced_semiample}, Corollary \ref{cor:bpf}, Proposition \ref{prop:gluing}, Proposition \ref{prop:gluing_normalisation}, Proposition \ref{prop:gluing_residue_field_finite} showed the existence of $f$ satisfying this property. In each case, the fact that such an $f$ is an EWM fibration according to the new definition is almost automatic.        
 % \end{remark}}

%In fact, if we replace $f$ by its Stein factorisation, given another map $h \colon X \to Z$ to a separated schemes of finite type over $S$ which contracts  

Let us recall the following pinching result.
\begin{theorem}[{\cite[Theorem 3.1]{Artin70} and \cite[Theorem 38]{kollar12}}] \label{thm:Kollar_pinching} Let $X$ be a Noetherian algebraic space over a Noetherian base scheme $S$, let $Z \subseteq X$ be a closed subspace, and let $g \colon Z \to V$ be a finite surjection {of Noetherian algebraic spaces over $S$.} Then there exists a universal pushout diagram of algebraic spaces 
\begin{center}
\begin{tikzcd}
Z  \arrow[hookrightarrow]{r} \arrow{d}{g} & X \arrow{d}{\pi}  \\
V  \arrow[hookrightarrow]{r} & Y,
\end{tikzcd}
\end{center}  
such that $Y$ is a Noetherian algebraic space over $S$, $V \to Y$ is a closed embedding, and $Z = \pi^{-1}(V)$. Further, the diagram is a pushout square on the level of topological spaces and $\pi$ is a finite map which is an isomorphism over $Y \setminus V$. If $X$, $Z$, and $V$ are of finite type over $S$, then so is $Y$.
\end{theorem}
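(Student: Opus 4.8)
The plan is to reduce to an explicit affine construction (the conductor square) and then globalize by descent. I would first treat the affine case, where $X = \Spec A$, the closed subspace $Z = \Spec(A/I)$ is cut out by an ideal $I \subseteq A$, and the finite surjection $g$ is induced by a finite ring map $g^{\#} \colon B \to A/I$. Here I would set $R \coloneq A \times_{A/I} B$, the fibre product of $A \twoheadrightarrow A/I \xleftarrow{g^{\#}} B$, and take $Y \coloneq \Spec R$, with $\pi \colon X \to Y$ induced by the projection $R \to A$ and $V \to Y$ induced by $R \to B$. The universal property is then immediate: for affine $T = \Spec C$, a pair of morphisms $X \to T$, $V \to T$ agreeing on $Z$ corresponds to a pair of ring maps $C \to A$, $C \to B$ equalising into $A/I$, that is, to a single map $C \to R$; this is exactly the defining property of the fibre product, and the general pushout property follows from the gluing established below.

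Next I would verify the stated local properties, following Ferrand. Since $A \to A/I$ is surjective, every element of $B$ lifts to $R$, so $R \to B$ is surjective and $V \hookrightarrow Y$ is a closed immersion cut out by $J \coloneq \ker(R \to B) = \{(a,0) : a \in I\} \cong I$. The image of $J$ in $A$ is $I$, so $\pi^{-1}(V) = \Spec(A/I) = Z$. From the exact sequence
\[
0 \to R \to A \oplus B \xrightarrow{(a,b)\mapsto \bar a - g^{\#}(b)} A/I \to 0,
\]
together with the finiteness of $A/I$ over $B$ and the surjectivity of $R \to B$, one reads off that $A$ is module-finite over $R$, so $\pi$ is finite. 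Both $\ker(R \to A) \cong \ker(g^{\#})$ and $\mathrm{coker}(R \to A) \cong (A/I)/\mathrm{im}(g^{\#})$ are annihilated by $J$, hence supported on $V$, which shows that $\pi$ is an isomorphism over $Y \setminus V$. That the square is a pushout on underlying topological spaces is again Ferrand's computation of $\Spec R$.

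Finally I would globalize. For schemes the construction is local on the target and compatible with the flat (in particular étale) base changes and localizations needed for gluing, so the affine conductor squares glue to a scheme $Y$. For algebraic spaces I would pass to étale presentations: choosing an étale cover of $V$ and compatible étale charts of $X$ and $Z$ reduces the construction to the affine case étale-locally on the prospective $Y$, and the local pushouts descend because forming $R = A \times_{A/I} B$ commutes with étale base change on the target. Representability of the glued object as an algebraic space is precisely Artin's contraction theorem; alternatively one verifies Artin's axioms directly for the pushout functor, and universality (stability of the square under the relevant base change) is part of the same flat-base-change compatibility. Preservation of finiteness in the last assertion follows from the Artin--Tate lemma, since $A$ is module-finite over $R$ and of finite type over $S$; Noetherianity of $Y$ follows from Eakin--Nagata.

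The genuinely hard part is this globalization in the algebraic-space setting. Unlike limits, pushouts do not glue formally, so the crux is to show that the étale-local conductor-square constructions are compatible along the étale equivalence relation and descend to a global algebraic space---equivalently, that forming $A \times_{A/I} B$ commutes with the étale base changes relating the charts. This flat-base-change compatibility is exactly what Ferrand establishes for affine morphisms and what Artin's representability theorem packages for algebraic spaces; once it is in hand, all the remaining assertions are either formal or direct computations with the fibre-product ring.
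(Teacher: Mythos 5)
Your affine construction is correct and is exactly Ferrand's conductor square: $Y=\Spec(A\times_{A/I}B)$ does have all the stated properties, and this matches what the paper implicitly relies on (the Stacks Project tag 0E25 material). Note, however, that the paper does not reprove existence at all: its proof consists of citing Koll\'ar's Theorem~38 for the existence of the pushout as an algebraic space, and then checking the remaining assertions (topological pushout, isomorphism over $Y\setminus V$, finite type) after an \'etale base change of $Y$, where everything reduces to the affine/scheme case. So the entire mathematical content you need to supply beyond the affine computation is precisely the globalization --- and that is where your argument has a genuine gap.

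First, your claim that for schemes ``the affine conductor squares glue to a scheme $Y$'' is false in general. The construction is local on the target $Y$, but $Y$ is what you are trying to build: an affine chart of $Y$ amounts to a pair of affine opens $U\subseteq X$, $W\subseteq V$ with $U\cap Z=g^{-1}(W)$, and for a point $v\in V$ one must find an affine open $U\subseteq X$ containing the whole finite fibre $g^{-1}(v)$ with this compatibility. Such opens need not exist unless $X$ satisfies something like the Chevalley--Kleiman property; this is exactly why pinchings and quotients by finite equivalence relations fail to exist in the category of schemes (the paper makes this point at the start of Subsection~2.2) and why the theorem is stated for algebraic spaces. Second, your algebraic-space argument is circular: you propose to work ``\'etale-locally on the prospective $Y$,'' but until $Y$ exists there is no \'etale site of $Y$ to localize on. What one actually has to do is produce an \'etale equivalence relation presenting $Y$, i.e.\ \'etale charts of $X$ and of $V$ that are compatible over $Z$; since $Z\to V$ is finite rather than \'etale, charts of $V$ do not pull back to charts of $X$, and manufacturing compatible ones is the real content of Koll\'ar's Theorem~38 (and of Artin's contraction theorem, whose hypotheses you would have to verify rather than invoke by name). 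Flat-base-change compatibility of $A\times_{A/I}B$, which you do state correctly, only helps once such a presentation is in hand; it does not produce it. As written, your proof either silently assumes the conclusion (when you say ``representability \ldots is precisely Artin's contraction theorem'') or rests on the false scheme-gluing claim, so the existence statement remains unproved.
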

\begin{proof}
The pushout exists by \cite[Theorem 38]{kollar12}. After an \'etale base change of $Y$, we can assume that the spaces in question are schemes, and so the diagram is a topological pushout and $\pi$ is an isomorphism over $Y \setminus V$ by \stacksproj{0E25}. Further, $Y$ is of finite type, provided that so are $X$,$Z$, and $V$, by \stacksproj{0E27}.% The fact that this diagram is a topological pushout and $\pi$ is a finite birational map, which is an isomorphism over $Y \setminus V$, is a direct consequence of the proof.  The assumption on the injectivity of $\mcO_V \to g_* \mcO_Z$ is needed to invoke the Eakin-Nagata theorem (cf. Lemma \ref{lem:Eakin-Nagata}).
\end{proof}

\subsection{Universal homeomorphisms}
We say that a morphism of schemes {(or algebraic spaces)} $f \colon X \to Y$ is a \emph{universal homeomorphism}, if all of its base changes are homeomorphisms. In the case of schemes, this is equivalent, by \stacksproj{04DF}, to being integral, universally injective, and surjective. However, for algebraic spaces a universal homeomorphism need not be integral (e.g.\ $\mathbb{A}^1/\{x\,{\sim}\, {-}x \,|\, x \neq 0\} \to \mathbb{A}^1/\{x\,{\sim}\, {-}x\} \simeq \mathbb{A}^1$, cf.\ \stacksproj{05Z6}). In this setting, being integral, universally injective, and surjective is equivalent to being a representable universal homeomorphism. 

To verify that a {representable} universally closed (for example, integral) morphism $f$ of schemes {or algebraic spaces} is a universal homeomorphism it is enough to check that $\mathrm{Mor}(\Spec K, X) \to \mathrm{Mor}(\Spec K, Y)$ is a bijection for every algebraically closed field $K$ (cf.\ \stacksproj{01S4 and 03MH}). %We say that a morphism of rings $A \to B$ is a \emph{universal homeomorphism} if the induced $\Spec B \to \Spec A$ is so.  

\begin{comment}
\begin{proposition} \label{prop:universal_homeo_is_radical} A morphism of schemes $f \colon X \to Y$ is a universal homeomorphism if and only if it is an integral homeomorphism such that for every $x \in X$ the induced morphism of residue fields $\kappa(f(x)) \to \kappa(x)$ is purely inseparable.
\end{proposition}
\begin{proof}
This follows from \stacksproj{04DF} and \stacksproj{01S4}.
\end{proof}
Thus, if $f$ is a universal homeomorphism over a field of characteristic zero, then it induces isomorphisms on residue fields.
\end{comment}

We call an extension of rings $A \subseteq B$ \emph{elementary}, if there exists $b \in B$ such that $A[b] = B$ and $b^2,b^3 \in A$. The following proposition states that universal homeomorphisms in characteristic zero decompose into thickenings and elementary extensions. Indeed, when $f \colon A \to B$ is a universal homeomorphism of {$\mathbb{Q}$-algebras}, then $f$ automatically induces isomorphisms on residue fields {(by base change this reduces to checking that a finite extension of characteristic zero fields $\Spec K \to \Spec L$ is a universal homeomorphism if and only if $K\simeq L$; this follows from the fact that $K \otimes_L K = K^{\oplus \deg L/K}$ as the fields are of characteristic zero)}. 
\begin{proposition}[\stacksproj{0CND}] \label{prop:0CND} \label{prop:elementary_extensions} An extension of rings $A \subseteq B$ is a universal homeomorphism inducing isomorphisms on residue fields if and only if every finite subset $E \subseteq B$ is contained in an extension $A[b_1,\ldots,b_k] \subseteq B$ such that for every $1 \leq i \leq k$ we have that $b_i^2,b_i^3 \in A[b_1,\ldots,b_{i-1}]$.
\end{proposition}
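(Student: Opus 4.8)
The plan is to prove both implications by treating the elementary extensions $A \subseteq A[b]$ (with $b^2, b^3 \in A$) as the basic building blocks, and the general case as a filtered colimit of finite compositions of them. The class of maps ``universal homeomorphism inducing isomorphisms on residue fields'' should be shown stable under composition and filtered colimits, which gives $(\Leftarrow)$, while for $(\Rightarrow)$ one must manufacture elementary subextensions by hand.

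\textbf{The direction $(\Leftarrow)$.} First I would check that a single elementary extension $A \subseteq A[b]$, with $b^2 = a_2 \in A$ and $b^3 = a_3 \in A$, has the asserted property. Since $A[b] = A + Ab$ is finite, hence integral, over $A$, by \stacksproj{04DF} it remains to verify universal injectivity, surjectivity, and triviality of residue extensions. Here I would exploit the relations $a_3 = b a_2$ and $a_3^2 = a_2^3$, which already hold in $A[b]$, to compute each fibre $A[b] \otimes_A \kappa(\mathfrak{p})$: when $a_2 \notin \mathfrak{p}$ the element $b$ is forced to equal $a_3/a_2$ and the fibre is $\kappa(\mathfrak{p})$, whereas when $a_2 \in \mathfrak{p}$ one also has $a_3 \in \mathfrak{p}$ and the fibre is $\kappa(\mathfrak{p})[b]/(b^2)$; in both cases it is local with a single point and residue field $\kappa(\mathfrak{p})$. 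Surjectivity is lying over (integral plus $A \hookrightarrow A[b]$), so by the criterion recalled in the excerpt (\stacksproj{01S4}) the claim follows. The property then passes to finite compositions, and to filtered colimits: writing $B = \varinjlim_E B_E$ over the subalgebras $B_E = A[b_1,\dots,b_k]$ supplied by the hypothesis, integrality is elementwise, surjectivity is again lying over, and for the fibres I would use that reduction commutes with filtered colimits, so $(B \otimes_A \kappa(\mathfrak{p}))_{\red} = \varinjlim \kappa(\mathfrak{p}) = \kappa(\mathfrak{p})$; a ring whose reduction is a field is local with one point and that residue field.

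\textbf{The direction $(\Rightarrow)$, reductions.} Since every element of $B$ is integral over $A$, any finite subset $E$ lies in a finite $A$-subalgebra $A[E] \subseteq B$, and every intermediate extension $A \subseteq C \subseteq B$ again satisfies the hypothesis: it is integral, bijective on $K$-points for each algebraically closed field $K$, and isomorphic on residue fields, so \stacksproj{01S4} applies. Hence it suffices to show that a finite extension $A \subseteq B$ of the given type is itself an elementary tower. This I would do by iterating the \emph{key claim}: if $A \subsetneq B$ is a finite universal homeomorphism inducing isomorphisms on residue fields, then there exists $b \in B \setminus A$ with $b^2, b^3 \in A$. Granting the claim, one peels off $A \subsetneq A[b]$ and repeats on $A[b] \subseteq B$, obtaining a strictly increasing chain of $A$-subalgebras which stabilises at $B$ by the ascending chain condition (after reducing $A$ to the Noetherian case).

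\textbf{The key claim and the main obstacle.} For $(A,\mathfrak{m})$ local Artinian the claim is transparent: then $B$ is local Artinian with maximal ideal $\mathfrak{n}$ and $B = A + \mathfrak{n}$, so $\mathfrak{n} \not\subseteq A$; taking the largest $s \geq 1$ with $\mathfrak{n}^s \not\subseteq A$ and any $b \in \mathfrak{n}^s \setminus A$, nilpotence forces $b^2 \in \mathfrak{n}^{2s} \subseteq \mathfrak{n}^{s+1} \subseteq A$ and likewise $b^3 \in A$. The difficulty, and what I expect to be the crux, is to globalise this: producing a \emph{single} element $b \in B \setminus A$ with $b^2, b^3 \in A$ out of its local or fibrewise versions is precisely the kind of gluing that the body of this paper is built to address. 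I would attempt it by first reducing to $A$ excellent via Noetherian approximation (the property, and elementary towers, being of finite-presentation type), and then by Noetherian induction on the support $V(\mathfrak{c})$ of the conductor $\mathfrak{c} = (A :_A B)$: apply the Artinian construction at a generic point of $V(\mathfrak{c})$, spread the element out over an open neighbourhood, and descend it through the conductor (Milnor) square so as to shrink the locus where $A \neq B$ and recurse. Guaranteeing that the locally built $b$ extends to a genuine global section still satisfying $b^2, b^3 \in A$ is the step demanding the most care.
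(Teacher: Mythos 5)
First, a point of comparison: the paper does not prove this proposition at all --- it is quoted from the Stacks Project, \stacksproj{0CND} --- so your attempt can only be measured against that proof. Your direction $(\Leftarrow)$ is correct and is the standard argument: the fibre computation for an elementary extension $A \subseteq A[b]$ with $b^2,b^3 \in A$ (one-point fibres with residue field $\kappa(\mathfrak{p})$, using the relation $a_3^2=a_2^3$), followed by stability under composition and filtered colimits. Moreover, your ``key claim'' for $(\Rightarrow)$ --- if $A \subsetneq B$ is such an extension then some $b \in B\setminus A$ satisfies $b^2,b^3\in A$ --- is precisely the preparatory lemma \stacksproj{0CNC}, and your Artinian-local verification of it is correct.

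The genuine gap is that the key claim is never proved beyond the Artinian local case, and the globalisation you sketch would not work as stated. Noetherian approximation is not available here: $A \subseteq B$ is finite but in general not of finite presentation, and you give no argument that the hypothesis descends to a finitely generated model of the pair, so ``reduce to $A$ excellent'' is unjustified. Likewise, at a generic point $\mathfrak{p}$ of $V(\mathfrak{c})$ the local ring $A_{\mathfrak{p}}$ is not Artinian, so ``apply the Artinian construction'' there is not meaningful as written, and the spreading-out/Milnor-square recursion is left entirely open (as you concede). What actually makes the claim true is a purely local, elementary argument requiring no excellence and no induction: reduce to $B=A[x]$ (intermediate extensions inherit the hypothesis, as you note), so $B/A$ is a finite $A$-module; let $\mathfrak{c}=\mathrm{Ann}_A(B/A)$ and pick $\mathfrak{p}$ minimal over $\mathfrak{c}$, so that $A_{\mathfrak{p}}\neq B_{\mathfrak{p}}$. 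Since $\mathfrak{c}$ is an ideal of $B$ as well as of $A$, one may pass to $\bar{A}=A_{\mathfrak{p}}/\mathfrak{c}_{\mathfrak{p}} \subseteq \bar{B}=B_{\mathfrak{p}}/\mathfrak{c}_{\mathfrak{p}}$; this is a base change of the original extension in which both rings have a unique prime ideal, so the maximal ideal $\bar{\mathfrak{q}}$ of $\bar{B}$ consists of nilpotents, and $\bar{B}=\bar{A}+\bar{\mathfrak{q}}$ by the residue-field isomorphism. Now your last-power trick applies verbatim: choose $\bar{b}\in\bar{\mathfrak{q}}\setminus\bar{A}$, whose powers are eventually zero, and replace it by its last power not lying in $\bar{A}$. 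Lifting back to $B_{\mathfrak{p}}$ costs nothing because $\mathfrak{c}_{\mathfrak{p}}\subseteq A_{\mathfrak{p}}$, and globalising is a one-line clearing of denominators: if $c\in B$ has image outside $A_{\mathfrak{p}}$, then $uc\notin A$ for every $u\in A\setminus\mathfrak{p}$, while suitable such multiples force $(uc)^2,(uc)^3\in A$.

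There is a second, smaller gap: your iteration of the key claim needs the chain $A \subsetneq A[b_1] \subsetneq \cdots$ to terminate, which you again delegate to an unproved Noetherian reduction. This is avoidable: a concatenation of two admissible towers is again an admissible tower, so the union $B'$ of all towers is an $A$-subalgebra of $B$; if $B'\neq B$, the key claim applied to the intermediate extension $B'\subseteq B$ gives $b\notin B'$ with $b^2,b^3\in B'$, hence both in a single tower, and that tower extended by $b$ shows $b\in B'$, a contradiction. With these two repairs your outline becomes, in essence, the Stacks Project proof.
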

\begin{proposition}[\stacksproj{0CNE}] \label{prop:OCNE} \label{prop:universal_homeomorphisms_affine_extensions}An extension of rings $A \subseteq B$ is a universal homeomorphism if and only if every finite subset $E \subseteq B$ is contained in an extension $A[b_1,\ldots,b_k] \subseteq B$ such that for every $1 \leq i \leq k$ we have that $b_i^2,b_i^3 \in A[b_1,\ldots,b_{i-1}]$ or $pb_i,b_i^p \in A[b_1,\ldots,b_{i-1}]$ for some prime number $p$, {which depends on $i$}.
\end{proposition}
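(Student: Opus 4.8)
The plan is to prove both implications from the characterisation (\stacksproj{04DF}) that a ring extension $A \subseteq B$ is a universal homeomorphism precisely when it is integral, surjective, and universally injective (radicial), using that each of these three properties is closed under composition and stable under filtered unions of subalgebras. I will also use repeatedly that any intermediate extension $A \subseteq C \subseteq B$ of a universal homeomorphism is again one: integrality and surjectivity are inherited from $A \subseteq C$ and $C \subseteq B$, while radicialness of $A \subseteq C$ follows because $\Spec B \to \Spec A$ is radicial and factors through the surjection $\Spec B \to \Spec C$.

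For the ``if'' direction, I would write $B$ as the filtered union of the subrings $A[b_1,\dots,b_k]$ provided by the hypothesis; by stability under filtered unions it suffices to treat a single such subring, and by composition along the tower $A \subseteq A[b_1] \subseteq \cdots \subseteq A[b_1,\dots,b_k]$ it suffices to treat one step $C \subseteq C[b]$ with either $b^2,b^3 \in C$ or $pb,b^p \in C$. Such a step is integral ($b$ is a root of the monic $T^2-b^2$, respectively $T^p-b^p$), hence surjective, and radicialness I would check on fibres. In the elementary case the identity $b\cdot b^2 = b^3$ shows that over a prime with residue field $\kappa$ either $\bar b \in \kappa$ (when $\overline{b^2}\neq 0$) or $b$ is nilpotent and the whole fibre is $\kappa[b]/(b^2)$, so the fibre is a single point with residue field $\kappa$. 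In the Frobenius case, in residue characteristic $p$ the relation $pb\in C$ degenerates and the fibre $\kappa[T]/(T^p-\overline{b^p})$ is geometrically a single purely inseparable point, whereas in residue characteristic different from $p$ that same relation forces $\bar b \in \kappa$; either way the fibre is a single radicial point. This fibre dichotomy is exactly what the auxiliary relation $pb\in C$ buys.

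For the ``only if'' direction, given a universal homeomorphism $A \subseteq B$ and a finite subset $E$, I would replace $B$ by the finite $A$-algebra $A[E]$ (still a universal homeomorphism by the remarks above) and then produce generators of $B$ forming a chain of elementary and Frobenius steps. Concatenating along $A \subseteq A[b_1] \subseteq \cdots$ reduces to a single generator $B = A[b]$, and I would induct on the length of $B$ as a finite $A$-module. If $A \subseteq B$ induces isomorphisms on all residue fields, Proposition \ref{prop:0CND} already realises it through elementary steps. Otherwise some residue field extension is nontrivial, hence purely inseparable of some characteristic $p$, and the crux is to peel off one Frobenius layer: to construct $b' \in B$ with $A \subseteq A[b']$ a Frobenius step (${b'}^{p}, pb' \in A$) and $A[b'] \subseteq B$ a universal homeomorphism of strictly smaller module length, whereupon induction closes the argument.

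I expect this peeling step to be the main obstacle, and precisely because of mixed characteristic: one must select the prime $p$ compatibly along $\Spec A$ and simultaneously arrange ${b'}^{p} \in A$, which governs the inseparable residue behaviour in characteristic $p$, and $pb' \in A$, which forces the layer to become trivial after inverting $p$ over the characteristic-zero locus, while also checking that $A[b'] \subseteq B$ stays radicial. Once this mixed-characteristic bookkeeping is carried out, the two implications close as above.
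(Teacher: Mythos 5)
First, a point of reference: the paper does not prove this proposition at all — it is quoted from the Stacks Project \stacksproj{0CNE} and used as a black box (most notably in the proof of Proposition \ref{prop:extending_universal_homeo_algebras}) — so the comparison is between your argument and the actual content of that tag. Your ``if'' direction is correct and complete in outline: writing $B$ as the filtered union of the prescribed subextensions, composing along the tower, and checking that each one-generator step $C \subseteq C[b]$ is integral, injective (hence surjective on spectra by lying over), and has radicial fibres — via the dichotomy $\overline{b^2} \neq 0$ versus $b$ nilpotent in the elementary case, and residue characteristic $p$ versus $\neq p$ in the Frobenius case — is a sound proof of that implication; the filtered-union step for universal injectivity is immediate from the field-valued-points criterion.

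The ``only if'' direction, however, contains a genuine gap, and you have identified it yourself: the ``peeling step.'' Producing $b' \in B$ with $pb', (b')^p \in A$ for a \emph{single} prime $p$, such that $A[b'] \subseteq B$ is again a universal homeomorphism and is strictly smaller in some well-founded sense, \emph{is} the theorem; everything preceding it (reduction to $B = A[E]$, to one algebra generator, the fact that intermediate extensions of a universal homeomorphism are universal homeomorphisms, and the appeal to Proposition \ref{prop:0CND} when residue fields do not move) is routine, so as written the proposal proves the easy implication and restates the hard one. Moreover, the induction measure you propose — the length of $B$ as a finite $A$-module — is not defined in this generality: over a non-Artinian base such as $\mbZ_{(p)}$, which is precisely the setting of this paper, a nonzero finite $A$-module has infinite length, so even the skeleton of the induction collapses; one would need something like Noetherian induction on $\Spec A$ together with limit arguments, which is in effect what the chain of lemmas in the Stacks Project leading up to \stacksproj{0CNE} does. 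Finally, the issue you call ``mixed-characteristic bookkeeping'' — that the residue characteristic, and hence the relevant prime, varies over $\Spec A$, while each generator $b_i$ must come with one fixed prime $p$ — is not bookkeeping but the essential difficulty: it is exactly why the statement needs both types of steps and why no short fibrewise argument can settle this direction.
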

In characteristic $p>0$, universal homeomorphisms may also be  described in the following way. 
\begin{proposition}[{cf.\ \stacksproj{0CNF}, Lemma \ref{lem:univ_homeo_iso_generic}}] \label{prop:univ_homeo_in_char_p} Let $f \colon X \to Y$ be an affine morphism of schemes {of characteristic $p>0$}. Then $f$ is a universal homeomorphism if and only if $f^* \colon \mcO^{\perf}_Y \to \mcO^{\perf}_X$ is an isomorphism.
\end{proposition}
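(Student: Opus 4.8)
The plan is to reduce to a statement about perfections of rings and then invoke the structure theory of universal homeomorphisms in Proposition \ref{prop:OCNE}. Since both conditions are local on $Y$ and $f$ is affine, I may assume $X = \Spec B$ and $Y = \Spec A$ with $A \to B$ a map of $\mbF_p$-algebras. In characteristic $p$ the Frobenius $s \mapsto s^p$ is additive, so $\mcO^{\perf}$ is in fact a sheaf of rings; as filtered colimits commute with taking sections over a quasi-compact quasi-separated scheme, $\Gamma(Y,\mcO^{\perf}_Y) = A_{\perf} \coloneq \varinjlim(A \xrightarrow{F} A \xrightarrow{F} \cdots)$ and likewise for $B$, so that $\mcO^{\perf}_Y \to f_*\mcO^{\perf}_X$ is an isomorphism precisely when $A_{\perf} \to B_{\perf}$ is. Since perfection annihilates nilpotents and a universal homeomorphism is insensitive to them, I may further replace $A$, $B$ by $A_{\red}$, $B_{\red}$; for reduced $A$ the surjectivity of $\Spec B \to \Spec A$ forces $A \hookrightarrow B$ to be injective, so that Proposition \ref{prop:OCNE} applies verbatim.

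For the forward implication, suppose $f$ is a universal homeomorphism. By Proposition \ref{prop:OCNE} every finite subset of $B$ lies in a tower $A[b_1,\dots,b_k]$ in which each $b_i$ satisfies $b_i^2, b_i^3 \in A[b_1,\dots,b_{i-1}]$ or $p b_i, b_i^p \in A[b_1,\dots,b_{i-1}]$. In characteristic $p$ the second branch simplifies to $b_i^p \in A[b_1,\dots,b_{i-1}]$ (as $pb_i = 0$), and the first branch also yields $b_i^p \in A[b_1,\dots,b_{i-1}]$, since $b_i^2, b_i^3 \in A'$ implies $b_i^n \in A'$ for every $n \ge 2$ and $p \ge 2$. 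Thus it suffices to show that each elementary step $A' \subseteq A'[b]$ with $b^p \in A'$ induces an isomorphism $A'_{\perf} \xrightarrow{\sim} A'[b]_{\perf}$. Surjectivity holds because $A'[b]_{\perf}$ is a perfect, hence reduced, ring in which $p$-th roots are unique, so $b$ equals the $p$-th root of $b^p \in A'$ lying in the image of $A'_{\perf}$; as that image also contains $A'$ and is closed under $p$-th roots, it is all of $A'[b]_{\perf}$. Injectivity holds because $A' \hookrightarrow A'[b]$ is injective and perfection, being a filtered colimit, is exact. Inducting up the tower gives $A[b_1,\dots,b_k]_{\perf} = A_{\perf}$, and passing to the filtered colimit over all finite subsets yields $B_{\perf} = A_{\perf}$. (Alternatively, this implication is immediate from Lemma \ref{lem:univ_homeo_iso_generic}, whose hypothesis that $f_{\mbQ}$ be an isomorphism is vacuous here.)

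For the converse, suppose $A_{\perf} \to B_{\perf}$ is an isomorphism. The structure maps $p_Y \colon \Spec A_{\perf} \to \Spec A$ and $p_X \colon \Spec B_{\perf} \to \Spec B$ are universal homeomorphisms, being built from the absolute Frobenius. The hypothesis identifies $\Spec B_{\perf} \cong \Spec A_{\perf}$ compatibly, so that $f \circ p_X = p_Y \circ \psi$ for an isomorphism $\psi$, whence $f \circ p_X$ is a universal homeomorphism. Since $p_X$ is itself a surjective universal homeomorphism, the two-out-of-three property of universal homeomorphisms --- if $g \circ h$ and $h$ are universal homeomorphisms then so is $g$, which one checks after base change via $g_T = (g_T \circ h_T) \circ h_T^{-1}$ --- lets me cancel $p_X$ and conclude that $f$ is a universal homeomorphism.

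The main obstacle is the forward direction, and within it the observation that in characteristic $p$ both types of elementary extension in Proposition \ref{prop:OCNE} collapse to the single condition $b^p \in A'$; once this is in hand, the uniqueness of $p$-th roots in a perfect ring is exactly what upgrades the formal $p$-th root $b$ to an element already present in the perfection of the subring, giving surjectivity. The reduction to reduced rings, which makes injectivity and the applicability of Proposition \ref{prop:OCNE} available, is a minor but necessary preliminary.
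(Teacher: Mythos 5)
The paper does not actually prove this proposition: it is stated with a citation to \stacksproj{0CNF}, and the accompanying reference to Lemma \ref{lem:univ_homeo_iso_generic} points to the mixed-characteristic analogue proved later --- whose proof, note, itself invokes Proposition \ref{prop:univ_homeo_in_char_p} to handle the mod-$p$ reduction. Your argument therefore supplies what the paper outsources, and it is correct and self-contained: reduce to an extension of reduced $\mbF_p$-algebras $A \subseteq B$, decompose the universal homeomorphism into elementary steps via Proposition \ref{prop:OCNE}, get surjectivity on perfections from uniqueness of $p$-th roots in a perfect (hence reduced) ring, injectivity from exactness of filtered colimits, and the converse by cancelling the perfection maps $\Spec B_{\perf} \to \Spec B$, which are universal homeomorphisms. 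This is essentially the Stacks Project's own route, so nothing is lost relative to the citation; what you gain is that the paper's logical structure becomes visibly non-circular. For the same reason, your parenthetical alternative --- deducing the forward implication from Lemma \ref{lem:univ_homeo_iso_generic} with vacuous $\mbQ$-hypothesis --- should be dropped: within this paper that lemma is downstream of the present proposition, so using it here would be circular.

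One small imprecision to patch: in Proposition \ref{prop:OCNE} the prime appearing in the condition $pb_i, b_i^p \in A[b_1,\ldots,b_{i-1}]$ depends on $i$ and need not equal the characteristic. If that prime $q$ differs from $p$, then $q$ is a unit in an $\mbF_p$-algebra, so $qb_i \in A[b_1,\ldots,b_{i-1}]$ already forces $b_i \in A[b_1,\ldots,b_{i-1}]$ and the step is trivial; only when $q = p$ does your reading ($pb_i = 0$, so the condition is $b_i^p \in A[b_1,\ldots,b_{i-1}]$) apply. With this one-line fix, every elementary step satisfies $b_i \in A'$ or $b_i^p \in A'$, and your induction up the tower and passage to the filtered colimit over finite subsets go through unchanged.
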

\noindent {Here, $\mcO^{\perf}_X = \varinjlim(\mcO_X \xrightarrow{F} \mcO_X \xrightarrow{F} \ldots)$ denotes the structure sheaf of the perfection $X^{\perf}$ of $X$.}

\begin{remark} \label{rem:scheme_facts} For the convenience of the reader, we recall a few basic scheme theoretic facts that we will often use later on. Here, $S$ is a scheme or an algebraic space (over a scheme) and {$f \colon Y \to X$} is a morphism of schemes or algebraic spaces, respectively.
\begin{enumerate}
	\item Assume that $Y$ is quasi-compact and $X$ is quasi-separated over $S$. Then $f$ is quasi-compact (\stacksproj{03KS}). \label{itm:03KS}
	\item Assume that $Y$ is quasi-compact over $S$ and $f$ is surjective. Then $X$ is quasi-compact over $S$ (cf.\ \stacksproj{03E4}). \label{itm:qc} \label{itm:03E4}

	\item Assume that $Y$ is quasi-separated or separated over $S$. Then so is $f$ (\stacksproj{03KR}). \label{itm:03KR}
	\item \label{itm:09MQ} \label{itm:05Z2} Assume that $f$ is surjective and universally closed. If $Y$ is quasi-separated or separated over $S$, then so is $X$ (\stacksproj{05Z2}).
%09MQ,
	\item Assume that $Y$ is locally of finite type over $S$. Then $f$ is locally of finite type (\stacksproj{0462}). \label{itm:0462}

	\item The morphism $f$ is integral if and only if it is affine and universally closed (\stacksproj{01WM, 0415}). \label{itm:01WM} \label{itm:0415}
	\item Assume that $f$ is surjective. If $Y$ is universally closed over $S$, then so is $X$. In particular, if also $X$ is separated and of finite type over $S$, then it is proper (\stacksproj{03GN, 08AJ}). \label{itm:03GN} \label{itm:08AJ}
	\item Assume that $Y$ is proper and $X$ is separated over $S$. Then $f$ is proper (\stacksproj{04NX}). \label{itm:04NX}
	\item Assume that $Y$ is finite (integral, resp.) and $X$ is separated over $S$. Then $f$ is finite (integral, resp.) (\stacksproj{035D}). \label{itm:035D}
	
	\item Assume that $f$ is of finite type with finite fibres and that the algebraic spaces $Y$ and $X$ are quasi-separated over $S$. Then $f$ is quasi-finite (\stacksproj{06RW,0ACK}). \label{itm:06RW}
	\item Assume that $f$ is proper with finite fibres and $X$ is quasi-separated over $S$. Then $f$ is finite (\stacksproj{0A4X}). \label{itm:0A4X}

	\item Assume that $Y$ is affine and $f$ is surjective and integral. Then $X$ is affine (\stacksproj{05YU, 07VT}). \label{itm:05YU} \label{itm:07VT}
	\item Assume that $f$ is integral and induces a bijection $|Y|=|X|$. Then $Y$ is a scheme if and only if $X$ is a scheme (\stacksproj{07VV}). \label{itm:07VV}

	\item Assume that $f$ is a representable universal homeomorphism. Then pulling back induces an equivalence of categories of \'etale or affine \'etale schemes or algebraic spaces over $X$ and $Y$ (\stacksproj{04DZ, 05ZH, 07VW}). \label{itm:04DZ} \label{itm:05ZH} \label{itm:07VW}
\end{enumerate}
\end{remark}
%In the course of this article, we will also repeatedly use the following lemma.

\begin{lemma} \label{lemma:factors_of_universal_homeomorphism} Let $f \colon X \xrightarrow{g} Y \xrightarrow{h} Z$ be morphisms of schemes such that $f$ is a universal homeomorphism. Further, assume that $g$ is surjective, or $g$ is dominant and $h$ is separated. Then both $g$ and $h$ are universal homeomorphisms. %If $f$ is finite, then so if $g$ and $h$.

%In the case of algebraic spaces, if $h$ is representable, then $g$ is representable, too.
\end{lemma}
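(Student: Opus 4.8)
The plan is to verify, in each case, the three properties characterising universal homeomorphisms of schemes: integrality, universal injectivity, and surjectivity (\stacksproj{04DF}); once these hold the conclusion follows with no further base changes, since this characterisation is already ``universal''. Two observations are immediate and case-independent. As $f = h \circ g$ is surjective, so is $h$. And since the composite $\Mor(\Spec K, X) \to \Mor(\Spec K, Y) \to \Mor(\Spec K, Z)$ is injective for every field $K$ (because $f$ is universally injective), the first arrow is injective, so $g$ is universally injective. The main tool will be the graph factorisation $g = \mathrm{pr}_2 \circ \Gamma_g$, where $\mathrm{pr}_2 \colon X \times_Z Y \to Y$ is the base change of $f$ along $h$, hence a universal homeomorphism, and $\Gamma_g \colon X \to X \times_Z Y$ is the graph of $g$. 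The key structural fact is that $\Gamma_g$ is the base change of the diagonal $\Delta_h \colon Y \to Y \times_Z Y$ along $g \times \mathrm{id}_Y$, and therefore an immersion (\stacksproj{01KJ}), which is moreover a closed immersion whenever $h$ is separated.

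I would first treat $g$. Suppose $g$ is surjective. Since $\mathrm{pr}_2$ is a universal homeomorphism its underlying map is bijective, so from $\mathrm{pr}_2 \circ \Gamma_g = g$ being surjective I deduce that $\Gamma_g$ is surjective as well. A surjective immersion is a closed immersion, hence a (surjective) thickening, which is itself a universal homeomorphism; therefore $g$, being a composite of two universal homeomorphisms, is one too. Suppose instead that $g$ is dominant and $h$ is separated. Then $\Delta_h$ is a closed immersion, so $\Gamma_g$ is a closed immersion, and $g = \mathrm{pr}_2 \circ \Gamma_g$ is integral, universally injective, and universally closed. As $g$ is then closed and dominant, its image is both closed and dense, hence all of $Y$, so $g$ is surjective. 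In either case $g$ is a universal homeomorphism; in particular $g$ is surjective and integral.

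It then remains to deduce that $h$ is a universal homeomorphism, knowing that $g$ is a surjective universal homeomorphism. Surjectivity of $h$ was already noted. For universal closedness I would use that $f = h \circ g$ is universally closed while $g$ is surjective: after any base change $g'$ remains surjective, so for a closed set $C$ one has $h'(C) = f'\big((g')^{-1}(C)\big)$, which is closed; thus $h$ is universally closed. For affineness I argue locally on $Z$ (affineness of $h$ being local on the target): replacing $Z$ by an affine open, and $X, Y$ by the corresponding preimages, makes $X$ affine because $f$ is affine, and then the surjective integral morphism $g$ with affine source forces $Y$ to be affine (\stacksproj{05YU}); hence $h$ is affine. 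Being affine and universally closed, $h$ is integral (\stacksproj{01WM}). Finally, $h$ is injective because $f$ is and $g$ is surjective, and for each $y \in Y$, writing $y = g(x)$, the tower $\kappa(h(y)) \subseteq \kappa(y) \subseteq \kappa(x)$ lies inside the purely inseparable extension $\kappa(x)/\kappa(h(y))$, so $\kappa(y)/\kappa(h(y))$ is purely inseparable; therefore $h$ is universally injective, and hence a universal homeomorphism.

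The step I expect to be most delicate is establishing integrality of $g$ in the first case, where $h$ is not assumed separated and the graph $\Gamma_g$ need not a priori be a closed immersion. The device that resolves this is the propagation of surjectivity: because $\mathrm{pr}_2$ is bijective, surjectivity of $g$ upgrades to surjectivity of $\Gamma_g$, and the general fact that a surjective immersion is closed then supplies the closed immersion needed. This is precisely what allows the argument to dispense with any separatedness hypothesis in that case, while the separatedness hypothesis in the dominant case is used only to make $\Gamma_g$ closed directly.
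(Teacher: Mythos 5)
Your proof is correct, but it follows a genuinely different route from the paper's. The paper never uses the graph factorisation: it first handles surjectivity of $g$ (in the dominant case, via the fact that a morphism from a scheme integral over the base to a scheme separated over the base is itself integral, Remark \ref{rem:scheme_facts}(\ref{itm:035D}), so $g$ is closed and hence surjective), then proves $h$ is a universal homeomorphism directly (universally closed by Remark \ref{rem:scheme_facts}(\ref{itm:03GN}), surjective and universally injective by transfer from $f$ using surjectivity of $g$), and only \emph{then} returns to $g$: since $h$ is now known to be a universal homeomorphism it is integral, hence separated, so the same lemma \stacksproj{035D} applies to give integrality of $g$. The delicate point you correctly identified — integrality of $g$ when $h$ is not assumed separated — is thus resolved by the paper through an ordering trick (prove $h$ first, harvest its separatedness a posteriori), whereas you resolve it by the graph decomposition $g = \mathrm{pr}_2 \circ \Gamma_g$, observing that bijectivity of $\mathrm{pr}_2$ upgrades surjectivity of $g$ to surjectivity of the immersion $\Gamma_g$, which is therefore a thickening. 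Your route is more self-contained (it replaces the citation of \stacksproj{035D} with the standard graph argument) and treats $g$ uniformly in both cases; the paper's route is shorter, and also avoids your detour through affineness of $h$, since universally closed plus universally bijective already gives a universal homeomorphism without first checking integrality via \stacksproj{04DF}.
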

\begin{proof}
First, we show that $g$ is surjective. To this end, we can assume that $g$ is dominant and $h$ is separated. Then, $g$ is integral (Remark \ref{rem:scheme_facts}(\ref{itm:035D})), hence closed and surjective.

Since $g$ is surjective, Remark \ref{rem:scheme_facts}(\ref{itm:03GN}) implies that $h$ is universally closed. Moreover, $h$ is surjective as $f$ is surjective, and it is universally injective as $f$ is universally injective and $g$ is surjective. Therefore, $h$ is a universal homeomorphism. In particular, it is separated, and as above we get that $g$ is integral. Since $f$ is universally injective, $g$ is also universally injective, and so it is a universal homeomorphism.  \qedhere
%Now, $h$ is finite by Lemma \ref{lem:fin_type_when_universal_homeo}. 

%By means of base change, we can assume that $X$, $Y$, and $Z$ are schemes.

%The universal injectivity of $g$ is a consequence of the universal injectivity of $f$. We claim that $g$ is surjective. To this end, we can assume that $h$ is separated. Now, since $f \colon X \to Z$ is integral, $g \colon X \to Y$ is also integral, hence it is universally closed (see \stacksproj{01WM}) and surjective. 

%By \stacksproj{03GN}, $h$ is universally closed. 

%[Gap]

%The last statement is a consequence of the fact that representability may be verified \'etale locally \footnote{verify}.
\end{proof}
\begin{lemma} \label{lem:verify_uh_over_every_p} An affine morphism of schemes $f \colon Y \to X$ is a universal homeomorphism if and only if $f_{\Zpp} \colon Y_{\Zpp} \to X_{\Zpp}$ is a universal homeomorphism for every prime number $p$.
\end{lemma}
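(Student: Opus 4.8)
The plan is as follows. The forward implication is immediate: a base change of a universal homeomorphism is again a universal homeomorphism, so if $f$ is one then so is each $f_{\Zpp}$. The content lies in the converse, and by \stacksproj{04DF} it suffices to prove that $f$ is integral, universally injective, and surjective. Since $f$ is affine, I would establish integrality by a local-to-global argument over $\Spec\mbZ$, and then dispatch universal injectivity and surjectivity simultaneously using the pointwise criterion for universally closed morphisms recalled above.

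For integrality I would work Zariski-locally on $X$ and $Y$, writing $f$ as $\Spec B \to \Spec A$ with ring map $A \to B$; integrality is local on the base, so this is harmless. Let $\overline{A} \subseteq B$ denote the integral closure of (the image of) $A$ in $B$. The key point is that integral closure commutes with localization, so for every prime $p$ the module $\overline{A} \otimes_{\mbZ} \Zpp$ equals the integral closure of $A \otimes_{\mbZ} \Zpp$ in $B \otimes_{\mbZ} \Zpp$. By hypothesis $f_{\Zpp}$ is a universal homeomorphism, hence integral, so this integral closure is all of $B \otimes_{\mbZ}\Zpp$. Consequently the $\mbZ$-module $M \coloneq B / \overline{A}$ satisfies $M \otimes_{\mbZ} \Zpp = 0$ for every prime $p$, since localization is exact. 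As $M \otimes_{\mbZ} \Zpp = M_{(p)}$ and the maximal ideals of $\mbZ$ are exactly the $(p)$, the standard fact that a module vanishes once all its localizations at maximal ideals vanish yields $M = 0$, i.e.\ $B = \overline{A}$. Thus $A \to B$ is integral, so $f$ is integral, and in particular universally closed.

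It remains to check that $f$ is a universal homeomorphism. As $f$ is now known to be (representable and) universally closed, by the criterion recalled above it is enough to show that $\Mor(\Spec K, Y) \to \Mor(\Spec K, X)$ is a bijection for every algebraically closed field $K$. The structure morphism $\Spec K \to \Spec\mbZ$ has image the point $(\operatorname{char} K)$, so choosing $p = \operatorname{char} K$ when this is positive and any prime $p$ when $\operatorname{char} K = 0$, the map $\Spec K \to \Spec\mbZ$ factors through $\Spec\Zpp$; this factorisation is unique because $\Spec\Zpp \to \Spec\mbZ$ is a monomorphism. By the universal property of the fibre product this gives natural bijections $\Mor(\Spec K, X) = \Mor(\Spec K, X_{\Zpp})$ and $\Mor(\Spec K, Y) = \Mor(\Spec K, Y_{\Zpp})$, compatible with $f$ and $f_{\Zpp}$. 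Since $f_{\Zpp}$ is a universal homeomorphism, the right-hand map is a bijection, hence so is the left-hand one. As this holds for every algebraically closed $K$, we conclude that $f$ is a universal homeomorphism.

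The only genuinely non-formal step is the proof of integrality, and its crux is the combination of two standard commutative-algebra facts — that forming the integral closure commutes with localization, and that a module is zero as soon as all its localizations at maximal ideals vanish — applied to the localizations $\Zpp$, which are jointly surjective onto $\Spec\mbZ$. Everything else is formal manipulation with fibre products and the $K$-point criterion. I would expect the only subtlety to be in checking that integral closure indeed commutes with these (possibly non-Noetherian) localizations, and in being careful that the vanishing argument uses only the maximal ideals $(p)$ and not the generic point of $\Spec\mbZ$.
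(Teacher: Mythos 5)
Your proposal is correct and follows essentially the same route as the paper: the paper likewise notes that universal injectivity and surjectivity follow pointwise from the hypothesis (since every point, and every algebraically closed field point, factors through some $\Spec \Zpp$), and verifies integrality affine-locally by citing the Stacks Project lemma that integral closure commutes with localization — exactly the fact you combine with the vanishing-at-all-maximal-ideals argument. Your write-up simply fills in the details that the paper's two-sentence proof leaves to the cited tag.
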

Here $X_{\Zpp} \coloneq X \times_{\Spec \mbZ} \Spec \Zpp$.
\begin{proof}
If $f_{\Zpp}$ is a universal homeomorphism for every $p$, then $f$ is universally injective and surjective. To verify integrality, we can assume that $X$ and $Y$ are affine, in which case this follows by \stacksproj{034K}.
\end{proof}

The following lemma allows us to descend finite generatedness under universal homeomorphisms $f \colon Y \to X$. However, when $f^* \colon \mcO_X \to f_*\mcO_Y$ is not injective, the statement is false (cf.\ Remark \ref{remark:pushout_need_not_be_noetherian}).
\begin{lemma}[Eakin-Nagata] \label{lem:fin_gen_when_finite} \label{lem:Eakin-Nagata} Let $f \colon Y \to X$ be an integral morphism of algebraic spaces over a Noetherian base scheme $S$ such that $Y$ is of finite type over $S$ and $f^* \colon \mcO_X \to f_*\mcO_Y$ is injective. Then $X$ is of finite type over $S$. Moreover, if $Y$ is separated or proper over $S$, then so is $X$.
\end{lemma}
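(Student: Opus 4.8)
The heart of the matter is a statement in commutative algebra, and the reduction to it is the bulk of the work; once we are reduced to rings, the \emph{Artin--Tate lemma} finishes the argument. First I would record that $f$ is surjective. Since $f$ is integral it is affine and universally closed (Remark~\ref{rem:scheme_facts}(\ref{itm:01WM})), so working étale-locally on $X$ reduces surjectivity to the case of an integral, injective ring extension $A \hookrightarrow B$, where $\Spec B \to \Spec A$ is surjective by the lying-over theorem. Granting this, the ``moreover'' part is immediate: if $Y$ is separated over $S$ then so is $X$ by Remark~\ref{rem:scheme_facts}(\ref{itm:05Z2}) (as $f$ is surjective and universally closed), and if $Y$ is proper over $S$, then $X$ is universally closed by Remark~\ref{rem:scheme_facts}(\ref{itm:08AJ}), separated as just noted, and of finite type by the main assertion, hence proper.

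For the main assertion we may assume $S = \Spec R$ with $R$ Noetherian, as being of finite type is local on $S$. To prove $X \to S$ is of finite type I would verify quasi-compactness and local finiteness of type separately. Quasi-compactness of $X$ over $S$ follows from that of $Y$ together with surjectivity of $f$, by Remark~\ref{rem:scheme_facts}(\ref{itm:03E4}). Local finiteness of type is étale-local on $X$, so I would pull back $f$ along an arbitrary affine étale morphism $U = \Spec A \to X$. Since $f$ is affine, $Y_U \coloneq Y \times_X U$ is an affine scheme $\Spec B$; the projection $Y_U \to Y$ is étale, so $Y_U$ is locally of finite type over $S$, and being affine (hence quasi-compact) it is of finite type over $S$, i.e.\ $B$ is a finitely generated $R$-algebra. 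The induced map $A \to B$ is integral, and---crucially---is again injective, being the flat base change of the injection $\mcO_X \hookrightarrow f_*\mcO_Y$ along the flat morphism $U \to X$. Thus everything reduces to the affine claim: \emph{if $A \hookrightarrow B$ is an injective integral map of rings with $R$ Noetherian and $B$ a finitely generated $R$-algebra, then $A$ is a finitely generated $R$-algebra.}

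For the affine claim, note first that $B$ is a finitely generated $A$-algebra (any $R$-algebra generators serve as $A$-algebra generators), and being integral over $A$ as well, $B$ is a finite $A$-module. Now $R \to A \to B$ has $R$ Noetherian, $R \to B$ of finite type, $A \hookrightarrow B$, and $B$ finite over $A$, which is precisely the input of the Artin--Tate lemma (\stacksproj{00IS}); it yields that $A$ is a finitely generated $R$-algebra. Running this over a covering of $X$ by such charts $U$ shows $X \to S$ is locally of finite type, and together with quasi-compactness that $X \to S$ is of finite type.

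The main obstacle is organising the reduction so that all the hypotheses descend to the affine charts simultaneously: affineness of $f$ is what keeps $Y_U$ a scheme, flatness of the étale chart $U \to X$ is what preserves the injectivity $\mcO_X \hookrightarrow f_*\mcO_Y$, and finite type of $Y$ is what makes $B$ a finitely generated $R$-algebra. The injectivity hypothesis enters twice---once to force $f$ surjective and once as the subring hypothesis $A \subseteq B$ of Artin--Tate---and is genuinely indispensable (cf.\ Remark~\ref{remark:pushout_need_not_be_noetherian}), since without it $A$ may be an enormous non-Noetherian ring mapping onto a small finite-type quotient. Once these points are in place, the only substantive input is the classical Artin--Tate lemma, and no new difficulty remains beyond the bookkeeping for algebraic spaces.
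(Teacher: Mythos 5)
Your proof is correct and takes essentially the same route as the paper's: surjectivity of $f$ from integrality plus injectivity of $f^*$, quasi-compactness of $X$ over $S$ via Remark \ref{rem:scheme_facts}(\ref{itm:03E4}), étale-local reduction of the finite-type assertion to an affine statement about an injective integral extension $A \hookrightarrow B$ with $B$ finitely generated over Noetherian $R$, and the separated/proper claims via Remark \ref{rem:scheme_facts}(\ref{itm:05Z2})(\ref{itm:08AJ}). The only difference is bibliographic: where you settle the affine claim by the Artin--Tate lemma, the paper cites \cite[Theorem 41]{kollar12}, which is the same statement.
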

\noindent Note that $f$ is automatically finite when $X$ is of finite type.
\begin{proof}
Since $f$ is dominant and closed, it is surjective. Thus, by Remark \ref{rem:scheme_facts}(\ref{itm:03E4}), $X$ is quasi-compact over $S$. To check that $X$ is locally of finite type, we can assume that $X$, $Y$, and $S$ are affine, in which case the statement follows from \cite[Theorem 41]{kollar12}. The separatedness or properness of $X$ provided that of $Y$ is a consequence of Remark \ref{rem:scheme_facts}(\ref{itm:05Z2})(\ref{itm:08AJ}).
\end{proof}

\begin{comment}
\begin{lemma} \label{lem:fin_gen_when_finite} Let $B \subseteq A$ be a integral extension of algebras over a ring $R$. Assume that $A$ is finitely generated as an $R$-algebra. Then $B$ is finitely generated as well.
\end{lemma}
\begin{proof}
Since $A$ is finitely generated, $B \to A$ is finite. Let $v_1, \ldots, v_d \in A$ be the basis of $A$ as a module over $B$, and let $a_1, \ldots, a_n$ be the generators of $A$ as an $R$-algebra. Write
\begin{align*}
a_i &= \sum_{j=1}^d b_{i,j} v_j, \text { and }\\
v_k \cdot v_l &= \sum_{j=1}^d b_{k,l,j} v_j,
\end{align*}
for $1 \leq i \leq n$, $1 \leq k,l \leq d$ and $b_{i,j}, b_{k,l,j} \in B$. 

Let $B' \subseteq B$ be an $R$-algebra generated by all $b_{i,j}$ and $b_{k,l,j}$. By construction, $B' \subseteq A$ is a finite extension of $R$-algebras, hence so is $B' \subseteq B$. Let $w_1, \ldots, w_r$ be the generators of $B$ as a $B'$-module. Then all $w_i$, $b_{i,j}$, and $b_{k,l,j}$ generate $B'$.
\end{proof}
\end{comment}

%We used the following lemma in the above proof.

%\subsection{Quotients by finite equivalence relations}

\subsection{Quotients by finite equivalence relations} \label{ss:finite_quotients}

In this subsection we review definitions and basic results on quotients by set theoretic equivalence relations following \cite{kollar12}.

Even in the case of a finite group $G$ acting on a scheme $X$, we cannot expect the quotient $X/G$ to be a scheme unless $X$ is quasi-projective or, more generally, Chevalley-Kleiman (cf.\ \cite[Definition 47]{kollar12}). Therefore, we need to work in the category of algebraic spaces. %More precisely, throughout this section we work in the category of separated algebraic spaces of finite type over a Noetherian base scheme $S$. 

\begin{definition} Let $X$ be a separated algebraic space of finite type over a Noetherian base scheme $S$. A morphism $\sigma \colon E \to X \times_S X$ (equivalently $\sigma_1, \sigma_2 \colon E \rightrightarrows X$ over $S$) is a \emph{set theoretic equivalence relation} on $X$ over $S$ if for every geometric point $\Spec K \to S$ the map
\[
\sigma(K) \colon \Mor_S(\Spec K, E) \hookrightarrow \Mor_S(\Spec K, X) \times \Mor_S(\Spec K, X) 
\] 
yields an equivalence relation on $K$-points of $X$. We say that $\sigma \colon E \to X \times_S X$  is finite if $\sigma_i$ are finite.
\end{definition}
See \cite[Definition 2]{kollar12} for another equivalent definition.

\begin{definition} Let $\sigma_1, \sigma_2 \colon E \rightrightarrows X$ be a set theoretic finite equivalence relation of separated algebraic spaces of finite type over a Noetherian base scheme $S$. We call $q \colon X \to Y$, for a separated algebraic space $Y$ of finite type over $S$, a \emph{categorical quotient}  if $q \circ \sigma_1 = q \circ \sigma_2$ and $q$ is universal with this property (in the category of separated algebraic spaces of finite type over $S$). We call $q$ a \emph{geometric quotient} if
\begin{itemize}
	\item it is a categorical quotient, 
	\item it is finite, and 
	\item for every geometric point $\Spec K \to S$, the fibres of $q_K \colon X_K(K) \to Y_K(K)$ are the $\sigma(E_K(K))$-equivalence classes of $X_K(K)$.
\end{itemize}
\end{definition}
Note that in contrast to Koll\'ar we do not require the spaces to be reduced in the definition of set theoretic finite equivalence relations. The following proposition shows that the assumption on being a categorical quotient can be replaced by saying that $\mcO_Y$ is the kernel of $\sigma_1^*-\sigma_2^*$. %In any case, this does not affect the existence of geometric quotients as one can always construct them for reductions of the spaces and then apply \stacksproj{07VX}.

\begin{proposition}[{\cite[Lemma 17]{kollar12}}] \label{thm:kollar_quotients_exist} Let $X$ be a separated algebraic space of finite type over a Noetherian base scheme $S$, let $Y$ be an algebraic space over $S$, let $\pi \colon X \to Y$ be an integral morphism over $S$, and let $E \rightrightarrows X$ be a finite set theoretic equivalence relation over $Y$. Then the geometric quotient $X/E$ exists as a separated algebraic space of finite type over $S$.
\end{proposition}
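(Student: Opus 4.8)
The plan is to build the quotient as the relative spectrum of the sheaf of $E$-invariant functions. Since $\pi$ is integral it is affine, so $X = \underline{\Spec}_Y \pi_*\mcO_X$; and because $E \rightrightarrows X$ is an equivalence relation over $Y$, the two projections $\sigma_1,\sigma_2 \colon E \to X$ satisfy $\pi\circ\sigma_1 = \pi\circ\sigma_2 =: \tau$. As the $\sigma_i$ are finite, the formation of $\pi_*\mcO_X$ and $\tau_*\mcO_E$ commutes with \'etale base change on $Y$, so the $\mcO_Y$-algebra
\[
\mathcal{A} \coloneq \ker\!\bigl(\pi_*\mcO_X \xrightarrow{\ \sigma_1^*-\sigma_2^*\ } \tau_*\mcO_E\bigr)
\]
is quasi-coherent. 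I would then set $Z \coloneq \underline{\Spec}_Y \mathcal{A}$, an algebraic space affine over $Y$, and let $q \colon X \to Z$ be the induced morphism; by construction $\mcO_Z = \mathcal{A}$ is exactly the kernel of $\sigma_1^*-\sigma_2^*$, matching the characterisation of categorical quotients noted above.

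The elementary properties are then formal. The inclusion $\mcO_Z \hookrightarrow \pi_*\mcO_X$ exhibits $q_*\mcO_X$ as integral over $\mcO_Z$ (it is already integral over $\mcO_Y \subseteq \mcO_Z$) with $q^*$ injective, so $q$ is integral and surjective. Eakin--Nagata (Lemma \ref{lem:Eakin-Nagata}) applied to $q$ then shows that $Z$ is of finite type over $S$ and that $q$ is \emph{finite}; since $X$ is separated and $q$ is surjective and universally closed, $Z$ is separated over $S$ by Remark \ref{rem:scheme_facts}(\ref{itm:05Z2}). That $q$ coequalises $\sigma_1,\sigma_2$ holds by the definition of $\mathcal{A}$, and the universal property of a categorical quotient will follow once $q$ is known to identify exactly the equivalence classes: any $h\colon X\to W$ with $h\sigma_1=h\sigma_2$ has $h^*\mcO_W$ landing in $\mathcal{A}$, yielding the required factorisation $Z\to W$ by the same gluing and uniqueness argument used for EWM contractions in Section \ref{s:preliminaries}.

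The main obstacle is the geometric condition: for every algebraically closed geometric point $\Spec K \to S$, the fibres of $q_K\colon X_K(K)\to Z_K(K)$ must be precisely the $\sigma(E_K(K))$-equivalence classes. The inclusion of each class into a fibre is immediate from $q\circ\sigma_1=q\circ\sigma_2$; the reverse inclusion amounts to separating inequivalent $K$-points by invariant functions. This is exactly where finiteness of $E$ is indispensable and where the characteristic-free substitute for averaging enters: given $f$, I would form the coefficients of the characteristic polynomial of multiplication by $\sigma_2^*f$ on the finite $\mcO_X$-module $\sigma_{1*}\mcO_E$. These coefficients lie in $\mcO_X$, are $E$-invariant by the symmetry and transitivity of the relation, and their values at a point are the elementary symmetric functions of $f$ along the (finite) equivalence class; choosing $f$ to take distinct values on the disjoint finite orbits of two inequivalent points then produces an invariant separating them.

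The delicate technical point, which I expect to absorb most of the work, is that inequivalence is tested over $K$ while the invariants constructed above a priori live over the base, and forming invariants need not commute with the non-flat base change $\Spec K \to S$. I would address this by verifying that the norm and symmetric-function invariants are compatible with arbitrary base change (this uses only finiteness of $\sigma_1$, not flatness), so that the separation argument may be carried out directly on $X_K$ and the resulting functions descend to sections of $\mcO_{Z_K}$. Granting this, $q$ is a finite categorical quotient whose geometric fibres are the equivalence classes, that is, the geometric quotient $X/E$, completing the proof.
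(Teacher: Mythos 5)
Your construction is the same as the paper's: both take $Z \coloneq \Spec_Y \ker(\sigma_1^* - \sigma_2^* \colon \pi_*\mcO_X \to \pi_*\mcO_E)$, deduce that $q \colon X \to Z$ is finite with $\mcO_Z \to q_*\mcO_X$ injective, and get separatedness and finite type of $Z$ over $S$ from Eakin--Nagata (Lemma \ref{lem:Eakin-Nagata}) together with Remark \ref{rem:scheme_facts}. For the condition on geometric fibres, the paper does not reprove anything: it simply invokes the argument of \cite[Lemma 17]{kollar12} (the statement is, after all, attributed to Koll\'ar, the point of the paper's version being only that the spaces need not be reduced). Your plan to re-derive it via symmetric functions of the values of $f$ along the finite equivalence class, together with a check of base-change compatibility, is exactly Koll\'ar's mechanism, so that part is sound in outline even if you leave the base-change verification as a promise.

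The genuine gap is in the step you treat as ``formal'': the universal property. You claim that for $h \colon X \to W$ equalising $\sigma$, the inclusion $h^*\mcO_W \subseteq \mathcal{A}$ yields the factorisation $Z \to W$. This works only when $W$ is affine (or, with a gluing argument over open affines, a scheme); here $W$ ranges over separated algebraic spaces of finite type over $S$, and a morphism into an algebraic space is neither determined by nor constructible from a map of sheaves of rings, nor can $W$ be covered by Zariski-open affines to glue over. Your fallback --- ``the same gluing and uniqueness argument used for EWM contractions in Section \ref{s:preliminaries}'' --- is circular in this context: that remark in the preliminaries is itself justified by a citation to the proof of Proposition \ref{thm:kollar_quotients_exist}, i.e.\ to precisely the argument you are supposed to supply. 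The paper resolves this with a different device occupying the second half of its proof: let $Z^*$ be the image of $(q,h) \colon X \to Z \times_S W$; then $Z^* \to Z$ is proper (Remark \ref{rem:scheme_facts}(\ref{itm:08AJ})) and bijective on geometric points (using that $q$ is a quotient on geometric points), hence a finite universal homeomorphism (Remark \ref{rem:scheme_facts}(\ref{itm:0A4X})); finally $h_*\mcO_{Z^*}$ injects into $\ker(\sigma_1^*-\sigma_2^* \colon q_*\mcO_X \to q_*\mcO_E) = \mcO_Z$, forcing $h_*\mcO_{Z^*} = \mcO_Z$, so $Z^* \to Z$ is an isomorphism and $Z \simeq Z^* \to W$ is the desired factorisation. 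Some argument of this kind (or another descent mechanism for maps to algebraic spaces along the non-flat cover $q$) is genuinely needed, and your proposal is missing it.
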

\begin{proof}
Note that $X \to Y$ is automatically finite as $X$ is of finite type over $S$. We claim that $Z = \Spec_Y \ker(\sigma_1^* - \sigma_2^* \colon \pi_*\mcO_X \to \pi_*\mcO_E)$ is the geometric quotient, where the projection from $E$ to $Y$ is by abuse of notation denoted by $\pi$. Let $q \colon X \to Z$ be the induced map. By construction, $q$ is finite and $\mcO_Z \to q_*\mcO_X$ is injective, hence $Z$ is separated and of finite type over $S$ by Lemma \ref{lem:fin_gen_when_finite}. Moreover, $q$ is a quotient on geometric points (by the same argument as in \cite[Lemma 17]{kollar12}) and $q \circ \sigma_1 = q \circ \sigma_2$. Thus, it is enough to show that it is a categorical quotient.

To this end, consider a map $f \colon X \to W$ to a separated algebraic space of finite type over $S$ which equalises $\sigma$. Let $Z^*$ be the image of $(q,f) \colon X \to Z \times_S W$. Since $Z \times_S W$ is separated and of finite type over $S$, so is $Z^*$. It is enough to show that the induced map $h \colon Z^* \to Z$ is an isomorphism. Since $q \colon X \to Z$ is a quotient on geometric points and the induced map $q^* \colon X \to Z^*$ equalises $\sigma$, we get that $h \colon Z^* \to Z$ is a bijection on geometric points. By Remark \ref{rem:scheme_facts}(\ref{itm:08AJ}), $h \colon Z^* \to Z$ is proper, and so by Remark \ref{rem:scheme_facts}(\ref{itm:0A4X}) it is a finite universal homeomorphism. By construction, $h_*\mcO_{Z^*} \to \ker(\sigma_1^* - \sigma_2^* \colon q_*\mcO_X \to q_*\mcO_E)=\mcO_Z$ is an injection of $\mcO_Z$-sheaves, thus $h_* \mcO_{Z^*} = \mcO_Z$ and $h \colon Z^* \to Z$ is an isomorphism. \qedhere  
\end{proof}
%In the case when the projection $\pi \colon X \to S$ is finite, we construct $X/E$ as $\Spec_S \ker(\sigma_1^* - \sigma_2^* \colon \pi_*\mcO_X \to \pi_*\mcO_E)$, where the projection from $E$ to $S$ is by abuse of notation denoted by $\pi$. In particular, we see that Theorem \ref{thm:kollar_quotients_exist} is valid when $S$ is a Noetherian algebraic space.

%It is easy to construct categorical quotients for affine schemes. Indeed, one can take $Y = \Spec \ker(\sigma_1^* - \sigma_2^* \colon \mcO_X \to \mcO_E)$. 
%In general, we always have that $q^* \colon \mcO_Y \to q_*\mcO_X$ is injective for a geometric quotient $q \colon X \to Y$, and so if $X$ is separated and of finite type over $S$, then so is $Y$ by Lemma \ref{lem:fin_gen_when_finite} and Remark \ref{rem:scheme_facts}(\ref{itm:05Z2}).
%It is shown in \cite[Theorem 6]{kollar12} that geometric quotients of set theoretic finite equivalence relations exists 

\subsection{Quotients by group schemes}
The following definitions are taken from \cite[Definition 2.7]{kollar97}.
\begin{definition} Let $X$ be an algebraic space over a Noetherian scheme $S$, and let $G$ be a group scheme over $S$ acting on $X$. We say that $q \colon X \to Z$ is a \emph{topological quotient} of $X$ by $G$ if $q$ is a $G$-morphism (with $Z$ admitting a trivial action), it is locally of finite type, it is universally submersive, and it is a set quotient on the level of geometric points. If in addition $\mcO_Z = (q_*\mcO_X)^G$, then we call $q$ a $\emph{geometric quotient}$.
\end{definition}
We say that an action of $G$ on $X$ is proper if $\psi_X \colon G \times_S X \xrightarrow{(m_X,p_2)} X \times_S X$ is proper, where $m_X \colon G \times_S X \to X$ is the morphism underlying the action of $G$, and $p_2 \colon G\times_S X \to X$ is the projection on the second factor. Since $G$ is affine, this condition ensures that the stabilisers are finite.  

We state an analogue of Theorem \ref{thm:kollar_quotients_exist}.
\begin{theorem}[{\cite[Theorem 3.13]{kollar97}}] \label{thm:kollar_quotients_group_schemes_exist} Let $G$ be an affine algebraic group scheme, flat and locally of finite type over $S$. Let $m \colon G \times X \to X$ be a proper $G$-action on an algebraic space $X$ over $S$. Let $f \colon X \to Z$ be a topological quotient. Then a geometric quotient $g \colon X \to X/G$ exists and is defined by the formula $X/G := \Spec_Z(f_*\mcO_X)^G$. Moreover, the induced map $X/G \to Z$ is a finite universal homeomorphism.
\end{theorem}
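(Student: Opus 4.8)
The plan is to construct $X/G$ explicitly as $Z' \coloneq \Spec_Z \mathcal{A}$, where $\mathcal{A} \coloneq (f_*\mcO_X)^G$ is the kernel of $m^* - p_2^* \colon f_*\mcO_X \to (f\circ p_2)_*\mcO_{G\times_S X}$ (the two pullbacks land in the same sheaf since $f\circ m = f\circ p_2$, because $f$ is $G$-invariant), and then to analyse the induced factorisation $X \xrightarrow{g} Z' \xrightarrow{h} Z$ of $f$. By construction $h$ is affine with $\mcO_Z \hookrightarrow h_*\mcO_{Z'} = \mathcal{A}$, and $g$ is $G$-invariant, i.e.\ $g\circ m = g\circ p_2$. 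It then remains to prove that $g$ is a geometric quotient and that $h$ is a finite universal homeomorphism.

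First I would exploit properness of the action to control the orbit relation. Each fibre of $\psi_X = (m,p_2)$ is a torsor under a stabiliser, which is affine (closed in $G$) and proper (a fibre of the proper $\psi_X$), hence finite; thus $\psi_X$ is proper with finite fibres and therefore finite by Remark \ref{rem:scheme_facts}(\ref{itm:0A4X}). Since $f$ is a $G$-morphism, $\psi_X$ factors through $X\times_Z X$, with image all of $X\times_Z X$ on geometric points as $f$ is a set quotient. A routine diagram chase on geometric points then shows that $g$, being $G$-invariant, induces the same partition of $X(\bar k)$ into orbits as $f$, so that $h$ is universally injective and surjective while $g$ is surjective with geometric fibres the orbits. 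The sheaf condition $\mcO_{Z'} = (g_*\mcO_X)^G$ follows by applying the affine functor $h_*$, under which both sides become $\mathcal{A}$; and $g$ inherits local finite type and universal submersiveness from $f$ once $h$, through which $f$ factors, is known to be a finite universal homeomorphism. Thus, granting that last point, $g$ is a topological, and hence geometric, quotient.

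The main obstacle is the finiteness of $h$, equivalently the finiteness of $\mathcal{A}$ over $\mcO_Z$. This is genuine content: universal homeomorphisms are in general very far from finite (the perfection maps used elsewhere in this paper are the prototype), and for a general affine $G$ the ring of invariants need not even be of finite type. What rescues finiteness is exactly the finiteness of the stabilisers supplied by properness. The plan is to work \'etale-locally on $Z$ and use the finite surjection $\psi_X$ to replace the orbit groupoid $G\times_S X \rightrightarrows X$ by a genuine finite set theoretic equivalence relation on a suitable finite-type scheme meeting each orbit, and then to invoke Proposition \ref{thm:kollar_quotients_exist} to obtain a finite geometric quotient which one identifies with $Z'$. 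Finiteness of $h$ follows, and with it integrality (Remark \ref{rem:scheme_facts}(\ref{itm:01WM})), so that $h$ is a representable universal homeomorphism, being integral, universally injective and surjective. This local reduction to the finite equivalence relation case is the heart of Koll\'ar's original strategy and is where I expect essentially all the difficulty to lie; the remaining assertions are the formal bookkeeping indicated above.
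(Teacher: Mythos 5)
The paper gives no proof of this statement at all: it is imported verbatim from \cite[Theorem 3.13]{kollar97} and used purely as a black box (e.g.\ in Lemma \ref{lemma:quotients_group_schemes_universal_homeomorphism} and in the proof of Theorem \ref{thm:quotients_algebraic_groups_intro}). So there is no internal argument to compare yours against; the only meaningful benchmark is Koll\'ar's original proof, which your sketch explicitly sets out to reconstruct.

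Measured against that, your formal skeleton is fine: defining $Z' = \Spec_Z \mathcal{A}$ with $\mathcal{A} = \ker(m^* - p_2^*)$, finiteness of stabilisers (affine and proper, hence finite) and so of $\psi_X$ via Remark \ref{rem:scheme_facts}(\ref{itm:0A4X}), the identification $\mcO_{Z'} = (g_*\mcO_X)^G$ by left-exactness of the affine pushforward $h_*$, and transferring submersiveness and finite type from $f$ to $g$ once $h$ is known to be a finite universal homeomorphism. But the step you defer is not a routine reduction; it is the entire mathematical content of the theorem, and as written there is a genuine gap. Proposition \ref{thm:kollar_quotients_exist} applies only to \emph{finite} set-theoretic equivalence relations, whereas the orbit groupoid $G \times_S X \rightrightarrows X$ has non-finite projections whenever $\dim G > 0$; finiteness of $\psi_X \colon G \times_S X \to X \times_Z X$ does not by itself produce anything to which that proposition applies. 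To invoke it you must first construct, \'etale-locally on $Z$, a finite-type subscheme $W \subseteq X$ meeting every orbit such that the induced relation on $W$ is a finite set-theoretic equivalence relation, and then identify $\Spec_Z(f_*\mcO_X)^G$ with (or at least embed it coherently into) the resulting finite cover of $Z$. The first step is Koll\'ar's slice/approximate-quotient construction (\cite[Proposition 3.11 and Theorem 3.12]{kollar97}), which requires choosing complete-intersection transversals through points, properness of the action, generic flatness, and Noetherian induction; the second is also delicate, since ``an invariant function is determined by its restriction to a set meeting every orbit'' is only a statement about reductions, and fails to be automatic scheme-theoretically when $X$ is non-reduced. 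Since the finiteness of $(f_*\mcO_X)^G$ over $\mcO_Z$ is exactly what makes the theorem true, the proposal is an outline of Koll\'ar's proof with its decisive step assumed rather than proved.
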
  

\begin{remark} \label{remark:group_action_facts} With notation as above, suppose that $X$ is a separated algebraic space and $G$ is an affine algebraic group scheme, flat and of finite type over $S$ and which acts properly on $X$. Note that $G$ is of finite presentation (\stacksproj{01TX}) and universally open over $S$ (\stacksproj{01UA}). Let $q \colon X \to Z$ be a finite type topological quotient. Then
\begin{enumerate}
\item $m \colon G \times X \to X$ is affine and of finite type,
\item $q$ is affine and $Z$ is separated,
\item if $X$ is of finite type over $S$, then so is $Z$,
\item a geometric quotient is automatically a categorical quotient,
\item if $f \colon X \to Y$ is a finite surjective $G$-morphism of separated algebraic spaces of finite type admitting a proper $G$-action and the geometric quotients $X/G$ and $Y/G$ exist, then the induced map $f_G \colon X/G \to Y/G$ is finite. Moreover, if $f$ is a finite universal homeomorphism, then so is $f_G$.
%\item if $f \colon X \to Y$ is a representable universal $G$-homeomorphism with $Y$ being an algebraic space over $S$ and the geometric quotients $X/G$ and $Y/G$ exist, then the induced map $f_{X/G} \colon X/G \to Y/G$ is a representable universal homeomorphism.
\end{enumerate}
The morphism $m \colon G \times X \to X$ may be identified with $p_2 \colon G \times X \to X$ via the isomorphism  $(p_1, m_X) \colon G \times X \to G \times X$, so (1) holds. The morphism $q$ is affine by \cite[Theorem 3.12]{kollar97}. The quotient $Z$ is separated by \cite[Proposition 2.9]{kollar97} and of finite type over $S$ (provided so is $X$) by \cite[Theorem 3.12]{kollar97}. A geometric quotient is categorical by \cite[Corollary 2.15]{kollar97}. Last, for (5), consider the following diagram
\begin{center}
\begin{tikzcd}
X \arrow{r}{f} \arrow{d}{q} & Y \arrow{d}{q_Y} \\
X/G \arrow{r}{f_G} & Y/G.
\end{tikzcd}
\end{center}
By the above, both $X/G$ and $Y/G$ are separated and of finite type, hence so is $f_G$. Let $Z\subseteq X/G$ be a closed subset. Then $f(q^{-1}(Z)) = q_Y^{-1}(f_G(Z))$ is closed, and hence so is $f_G(Z)$ as $q_Y$ is submersive. The same holds after any base change by a morphism to $Y/G$, thus $f_G$ is universally closed, and so proper. By Remark \ref{rem:scheme_facts}(\ref{itm:0A4X}), it is finite and the last assertion can be checked on geometric points.

\end{remark}
%\noindent A geometric quotients is automatically a categorical quotient by \cite[Corollary 2.15]{kollar97}.

%This is exactly what is needed to guarantee that a quotient is separated. Indeed, if a topological quotient $Z$ of $X$ by $G$ exists, and $G$ acts properly, then $Z$ is separated (see \cite[Proposition 2.9]{kollar97}).

\subsection{Pushouts of universal homeomorphisms}
In this subsection we discuss some preliminary results on pushouts of universal homeomorphisms. The case of pushouts of thickenings by affine morphisms is well understood and described in \stacksproj{07RT and 07VX}. %Consult Section \ref{s:pushouts_of_universal_homeomorphisms_in_mixed_char} for more on this topic.

%We say that a scheme is a \emph{pushout} if it is a categorical pushout in the category of schemes. Motivated by a definition of geometric quotients by finite equivalence relations (cf.\ \cite[Definition 4]{kollar12}), we introduce the following ad-hoc definition.
\begin{definition}[{cf.\ \cite[Section 8]{kollar97}}] \label{definition:geo_pushout} Consider the following commutative diagram of schemes or algebraic spaces over a scheme $S$
\begin{center}
\begin{tikzcd}
X \arrow{d}{f}  & Y \arrow{l}{p} \arrow{d}{g} \\
X' & Y', \arrow{l}{q}
\end{tikzcd}
\end{center}
where $Y \xrightarrow{p} X$ is representable, quasi-compact, and quasi-separated, and $Y \xrightarrow{g} Y'$ is a representable universal homeomorphism. We say that this diagram is a \emph{topological pushout square} if $f$ is a representable universal homeomorphism and a \emph{geometric pushout square} if in addition
\[
\mcO_{X'} = f_* \mcO_X \times_{(f \circ p)_* \mcO_Y} q_*\mcO_{Y'}.
\]
We write $X' = X \sqcup_Y Y'$ and say that $X'$ is a \emph{topological} or a \emph{geometric pushout}. If $X$ is a scheme, then so is $X'$ by Remark \ref{rem:scheme_facts}(\ref{itm:07VV}).

%, but with an additional initial requirement that $Y \to X$of and $Y \to Y'$ are representable, and with a stipulation that the diagram is a \emph{geometric pushout square} of algebraic spaces if $f \colon X \to X'$ is also represeoutsntable.

\begin{comment} \item Let $L_{X'}$ be a line bundle on $X'$ and let $L_X$, $L_Y$, and $L_{Y'}$ be its pullbacks to $X$, $Y$, and $Y'$. Then, the natural map
\[
L_{X'} \to f_* L_X \times_{(f \circ p)_* L_Y} q_* L_{Y'}.
\]
is an isomorphism. 
\end{comment}

\end{definition}
The assumption on the representability of $p$ may not be necessary. In any case, we are mostly interested in the case of $p$ being affine or a morphism from a scheme $Y$ to an algebraic space $X$.
\begin{lemma} \label{lem:0et0} Let $A \to B \leftarrow B'$ be maps of rings such that $B' \to B$ is a universal homeomorphism and let $A' = A \times_B B'$. Suppose that $A' \to A$ is a universal homeomorphism. Then $\Spec A'$ is a geometric pushout of $\Spec A \leftarrow \Spec B \to \Spec B'$.
\end{lemma}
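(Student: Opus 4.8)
The plan is to take the square of Definition \ref{definition:geo_pushout} with $X = \Spec A$, $Y = \Spec B$, $Y' = \Spec B'$, and $X' = \Spec A'$, where $f \colon \Spec A \to \Spec A'$ and $q \colon \Spec B' \to \Spec A'$ are induced by the two projections out of $A' = A \times_B B'$, and then to verify each clause of the definition directly.

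First I would dispose of the structural conditions, all of which are immediate because every object in sight is affine. Thus $p \colon \Spec B \to \Spec A$ is affine, hence representable, quasi-compact, and quasi-separated; the morphism $g \colon \Spec B \to \Spec B'$ is a representable universal homeomorphism since $B' \to B$ is one by hypothesis; and $f$ is a representable universal homeomorphism by the standing assumption that $A' \to A$ is one. The square commutes by the universal property of the ring fibre product, the two composites $A' \to A \to B$ and $A' \to B' \to B$ agreeing by construction. This already exhibits the square as a topological pushout square.

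The only substantive point is the sheaf identity $\mcO_{X'} = f_*\mcO_X \times_{(f \circ p)_*\mcO_Y} q_*\mcO_{Y'}$. On global sections the right-hand side is $A \times_B B'$, which is literally $A'$, so the comparison map is an isomorphism there; the subtlety is that a geometric pushout demands the equality as \emph{sheaves}, not merely on global sections. To promote it I would test on the basis of principal opens $D(a') \subseteq \Spec A'$. As finite limits of sheaves are computed sectionwise, the right-hand side evaluates on $D(a')$ to $A_{a'} \times_{B_{a'}} B'_{a'}$ (localisation taken over $A'$), while the left-hand side is $A'_{a'}$. The hard part, such as it is, is exactly the interchange of localisation with the fibre product: localising the left-exact sequence $0 \to A' \to A \oplus B' \to B$ at $a'$ is exact and so commutes with formation of the kernel, yielding $A'_{a'} = A_{a'} \times_{B_{a'}} B'_{a'}$. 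Hence the comparison map is an isomorphism on a basis, therefore an isomorphism of sheaves, and all requirements of Definition \ref{definition:geo_pushout} are met.
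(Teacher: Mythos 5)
Your proof is correct and is essentially the paper's own argument: the paper simply cites Stacks Project Tags 0ET0, 07RT and 01Z8, and the proof of those results is exactly your verification — the topological part is the hypothesis that $A' \to A$ is a universal homeomorphism, and the sheaf identity is checked on principal opens $D(a')$ by using exactness of localisation to commute it past the kernel defining $A \times_B B'$ (this interchange is precisely Tag 01Z8). Nothing is missing.
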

\begin{proof}
This follows by the same proof as \stacksproj{0ET0} (see also \stacksproj{07RT and 01Z8}).
%The fact that $\Spec A'$ is a categorical pushouts follows by the same proof as \stacksproj{0ET0}, and that it is a geometric pushout follows by \stacksproj{01Z8} (see the proof of \stacksproj{07RT}).
\end{proof} 
\begin{remark} \label{remark:geo_pushouts_affine_and_finite} Consider a  topological pushout square of schemes or algebraic spaces as above. Then 
\begin{enumerate}
	\item The morphism $q$ is representable, quasi-compact, and quasi-separated. 
	\item If $p$ is separated, affine, universally closed, or integral, then so is $q$.
	\item If $X$ is quasi-compact, quasi-separated, or separated, then so is $X'$.
	\item If the pushout is geometric and $g^* \colon \mcO_{Y'} \to g_*\mcO_{Y}$ is injective, then $f^* \colon \mcO_{X'} \to f_*\mcO_X$ is injective as well.
\end{enumerate}
To prove (1) and (2), we can assume that $X'$ is an affine scheme, and so that $X$ is affine and $Y$ is a quasi-compact quasi-separated scheme. Then $Y'$ is also a scheme (Remark \ref{rem:scheme_facts}(\ref{itm:07VV})), it is quasi-compact (Remark \ref{rem:scheme_facts}(\ref{itm:qc})), and quasi-separated (Remark \ref{rem:scheme_facts}(\ref{itm:09MQ})). Thus $q$ is representable, quasi-compact, and quasi-separated. If $p$ is separated or affine, then we can assume that $Y$ is separated or affine, respectively, and then so is $Y'$ (Remark \ref{rem:scheme_facts}(\ref{itm:09MQ})(\ref{itm:05YU})). Thus $q$ is separated or affine, respectively. If $p$ is universally closed, then so is $q$ by Remark \ref{rem:scheme_facts}(\ref{itm:03GN}) applied to $q \circ g$. Since being integral is equivalent to being affine and universally closed (Remark \ref{rem:scheme_facts}(\ref{itm:01WM})), the integrality of $p$ implies the integrality of $q$. %If $p$ is integral, then $q \circ g$ is integral. . The fact that $q$ is affine has been verified already and its universal closedness follows from , thus $q$ is integral. If $p$ is a contraction, then $\mcO_{X'} = f_*\mcO_X \times_{(f\circ p)_* \mcO_Y}  q_* \mcO_{Y'}  = f_* \mcO_X \times_{f_* \mcO_{X}} q_*\mcO_{Y'} = q_*\mcO_{Y'}$, and so $q$ is a contraction, as well.

The quasi-compactness, quasi-separatedness, or separatedness of $X'$, provided $X$ has these properties, respectively, follows, as above, from Remark \ref{rem:scheme_facts}(\ref{itm:09MQ})(\ref{itm:qc}). The injectivity of $f^*$ provided the injectivity of $g^*$ follows by definition. %\stacksproj{09MQ} and the fact that $g$ is a homeomorphism (see \stacksproj{05Z2 and 03E4} for the case of algebraic spaces). 

%\displaystyle 
\begin{comment}
We list some properties of geometric pushouts. For these properties below which may be verified after an \'etale base change we provide proofs only in the category of schemes.
\begin{enumerate}
	\item 
	
	\item The morphism $q$ is representable, quasi-compact, and quasi-separated. This follows from \stacksproj{07VV}, \stacksproj{09MQ}, and the fact that $p=q$ topologically. By the same argument, if $X$ is quasi-compact or quasi-separated, then so is $X'$.
	\item If $p$ is separated, then so is $q$ (see \stacksproj{09MQ}). If $X$ is separated, then so is $X'$ (see \stacksproj{09MQ}).
	\item If $p$ is affine, so is $q$. Indeed, by means of an affine base change of $X'$, we can assume that $X'$, $X$, and $Y$ are affine. Then $Y'$ is affine by \stacksproj{05YU} applied to $g \colon Y \to Y'$.
	\item If $p$ is integral, then so is $q$. This follows from \stacksproj{01WM and 03GN} applied to $Y \to Y' \to X'$.
	\item If $p$ is a contraction, then so is $q$. Indeed,

		 $\qquad \displaystyle \mcO_{X'} = f_*\mcO_X \times_{(q \circ g)_* \mcO_Y}  q_* \mcO_{Y'}  = f_* \mcO_X \times_{q_* \mcO_{Y'}} q_*\mcO_{Y'} = f_*\mcO_X.$
	\item If $g^* \colon \mcO_{Y'} \to \mcO_{Y}$ is injective, then so is $f^* \colon \mcO_{X'} \to \mcO_X$.
	%\item In the case of algebraic spaces, $q$ is automatically representable. To this end, we can perform an affine base change of $X'$ and assume that $X'$, $X$, and $Y$ are affine. Then $Y'$ is affine by \stacksproj{07VP}.[issue]
\end{enumerate}
\end{comment}
\end{remark}

%The following lemma asserts the existence of pushouts provided we can find a scheme which is close enough to being a pushout (cf.\ \cite[(8.1.3)]{kollar97}). %We use it to study properties of geometric pushouts later in this section. Moreover it implies the existence of such pushouts for $\mathbb{F}_p$-schemes.

\begin{lemma}[{cf.\ \cite[(8.1.3)]{kollar97}}] \label{lemma:technical_pushouts} 
%Consider the following commutative diagram of schemes
%\begin{center}
%\begin{tikzcd}
%X \arrow{d}{f_Z}  & Y \arrow{l}{p} \arrow{d}{g}\\
%Z & Y',  \arrow{l}{q_Z}
%\end{tikzcd}
%\end{center}
Let $X \leftarrow Y \rightarrow Y'$ be a diagram of schemes (algebraic spaces, resp.) satisfying the assumptions of Definition \ref{definition:geo_pushout} and which admits a topological pushout $Z$. Then the geometric pushout $X' \coloneq X \sqcup_Y Y'$ exists as a scheme (an algebraic space, resp.). Moreover, the induced map $X' \to Z$ is a representable universal homeomorphism.
\end{lemma}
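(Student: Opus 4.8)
The plan is to construct $X'$ explicitly as an affine scheme over the given topological pushout $Z$, and then verify both assertions through the factorisation $X \to X' \to Z$. Write $f_Z \colon X \to Z$ for the given representable universal homeomorphism and $q_Z \colon Y' \to Z$ for the induced map; by Remark \ref{remark:geo_pushouts_affine_and_finite}(1) the latter is representable, quasi-compact and quasi-separated, the square commutes, and $f_Z \circ p = q_Z \circ g$. Since $f_Z$ is a representable universal homeomorphism it is integral, hence affine, so $X = \Spec_Z (f_Z)_*\mcO_X$ and $(f_Z)_*\mcO_X$ is a quasi-coherent $\mcO_Z$-algebra, integral over $\mcO_Z$. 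As $q_Z$ and $f_Z\circ p = q_Z\circ g$ are quasi-compact and quasi-separated, the sheaves $(q_Z)_*\mcO_{Y'}$ and $(f_Z\circ p)_*\mcO_Y$ are quasi-coherent as well. I therefore set
\[
\mathcal{A} \coloneq (f_Z)_*\mcO_X \times_{(f_Z\circ p)_*\mcO_Y} (q_Z)_*\mcO_{Y'},
\]
a quasi-coherent sheaf of $\mcO_Z$-algebras (the kernel of the difference map into $(f_Z\circ p)_*\mcO_Y$), and define $X' \coloneq \Spec_Z \mathcal{A}$ with structure map $\mathrm{pr}\colon X'\to Z$. The two projections induce $Z$-morphisms $f\colon X\to X'$ and $q\colon Y'\to X'$ satisfying $\mathrm{pr}\circ f = f_Z$, $\mathrm{pr}\circ q = q_Z$ and $f\circ p = q\circ g$.

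The heart of the argument is to prove that $f\colon X\to X'$ is surjective. It corresponds to the first projection $\mathcal{A}\to (f_Z)_*\mcO_X$, which is integral because $(f_Z)_*\mcO_X$ is integral over $\mcO_Z$ and the map factors through $\mathcal{A}$; its kernel is, by left-exactness of $(q_Z)_*$ and the identity $(f_Z\circ p)_*\mcO_Y = (q_Z)_* g_*\mcO_Y$, isomorphic to $(q_Z)_*\ker\!\big(\mcO_{Y'}\xrightarrow{g^*} g_*\mcO_Y\big)$. I claim $\ker(g^*)$ lies in the nilradical of $\mcO_{Y'}$. Indeed, $g$ is a universal homeomorphism, so $g_{\red}\colon Y_{\red}\to Y'_{\red}$ is a surjection of reduced spaces and hence $\mcO_{Y'_{\red}}\to (g_{\red})_*\mcO_{Y_{\red}}$ is injective; chasing the reduction square shows $\ker(g^*)\subseteq \ker(\mcO_{Y'}\to \mcO_{Y'_{\red}})$, the nilradical. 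Since $q_Z$ is quasi-compact, over any quasi-compact open of $Z$ the orders of nilpotence are bounded by a finite affine cover, so the ideal $\mcI\coloneq \ker(\mathcal{A}\to (f_Z)_*\mcO_X)$ consists of nilpotent sections and $V(\mcI)=X'$. As $\mathcal{A}/\mcI$ embeds as an integral $\mcO_Z$-subalgebra of $(f_Z)_*\mcO_X$, lying-over shows that $X \to \Spec_Z(\mathcal{A}/\mcI) = V(\mcI) = X'$ is surjective.

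With surjectivity of $f$ in hand, I apply Lemma \ref{lemma:factors_of_universal_homeomorphism} to $X\xrightarrow{f}X'\xrightarrow{\mathrm{pr}}Z$: the composite $f_Z$ is a universal homeomorphism and $f$ is surjective, so both $f$ and $\mathrm{pr}$ are universal homeomorphisms. Both are affine, hence representable, which yields the second assertion; moreover $X'$ is a scheme (resp.\ algebraic space) whenever $Z$ is by Remark \ref{rem:scheme_facts}(\ref{itm:07VV}), and $Z$ itself is a scheme when the inputs are, by the same fact applied to $f_Z$. To see that $X'$ is the \emph{geometric} pushout, I push the defining formula forward: since $\mathrm{pr}$ is affine, $\mathrm{pr}_*$ is exact and faithful on quasi-coherent sheaves and hence reflects isomorphisms, while $\mathrm{pr}_*\mcO_{X'} = \mathcal{A} = \mathrm{pr}_*(f_*\mcO_X)\times_{\mathrm{pr}_*((f\circ p)_*\mcO_Y)}\mathrm{pr}_*(q_*\mcO_{Y'})$; reflecting the isomorphism gives $\mcO_{X'} = f_*\mcO_X \times_{(f\circ p)_*\mcO_Y} q_*\mcO_{Y'}$, which is exactly the condition of Definition \ref{definition:geo_pushout}. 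Hence $X' = X\sqcup_Y Y'$ is a geometric pushout, and $\mathrm{pr}\colon X'\to Z$ is a representable universal homeomorphism.

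The construction of $\mathcal{A}$ and all the checks above are étale-local on $Z$ and commute with flat base change (since the relevant pushforwards are quasi-compact quasi-separated); in the algebraic-space case one therefore base-changes along an affine étale cover of $Z$, which turns $X$, $X'$ and $Z$ into affine schemes, so that the scheme-theoretic statements, and in particular Lemma \ref{lemma:factors_of_universal_homeomorphism}, apply directly. The main obstacle is the surjectivity of $f$ in the second paragraph: everything hinges on controlling the kernel of $\mathcal{A}\to (f_Z)_*\mcO_X$, and it is precisely here that the universal-homeomorphism hypothesis on $g$ (forcing the kernel into the nilradical) and the quasi-compactness of $p$ (bounding the nilpotence orders) are used in an essential way.
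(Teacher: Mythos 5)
Your proof is correct and follows essentially the same route as the paper's: define $X' \coloneq \Spec_Z$ of the fibre product of pushforward algebras (with the same quasi-coherence checks), identify the kernel of $\mathcal{A} \to (f_Z)_*\mcO_X$ with $(q_Z)_*\ker(g^*)$ and show it is locally nilpotent using that $g$ is a universal homeomorphism, then conclude with Lemma \ref{lemma:factors_of_universal_homeomorphism}. The only cosmetic difference is which branch of that lemma you invoke — you establish surjectivity of $X \to X'$ directly via lying-over for the integral inclusion $\mathcal{A}/\mcI \hookrightarrow (f_Z)_*\mcO_X$, whereas the paper uses the dominant-plus-separated (affine) branch — and your explicit verification of the sheaf identity over $X'$ via faithfulness of $\mathrm{pr}_*$ is a correct elaboration of a step the paper leaves implicit.
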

%Note that we only use the assumption on the existence of $Z$ in the affine case of the following proof. 
%The main difficulty lies in constructing a categorical pushout. That it is geometric comes for free from the proof.
\begin{proof}
Define $X' \coloneq \Spec_Z \big((f_Z)_*\mcO_{X} \times_{(f_Z \circ p)_*\mcO_{Y}} (q_Z)_*\mcO_{Y'} \big)$ sitting inside

\begin{center}
\begin{tikzcd}
  & X \arrow{d}{f} \arrow[bend right = 15]{ldd}[swap]{f_Z}  & Y \arrow{l}{p} \arrow{d}{g} \\
  & X' \arrow[dashed]{ld} & Y'.  \arrow{l}{q} \arrow[bend left = 15]{lld}{q_Z} \\
Z & & 
\end{tikzcd}
\end{center}
Here, we used quasi-compactness and quasi-separatedness of morphisms (Remark \ref{remark:geo_pushouts_affine_and_finite}(1)) to guarantee that the pushforwards of structure sheaves are quasi-coherent (see \stacksproj{03M9}). 

Now, by means of \'etale base change, we can assume that the spaces in question are schemes. By construction, $\ker(\mcO_{X'} \to f_*\mcO_{X}) \simeq \ker(q_*\mcO_{Y'} \to q_*g_*\mcO_Y)$, and so the kernel of $\mcO_X' \to f_*\mcO_X$ is locally nilpotent, and, in particular, $X \to X'$ is dominant. Moreover, $X' \to Z$ is separated as it is affine. Thus, both $X \to X'$ and $X' \to Z$ are universal homeomorphisms by Lemma \ref{lemma:factors_of_universal_homeomorphism}, and so $X'$ is a geometric pushout.
\end{proof}
%(see \stacksproj{01S7})

\begin{remark} \label{remark:pushout_need_not_be_noetherian} Even if $X \leftarrow Y \rightarrow Y'$ are of finite type over a field $k$, the geometric pushout need not be Noetherian (see \cite[Example 8.5]{kollar97}). A pertinent example which is relevant to us is the following pushout diagram:
\begin{center}
\begin{tikzcd}
\Spec \mbZ \arrow{d} &  \Spec \mbQ \arrow{l} \arrow{d} \\
\Spec \mbZ \oplus x\mbQ & \Spec \mbQ[x]/x^2 \arrow{l}.
\end{tikzcd}
\end{center}
%Howeover, if $\mcO_{Y'} \xrightarrow{g^*} \mcO_Y$ is injective, then so is $\mcO_{X'} \xrightarrow{f^*} \mcO_X$, and $X'$ is of finite type over $R$ by Lemma \ref{lem:fin_gen_when_finite}.
\begin{comment}
Moreover, finite universal homeomorphisms need not be preserved under pushouts as shown by the following example
\begin{center}
\begin{tikzcd}
\Spec \mbZ \oplus x\mbQ \arrow{d}  & \Spec \mbQ[x]/x^2 \arrow{l} \arrow{d} \\
\Spec \mbZ   & \Spec \mbQ, \arrow{l}
\end{tikzcd}
\end{center}
in which the right vertical arrow is a finite universal homeomorphism, while the left one is not finite. 
\end{comment}
\end{remark}

To rectify the problem laid down in the above remark, we use Noetherian approximation.
\begin{lemma} \label{lem:fin_gen_pushouts} 
Let $X \xleftarrow{p} Y \xrightarrow{g} Y'$ be a diagram of schemes (or algebraic spaces) over a Noetherian base scheme $S$, satisfying the assumptions of Definition \ref{definition:geo_pushout} and admitting a topological pushout $X'$. Assume that $X$ is of finite type over $S$. Then there exists a topological pushout $X'_{\rm top}$ of $X \xleftarrow{} Y \xrightarrow{} Y'$, which is of finite type over $S$. Moreover, if $X$ is proper over $S$, then so is $X'_{\rm top}$.
%Then $X'$ is of finite type over $S$ as well provided that $g^* \colon \mcO_{Y'} \to \mcO_{Y}$ is injective. In general, there always exists a topological pushout $X'_{\rm top}$ of $X \xleftarrow{p} Y \xrightarrow{g} Y'$, which is of finite type over $S$. Moreover, if $X$ is proper over $S$, then so is $X'_{\rm top}$.
\end{lemma}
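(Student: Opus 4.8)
The plan is to build $X'_{\rm top}$ as a finite-type ``de-thickening'' of the geometric pushout, exploiting that only its nilpotent structure can fail to be of finite type.

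First I would invoke Lemma~\ref{lemma:technical_pushouts}: since the topological pushout $X'$ exists, the geometric pushout $X'_{\rm geo} \coloneq X \sqcup_Y Y'$ exists, the induced map $f \colon X \to X'_{\rm geo}$ is a universal homeomorphism, and there is a universal homeomorphism $X'_{\rm geo} \to X'$. Let $\mcK \coloneq \ker(\mcO_{X'_{\rm geo}} \to f_*\mcO_X)$; by the proof of Lemma~\ref{lemma:technical_pushouts} this is a locally nilpotent quasi-coherent ideal (isomorphic to $\ker(q_*\mcO_{Y'} \to q_*g_*\mcO_Y)$), and in general it is not finitely generated --- this is exactly the failure exhibited in Remark~\ref{remark:pushout_need_not_be_noetherian}. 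Let $X_0 \hookrightarrow X'_{\rm geo}$ be the closed subspace defined by $\mcK$, so that $\mcO_{X_0} \hookrightarrow f_*\mcO_X$; since the closed immersion $X_0 \hookrightarrow X'_{\rm geo}$ is a universal homeomorphism, Lemma~\ref{lemma:factors_of_universal_homeomorphism} shows that $X \to X_0$ is a universal homeomorphism as well.

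The key reduction is that $X_0$ is already of finite type. Indeed $X \to X_0$ is integral (being a universal homeomorphism), $X$ is of finite type over $S$, and $\mcO_{X_0} \to (X \to X_0)_*\mcO_X$ is injective; hence Lemma~\ref{lem:fin_gen_when_finite} (Eakin--Nagata) shows that $X_0$ is of finite type over $S$, and proper if $X$ is. It then remains to interpolate a finite-type topological pushout between $X_0$ and $X'_{\rm geo}$. Concretely, I would produce a finite-type quasi-coherent $\mcO_S$-subalgebra $\mcO_{X'_{\rm top}} \subseteq \mcO_{X'_{\rm geo}}$ surjecting onto $\mcO_{X_0}$. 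Locally this is transparent: lifting finitely many $\mcO_S$-algebra generators of the finite-type algebra $\mcO_{X_0}$ to sections of $\mcO_{X'_{\rm geo}}$ and letting $\mcO_{X'_{\rm top}}$ be the subalgebra they generate yields a finite-type algebra with $\mcO_{X'_{\rm geo}} = \mcO_{X'_{\rm top}} + \mcK$; as $\mcK$ is nilpotent, $\mcO_{X'_{\rm geo}}$ is integral over $\mcO_{X'_{\rm top}}$ and they have the same reduction, so the corresponding affine morphism $X'_{\rm geo} \to X'_{\rm top}$ is a universal homeomorphism (and $X'_{\rm top}$ is a scheme exactly when $X'_{\rm geo}$ is, by Remark~\ref{rem:scheme_facts}(\ref{itm:07VV})). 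To carry this out globally I would write $\mcO_{X'_{\rm geo}}$ as the filtered union of its finite-type quasi-coherent $\mcO_S$-subalgebras and, using the finite type of $\mcO_{X_0}$ and quasi-compactness of $X_0$, select a single member surjecting onto $\mcO_{X_0}$; this is the Noetherian approximation step.

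Granting such an $X'_{\rm top}$, the diagram $X \leftarrow Y \rightarrow Y'$ maps to it via $X \to X'_{\rm geo} \to X'_{\rm top}$ and $Y' \to X'_{\rm geo} \to X'_{\rm top}$, the composite $X \to X'_{\rm top}$ is a universal homeomorphism, and $X'_{\rm top}$ has the same underlying space as $X'_{\rm geo}$ and hence as $X'$; therefore $X'_{\rm top}$ is a topological pushout, now of finite type over $S$. The properness statement, when $X$ is proper, follows from the addendum to Lemma~\ref{lem:fin_gen_when_finite} applied to $X \to X'_{\rm top}$, or directly from Remark~\ref{rem:scheme_facts}(\ref{itm:08AJ}) since $X \to X'_{\rm top}$ is a surjective universal homeomorphism from a proper scheme. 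The step I expect to be the main obstacle is the global construction above: arranging that the finite-type subalgebra makes $X'_{\rm top}$ of finite type over the base $S$ itself --- not merely over the a priori non-finite-type topological pushout $X'$ --- while retaining the surjection onto $\mcO_{X_0}$ that forces $X'_{\rm geo} \to X'_{\rm top}$ to be a universal homeomorphism. This is precisely where the hypothesis on $X$ is used (through the finite type of $X_0$) and where Noetherian approximation of quasi-coherent algebras, rather than any splitting of the nilpotent extension $0 \to \mcK \to \mcO_{X'_{\rm geo}} \to \mcO_{X_0} \to 0$ (which need not exist), does the work.
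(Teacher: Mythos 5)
Your outline reproduces the architecture of the paper's proof: reduce to the geometric pushout via Lemma~\ref{lemma:technical_pushouts}, observe that the schematic image $X_0 = f(X) \subseteq X'_{\rm geo}$ (your $X_0$ is exactly the paper's $f(X)$, cut out by the locally nilpotent kernel $\mcK$) is a thickening which is of finite type over $S$ by Eakin--Nagata (Lemma~\ref{lem:Eakin-Nagata}), interpolate a finite-type model between $X_0$ and $X'_{\rm geo}$, and deduce properness at the end. The gap is in the interpolation step, precisely the one you flag as the main obstacle.

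The device you propose there --- writing $\mcO_{X'_{\rm geo}}$ as a filtered union of finite-type quasi-coherent $\mcO_S$-subalgebras and selecting one that surjects onto $\mcO_{X_0}$ --- is only meaningful when $X'_{\rm geo} \to S$ is affine: then one pushes the structure sheaf forward to $S$, writes the resulting quasi-coherent $\mcO_S$-algebra as the filtered union of its finite-type quasi-coherent subalgebras, and takes relative $\Spec$. In the cases the lemma is actually needed for (e.g.\ $X$ proper over $S$), the structure morphism is not affine, and a subsheaf of rings of $\mcO_{X'_{\rm geo}}$ on the space $|X'_{\rm geo}|$ does not by itself define a scheme of finite type over $S$; the claim that such subalgebras exist, exhaust $\mcO_{X'_{\rm geo}}$, and can be chosen to surject onto $\mcO_{X_0}$ \emph{is} the approximation theorem, not a route to it. Your affine-local construction (lifting generators of $\mcO_{X_0}$) does not glue --- the subalgebras built on overlapping affines differ, and making them compatible is the hard content of any approximation argument. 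The paper's actual mechanism is: apply absolute Noetherian approximation, \stacksproj{09MV} for schemes and \stacksproj{09NR} for algebraic spaces, to write $X' = \varprojlim X'_i$ with $X'_i$ of finite type over $S$ and affine transition maps (note these $X'_i$ do \emph{not} share the underlying space of $X'$); then use \stacksproj{081B} (resp.\ \stacksproj{0828}) to descend the closed immersion $X_0 \hookrightarrow X'$ to some finite stage $X'_i$; finally replace $X'_i$ by the schematic image of $X'$, so that $\mcO_{X'_i} \hookrightarrow \mcO_{X'}$ and hence the kernel of $\mcO_{X'_i} \to \mcO_{X_0}$ is locally nilpotent, making $X_0 \to X'_i$ a thickening and $X \to X'_i$ a representable universal homeomorphism. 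One further small point: your first suggested route to properness, applying the addendum of Lemma~\ref{lem:Eakin-Nagata} to $X \to X'_{\rm top}$, is not licit as stated, since $\mcO_{X'_{\rm top}} \to f_*\mcO_X$ need not be injective (its kernel is $\mcO_{X'_{\rm top}} \cap \mcK$); your second route is the correct one and is what the paper does: separatedness of $X'_{\rm top}$ by Remark~\ref{rem:scheme_facts}(\ref{itm:05Z2}), then properness by Remark~\ref{rem:scheme_facts}(\ref{itm:03GN}).
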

\begin{proof}
%By Remark \ref{remark:geo_pushouts_affine_and_finite}, if $g^* \colon \mcO_{Y'} \to \mcO_{Y}$ is injective, then so is $f^* \colon \mcO_{X'} \to \mcO_{X}$, hence $X'$ is of finite type over $S$ by Lemma \ref{lem:fin_gen_when_finite}.
%In the general case, we n
By Lemma \ref{lemma:technical_pushouts} we can assume that $X'$ is a geometric pushout. Note that $X$, $X'$, and $S$ are quasi-compact and quasi-separated (cf.\ Remark \ref{remark:geo_pushouts_affine_and_finite}(3)). Thus, we can apply \stacksproj{09MV} (\stacksproj{09NR}, resp.) to get an inverse system of schemes (algebraic spaces, resp.) $X'_i$, of finite type over $S$, over a directed set $I$ with affine transition maps such that $X' = \varprojlim X'_i$. 

Since $f \colon X \to X'$ is a representable universal homeomorphism, the induced map $f(X) \to X'$ is a thickening and $f \colon X \to f(X)$ is a representable universal homeomorphism (cf.\ Remark \ref{rem:scheme_facts}(\ref{itm:04DZ})). Moreover, $f(X)$ is of finite type over $S$ by Lemma \ref{lem:Eakin-Nagata} as $\mcO_{f(X)} \to f_*\mcO_X$ is injective by definition. Thus, by \stacksproj{081B} (\stacksproj{0828}, resp.), there exists $X'_i$ such that the composition  $f(X) \to X' \to X'_i$ is a closed immersion. By replacing $X'_i$ by the image of $X'$ in it, we can assume that $\mcO_{X'_i} \to \mcO_{X'}$ is injective, and hence the kernel of $\mcO_{X'_i} \to \mcO_{f(X)}$ is locally nilpotent. Therefore, $f(X) \to X'_i$ is a thickening, thus $X \to X'_i$ is a representable universal homeomorphism and $X'_{\rm top} \coloneq X'_i$ is a topological pushout of  $X \xleftarrow{p} Y \xrightarrow{g} Y'$. 

To show the last statement, we note that the properness of $X$ implies that $X'_{\rm top}$ is separated over $S$ (Remark \ref{rem:scheme_facts}(\ref{itm:09MQ})), and hence proper by Remark \ref{rem:scheme_facts}(\ref{itm:03GN}).
\end{proof}

\begin{lemma}[{cf.\ \cite[Lemma 8.2]{kollar97}}] \label{lem:properties_of_geo_pushouts}
A base change of a geometric pushout square by a flat morphism is a geometric pushout square. %In particular, being a geometric push-out square is an fpqc local property.
\end{lemma}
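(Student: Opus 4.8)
The plan is to interpret the base change as pulling the entire diagram back along a flat morphism $h \colon W \to X'$ to the pushout; the case of a flat base change $S' \to S$ over the base is recovered by taking $W = X' \times_S S'$, since then $W \to X'$ is flat. Writing $X_W \coloneq X \times_{X'} W$, $Y'_W \coloneq Y' \times_{X'} W$, and $Y_W \coloneq Y \times_{X'} W$ (using the common composite $f \circ p = q \circ g \colon Y \to X'$), I obtain a commutative square with vertical arrows $f_W \colon X_W \to W$ and $g_W \colon Y_W \to Y'_W$ and horizontal arrows $p_W \colon Y_W \to X_W$ and $q_W \colon Y'_W \to W$. First I would record that this square is a topological pushout square. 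Because $Y \to X'$ factors through $p$, one has $Y_W = Y \times_X X_W$, so $p_W$ is the base change of $p$ along $X_W \to X$ and hence is again representable, quasi-compact, and quasi-separated; similarly $g_W$ is the base change of $g$ along $W \to X'$, so it is a representable universal homeomorphism, and $f_W$ is the base change of $f$, hence likewise a representable universal homeomorphism (universal homeomorphisms and representability being stable under arbitrary base change). This verifies every condition of Definition \ref{definition:geo_pushout} except the identity of structure sheaves.

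The substance is therefore the structure-sheaf identity, and this is where flatness of $h$ enters. Starting from the defining equality
\[
\mcO_{X'} = f_* \mcO_X \times_{(f \circ p)_* \mcO_Y} q_* \mcO_{Y'},
\]
I would apply the pullback functor $h^*$. Since $h$ is flat, $h^*$ is exact; as the fibre product of quasi-coherent sheaves $\mcF \times_{\mcH} \mcG$ is the kernel of $\mcF \oplus \mcG \to \mcH$, the exact functor $h^*$ commutes with this limit, so
\[
\mcO_W = h^* \mcO_{X'} = h^* f_* \mcO_X \times_{h^* (f \circ p)_* \mcO_Y} h^* q_* \mcO_{Y'}.
\]
Next I would invoke flat base change for quasi-coherent pushforward, which applies because $f$, $q$, and $f \circ p$ are quasi-compact and quasi-separated (the first as a universal homeomorphism, the second by Remark \ref{remark:geo_pushouts_affine_and_finite}(1), the third by composition with $p$). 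Denoting by $h_X \colon X_W \to X$, $h_Y \colon Y_W \to Y$, $h_{Y'} \colon Y'_W \to Y'$ the projections, flat base change yields $h^* f_* \mcO_X \cong (f_W)_* h_X^* \mcO_X = (f_W)_* \mcO_{X_W}$, and likewise $h^* q_* \mcO_{Y'} \cong (q_W)_* \mcO_{Y'_W}$ and $h^* (f \circ p)_* \mcO_Y \cong (f_W \circ p_W)_* \mcO_{Y_W}$.

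Substituting these identifications gives
\[
\mcO_W \cong (f_W)_* \mcO_{X_W} \times_{(f_W \circ p_W)_* \mcO_{Y_W}} (q_W)_* \mcO_{Y'_W},
\]
which is precisely the remaining condition, so the base-changed square is a geometric pushout square. I expect no serious conceptual obstacle: the argument is formal once one knows that flat pullback is exact and commutes with quasi-coherent pushforward. The only points requiring care are bookkeeping — confirming the quasi-compactness and quasi-separatedness hypotheses needed for flat base change, and that $h^*$ genuinely commutes with the fibre-product limit — and, in the algebraic-space setting, reducing the flat base change statement to the case of schemes by an \'etale-local argument, exactly as done elsewhere in this section.
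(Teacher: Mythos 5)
Your proof is correct and follows essentially the same route as the paper's: the paper characterises a geometric pushout square by the exact sequence $0 \to \mcO_{X'} \to f_*\mcO_X \oplus q_*\mcO_{Y'} \to (f\circ p)_*\mcO_Y$ together with $f$ being a representable universal homeomorphism, and notes both are preserved under flat base change. Your write-up just makes explicit the two ingredients the paper leaves implicit — stability of representable universal homeomorphisms under base change, and exactness of flat pullback combined with flat base change for quasi-compact quasi-separated pushforwards.
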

%\noindent Therefore, being a geometric pushout square is an affine and \'etale local property.
%This combined with \stacksproj{0BMP} shows that being a geometric pushout is an fpqc local property. In particular, it is an affine and \'etale local property.
\begin{comment}
Consider the following commutative diagram
\begin{center}
\begin{tikzcd}
X \arrow{d}{f}  & Y \arrow{l}{i} \arrow{d}{f_Y} \\
Z & Y'  \arrow{l}{j}
\end{tikzcd}
\end{center}
where $X \leftarrow Y \rightarrow Y'$ satisfies the assumptions of Definition \ref{definition:geo_pushout} and $X \to Z$ is a universal homeomorphism. Then it is a geometric pushout square if and only if 
\[
0 \to \mcO_Z \to f_* \mcO_X \oplus j_* \mcO_{Y'} \to (f \circ i)_* \mcO_Y
\]
is exact.
\end{comment}
\begin{proof}
A geometric pushout square as in Definition \ref{definition:geo_pushout} is uniquely determined by the following exact sequence
\[
0 \to \mcO_{X'} \to f_*\mcO_X \oplus q_* \mcO_{Y'} \to (f \circ p)_* \mcO_Y
\] 
and the fact that $X \to X'$ is a representable universal homeomorphism. These properties are preserved under flat base change. \qedhere %The fact that being a geometric pushout square is an fpqc local property is a consequence of \stacksproj{0BMP} and  algebraic spaces admitting surjective \'etale covers by schemes. \qedhere

\end{proof}

Further, we study \'etale morphisms under geometric pushouts.  % This implies that the above results hold true also in the category of algebraic spaces (cf.\ the proof of Theorem \ref{theorem:general_pushouts_in_mixed_char_alg_spaces}).
\begin{proposition}[{cf.\ \cite[Lemma 44]{kollar12}}] \label{prop:pushouts_induce_etale_maps}
Let 
\begin{center}
\begin{tikzcd}
X_1 \arrow{d} & Y_1 \arrow{l}[swap]{p_1} \arrow{r}{g_1} \arrow{d} & Y_1' \arrow{d} \\
X_2 & Y_2 \arrow{l}[swap]{p_2} \arrow{r}{g_2} & Y_2'
\end{tikzcd}
\end{center}
be a commutative diagram of schemes such that both squares are Cartesian and $X_i \leftarrow Y_i \rightarrow Y'_i$ satisfy the assumptions of Definition \ref{definition:geo_pushout} for $i \in \{1,2\}$. Further, suppose that  the vertical maps are \'etale and the geometric pushout $X_2'$ of the second row {$(p_2,g_2)$} exists. Then a geometric pushout $X_1'$ of the first row {$(p_1,g_1)$} exists and the induced map $X_1' \to X_2'$ is \'etale. %Moreover, if $X_1 \to X_2$ is finite, then so is $X'_1 \to X'_2$[check if necessary].
\end{proposition}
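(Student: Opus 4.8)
The plan is to construct $X_1'$ directly as an \'etale scheme over the given pushout $X_2'$ and then recognise it as a geometric pushout by flat base change. Write $f_2 \colon X_2 \to X_2'$, $q_2 \colon Y_2' \to X_2'$ for the structure maps of the second-row geometric pushout, so that $f_2$ is a representable universal homeomorphism. By Remark \ref{rem:scheme_facts}(\ref{itm:04DZ}), pulling back along $f_2$ gives an equivalence between \'etale schemes over $X_2'$ and \'etale schemes over $X_2$; I would define $X_1' \to X_2'$ to be the unique \'etale $X_2'$-scheme whose base change along $f_2$ is the \'etale map $X_1 \to X_2$. By construction $X_1' \to X_2'$ is \'etale and $X_1' \times_{X_2'} X_2 \cong X_1$, so the base change $X_1 \to X_1'$ of $f_2$ is a representable universal homeomorphism.

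Next I would apply Lemma \ref{lem:properties_of_geo_pushouts} to the flat (indeed \'etale) morphism $X_1' \to X_2'$: base-changing the second-row geometric pushout square along it produces a geometric pushout square
\begin{center}
\begin{tikzcd}
X_2 \times_{X_2'} X_1' \arrow{d} & Y_2 \times_{X_2'} X_1' \arrow{l} \arrow{d} \\
X_1' & Y_2' \times_{X_2'} X_1'. \arrow{l}
\end{tikzcd}
\end{center}
It remains to identify its entries with the first row. The top-left term is $X_1$ by construction; using the factorisation $Y_2 \to X_2 \to X_2'$ together with the Cartesian left square, the top-right term is $Y_2 \times_{X_2} (X_2 \times_{X_2'} X_1') = Y_2 \times_{X_2} X_1 = Y_1$, and under these identifications the base change of $p_2$ becomes the projection $Y_1 \to X_1$, i.e.\ $p_1$.

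The remaining identification $Y_2' \times_{X_2'} X_1' \cong Y_1'$ is the crux of the argument and the step I expect to require the most care. Set $W' \coloneq Y_2' \times_{X_2'} X_1'$, which is \'etale over $Y_2'$ as a base change of $X_1' \to X_2'$. Base-changing further along the representable universal homeomorphism $g_2 \colon Y_2 \to Y_2'$ and telescoping fibre products gives $W' \times_{Y_2'} Y_2 = Y_2 \times_{X_2'} X_1' = Y_1$. On the other hand $Y_1' \to Y_2'$ is \'etale (a vertical map of the diagram) and, by the Cartesian right square, $Y_1' \times_{Y_2'} Y_2 = Y_1$. Thus $W'$ and $Y_1'$ are two \'etale $Y_2'$-schemes with isomorphic base changes along $g_2$; since $g_2$ is a representable universal homeomorphism, the equivalence of \'etale sites of Remark \ref{rem:scheme_facts}(\ref{itm:04DZ}) forces a canonical isomorphism $W' \cong Y_1'$ over $Y_2'$, compatible with the identification over $Y_2$. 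Under it the base change of $g_2$ becomes $g_1$ and the base change of $q_2$ becomes the structure map $Y_1' \to X_1'$.

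Assembling these identifications, the base-changed square is exactly the first-row square with apex $X_1'$, so by Lemma \ref{lem:properties_of_geo_pushouts} it is a geometric pushout square; hence $X_1'$ is the geometric pushout of $X_1 \xleftarrow{p_1} Y_1 \xrightarrow{g_1} Y_1'$, and $X_1' \to X_2'$ is \'etale by construction. The only delicate points are the two appeals to the equivalence of \'etale sites under a representable universal homeomorphism (for $f_2$, to define $X_1'$, and for $g_2$, to identify $W'$ with $Y_1'$) and the need to check that these equivalences are compatible with the structure maps $p_2, q_2$, so that the base-changed diagram genuinely recovers the first row; everything else is formal manipulation of Cartesian diagrams.
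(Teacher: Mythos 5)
Your proposal is correct and follows essentially the same route as the paper's proof: both construct $X_1'$ via the equivalence of \'etale sites along the representable universal homeomorphism $X_2 \to X_2'$ (Remark \ref{rem:scheme_facts}(\ref{itm:04DZ})), identify its pullback over $Y_2'$ with $Y_1'$ using the same equivalence along $g_2 \colon Y_2 \to Y_2'$, and conclude by flat base change of geometric pushout squares (Lemma \ref{lem:properties_of_geo_pushouts}). The only difference is presentational — you base-change first and then identify the entries, while the paper identifies the two squares as a pullback and then invokes the lemma — which is logically the same argument.
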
 
\begin{proof}
By Remark \ref{rem:scheme_facts}(\ref{itm:04DZ}) applied to the universal homeomorphism $X_2 \to X_2'$ we can find a scheme $X'_1$ and an \'etale morphism $X'_1 \to X'_2$ the pullback of which is $X_1 \to X_2$.  Moreover, the pullback of $X'_1 \to X_2'$ to $Y_2'$ agrees with $Y'_1 \to Y'_2$ by applying Remark \ref{rem:scheme_facts}(\ref{itm:04DZ}) to the universal homeomorphism $Y_2 \to Y_2'$. In particular, we get the following commutative diagram
\begin{center}
\begin{tikzcd}
X_1 \arrow[dashed]{d} \arrow{r}     & X_2 \arrow{d}  & Y_2 \arrow{l} \arrow{d}   &  Y_1 \arrow[bend right = 30]{lll} \arrow{l} \arrow{d} \\
X'_1 \arrow[dashed]{r}     & X'_2 & Y'_2 \arrow{l}  & Y'_1, \arrow{l} \arrow[bend left = 30, dashed]{lll} 
\end{tikzcd}
\end{center}
\begin{comment}
\begin{center}
\begin{tikzcd}
&
X_2
\ar[leftarrow]{dl}[swap, sloped, near start]{}
\ar{dd}[near end]{}
& & Y_2
\ar{dd}{}
\ar{ll}{}
\ar[leftarrow]{dl}[swap, sloped, near start]{}
\\
X_1

\ar[dashed]{dd}[swap]{}
& & Y_1 \ar[crossing over]{ll}[near start]{}
\\
&
X'_2

\ar[dashed, leftarrow, sloped, near end]{dl}{}
& & Y'_2 \ar[near start]{ll}{}
\ar[leftarrow]{dl}
\\
\tilde{X}_1

& & Y'_1 \ar[dashed]{ll}
\ar[leftarrow, crossing over, near start]{uu}{}
\end{tikzcd}
\end{center}
\end{comment}
where the bigger square is a pull-back of the smaller square via $X'_1 \to X'_2$. By Lemma \ref{lem:properties_of_geo_pushouts} the bigger square is thus a geometric pushout. \qedhere

%The last assertion follows from \stacksproj{04DZ}\footnote{check separatedness and finite type} applied to $X_1 \to  X'_1 \to X'_2$.
\end{proof}
\begin{lemma} \label{lemma:geometric_pushout_is_a_categorical_pushout} Let $X'$ be a geometric pushout of a diagram $X \leftarrow Y \rightarrow Y'$ of schemes (algebraic spaces, resp.) satisfying the assumptions of Definition \ref{definition:geo_pushout}. Then $X'$ is a categorical pushout in the category of schemes (algebraic spaces, resp.).
\end{lemma}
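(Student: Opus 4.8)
The plan is to verify the universal property of a categorical pushout directly from the defining fibre-product description of $\mcO_{X'}$, exploiting that $f$ and $g$ are universal homeomorphisms and hence homeomorphisms on underlying spaces. Write the square as in Definition \ref{definition:geo_pushout}, with vertical maps $f\colon X\to X'$, $g\colon Y\to Y'$ and horizontal maps $p\colon Y\to X$, $q\colon Y'\to X'$, so that $\mcO_{X'}=f_*\mcO_X\times_{(f\circ p)_*\mcO_Y}q_*\mcO_{Y'}$ and $f$ is a representable universal homeomorphism. Given a scheme (algebraic space) $W$ with morphisms $h\colon X\to W$ and $j\colon Y'\to W$ satisfying $h\circ p=j\circ g$, I must produce a unique $u\colon X'\to W$ with $u\circ f=h$ and $u\circ q=j$. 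I would first settle the scheme case and then descend: choosing a scheme atlas $U\to X'$ and pulling the whole diagram back along it, Lemma \ref{lem:properties_of_geo_pushouts} (étale maps are flat) produces a geometric pushout of schemes with top term $U$; the scheme case yields $U\to W$, and comparing the two pullbacks over $U\times_{X'}U$ — again a geometric pushout of schemes by the same lemma — shows they glue to the desired $u$, with uniqueness descending in the same way (here one also uses Proposition \ref{prop:pushouts_induce_etale_maps}).

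For schemes, the topological map is forced: since $f$ is a universal homeomorphism, $|f|\colon|X|\to|X'|$ is a homeomorphism, so I set $|u|\coloneq|h|\circ|f|^{-1}$, giving $|u|\circ|f|=|h|$. Using $f\circ p=q\circ g$ together with $|g|$ being a homeomorphism (so that $|f|^{-1}\circ|q|=|p|\circ|g|^{-1}$ on points), one checks $|u|\circ|q|=|j|$. These identifications give $u_*f_*\mcO_X=h_*\mcO_X$, $u_*q_*\mcO_{Y'}=j_*\mcO_{Y'}$, and $u_*(f\circ p)_*\mcO_Y=(h\circ p)_*\mcO_Y$ as sheaves on $W$, and since $u_*$ preserves fibre products, $u_*\mcO_{X'}=h_*\mcO_X\times_{(h\circ p)_*\mcO_Y}j_*\mcO_{Y'}$. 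Now $h^\#\colon\mcO_W\to h_*\mcO_X$ and $j^\#\colon\mcO_W\to j_*\mcO_{Y'}$ have equal images in $(h\circ p)_*\mcO_Y=(j\circ g)_*\mcO_Y$ by the hypothesis $h\circ p=j\circ g$, so the universal property of the fibre product yields a single map $u^\#\colon\mcO_W\to u_*\mcO_{X'}$ whose two projections are $h^\#$ and $j^\#$; this makes $u$ a morphism of ringed spaces with $u\circ f=h$ and $u\circ q=j$.

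It remains to check that $u$ is a morphism of locally ringed spaces, hence of schemes. Because $f$ is surjective, every $x'\in X'$ lifts to some $x\in X$ with $f(x)=x'$; from $h=u\circ f$ with $h$ and $f$ local and the identity $(f^\#_x)^{-1}(\mfm_{X,x})=\mfm_{X',x'}$ (valid since $f^\#_x$ is a local homomorphism and the preimage of $\mfm_{X,x}$ is a proper prime), one gets $u^\#_{x'}(\mfm_{W,u(x')})\subseteq\mfm_{X',x'}$, so $u^\#$ is local. For uniqueness, $|u|$ is forced as above, and any competing morphism has $u^\#$ with prescribed projections to $f_*\mcO_X$ and $q_*\mcO_{Y'}$, namely $h^\#$ and $j^\#$; as a section of the fibre-product sheaf $\mcO_{X'}$ is determined by its two components, $u^\#$ is unique. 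I expect the main obstacle to be exactly this last point: the projection $\mcO_{X'}\to f_*\mcO_X$ need not be injective, since $g^\#\colon\mcO_{Y'}\to g_*\mcO_Y$ may have a kernel, so uniqueness cannot be read off from $u\circ f=h$ alone and genuinely requires both legs of the fibre product — the secondary technical care being the étale descent from schemes to algebraic spaces.
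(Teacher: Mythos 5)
Your scheme case is correct and is essentially the paper's own argument (which follows \stacksproj{0ET0}): the topological map is forced because $f$ is a universal homeomorphism, the map on structure sheaves comes from the universal property of the fibre product defining $\mcO_{X'}$, and locality follows from surjectivity of $f$ together with locality of $f^{\#}$ and $h^{\#}$. The paper leaves uniqueness implicit; you rightly spell it out using both projections, since $\mcO_{X'} \to f_*\mcO_X$ can indeed fail to be injective.

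The algebraic space case, however, has a genuine gap. Your descent along an atlas $U \to X'$ reduces the \emph{source} diagram to schemes, but the test object $W$ is still an algebraic space, and at that point you invoke ``the scheme case yields $U \to W$''. The scheme case you proved only gives the universal property against \emph{scheme} targets: the whole argument is a locally-ringed-space argument, and a morphism from a scheme to an algebraic space is not a morphism of (locally) ringed spaces, so nothing you established produces the map $U \to W$. The missing statement --- that a pushout square of schemes of this type remains a pushout in the larger category of algebraic spaces --- is not formal: a fully faithful embedding need not preserve colimits, and pushouts or quotients in the category of schemes genuinely can differ from those in algebraic spaces. This is exactly the nontrivial content the paper imports from \stacksproj{07SX}, whose hypotheses (compatibility of the pushout with flat base change, and lifting of \'etale morphisms across it) are what Lemma \ref{lem:properties_of_geo_pushouts} and Proposition \ref{prop:pushouts_induce_etale_maps} are verified for. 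To fill the gap by hand you would have to \'etale-localize the target $W$ as well, transport \'etale covers across the universal homeomorphisms involved (Remark \ref{rem:scheme_facts}(\ref{itm:04DZ})), apply the scheme case to those covers, and then glue; your proposal uses Lemma \ref{lem:properties_of_geo_pushouts} and Proposition \ref{prop:pushouts_induce_etale_maps} only to localize the source, which is the easy half (the paper disposes of it by citing the end of \stacksproj{07VX}).
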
 
\begin{proof}
Since algebraic spaces are quotients of schemes, one can reduce to the case of $X$, $X'$, $Y$, and $Y'$ being schemes (see the end of \stacksproj{07VX}). By \stacksproj{07SX}, it is enough to show that $X'$ is a pushout in the category of schemes (assumptions (3) and (4) are satisfied by Lemma \ref{lem:properties_of_geo_pushouts} and Proposition \ref{prop:pushouts_induce_etale_maps}, respectively). 

%By \stacksproj{0BMP}), we can further assume that $X'$ is affine. 
We argue as in \stacksproj{0ET0}. Suppose there is a scheme $Z$ and maps $f_Z \colon X \to Z$ and $q_Z \colon Y' \to Z$ agreeing on $Y$. We can define $h \colon X' \to Z$ as equal to $f_Z$ on the level of topological spaces. Moreover, $h$ is a map of ringed spaces via $\mcO_Z \to (f_Z)_*\mcO_X \times_{(f_Z \circ p)_*\mcO_Y} (q_Z)_* \mcO_{Y'} = h_*\mcO_{X'}$. In fact, it is a map of locally ringed spaces (and hence of schemes) as $f \colon X \to X'$ is a universal homeomorphism and $f_Z$ is a map of schemes (cf.\ the last paragraph of \stacksproj{0ET0}).
%(see also \stacksproj{07RT}).% an analogue of \stacksproj{0ET0} (see also \stacksproj{07RT}).,
\end{proof}

%\begin{remark} Proposition \ref{prop:pushouts_induce_etale_maps} is an analogue of the \'etale case of \cite[Lemma 44]{kollar12} and \stacksproj{07RX}. In view of that it is natural to wonder if the proposition holds true if we replace \'etale by smooth or flat.
%\end{remark} 

Last, we prove that it is enough to construct geometric pushouts locally. 
%To this end we need the following definition. Let $X \xleftarrow{p} Y \xrightarrow{g} Y'$ be a diagram of schemes or algebraic spaces satisfying the assumptions of Definition \ref{definition:geo_pushout}. We say that $U \xleftarrow{p} V \xrightarrow{g} V'$ is a \emph{pullback} of this diagram by an \'etale morphism $U \to X$ if $V = U \times_X Y$ and $V' \to Y'$ is the unique map with the pullback via $g$ being $V \to Y$ (see Remark \ref{rem:scheme_facts}(\ref{itm:04DZ})). If $U$ is a scheme, then so is $V$ and $V'$ (see \stacksproj{07VW}). If $U \to X$ is an open immersion, then  $V = p^{-1}(U)$ and $V' = g(V)$. 

\begin{lemma} \label{lem:construct_pushout_locally}
Let $X \xleftarrow{p} Y \xrightarrow{g} Y'$ be a diagram of schemes (algebraic spaces, resp.) satisfying the assumptions of Definition \ref{definition:geo_pushout}. Then a geometric pushout of this diagram exists as a scheme (an algebraic space, resp.), if and only if it exists after pulling back by every open immersion (\'etale morphism, resp.) $U \to X$ with $U$ an affine scheme. %Then a geometric pushout of $X \xleftarrow{p} Y \xrightarrow{g} Y'$ exists as a scheme (an algebraic space, resp.). 
\end{lemma}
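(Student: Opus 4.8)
The \emph{only if} direction is essentially flat base change. Suppose the geometric pushout $X'$ of $X \xleftarrow{p} Y \xrightarrow{g} Y'$ exists, fitting into a geometric pushout square as in Definition \ref{definition:geo_pushout} with $f \colon X \to X'$ and $q \colon Y' \to X'$. Given an open immersion (\'etale morphism, resp.) $U \to X$, since $f$ is a representable universal homeomorphism, Remark \ref{rem:scheme_facts}(\ref{itm:04DZ}) produces a unique open immersion (\'etale morphism, resp.) $V \to X'$ whose pullback along $f$ is $U \to X$; in particular $V \to X'$ is flat. Base changing the geometric pushout square by $V \to X'$ and invoking Lemma \ref{lem:properties_of_geo_pushouts} yields a geometric pushout square whose terms are $U$, $Y_U \coloneq Y \times_X U$, and $Y' \times_{X'} V$. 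Applying Remark \ref{rem:scheme_facts}(\ref{itm:04DZ}) once more to the universal homeomorphism $g$ identifies $Y' \times_{X'} V$ with the third term $Y'_U$ of the pulled-back diagram, so $V$ is the geometric pushout over $U$.

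For the \emph{if} direction, observe first that every open immersion (\'etale morphism, resp.) $U \to X$ gives rise to a diagram $U \xleftarrow{} Y_U \xrightarrow{} Y'_U$ satisfying the assumptions of Definition \ref{definition:geo_pushout}: here $Y_U \coloneq Y \times_X U$ and $Y'_U$ is the unique scheme \'etale over $Y'$ pulling back to $Y_U$ under $g$ (Remark \ref{rem:scheme_facts}(\ref{itm:04DZ})), with $Y_U \to Y'_U$ the induced universal homeomorphism. By hypothesis, the geometric pushout $X'_U$ of this diagram exists. The key point is that this formation is compatible with further localisation: if $U' \to U$ is an open immersion (\'etale morphism, resp.), then, arguing exactly as in the \emph{only if} direction, $X'_{U'}$ is the pullback of $X'_U$ along the open immersion (\'etale morphism, resp.) $V' \to X'_U$ corresponding to $U' \to U$ under Remark \ref{rem:scheme_facts}(\ref{itm:04DZ}). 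In particular, any two such local pushouts restrict to canonically isomorphic geometric pushouts over a common localisation, by the uniqueness inherent in being a categorical pushout (Lemma \ref{lemma:geometric_pushout_is_a_categorical_pushout}).

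In the scheme case, choose an affine open cover $\{U_i\}$ of $X$ and set $X'_i \coloneq X'_{U_i}$, each carrying a universal homeomorphism $U_i \to X'_i$ that is a homeomorphism of underlying spaces. For each $i,j$ the open $U_i \cap U_j \subseteq U_i$ corresponds to an open $W_{ij} \subseteq X'_i$, and by the previous paragraph $W_{ij}$ and its counterpart in $X'_j$ are both the geometric pushout over $U_i \cap U_j$; this yields canonical transition isomorphisms, whose cocycle condition on triple overlaps follows again from the uniqueness in Lemma \ref{lemma:geometric_pushout_is_a_categorical_pushout}. Gluing the $X'_i$ produces a scheme $X'$; the maps $U_i \to X'_i$ glue to a universal homeomorphism $X \to X'$ (this property is Zariski local), and the defining identity $\mcO_{X'} = f_*\mcO_X \times_{(f \circ p)_*\mcO_Y} q_*\mcO_{Y'}$ of Definition \ref{definition:geo_pushout} may likewise be verified on the cover, so $X'$ is the desired geometric pushout.

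In the algebraic space case, choose an \'etale surjection $U \to X$ from a scheme $U$, which we may take to be a disjoint union of affine schemes, and set $R \coloneq U \times_X U$; this is a scheme, as the diagonal of an algebraic space is representable, and its two projections to $U$ are \'etale. By hypothesis together with the scheme-theoretic gluing above (applied over an affine \'etale cover of $R$), the geometric pushouts $X'_U$ and $X'_R$ exist as schemes, and Proposition \ref{prop:pushouts_induce_etale_maps} shows that the two projections induce \'etale morphisms $X'_R \rightrightarrows X'_U$. The structure maps of the \'etale equivalence relation $R \rightrightarrows U$ presenting $X$ — the diagonal $U \to R$, the switch $R \to R$, and the composition over $R \times_U R$ — induce corresponding maps between the associated pushouts by the uniqueness of Lemma \ref{lemma:geometric_pushout_is_a_categorical_pushout}, so that $X'_R \rightrightarrows X'_U$ is again an \'etale equivalence relation; its quotient algebraic space $X'$ is the sought geometric pushout, with $X \to X'$ a universal homeomorphism and the structure sheaf condition checked \'etale locally. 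I expect the main obstacle to be precisely this last verification that $X'_R \rightrightarrows X'_U$ forms an \'etale equivalence relation, but it should reduce cleanly to Lemma \ref{lemma:geometric_pushout_is_a_categorical_pushout}, which transports all the groupoid identities functorially from $R \rightrightarrows U$.
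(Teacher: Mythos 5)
Your \emph{only if} direction and your scheme case are essentially the paper's argument. For the forward direction the paper simply cites Proposition \ref{prop:pushouts_induce_etale_maps}, whose proof is exactly your combination of Remark \ref{rem:scheme_facts}(\ref{itm:04DZ}) with flat base change (Lemma \ref{lem:properties_of_geo_pushouts}). For the converse in the scheme case the paper, following \stacksproj{07RT}, avoids your cocycle bookkeeping: it defines a candidate ringed space $X'$ globally, with underlying topological space $|X|$ and structure sheaf $f_*\mcO_X \times_{(f \circ p)_*\mcO_Y} q_*\mcO_{Y'}$, and then observes that being a scheme and $f$ being a universal homeomorphism are local statements, which follow from the affine-local hypothesis via Lemma \ref{lem:0et0}. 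Your gluing variant works too, since transition isomorphisms and the cocycle condition do come for free from Lemma \ref{lemma:geometric_pushout_is_a_categorical_pushout}; one small omission is that you must also glue the maps $q_i \colon Y'_i \to X'_i$ into $q \colon Y' \to X'$ (needed even to state the structure-sheaf identity), which follows from the same canonicity.

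The algebraic space case, however, has a genuine gap precisely at the point you flagged, and the fix is not the one you propose. Functoriality of pushouts (Lemma \ref{lemma:geometric_pushout_is_a_categorical_pushout}) does transport the identity, inversion and composition maps, so it makes $X'_R \rightrightarrows X'_U$ an \'etale \emph{groupoid}; but it cannot show that $(t',s') \colon X'_R \to X'_U \times X'_U$ is a monomorphism, which is what ``equivalence relation'' means and what \stacksproj{02WW} requires in order to produce an algebraic space (an \'etale groupoid that fails this would in general only give a Deligne--Mumford stack). The missing input is that $R \to X'_R$ is a \emph{surjective} universal homeomorphism while $R \to U \times U \to X'_U \times X'_U$ is universally injective, so $X'_R \to X'_U \times X'_U$ inherits universal injectivity; this is why the paper can invoke \stacksproj{0DT7}, after which constructing $e$, $i$, $c$ by functoriality of pushouts is indeed enough. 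Separately, after forming $X' = X'_U/X'_R$ you still owe the verification that $X \to X'$ is a representable universal homeomorphism; this is not an \'etale-local triviality, because it amounts to the square with vertical arrows $U \to X'_U$ and $X \to X'$ being Cartesian, i.e.\ $U \simeq X'_U \times_{X'} X$. The paper proves this Cartesianness explicitly: $U \to X'_U \times_{X'} X$ is universally injective and surjective (using injectivity of $X \to X'$ from \stacksproj{045Z}), \'etale by \stacksproj{03FV}, hence an isomorphism by \stacksproj{02LC}. Your phrase ``the structure sheaf condition checked \'etale locally'' presupposes exactly this Cartesian diagram, so it needs to be supplied rather than assumed.
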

Here $U \xleftarrow{p} V \xrightarrow{g} V'$ is a \emph{pullback} of $X \xleftarrow{p} Y \xrightarrow{g} Y'$ by an \'etale morphism $U \to X$ if $V = U \times_X Y$ and $V' \to Y'$ is the unique \'etale map with the pullback via $g$ being $V \to Y$ (see Remark \ref{rem:scheme_facts}(\ref{itm:04DZ})). If $U$ is a scheme, then so are $V$ and $V'$ (Remark \ref{rem:scheme_facts}(\ref{itm:07VV})). If $U \to X$ is an open immersion, then  $V = p^{-1}(U)$ and $V' = g(V)$. Note that $U \leftarrow V \to V'$ satisfies the assumptions of Definition \ref{definition:geo_pushout}.
\begin{proof}
If a geometric pushout of $X \leftarrow Y \to Y'$ exists, then it exists after the pullbacks by Proposition \ref{prop:pushouts_induce_etale_maps}.
As for the implication in the other direction, we first deal with the case of schemes arguing as in \stacksproj{07RT}. Let 
\begin{center}
\begin{tikzcd}
X \arrow{d}{f} & \arrow{l}{p} \arrow{d}{g} Y \\
X' & \arrow{l}{q} Y'
\end{tikzcd}
\end{center}
be a push-out diagram of topological spaces such that $X'=X$, $f= \mathrm{id}$, and $q \colon Y' = Y \xrightarrow{p} X = X'$ is the natural map induced by $p$ {(that is, topologically, $q = g^{-1} \circ p$)}. 

We make $f$ into a map of ringed spaces by setting
\[
\mcO_{X'} \coloneq f_*\mcO_X \times_{(f \circ p)_* \mcO_{Y}} q_* \mcO_{Y'}.
\]
The fact that $f$ is a map of schemes and is a universal homeomorphism can be checked locally on $X$ and hence follows from the assumptions and Lemma \ref{lem:0et0}.\\

Now, we move to the case of algebraic spaces (cf.\ \stacksproj{07VX}). Pick a surjective \'etale map $U \to X$ with $U$ a scheme, and construct pushouts $U'$ and $E'$ of the pullbacks of $X \xleftarrow{p} Y \xrightarrow{g} Y'$ by $U\to X$ and $E := U\times_X U \rightrightarrows U \to X$, respectively (they exist by the above paragraph). Then the maps $\sigma_1', \sigma_2' \colon E' \rightrightarrows U'$ are \'etale by Proposition \ref{prop:pushouts_induce_etale_maps}. Moreover, they induce an equivalence relation on $U'$; indeed, $E \to E'$ is a universal homeomorphism, and so by \stacksproj{0DT7} it is enough to construct the identity $e \colon U' \to E'$, the inversion $i \colon E' \to E'$, and the composition map $c' \colon E' \times_{\sigma_2',U', \sigma_1'} E' \to E'$ which follow by functoriality of pushouts (we leave details to the reader). Thus, we can take a quotient $X' \coloneq U' / E'$ as an algebraic space (\stacksproj{02WW}) sitting inside the following diagram:
\begin{center}
\begin{tikzcd}
E \arrow[shift right = 2pt]{r} \arrow[shift left = 2pt]{r} \arrow{d} & U \arrow{r} \arrow{d} & X \arrow{d} \\
E' \arrow[shift right = 2pt]{r} \arrow[shift left = 2pt]{r} & U' \arrow{r} & X'.
\end{tikzcd} 
%\mathllap{E =\ }  U' \times_{X'} U'
\end{center}
Since the left diagram is Cartesian, $X \to X'$ is injective (cf.\ \stacksproj{045Z}). We claim that the right diagram is Cartesian (and so $X \to X'$ is a representable universal homeomorphism). Indeed, the morphism $U \to U'$ factorises as
\[
U \to U' \times_{X'} X \to U',
\]
and so $U \to U' \times_{X'} X$ is universally injective. Since $X \to X'$ is injective, so is $U' \times_{X'} X \to U'$. Thus $U \to U' \times_{X'} X$ is surjective. Moreover, it is \'etale by \stacksproj{03FV} as $U'\times_{X'} X \to X$ and $U \to X$ are \'etale, and hence it is an isomorphism as $U \to U' \times_{X'} X$ is representable (see \stacksproj{02LC}). \qedhere
\end{proof}

\subsection{Generalised conductor squares} \label{ss:generalised-conductor-square}
%The goal of this subsection is to show Proposition \ref{prop:univ_homeo_and_Pic}.
\begin{comment}We call a diagram of commutative rings of the form
\begin{center}
\begin{tikzcd}
R \arrow{r}{f} \arrow{d} & S \arrow{d} \\
R/I \arrow{r} & S/I
\end{tikzcd}
\end{center}
a \emph{Milnor square}, if $f \colon R \to S$ is a map of rings sending an ideal $I \subseteq R$ isomorphically onto an ideal of $S$, which by abuse of notation we also denote by $I$.
\end{comment}
\begin{definition} 
We call a commutative diagram
\begin{center}
\begin{tikzcd}
X \arrow{d}{f} & D \arrow{d}{g} \arrow{l}{i} \\
Y  & C \arrow{l}{j}
\end{tikzcd}
\end{center}
a \emph{{generalised} conductor square} when $f \colon X \to Y$ is a finite {surjective} map of reduced Noetherian schemes and $D \to C$ is the induced map of the conductors of $f$.
\end{definition}
{We define conductors affine locally exactly as in \cite[I.2.6]{weibel13}. Precisely, given a finite extension of rings $R \subseteq S$ we set $I = \{ s \in S \,\mid\, sS \subseteq R\}$. The ideals $I\subseteq R$ and $I = IS \subseteq S$ define the conductors $C$ and $D$. Note that $R \simeq S \times_{S/I} R/I$; this is an example of a Milnor square.

When $f$ is finite of degree greater than one over each irreducible component, then $C = Y$ and $D=X$. We called the above diagram a generalised conductor square, as conductors are often defined only when $f$ is birational (that is, an isomorphism over a \emph{dense} open subset), cf.\ \cite[Definition 2.25]{ct17}. Later on, we consider finite universal homeomorphisms $f \colon X \to Y$ which are isomorphisms over an open subset only (and so, on some irreducible components, may be equal to, say, Frobenius).  }

%A pushout $X \sqcup_{Z} W$ always exists if $i$ is a closed immersion, $g$ is integral, and the affine subsets on $X$ and $W$ are compatible (see \cite[0E25]{stacks-project} for details). Further, note that for spectra of affine rings, this definition agrees with the standard one (cf.\ \cite[I.2.6]{weibel13}).

%We employ the following two types of Milnor squares in this article: a thickening square with $Z \to X$ being a thickening and $Z \to W$ being an open immersion, and the conductor square with $Z$ and $W$ being the conductors of a finite birational morphism $X \to Y$ (cf.\ \cite[I.2.6]{weibel13}). 
 %\end{comment}
%We will apply the following lemma to conductor squares.
\begin{lemma} \label{lemma:Milnor_Pic} Consider a {generalised} conductor square as above. Then the diagram 
\begin{center}
\begin{tikzcd}
\PicS_X \arrow{r}{i^*} & \PicS_D  \\
\PicS_Y \arrow{u}{f^*} \arrow{r}{j^*} & \PicS_C  \arrow{u}{g^*}
\end{tikzcd}
\end{center}
is Cartesian in the $2$-category of groupoids.
\end{lemma}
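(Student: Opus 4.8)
The plan is to reduce to the affine, local situation, where the data is that of a Milnor square, and then invoke Milnor's patching theorem for projective modules over a fibre product of rings.

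Being Cartesian in the $2$-category of groupoids means precisely that the comparison functor $\PicS_Y \to \PicS_X \times_{\PicS_D} \PicS_C$ into the $2$-fibre product is an equivalence. The target assigns to an object a triple $(L_X, L_C, \phi)$ consisting of $L_X \in \PicS_X$, $L_C \in \PicS_C$, and an isomorphism $\phi \colon i^*L_X \xrightarrow{\sim} g^*L_C$ over $D$, and the comparison sends $L$ to $(f^*L, j^*L, \mathrm{can})$. Both $U \mapsto \PicS_{f^{-1}(U)}$ and the $2$-fibre product are stacks for the Zariski topology on $Y$ (as $\PicS$ satisfies descent and a $2$-fibre product of stacks is a stack), and the comparison is a morphism of stacks over $Y$. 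Since being an equivalence is local on $Y$ and $f$ is finite, hence affine, I may check it on an affine cover. This reduces the statement to $Y = \Spec R$, $X = \Spec S$, $C = \Spec R/I$, and $D = \Spec S/I$, where $I$ is the conductor and, as observed in the text, $R \simeq S \times_{S/I} R/I$ is a Milnor square with $S \to S/I$ surjective.

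Over this Milnor square, Milnor's patching theorem (see \cite{weibel13}) yields an equivalence between finitely generated projective $R$-modules and triples $(P,Q,\psi)$, where $P$ is a projective $S$-module, $Q$ a projective $R/I$-module, and $\psi \colon P \otimes_S S/I \xrightarrow{\sim} Q \otimes_{R/I} S/I$; this is exactly the comparison functor on all finitely generated projectives. It remains to see that it restricts to rank one. If $M$ is invertible, so are its two base changes. Conversely, at a prime $\mathfrak p \subseteq R$ with $I \not\subseteq \mathfrak p$, choosing $u \in I \setminus \mathfrak p$ and using $uS \subseteq R$ gives $R_u = S_u$, whence invertibility of $M \otimes_R S$ forces $M_{\mathfrak p}$ to be free of rank one; at primes containing $I$ the rank is read off from $M \otimes_R R/I$. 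Thus $M$ is invertible iff both base changes are, and Milnor patching restricts to an equivalence $\PicS_{\Spec R} \xrightarrow{\sim} \PicS_{\Spec S} \times_{\PicS_{\Spec S/I}} \PicS_{\Spec R/I}$.

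To globalise, I use that these local equivalences are compatible with restriction to smaller affines (Milnor patching being functorial), so they assemble to an equivalence of stacks on $Y$. Equivalently, one verifies directly that the comparison is fully faithful and essentially surjective: after twisting to the case $L = \mcO_Y$, full faithfulness reduces to the identity $\mcO_Y^\times = f_*\mcO_X^\times \times_{(f\circ i)_*\mcO_D^\times} j_*\mcO_C^\times$ of sheaves of units on $Y$, which holds because a unit in a fibre product of rings $A \times_{C_0} B$ with one map surjective is exactly a compatible pair of units; essential surjectivity then follows by gluing the objects produced locally by Milnor patching along the isomorphisms furnished by full faithfulness on overlaps. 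I expect the main obstacle to be the globalisation, namely carrying out the $2$-categorical descent carefully so that the gluing data for line bundles is canonically determined; the one genuinely arithmetic ingredient is the rank-one detection, where the defining property of the conductor (localising away from $I$ identifies $R$ and $S$) is used.
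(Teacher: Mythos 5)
Your proposal is correct and takes essentially the same route as the paper: the heart of both arguments is Milnor's patching theorem applied to the Milnor squares $R \simeq S \times_{S/I} R/I$ over affine opens of $Y$, together with the identification of finite rank-one projectives with line bundles. The only real difference is the globalization step — the paper defines the descended bundle globally by the explicit formula $L_Y = \ker\bigl(f_*L_X \times j_*L_C \xrightarrow{i^* - \phi\circ g^*} (f\circ i)_*L_D\bigr)$ and verifies locally that it is invertible and inverse to pullback, whereas you deduce the global statement from Zariski descent for the stacks involved (with a hands-on rank-one detection via $R_u = S_u$ for $u$ in the conductor, in place of the paper's implicit use of surjectivity of $f$); both mechanisms are sound.
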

\noindent This stipulates that there exists a functorial one-to-one correspondence between line bundles $L_Y$ on $Y$ and triples $(L_X,L_C, \phi)$ where $L_X$ and $L_C$ are line bundles on $X$ and $C$, respectively, and $\phi \colon g^*L_C \to i^*L_X$ is an isomorphism.
\begin{proof}
Given a line bundle $L$ on $Y$, we get an induced triple $(f^*L, j^*L, \phi)$, where $\phi \colon g^*j^*L  \xrightarrow{\simeq} i^*f^*L$. 

In the opposite direction, let $(L_X,L_C, \phi)$ be a triple as above and set $L_D := i^*L_X$. Therewith one can define a sheaf $L_Y$ on $Y$ by the formula
\[
L_Y \coloneq \mathrm{ker}(f_*L_X \times j_*L_C \xrightarrow{i^* - \phi \circ g^*} (f \circ i)_*L_D).
\]

To conclude the proof, we need to verify two things. First, that $L_Y$ is a line bundle. Second, that given a line bundle $L$ on $Y$ and an induced triple $(f^*L, j^*L, \phi)$, the natural map $L \to L_Y$ of sheaves to the induced line bundle on $Y$ is an isomorphism. Both statements can be verified locally, and hence the proof follows by \cite[Milnor Patching Theorem 2.7]{weibel13} (or \cite[Tag 0D2J]{stacks-project}) as {generalised} conductor squares of affine schemes are Milnor squares and finite rank one projective modules are line bundles (\stacksproj{00NX}).
\end{proof}

\section{Multiplicative perfection in mixed characteristic}
%The goal of this section is to prove Proposition \ref{prop:sections_on_thickenings} which implies that semiampleness of line bundles on a non-reduced scheme can be read off from its reduction and the generic fibre. To this end, we provide a description of the site of universal homeomorphisms on mixed characteristic varieties up to perfection (see Lemma \ref{lemma:main}). 

Throughout this section, we fix a prime number $p>0$ and work over the base ring $\Zpp$.

\subsection{Multiplicative perfection} The key advantage of working in positive characteristic is the existence of the Frobenius morphism. In mixed characteristic we shall approximate it by raising to a $p^n$-th power for big $n>0$.
\begin{definition} Let $A$ be a ring over $\mbZ_{(p)}$. We call the commutative monoid
\[
A^{\perf} \coloneq \varinjlim_{x \mapsto x^p} A,
\]
the (multiplicative) \emph{perfection} of $A$.
\end{definition}
Note that the multiplicative perfection does not preserve the additive structure. 
\begin{remark} The natural map $A \to A^{\perf}$ induces an inclusion $A/{\sim} \hookrightarrow A^{\perf}$ of monoids,  where $a \sim b$ if and only if $a^{p^n} = b^{p^n}$ for some $n\gg 0$.
\end{remark}
Further, note that for any rings $A$, $B$, $C$ over $\mbZ_{(p)}$ the {natural morphism of commutative monoids}
\[
(A \times_B C)^{\perf} \to A^{\perf} \times_{B^{\perf}} C^{\perf}
\]
{is a bijection of sets (and so an isomorphism of commutative monoids). This can be easily checked by hand or by recalling that filtered colimits commute with finite products (\cite[Tag 002W]{stacks-project}).}

% This can be easily checked by hand
% Indeed, the natural map $A\times_B C \to A^{\perf} \times_{B^{\perf}} C^{\perf}$ extends to $(A\times_B C)^{\perf} \to A^{\perf} \times_{B^{\perf}} C^{\perf}$; it is an injection by definition of multiplicative perfection, and it is surjective, 

\begin{definition} \label{def:multiplicative-perfection-of-sheaves} Let $L$ be a line bundle on a scheme $X$ over $\mbZ_{(p)}$. We call the {sheaf} of sets
\[
L^{\perf} \coloneq \varinjlim_{\phi_n} L^{p^n},
\]
where $\phi_n \colon L^{p^n} \to L^{p^{n+1}}$ with $\phi_n(x)=x^p$, the \emph{perfection} of $L$.
\end{definition}
\noindent {Explicitly, $L^{\perf}$ is the sheafification of the colimit taken in the category of presheaves.}

If $\Spec A = U \subseteq X$ is an affine subscheme such that $L|_U \simeq \mcO_U$, then we get a sequence of compatible isomorphisms $(L^{p^n})|_U \simeq \mcO_U$ for every $n \geq 0$, thus  $L^{\perf}(U) \simeq A^{\perf}$. %Here, we need to use that localisation is exact, and so it commutes with taking perfection. 

Define
\[
H^0(X, L)^{\perf} := \varinjlim_{\phi_n} H^0(X, L^{p^n}),
\]
where $\phi_n \colon H^0(X,L^{p^n}) \to H^0(X,L^{p^{n+1}})$ with $\phi_n(x)=x^p$.
When $X$ is quasi-compact {and quasi-separated,} $H^0(X, L^{\perf}) = H^0(X, L)^{\perf}$ {by \cite[Tag 009F(4) and Tag 0069(3)]{stacks-project}}. 

\subsection{Infinitesimal site up to perfection}

The following lemma is vital in the proofs of the main results of this section. 

\begin{lemma} \label{lem:univ_homeo_iso_generic} Let $f \colon X \to Y$ be an affine morphism of schemes over $\Spec \Zpp$ such that $f|_{X_{\mbQ}} \colon X_{\mbQ} \to Y_{\mbQ}$ is an isomorphism. Then $f^* \colon \mcO^{\perf}_Y \to f_*\mcO^{\perf}_X$ is an isomorphism if and only if $f$ is a universal homeomorphism. 
\end{lemma}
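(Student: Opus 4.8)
The plan is to reduce the statement to a question about rings and then treat the two implications separately, the \emph{if} direction (universal homeomorphism $\Rightarrow$ perfection isomorphism) being the technical heart.

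\emph{Reduction.} Both properties are local on $Y$, and $f$ is affine, so I would cover $Y$ by affine opens $\Spec A$ with preimage $\Spec B=f^{-1}(\Spec A)$. As $\Spec A$ and $\Spec B$ are quasi-compact and quasi-separated, the global-sections computation preceding the lemma gives $\mcO^{\perf}_Y(\Spec A)=A^{\perf}$ and $(f_*\mcO^{\perf}_X)(\Spec A)=B^{\perf}$, with $f^*$ the natural map $A^{\perf}\to B^{\perf}$. Thus it suffices to show, for a map of $\Zpp$-algebras $A\to B$ with $A[\tfrac1p]\xrightarrow{\sim}B[\tfrac1p]$, that $A^{\perf}\to B^{\perf}$ is an isomorphism if and only if $\Spec B\to\Spec A$ is a universal homeomorphism, where I freely use that $[b,m]=[b',m']$ in $B^{\perf}$ iff $b^{p^{N-m}}=b'^{p^{N-m'}}$ in $B$ for some $N$.

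\emph{The `if' direction.} Assume $\Spec B\to\Spec A$ is a universal homeomorphism; then $A\to B$ is integral and surjective on spectra, so $C\coloneq\ker(A\to B)$ is contained in the nilradical, while the hypothesis forces $C[\tfrac1p]=0$, so $C$ is also $p$-power torsion. I would first check $A^{\perf}\to(A/C)^{\perf}$ is an isomorphism: surjectivity is clear, and for injectivity, writing $a_1=a_2+t$ with $t\in C$, $t^N=0$, $p^Mt=0$, the expansion
\[
a_1^{p^n}-a_2^{p^n}=\sum_{i=1}^{N-1}\binom{p^n}{i}a_2^{p^n-i}t^i
\]
vanishes once $n$ is large enough that $p^M\mid\binom{p^n}{i}$ for all $1\le i\le N-1$ (possible since $v_p\binom{p^n}{i}=n-v_p(i)\to\infty$), each summand being a multiple of $p^Mt^i=0$. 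Replacing $A$ by $A/C$, I may assume $A\hookrightarrow B$, whence injectivity of $A^{\perf}\to B^{\perf}$ is immediate. For surjectivity I would fix $b\in B$ and apply Proposition \ref{prop:OCNE}: $b$ lies in a tower $A=A_0\subseteq A_1\subseteq\cdots\subseteq A_k$ with each step $A_i=A_{i-1}[b_i]$ elementary or of Frobenius type. Since primes $\ell\ne p$ are invertible over $\Zpp$, I may discard the $\ell$-Frobenius steps and assume $b_i^p\in A_{i-1}$ throughout; and because $A_{i-1}[\tfrac1p]=A_i[\tfrac1p]=B[\tfrac1p]$, each $b_i$ also satisfies $p^sb_i\in A_{i-1}$ for some $s$. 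A bootstrapping computation with the freshman's dream then gives $x^{p^n}=\alpha_n+p^nw_n$ with $\alpha_n\in A_{i-1}$ and $w_n\in A_i$, and the relations $p^sb_i\in A_{i-1}$ kill $p^nw_n$ for $n$ large, so $x^{p^n}\in A_{i-1}$; composing up the tower yields $b^{p^n}\in A$.

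\emph{The `only if' direction.} Assume $A^{\perf}\to B^{\perf}$ is an isomorphism. Surjectivity gives, for each $b\in B$, an identity $b^{p^N}\in\mathrm{im}(A\to B)$, exhibiting $b$ as a root of a monic polynomial over $A$; hence $A\to B$ is integral and, $f$ being affine, $f$ is integral and universally closed. Injectivity gives that every $a\in\ker(A\to B)$ satisfies $a^{p^n}=0$, so $\ker(A\to B)$ is nil and $\Spec B\to\Spec A$ is surjective. By the criterion recalled after the definition of universal homeomorphisms, it remains to verify that $\Hom(B,K)\to\Hom(A,K)$ is bijective for every algebraically closed $K$: surjectivity follows from integrality and surjectivity of $\Spec B\to\Spec A$ (lift a prime, extend the residue embedding into $K$), while for injectivity I take $\psi_1,\psi_2\colon B\to K$ agreeing on $A$ and fix $b$ with $b^{p^N}\in\mathrm{im}(A)$, so $\psi_1(b)^{p^N}=\psi_2(b)^{p^N}$; if $\mathrm{char}\,K=p$ this forces $\psi_1(b)=\psi_2(b)$, and if $p$ is invertible in $K$ then both maps factor through $B[\tfrac1p]$ and compatibility with the isomorphism $A[\tfrac1p]\cong B[\tfrac1p]$ forces $\psi_1=\psi_2$.

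The main obstacle is the surjectivity of $A^{\perf}\to B^{\perf}$ in the `if' direction. Unlike the purely infinitesimal case, where $p$-torsion of the kernel lets the binomial coefficients do all the work, a genuine integral universal homeomorphism may act like a Frobenius on some components, and one must combine the multiplicative input from Proposition \ref{prop:OCNE} (which supplies $b_i^p\in A_{i-1}$) with the bound on denominators coming from the isomorphism over $\mbQ$ (which supplies $p^sb_i\in A_{i-1}$) in order to annihilate the error terms $p^nw_n$. Keeping these two inputs compatible through the whole tower, with $n$ allowed to depend on the element, is the delicate point; everything else is formal or reduces to the characteristic-$p$ statement Proposition \ref{prop:univ_homeo_in_char_p} and the elementary estimate on $v_p\binom{p^n}{i}$.
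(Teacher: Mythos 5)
Your proof is correct, but it takes a genuinely different route from the paper's. For the main direction (universal homeomorphism $\Rightarrow$ isomorphism of perfections) the paper never invokes the tower decomposition: writing $\pi \colon B \to A$ for the ring map corresponding to $f$, it reduces modulo $p$ and applies the characteristic-$p$ criterion (Proposition \ref{prop:univ_homeo_in_char_p}) to $\pi_p \colon B/p \to A/p$ to get $a^{p^l} = \pi(b) + pt$, and then kills the error term by a finiteness trick: integrality of $\pi$ makes the $B$-subalgebra generated by $t$ a finite $B$-module, so the isomorphism over $\mbQ$ yields a single $n$ with $p^n t^i \in \pi(B)$ for \emph{all} $i$ simultaneously, and expanding $(\pi(b)+pt)^{p^k}$ with the estimate $p^n \mid p^i\binom{p^k}{i}$ finishes surjectivity; injectivity is handled by the same two inputs ($p$-torsion of the difference from the $\mbQ$-isomorphism, $s^{p^k} \in pB$ from the characteristic-$p$ criterion). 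You instead quotient out the kernel and then run the structure theorem for universal homeomorphisms (Proposition \ref{prop:OCNE}): discard the prime-to-$p$ Frobenius steps, and climb a tower of elementary and $p$-power extensions, the $\mbQ$-isomorphism supplying the denominator bound $p^s b_i \in A_{i-1}$ at each step. This is exactly the engine the paper reserves for Proposition \ref{prop:extending_universal_homeo_algebras}, so your argument in effect shows that the lemma and that proposition can be proved by one method; the cost is a longer case analysis whose $p$-Frobenius bootstrapping you compressed (one needs the induction $x^{p^{n+1}} = \gamma_n^p + p^{n+1}z_{n+1}$ and then $p^n z_n \in A_{i-1}$ for $n \geq (p-1)s$), though it does close. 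For the converse direction the paper merely cites \stacksproj{0CNF}, which is a characteristic-$p$ statement, while you prove it from scratch via the algebraically-closed-points criterion, splitting into residue characteristic $p$ (Frobenius is injective on fields) and residue characteristic $0$ (both maps factor through $B[\frac{1}{p}] \cong A[\frac{1}{p}]$); your treatment of this direction is thus more self-contained than the paper's, and it makes visible that the isomorphism over $\mbQ$ is genuinely used there as well.
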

%\noindent The same holds for the structure sheaf replaced by any line bundle $L$.  

The key to the results of this article is the local injectivity of $f^*$ as it allows for gluing sections and lifting them globally under thickenings.  Note that the other parts of the lemma have been shown in \cite[Lemma 8.7]{kollar97} (see also \stacksproj{0CNF}).
\begin{proof}
If $f^* \colon \mcO^{\perf}_Y \to \mcO^{\perf}_X$ is an isomorphism, then $f$ is a universal homeomorphism as well by \stacksproj{0CNF}. Thus, it is enough to show the converse. For the convenience of the reader, we also show local surjectivity of $f^*$.

Since $f$ is affine, the lemma can be reduced to showing that $\pi^{\perf} \colon B^{\perf} \to A^{\perf}$ is an isomorphism, when $\pi \colon B \to A$ is a universal homeomorphism such that the localisation $\pi_{\mbQ} \colon B[\frac{1}{p}] \to A[\frac{1}{p}]$ is an isomorphism.

Pick any element $a \in A$. Since $\pi$ is a universal homeomorphism, so is its reduction $\pi_p \colon B/p \to A/p$ modulo $p$. Thus, Proposition \ref{prop:univ_homeo_in_char_p} implies that 
\[
a^{p^l} = \pi(b) + pt
\]
for some $l \gg 0$, $b\in B$, and $t \in A$. 

As $\pi$ is integral, the $B$-subalgebra $A^0 \subseteq A$ generated by $t$ is a finite $B$-module. Given that $\pi_{\mbQ}$ is an isomorphism, we get $p^nA^0 \subseteq \pi(B)$ for some $n>0$ and hence $p^{n}t^i \in \pi(B)$ for every $i \geq 0$. 

Write
\[
a^{p^{k+l}} = \pi(b)^{p^k} + \sum_{i=1}^{p^{k}} {p^{k} \choose i} \pi(b)^{p^{k}-i} (pt)^i,
\]
for $k \gg 0$. Since {$p^{n} \mid p^i{p^{k} \choose i}$ for every $0 \leq i \leq p^k$ and $k \gg 0$}, the right hand side is contained in $\pi(B)$, and hence so is $a^{p^{k+l}}$. In particular, $\pi^{\perf}$ is surjective.\\

Now, assume that there exist $b, b' \in B$ satisfying $\pi(b) = \pi(b')$. Write $b = b' + s$ for some $s \in B$. Since $\pi_{\mbQ}$ is an isomorphism and $\pi(s)=0$, there exists $n>0$ such that $p^ns=0$. Since $\pi_p$ is a universal homeomorphism, Proposition \ref{prop:univ_homeo_in_char_p} implies that $s^{p^k}=pt$ for some $k>0$ and $t \in B$. In particular, $s^{np^k+1} = (pt)^ns = 0$ and we get
\[
b^{p^m} = (b')^{p^m} + \sum_{i=1}^{np^k} {p^m \choose i}(b')^{p^m-i}s^i = (b')^{p^m}
\] 
for $m \gg 0$. Here we used that $p^n \mid {p^m \choose i}$ for $1 \leq i \leq np^k$ and $m \gg 0$. As a consequence $\pi^{\perf}$ is injective which concludes the proof.
\end{proof}
%The surjectivity of $f^* \colon \mcO^{\perf}_Y \to \mcO^{\perf}_X$ can be also deduced from \stacksproj{0CNF}. However, the key to the results of this section is the injectivity of this map.

%to show Lemma \ref{lemma:main_algebra}, which provides a description of the infinitesimal site in mixed characteristic. We use this lemma to verify that semiampleness of line bundles on a non-reduced scheme can be read off from its reduction and the generic fibre (Lemma \ref{lemma:main}). This, together with Theorem \ref{thm:extending_universal_homeo}, are the main components that allow for applying Keel's strategy in mixed characteristic. 

Now, we can prove Lemma \ref{lemma:main}.
\begin{comment}
\begin{lemma} (cf.\ Lemma \ref{lemma:main_introduction}) \label{lemma:main} Let $f \colon X \to Y$ be a universal homeomorphism of schemes  over $\Zpp$. Then the following diagram is Cartesian
\begin{center}
\begin{tikzcd}
\mcO^{\perf}_Y \arrow{r}{f^*} \arrow{d} & \mcO^{\perf}_X \arrow{d} \\
\mcO^{\perf}_{Y_{\mu}} \arrow{r} & \mcO^{\perf}_{X_{\mu}},
\end{tikzcd}
\end{center}
\end{lemma}
\end{comment}
\begin{proof}[Proof of Lemma \ref{lemma:main}]
Note that $X_{\mbQ} \to X$ is quasi-compact and quasi-separated, as so is $\Spec \mbQ \to \Spec \mbZ_{(p)}$. Thus, by Lemma \ref{lemma:technical_pushouts}, there exists a pushout scheme $Z \coloneq X \sqcup_{X_{\mbQ}} Y_{\mbQ}$ sitting inside the following commutative diagram
\begin{center}
\begin{tikzcd}
Y &  &  \\
  & Z \arrow[dashed]{lu}{h}     & X \arrow[dashed]{l}{g} \arrow[bend right = 25]{llu}{f}  \\
  & Y_{\mbQ} \arrow[dashed]{u} \arrow[bend left = 25]{luu} & X_{\mbQ}, \arrow{l} \arrow{u}
\end{tikzcd}
\end{center}
with $g$ and $h$ being universal homeomorphisms. By construction, $h|_{Z_{\mbQ}} \colon Z_{\mbQ} \to Y_{\mbQ}$ is an isomorphism, and so Lemma \ref{lem:univ_homeo_iso_generic} implies that
$h^* \colon \mcO^{\perf}_Y \to h_*\mcO^{\perf}_{Z}$
is an isomorphism as well. We can conclude the proof as  
\[
\mcO^{\perf}_{Z} = \mcO^{\perf}_{Y_{\mbQ}} \times_{\mcO^{\perf}_{X_{\mbQ}}} \mcO^{\perf}_{X}. \qedhere
\]
\end{proof}
%If $f \colon X \to Y$ is a thickening, then the lemma is valid without the assumption that $X$ and $Y$ are of finite type. Indeed, in this case we can construct the pushout by \stacksproj{07RT}.

The following proposition is a direct consequence of {Lemma \ref{lemma:main}}.
\begin{proposition} \label{prop:sections_on_thickenings}  Let $f \colon X \to Y$ be a universal homeomorphism of quasi-compact quasi-separated schemes over $\Zpp$, and let $L$ be a line bundle on $Y$. Then the following diagram is Cartesian:
\begin{center}
\begin{tikzcd}
H^0(Y,L)^{\perf} \arrow{r}{f^*} \arrow{d} & H^0(X, f^*L)^{\perf} \arrow{d} \\
H^0(Y_{\mbQ}, L|_{Y_{\mbQ}})^{\perf} \arrow{r}  &H^0(X_{\mbQ}, f^*L|_{X_{\mbQ}})^{\perf}.
\end{tikzcd}
\end{center}
\end{proposition}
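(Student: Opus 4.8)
The plan is to deduce the proposition from Lemma \ref{lemma:main} in two moves: first upgrade that lemma to a version twisted by $L$ at the level of sheaves, and then take global sections, where the quasi-compact quasi-separated hypotheses let $H^0$ interact well with both fibre products and the filtered colimit defining the perfection.

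First I would record the $L$-twisted analogue of Lemma \ref{lemma:main}. Writing $j_Y \colon Y_{\mbQ} \hookrightarrow Y$ and $j_X \colon X_{\mbQ} \hookrightarrow X$ for the inclusions and pushing all four perfection sheaves forward to $Y$, I claim that
\begin{center}
\begin{tikzcd}
L^{\perf} \arrow{r}{f^*} \arrow{d} & f_*(f^*L)^{\perf} \arrow{d} \\
(j_Y)_*(L|_{Y_{\mbQ}})^{\perf} \arrow{r} & f_*(j_X)_*(f^*L|_{X_{\mbQ}})^{\perf}
\end{tikzcd}
\end{center}
is Cartesian in the category of sheaves of sets on $Y$. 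Being Cartesian means that the natural map from $L^{\perf}$ to the fibre product of the other three sheaves is an isomorphism, and this can be checked on an open cover of $Y$. Since $f$ is affine (a universal homeomorphism of schemes is integral) and the $\mbQ$-fibres are affine base changes, all four pushforwards commute with restriction to an open $U \subseteq Y$, and the formation of the perfection sheaf commutes with restriction as well; thus restricting the square to $U$ yields the corresponding square for $f^{-1}(U) \to U$. Choosing $U$ affine with $L|_U$ trivial, a trivialization of $L|_U$ induces compatible trivializations of all the $L^{p^n}$ and of their pullbacks and restrictions, identifying the restricted twisted square with the untwisted square of Lemma \ref{lemma:main} for $f^{-1}(U) \to U$. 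As the latter is Cartesian, so is the twisted square.

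Next I would apply $H^0(Y,-)$. Global sections preserve finite limits, hence fibre products, so the resulting square of global sections is again Cartesian. It remains to identify its corners. Using the identity $H^0(Y, f_*\mcG) = H^0(X,\mcG)$ (and its analogues for $j_Y$ and $j_X$), together with the fact recorded in the excerpt that on a quasi-compact quasi-separated scheme $H^0$ commutes with the colimit defining the perfection, so that $H^0(-,M^{\perf}) = H^0(-,M)^{\perf}$, the four corners become $H^0(Y,L)^{\perf}$, $H^0(X, f^*L)^{\perf}$, $H^0(Y_{\mbQ}, L|_{Y_{\mbQ}})^{\perf}$, and $H^0(X_{\mbQ}, f^*L|_{X_{\mbQ}})^{\perf}$ respectively. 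Here I use that $X$ and $Y$ are quasi-compact and quasi-separated by hypothesis and that $X_{\mbQ}$ and $Y_{\mbQ}$ inherit these properties, being affine base changes of qcqs schemes. This is exactly the asserted Cartesian square, and the maps match since the top sheaf map induces $f^*$ and the vertical ones induce restriction to the $\mbQ$-fibres.

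The one point requiring care is the twisting step. Because $L^{\perf}$ is only a sheaf of sets --- a torsor-like object over $\mcO^{\perf}_Y$ rather than a module --- one cannot reduce to Lemma \ref{lemma:main} by a single global identification; the reduction must be carried out locally, exploiting that $L$ is locally trivial and that every functor in sight (pushforward, restriction to opens, and the sheafified colimit defining the perfection) is compatible with restriction to an open cover. Once this local compatibility is set up, the descent of the Cartesian property and the subsequent passage to global sections are formal, the latter being a routine application of the left-exactness of $H^0$ and of the quasi-compact quasi-separated colimit formula.
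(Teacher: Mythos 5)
Your proposal is correct and follows essentially the same route as the paper: the paper's own proof consists of invoking Lemma \ref{lemma:main} to obtain the $L$-twisted Cartesian square of perfection sheaves and then applying $H^0$. You have simply made explicit the two points the paper leaves implicit — the local-trivialization argument reducing the twisted square to Lemma \ref{lemma:main}, and the use of left-exactness of $H^0$ together with the quasi-compact quasi-separated colimit formula $H^0(-,M^{\perf}) = H^0(-,M)^{\perf}$ to identify the corners — both of which are exactly the intended justifications.
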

\begin{proof}
By Lemma \ref{lemma:main}, we get the following Cartesian diagram:
\begin{center}
\begin{tikzcd}
L^{\perf} \arrow{r}{f^*} \arrow{d} & (f^*L)^{\perf} \arrow{d} \\
(L|_{Y_{\mbQ}})^{\perf} \arrow{r} &  (f^*L|_{X_{\mbQ}})^{\perf},
\end{tikzcd}
\end{center}
Now, by applying $H^0$ to this diagram, we can conclude the proof.
\end{proof}

\subsection{Descending line bundles}
The goal of this subsection is to show Theorem \ref{thm:univ_homeo_and_Pic_intro}. Here, $\PicS_X$ denotes the groupoid of line bundles on $X$, and $\PicS_X[\frac{1}{p}]$ denotes the groupoid of line bundles on $X$ up to inverting $p$. {Informally, this is a groupoid of line bundles and their ``formal $p^n$-th roots''. Precisely, the objects of the category $\PicS_X[\frac{1}{p}]$ are pairs $(L,n)$ consisting of a line bundle $L$ on $X$ and a number $n\in \mbN$, and the morphisms being isomorphisms up to $p^N$-th power, that is $\Hom((\mcL,n), (\mcL',n')) = \varinjlim_N \mathrm{Isom}(\mcL^{p^{N-n}},\mcL'^{p^{N-n'}})$; cf.\ \cite[Tag 0EXA]{stacks-project}.} 

\begin{proof}[Proof of Theorem \ref{thm:univ_homeo_and_Pic_intro}]
We proceed by Noetherian induction on $X$.
\vspace{0.5em}

\textbf{Step 1.} The theorem holds when $X$ and $Y$ are defined over $\mathbb{F}_p$. Indeed, we have then that $\mcO^*_{Y}[\frac{1}{p}] \to \mcO^*_{X}[\frac{1}{p}]$ is an isomorphism (cf.\ the proof of \cite[Lemma 2.1]{ct17}), and so $\PicS_Y[\frac{1}{p}] \to \PicS_X[\frac{1}{p}]$ is an isomorphism as well.\\

\textbf{Step 2.} The theorem holds when $f \colon X \to Y$ is a thickening {(that is, a surjective closed immersion)}. Indeed, by Lemma \ref{lemma:technical_pushouts} (or \stacksproj{07RT}), there exists a pushout scheme $Z \coloneq X \sqcup_{X_{\mbQ}} Y_{\mbQ}$ sitting inside the following commutative diagram
\begin{center}
\begin{tikzcd}
Y &  &  \\
  & Z \arrow[dashed]{lu}{h}     & X \arrow[dashed]{l}{g} \arrow[bend right = 25]{llu}{f}  \\
  & Y_{\mbQ} \arrow[dashed]{u} \arrow[bend left = 25]{luu} & X_{\mbQ}. \arrow{l} \arrow{u}
\end{tikzcd}
\end{center}
As in the proof of Lemma \ref{lemma:main}, we get that $h^* \colon \mcO^{\perf}_Y \to \mcO^{\perf}_{Z}$
is an isomorphism. Since both $g$ and $f$ are thickenings, $h$ induces an isomorphism of reductions $Z_{\red} \to Y_{\red}$. Invertibility of sections does not depend on the infinitesimal structure, and so $h^* \colon \mcO^{*}_Y[\frac{1}{p}] \to \mcO^{*}_{Z}[\frac{1}{p}]$ is an isomorphism as well.
%(one can also show this directly using the short exact sequence $0 \to 1+I \to \mcO^*_Y \to \mcO^*_Z \to 0$, where $I$ is the kernel of $h^* \colon \mcO_Y \to \mcO_Z$ satisfying $I[\frac{1}{p}]=0$). 
As a consequence, $h^* \colon \PicS_Y[\frac{1}{p}] \to \PicS_Z[\frac{1}{p}]$ is an isomorphism.

Since flat {finitely presented} coherent sheaves of rank one are line bundles (\stacksproj{00NX}), \stacksproj{08KU} (cf.\ Lemma \ref{lemma:Milnor_Pic}) implies
\[
\PicS_{Z} = \PicS_{Y_{\mbQ}} \times_{\PicS_{X_{\mbQ}}} \PicS_{X}.
\]
By inverting $p$, we can conclude Step 1.\\

\textbf{Step 3.} We reduce to the case when both $X$ and $Y$ are reduced. In this step we assume that the proposition is true for $f \colon X \to Y$ being replaced by its reduction $f^{\red} \colon X^{\red} \to Y^{\red}$. We have the following spacial commutative diagram:
\begin{comment}

\begin{center}
\begin{tikzcd}
&
Y^{\red}
\ar{dl}[swap, sloped, near start]{}
\ar[leftarrow]{dd}[near end]{}
& & X^{\red}
\ar[leftarrow]{dd}{}
\ar{ll}{}
\ar{dl}[swap, sloped, near start]{}
\\
Y

\ar[leftarrow]{dd}[swap]{}
& & X \ar[crossing over]{ll}[near start]{}
\\
&
Y^{\red}_{\mu}

\ar[sloped, near end]{dl}{}
& & X^{\red}_{\mu} \ar[near start]{ll}{}
\ar{dl}
\\
Y_{\mu}

& & X_{\mu} \ar{ll}
\ar[crossing over, near start]{uu}{}
\end{tikzcd}
\end{center}
Now, apply $\PicS$ to this cube to get the following diagram.
\end{comment}

\begin{center}
\begin{tikzcd}
&
\PicS_{Y^{\red}}
\ar[leftarrow]{dl}[swap, sloped, near start]{}
\ar[rightarrow]{dd}[near end]{}
& & \PicS_{X^{\red}}
\ar[rightarrow]{dd}{}
\ar[leftarrow]{ll}{}
\ar[leftarrow]{dl}[swap, sloped, near start]{}
\\
\PicS_{Y}

\ar[rightarrow]{dd}[swap]{}
& & \PicS_{X} \ar[crossing over, leftarrow]{ll}[near start]{}
\\
&
\PicS_{Y^{\red}_{\mbQ}}

\ar[leftarrow, sloped, near end]{dl}{}
& & \PicS_{X^{\red}_{\mbQ}} \ar[leftarrow, near start]{ll}{}
\ar[leftarrow]{dl}
\\
\PicS_{Y_{\mbQ}}

& & \PicS_{X_{\mbQ}} \ar[leftarrow]{ll}
\ar[crossing over, near start, leftarrow]{uu}{}
\end{tikzcd}
\end{center}
The left and the right facets are $2$-pullback squares up to inverting $p$ by Step~2 and the back one is a $2$-pullback square up to inverting $p$ by assumption. A composition of two $2$-pullback squares stays a $2$-pullback square (\stacksproj{02XD}), and so we have the following diagram
\begin{center}
\begin{tikzcd}
\PicS_Y \arrow{r} \arrow{d} & \PicS_{X} \arrow{r} \arrow{d} & \PicS_{X^{\red}} \arrow{d} \\
\PicS_{Y_{\mbQ}} \arrow{r} & \PicS_{X_{\mbQ}} \arrow{r} & \PicS_{X^{\red}_{\mbQ}},
\end{tikzcd}
\end{center}
in which the big square and the right square are $2$-pullback squares up to inverting $p$. Thus the left square is a $2$-pullback square up to inverting $p$ as well (\stacksproj{02XD}).\\

\begin{comment}
This step o the proof follows by standard diagram chase after having noticed that the following three diagrams are homotopy pullback squares after inverting $p$ (the middle one by assumption and the other two by Step 1).
\begin{center}
\begin{tikzcd}
\PicS_Y \arrow{r} \arrow{d} & \PicS_{Y^{\red}} \arrow{r} \arrow{d} & \PicS_{X^{\red}} \arrow{d} & \PicS_{X} \arrow{l} \arrow{d} \\
\PicS_{Y_{\mbQ}} \arrow{r} & \PicS_{Y^{\red}_{\mbQ}} \arrow{r} & \PicS_{X^{\red}_{\mbQ}} & \PicS_{X_{\mbQ}}. \arrow{l} \\
\end{tikzcd}
\end{center}

\end{comment}
\textbf{Step 4.} We show the proposition under the assumption that $X$ and $Y$ are reduced and the proposition holds for every universal homeomorphism $f|_W \colon W \to f(W)$, where $W$ is a closed proper subset of $X$.\\

By Step 1, we may assume that $X_{\mbQ} \neq \emptyset$. Since $f \colon X \to Y$ is a finite {surjective} map of reduced schemes, it sits in the following {generalised} conductor square (cf.\ Subsection \ref{ss:generalised-conductor-square}, \cite[I.2.6]{weibel13}, \cite[Subsection 2.6.1]{ct17})
\begin{center}
\begin{tikzcd}
Y & \arrow{l}{f} X \\
C \arrow{u} & \arrow{l}{f_D} D, \arrow{u}
\end{tikzcd}
\end{center}
where $C$ and $D$ are conductors of $f$. Since $f \colon X \to Y$ is a finite universal homeomorphism, so is $f_D \colon D \to C$ (it is finite by Remark \ref{rem:scheme_facts}(\ref{itm:035D}) applied to $D \to C \to Y$). Note that $D$ is a strict closed subset of~$X$, { as $X_{\mbQ} \neq \emptyset$, and so $f$ is an isomorphism over an open subset of $Y$.}

As above, we can construct the following spatial diagram. 
\begin{center}
\begin{tikzcd}
&
\PicS_{C}
\ar[leftarrow]{dl}[swap, sloped, near start]{}
\ar[rightarrow]{dd}[near end]{}
& & \PicS_{D}
\ar[rightarrow]{dd}{}
\ar[leftarrow]{ll}{}
\ar[leftarrow]{dl}[swap, sloped, near start]{}
\\
\PicS_{Y}

\ar[rightarrow]{dd}[swap]{}
& & \PicS_{X} \ar[crossing over, leftarrow]{ll}[near start]{}
\\
&
\PicS_{C_{\mbQ}}

\ar[leftarrow, sloped, near end]{dl}{}
& & \PicS_{D_{\mbQ}} \ar[leftarrow, near start]{ll}{}
\ar[leftarrow]{dl}
\\
\PicS_{Y_{\mbQ}}

& & \PicS_{X_{\mbQ}} \ar[leftarrow]{ll}
\ar[crossing over, near start, leftarrow]{uu}{}
\end{tikzcd}
\end{center}
The top and the bottom facets are $2$-pullback squares by Lemma \ref{lemma:Milnor_Pic} and the back one is a $2$-pullback square up to inverting $p$ by the inductive assumption.  By the same argument as in Step 3, the front facet is a $2$-pullback square up to inverting $p$, too. \qedhere
\begin{comment}
We can conclude the proof of the proposition by standard diagram chase after having noticed that the following three diagrams are homotopy pullback squares after inverting $p$ (the middle one by the inductive assumption and the other two by Lemma \ref{lemma:Milnor_Pic}).
\begin{center}
\begin{tikzcd}
\PicS_Y \arrow{r} \arrow{d} & \PicS_{C} \arrow{r} \arrow{d} & \PicS_{D} \arrow{d} & \PicS_{X} \arrow{l} \arrow{d} \\
\PicS_{Y_{\mbQ}} \arrow{r} & \PicS_{C_{\mbQ}} \arrow{r} & \PicS_{D_{\mbQ}} & \PicS_{X_{\mbQ}}. \arrow{l} \\
\end{tikzcd}
\end{center}%\qedhere
\end{comment} 
\end{proof}
\begin{remark} An analogous argument shows that
\begin{center}
\begin{tikzcd}
\mcO^*_Y[\frac{1}{p}] \arrow{r}{f^*} \arrow{d} & \mcO^*_X[\frac{1}{p}] \arrow{d} \\
\mcO^*_{Y_{\mbQ}}[\frac{1}{p}] \arrow{r} & \mcO^*_{X_{\mbQ}}[\frac{1}{p}],
\end{tikzcd}
\end{center}
is a pullback square. Other types of functors with this property will be discussed in \cite{aemw}.
\end{remark}

\begin{corollary} \label{cor:pic_for_general_pushouts} Let $X'$ be a Noetherian topological pushout of a diagram $X \leftarrow Y \to Y'$ of Noetherian algebraic spaces over $\mathbb{Z}_{(p)}$ satisfying the assumptions of Definition \ref{definition:geo_pushout} and such that both $f \colon Y \to Y'$ {and $X \to X'$} are finite universal homeomorphisms. Then
\begin{center}
\begin{tikzcd}
\PicS_{X'}  \arrow{d}  \arrow{r}  &  \PicS_{X} \times_{\PicS_Y}  \PicS_{Y'}  \arrow{d} \\
\PicS_{X'_{\mbQ}}  \arrow{r} & \PicS_{X_\mbQ} \times_{\PicS_{Y_{\mbQ}}} \PicS_{Y'_{\mbQ}}, 
\end{tikzcd}
\end{center}
is a $2$-pullback square up to inverting $p$. 
%\[
%\PicS_{X'} \simeq (\PicS_{X} \times_{\PicS_Y} \PicS_{Y'}) \times_{(\PicS_{X_\mbQ} \times_{\PicS_{Y_{\mbQ}}} \PicS_{Y'_{\mbQ}})} \PicS_{X'_{\mbQ}},
%\] 
\end{corollary}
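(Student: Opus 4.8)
The plan is to deduce this from Theorem~\ref{thm:univ_homeo_and_Pic_intro}, applied separately to the two finite universal homeomorphisms appearing in the pushout, and then to paste the resulting fracture squares. Following the notation of Definition~\ref{definition:geo_pushout}, write the diagram as $X \xleftarrow{p} Y \xrightarrow{f} Y'$ together with $\alpha \colon X \to X'$ and $q \colon Y' \to X'$ satisfying $\alpha \circ p = q \circ f$ (so $f$ and $\alpha$ are the two finite universal homeomorphisms), and abbreviate $P \coloneq \PicS_X \times_{\PicS_Y} \PicS_{Y'}$ and $Q \coloneq \PicS_{X_\mbQ} \times_{\PicS_{Y_\mbQ}} \PicS_{Y'_\mbQ}$. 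Since $\Spec \mbQ \to \Spec \Zpp$ is flat, $X'_\mbQ$, $X_\mbQ$, $Y_\mbQ$, $Y'_\mbQ$ together with the maps between them are the base changes of the original data, so the map $\PicS_{X'_\mbQ} \to Q$ and the vertical restriction functors in the square to be analysed are the evident pullbacks. Before starting I would record that Theorem~\ref{thm:univ_homeo_and_Pic_intro}, although stated for schemes, applies to the representable finite universal homeomorphisms $\alpha$ and $f$ of Noetherian algebraic spaces: the fracture square may be checked after \'etale base change to a scheme atlas, using that $\PicS$ (localised at $p$) is an \'etale stack and that a finite universal homeomorphism is representable with an equivalence of \'etale sites (Remark~\ref{rem:scheme_facts}(\ref{itm:04DZ})), which reduces the assertion to the scheme case already proved.

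The main step is to arrange the two fracture squares into the horizontal pasting
\begin{center}
\begin{tikzcd}
\PicS_{X'} \arrow{r} \arrow{d} & P \arrow{r} \arrow{d} & \PicS_X \arrow{d} \\
\PicS_{X'_\mbQ} \arrow{r} & Q \arrow{r} & \PicS_{X_\mbQ},
\end{tikzcd}
\end{center}
where the horizontal composites $P \to \PicS_X$ and $Q \to \PicS_{X_\mbQ}$ are the projections. With this choice the composite $\PicS_{X'} \to P \to \PicS_X$ is $\alpha^*$ and the bottom composite is restriction, so the \emph{outer} rectangle is exactly the fracture square of $\alpha \colon X \to X'$ provided by Theorem~\ref{thm:univ_homeo_and_Pic_intro}, hence a $2$-pullback up to inverting $p$. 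I would then identify the \emph{right-hand} square as the base change of the fracture square of $f \colon Y \to Y'$, namely of $\PicS_{Y'} \simeq \PicS_{Y'_\mbQ} \times_{\PicS_{Y_\mbQ}} \PicS_Y$, along the compatible pair $p^* \colon \PicS_X \to \PicS_Y$ and $p_\mbQ^* \colon \PicS_{X_\mbQ} \to \PicS_{Y_\mbQ}$: forming $2$-fibre products with $\PicS_X$ over $\PicS_Y$ on top and with $\PicS_{X_\mbQ}$ over $\PicS_{Y_\mbQ}$ on the bottom turns $\PicS_{Y'},\PicS_{Y'_\mbQ}$ into $P,Q$ and $\PicS_Y,\PicS_{Y_\mbQ}$ into $\PicS_X,\PicS_{X_\mbQ}$. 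Concretely this rests on the identity $P \simeq \PicS_X \times_{\PicS_{Y_\mbQ}} \PicS_{Y'_\mbQ}$ (up to inverting $p$), obtained by substituting the fracture square for $Y'$ into $P = \PicS_X \times_{\PicS_Y}\PicS_{Y'}$ and cancelling the repeated factor. Since $2$-pullback squares are stable under base change, the right-hand square is a $2$-pullback up to inverting $p$, and composition of $2$-pullback squares (\stacksproj{02XD}) shows the left-hand square—the one in the statement—is a $2$-pullback up to inverting $p$, as desired.

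The delicate points, where I expect most of the effort to lie, are bookkeeping rather than conceptual. First, one must check that the right-hand square genuinely is the claimed base change, i.e.\ that all structure functors match up to canonical $2$-isomorphism: that $P \to \PicS_X \to \PicS_{X_\mbQ}$ agrees with $P \to Q \to \PicS_{X_\mbQ}$, and that the identifications of $P$ and $Q$ with the respective fibre products are compatible with restriction; all of this follows from $\alpha p = q f$ and flat base change but should be spelled out. Second, all of the manipulations of $2$-fibre products, together with the stability and pasting of $2$-pullback squares, take place in the localised groupoids $\PicS_{-}[\tfrac1p]$; these operations are valid in any bicategory of groupoids and so persist after inverting $p$, but I would say this explicitly. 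Finally, the reduction to schemes by \'etale descent flagged in the first paragraph is the only place the algebraic-space hypothesis enters, and it is worth verifying with some care that being a $2$-pullback up to inverting $p$ descends along the Čech nerve of an atlas.
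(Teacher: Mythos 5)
Your proof is correct and is essentially the paper's own argument: both apply Theorem~\ref{thm:univ_homeo_and_Pic_intro} once to $f \colon Y \to Y'$ and once to $X \to X'$, establish that the square formed by $\PicS_{X} \times_{\PicS_Y} \PicS_{Y'}$, $\PicS_X$, $\PicS_{X_\mbQ} \times_{\PicS_{Y_\mbQ}} \PicS_{Y'_\mbQ}$, $\PicS_{X_\mbQ}$ is a $2$-pullback up to inverting $p$ (you do this by base-changing the fracture square of $f$ via fibre-product cancellation; the paper packages the same step as a cube whose top, bottom, and right facets are $2$-pullbacks), and then conclude by the pasting/cancellation property of $2$-pullbacks (\stacksproj{02XD}) applied to the horizontal composite whose outer rectangle is the fracture square of $X \to X'$. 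Your extra paragraph reducing the algebraic-space case of Theorem~\ref{thm:univ_homeo_and_Pic_intro} to the scheme case by \'etale descent addresses a point the paper passes over silently, but it does not change the substance of the argument.
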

 Inverting $p$ commutes with products in the above diagram. Using the language of 2-categories makes the statement and the proof of this result incomparably easier and cleaner.% Also note that right hand side comprises not just line bundles but a subtle data of compatible isomorphisms as the products are taken in a $2$-category of stacks.
\begin{proof}
Construct the following spatial diagram.
\begin{center}
\begin{tikzcd}
&
\PicS_{X}
\ar[leftarrow]{dl}[swap, sloped, near start]{}
\ar[rightarrow]{dd}[near end]{}
& & \PicS_{Y}
\ar[rightarrow]{dd}{}
\ar[leftarrow]{ll}{}
\ar[leftarrow]{dl}[swap, sloped, near start]{}
\\
{ \hphantom{aaa}\,\PicS_{X} \times_{\PicS_Y} \PicS_{Y'}}

\ar[rightarrow]{dd}[swap]{}
& & \PicS_{Y'} \ar[crossing over, leftarrow]{ll}[near start]{}
\\
&
\PicS_{X_{\mbQ}}

\ar[leftarrow, sloped, near end]{dl}{}
& & \PicS_{Y_{\mbQ}} \ar[leftarrow, near start]{ll}{}
\ar[leftarrow]{dl}
\\
{\hphantom{aaa}\, \PicS_{X_{\mbQ}} \times_{\PicS_{Y_{\mbQ}}} \PicS_{Y'_{\mbQ}}}

& & \PicS_{Y'_{\mbQ}} \ar[leftarrow]{ll}
\ar[crossing over, near start, leftarrow]{uu}{}
\end{tikzcd}
\end{center}
The top and the bottom facets are $2$-pullback squares by definition and the right one is a $2$-pullback square up to inverting $p$ by Theorem \ref{thm:univ_homeo_and_Pic_intro}. By the same argument as in Step 3 of the above proof, the left facet is a $2$-pullback square up to inverting $p$, too. 

By Theorem \ref{thm:univ_homeo_and_Pic_intro} and the above paragraph, the big square and the right square in the following diagram
\begin{center}
\begin{tikzcd}
\PicS_{X'}  \arrow{d}  \arrow{r}  &  \PicS_{X} \times_{\PicS_Y} \PicS_{Y'} \arrow{d} \arrow{r} & \PicS_{X}  \arrow{d}  \\
\PicS_{X'_{\mbQ}}    \arrow{r} & \PicS_{X_\mbQ} \times_{\PicS_{Y_{\mbQ}}} \PicS_{Y'_{\mbQ}} \arrow{r} & \PicS_{X_{\mbQ}}, 
\end{tikzcd}
\end{center}
are $2$-pullbacks up to inverting $p$, hence so is the left one (\stacksproj{02XD}).
\end{proof}

\section{Pushouts of universal homeomorphisms in mixed characteristic} \label{s:pushouts_of_universal_homeomorphisms_in_mixed_char}
The goal of this section is to prove Theorem \ref{thm:general_pushouts_in_mixed_char_intro}. The following proposition is a key component of its proof.
\begin{comment}
More generally, we consider the following problem. Let $X \leftarrow U \to U'$ be maps of schemes such that $U \to X$ is affine and $U \to U'$ is a universal homeomorphism. It is natural to ask if a pushout square
\begin{center}
\begin{tikzcd}
X \arrow{d}  & U \arrow{l} \arrow{d} \\
X \sqcup_{U} U' & U'  \arrow{l}
\end{tikzcd}
\end{center}
exists.

 In \stacksproj{07RT} this is shown to be true when $U \to U'$ is a thickening. However, in general, this problem is quite subtle. We showed that such a pushout exists in positive characteristic (see Corollary \ref{cor:pushouts_in_positive_char}), and we will see later that this need not be the case in characteristic zero (see Remark \ref{remark:extending_fails_in_char_0}). The main result of this section is a proof that such pushouts exist in mixed characteristic provided they exist after having restricted to characteristic zero. This is one of the key components in the proofs of Theorem \ref{theorem:main} and Theorem \ref{thm:quotients_intro}.
\end{comment}
%The following proposition is the key component of the proof of the existence of pushouts in mixed characteristic.
\begin{proposition} \label{prop:extending_universal_homeo_algebras} Let $B$ be a ring and let $A' \to B'$ be a universal homeomorphism of $\mbQ$-algebras, where $B' \coloneq B \otimes_{\mbZ} \mbQ $. Then $A \to B$ is a universal homeomorphism where $A := B \times_{B'} A'$ is the pullback of the diagram
\begin{center}
\begin{tikzcd}
B \arrow{r}{i} & B' \\
A \arrow{u} \arrow{r} & A' \arrow{u}{j}.
\end{tikzcd}
\end{center}
Therefore, $\Spec A$ is the geometric pushout of $\Spec B \leftarrow \Spec B' \to \Spec A'$.
\end{proposition}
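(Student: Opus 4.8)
The final sentence (that $\Spec A$ is the geometric pushout) will follow formally from the main claim, that $A \to B$ is a universal homeomorphism, together with Lemma \ref{lem:0et0}: applying that lemma with its $(\mathsf{A},\mathsf{B},\mathsf{B}',\mathsf{A}')$ equal to our $(B,B',A',B\times_{B'}A')$, its hypothesis ``$\mathsf{B}'\to\mathsf{B}$ is a universal homeomorphism'' becomes our given $A'\to B'$, and its hypothesis ``$\mathsf{A}'\to\mathsf{A}$ is a universal homeomorphism'' becomes the claim $A\to B$; its conclusion is exactly that $\Spec A$ is the geometric pushout of $\Spec B\leftarrow \Spec B'\to \Spec A'$. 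So the entire content is to prove that $A\to B$ is a universal homeomorphism. First I would reduce to the base ring $\Zpp$: by Lemma \ref{lem:verify_uh_over_every_p} it suffices to check this after the flat base change $\mbZ\to\Zpp$ for each prime $p$, and since $\Zpp$ is flat over $\mbZ$ this base change commutes with the fibre product defining $A$, leaving $A'$ and $B'$ unchanged (they are $\mbQ$-algebras); thus I may assume $B$ is a $\Zpp$-algebra with $B'=B[\tfrac1p]$ and $A=B\times_{B'}A'$. I would then verify the three conditions of \stacksproj{04DF} — integral, universally injective, and surjective — split along the two strata of $\Spec\Zpp=\{(0),(p)\}$: universal injectivity and surjectivity are checked on field-valued points, and every field has characteristic $0$ or $p$, so each condition decomposes into an assertion about $A\otimes\mbQ\to B\otimes\mbQ$ and one about $A/pA\to B/pB$.

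The characteristic-zero stratum is immediate. Flat base change along $\Zpp\to\mbQ$ gives $A\otimes\mbQ\cong A'$ compatibly with the projection $A\to A'$, and identifies $A\otimes\mbQ\to B\otimes\mbQ$ with $j\colon A'\to B'$, which is a universal homeomorphism by hypothesis; this settles injectivity and surjectivity over $(0)$. Moreover, replacing $A$ by its image $\pi(A)\subseteq B$ — which is a thickening of $A$, since $\ker(A\to B)\cong\ker j$ lies in the nilradical of $A'$ and a surjective closed immersion is a universal homeomorphism — I may assume $j$ is injective, so that $A=B\cap A'$ inside $B'$ is an honest subring, with $A\cap pB=pA$ and hence $A/pA\hookrightarrow B/pB$.

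The remaining and principal work is the characteristic-$p$ stratum together with integrality. The tool is Proposition \ref{prop:elementary_extensions}: as $A'\subseteq B'$ is a universal homeomorphism of $\mbQ$-algebras it is assembled from elementary extensions, so each finite subset of $i(B)$ lies in some $A'[c_1,\dots,c_m]$ with $c_l^2,c_l^3\in A'[c_1,\dots,c_{l-1}]$. The crucial point is that $A'=A[\tfrac1p]$, so every coefficient occurring in these relations is carried into $A$ after multiplication by a high enough power of $p$; choosing a super-increasing sequence of exponents $N_l$ and setting $\beta_l=p^{N_l}c_l\in B$ makes each $\beta_l$ elementary over $A[\beta_1,\dots,\beta_{l-1}]$ (the relations $\beta_l^2,\beta_l^3$ land in $A$ by the choice of exponents), so that $A[\beta_1,\dots,\beta_m]$ is integral over $A$. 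Using the basic implication $i(b)^2\in A'\Rightarrow b^2\in B\cap A'=A$ and its higher analogues, I would then descend, by induction on the tower length $m$, to prove that every $b\in B$ is integral over $A$ and that $b^{p^n}\in A+pB$ for some $n$. By Proposition \ref{prop:univ_homeo_in_char_p} the latter, combined with the injectivity $A/pA\hookrightarrow B/pB$, shows that $A/pA\to B/pB$ has an isomorphism of perfections and is therefore a universal homeomorphism.

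Putting the strata together with the global integrality, \stacksproj{04DF} gives that $A\to B$ is a universal homeomorphism, and Lemma \ref{lem:0et0} then yields the geometric pushout. The hard part will be the bookkeeping of powers of $p$ in the third paragraph: one must lift the elementary generators $c_l\in B'$ to elements of $B$ while preserving the relations $c_l^2,c_l^3\in A'[c_{<l}]$, and simultaneously reach the given elements $b\in B$ themselves, whose expression in the $c_l$ involves cancellation of denominators. This meshing of the characteristic-zero universal homeomorphism $A'\to B'$ with the mixed-characteristic structure $A=B\times_{B'}A'$ is the genuine technical core, and I expect the induction on the elementary tower to be the delicate step.
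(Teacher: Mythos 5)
Your outline reproduces the paper's framework---reduction to $\Zpp$ via Lemma \ref{lem:verify_uh_over_every_p}, the elementary-extension structure theory of Proposition \ref{prop:0CND}, a perfection criterion on the characteristic-$p$ side, and Lemma \ref{lem:0et0} for the pushout statement---and your stratum-wise verification of the conditions of \stacksproj{04DF} is a legitimate reorganisation of the paper's argument: the two claims you isolate, (a) every $b\in B$ is integral over $A$ and (b) $b^{p^n}\in A+pB$ for some $n$, would indeed suffice to finish. The genuine gap is that neither claim is proved, and the mechanism you gesture at cannot prove them. Your tower construction only shows that $p$-\emph{multiples} of elements of $B$ lie in the integral extension $A[\beta_1,\dots,\beta_m]$: clearing denominators in $i(b)\in A'[c_1,\dots,c_m]$ produces $p^Nb\in A[\beta_1,\dots,\beta_m]$, and neither integrality nor membership in $A+pB$ passes from $p^Nb$ to $b$, since one cannot divide by $p$ in $B$ and integrality of a $p$-multiple does not imply integrality of the element. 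Likewise the ``basic implication'' $i(b)^2,i(b)^3\in A'\Rightarrow b$ integral over $A$ applies only to elements whose image already satisfies the elementary relations; for a general $b$, writing $i(b)=\alpha_0'+\alpha_1'c_1$, the square $i(b)^2$ contains the cross term $2\alpha_0'\alpha_1'c_1$ and so does not lie in $A'$, and this decomposition cannot be lifted into $B$ precisely because $\alpha_0',\alpha_1'$ carry denominators. So your ``induction on the tower length'' has no engine driving the inductive step.

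What is missing is the computation that converts ``$p$-multiple in an integral extension'' into ``$p$-power in an integral extension'', and it is the heart of the paper's proof. For a single elementary extension one arranges $f\in B$ with $i(f)=f'$ and $f^2,f^3\in A$, proves $f^2B\subseteq A$ (from $f'^2B'\subseteq A'$ together with the saturation $A\cap p^MB=p^MA$, which you did establish), writes $p^kb=a_1+a_2f$ with $a_1,a_2\in A$, and expands
\[
a_1^{p^k}=(p^kb-a_2f)^{p^k}=p^{kp^k}\bigl(b^{p^k}-b^{p^k-1}a_2f\bigr)+f^2q,\qquad q\in B;
\]
absorbing $f^2q$ into $A$ and cancelling $p^{kp^k}$ by saturation gives $b^{p^k}-b^{p^k-1}a_2f\in A$, and one further iteration of the same expansion yields $b^{p^{k+l}}\in A[f]$. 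This single statement delivers everything: $A\subseteq A[f]$ is a universal homeomorphism by Proposition \ref{prop:0CND}, $A[f]\subseteq B$ is one by Lemma \ref{lem:univ_homeo_iso_generic} (the map of perfections is onto because every $b$ has a $p$-power in $A[f]$), and in your language (a) follows because $b$ is a root of $T^{p^{k+l}}-b^{p^{k+l}}$ over $A[f]$, while (b) follows because $A[f]=A+Af\subseteq A+pB$ (using $\ker(B\to B')\subseteq A$ one may take $f\in A+pB$). You correctly located the difficulty, but the proposal stops exactly where the proof begins.
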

%\noindent 
\begin{proof}
Note that $A \otimes_{\mbZ} \mbQ = A'$. {By Lemma \ref{lem:verify_uh_over_every_p}, it is enough to check that $A \to B$ is a universal homeomorphism after tensoring by $\mathbb{Z}_{(p)}$ for every prime $p$. Since tensoring by $\mathbb{Z}_{(p)}$  is equivalent to inverting all prime numbers $q \neq p$, we have that $A \otimes \mathbb{Z}_{(p)} \simeq (B\otimes \mathbb{Z}_{(p)}) \times_{B'} A'$. Hence, by replacing $B$ by $B \otimes \mathbb{Z}_{(p)}$, we may assume that $A$ and $B$ are defined over $\mathbb{Z}_{(p)}$.} In particular, $A' = A[\frac{1}{p}]$ and $B' = B[\frac{1}{p}]$.

First, we reduce to the case of $A' \to B'$ being a finite universal homeomorphism. By Proposition \ref{prop:0CND}, we can find $A'$-subalgebras $B'_{\lambda} \subseteq B'$ such that $B' = \varinjlim B'_{\lambda}$ and $A' \to B'_{\lambda}$ are finite universal homeomorphisms. For $B_{\lambda} \coloneq i^{-1}(B'_{\lambda}) \subseteq B$, we have $B_{\lambda}[\frac{1}{p}] = B'_{\lambda}$. Assume that 
\[
	A_{\lambda} \coloneq B_{\lambda} \times_{B'_{\lambda}} A' \to B_{\lambda}
\]
are universal homeomorphisms where $A_{\lambda} \subseteq A$. Then $A = \varinjlim A_{\lambda} \to B = \varinjlim B_{\lambda}$ has a locally nilpotent kernel and is a universal homeomorphism by Proposition \ref{prop:OCNE}.

As of now, we can assume that $A' \to B'$ is finite. Thus, by Proposition \ref{prop:elementary_extensions}, the morphism $A' \to B'$ can be factorised as 
\[
A' \to A'/I' =: B'_0 \hookrightarrow B'_1 \hookrightarrow \ldots \hookrightarrow B'_k := B',
\]
where $I'$ is a locally nilpotent ideal, and $B'_{i-1} \subseteq B'_i$ is an elementary extension for $1 \leq i \leq k$. It is enough to prove the proposition for each subsequent morphism separately, so we may assume that $A' \to B'$ is either a surjection with a locally nilpotent ideal, or $A' \hookrightarrow B'$ is an elementary extension. 

First, assume that $A' \to B'$ is surjective with locally nilpotent ideal $I'$. Then 
\[
A = B \times_{B'} A' \to B
\]
is also surjective with the kernel $0 \times_{B'} I' \subseteq A$ being locally nilpotent. Therefore, $A \to B$ is a universal homeomorphism.

Thus, we can assume that $A' \subseteq B'$ is an elementary extension, i.e.\ there exists $f' \in B'$ such that $A'[f'] = B'$ and $f'^2,f'^3 \in A'$. In particular, $A \to B$ is an inclusion. Since $A$ is constructed as a product, it is saturated inside $B$, that is, if $b \in B$ is such that $p^lb \in A$ for some $l>0$, then $b \in A$. Indeed, the image of $b$ in $B'$ is, by assumption, contained in the image of $A'$.

Now, by multiplying $f' \in B'$ by a power of $p$, we may assume that $f'$ is the image of an element $f \in B$ such that $f^2, f^3 \in A$. Therefore,  for every $b \in B$, we have $f^2b \in B$ and $p^lf^2b \in A$ for some $l>0$, hence $f^2b \in A$, and so $f^2B \subseteq A$. %Here, we used that $A$ is saturated, that is, given $a \in A'$ if $a \in B$ and $p^la \in A$ for some $l>0$, then $a \in A$.

Consider $A/ (fB \cap A) \subseteq B/fB$. We claim that given $[b] \in B/fB$, we have $[b^{p^k}] \in A/(fB\cap A)$ for some $k>0$. In other words, for $b \in B$, there exists $k > 0$, $a \in A$, and $b' \in B$ such that $b^{p^k} = a + b'f$. 

Since $A' \subseteq B'$ is an elementary extension,
\[
p^kb = a_1 + a_2f
\]  
for some $k>0$ and $a_1,a_2 \in A$. Write
\begin{align*}
a_1^{p^k} &= (p^kb - a_2f)^{p^k}\\ 
		  &= p^{kp^k}b^{p^k} - {p^k \choose 1}p^{(p^k-1)k}b^{p^k-1}a_2f + f^2q\\
		  &= p^{kp^k}\underbrace{(b^{p^k} - b^{p^k-1}a_2f)}_{=:\, a} + f^2q,
\end{align*}
for some $q \in B$. Since $a_1^{p^k} \in A$ and $f^2q \in A$, we have that $p^{kp^k}a \in A$, and so $a \in A$. Write
\[
b^{p^k} = a + b^{p^k-1}a_2 f.
\] 
Thus, the claim holds for $b' := b^{p^k-1}a_2 \in B$.

Now, we will show that $b^{p^{k+l}} \in A[f] \subseteq B$ for some $l>0$. To this end, take $l>0$ such that $p^lb' = a'_1 + a'_2f$ for $a_1', a_2' \in A$ and write
\begin{align*}
b^{p^{k+l}} &= (a + b'f)^{p^l} \\
			&= a^{p^l} + p^la^{p^l-1}b'f  + f^2q' \\
			&= a^{p^l} + a^{p^l-1}(a'_1 + a'_2f)f + f^2q' \\
			&= \underbrace{(a^{p^l} + (a^{p^l-1}a'_2+q')f^2)}_{\in A} + \underbrace{a^{p^l-1}a'_1}_{\in A}f,
\end{align*}
where $q' \in B$.
%\stacksproj{0BRA}

By Proposition \ref{prop:0CND} and Lemma \ref{lem:univ_homeo_iso_generic}, respectively, $A \hookrightarrow A[f]$ and $A[f] \hookrightarrow B$ are universal homeomorphisms. Hence $A \hookrightarrow B$ is a universal homeomorphism. The last assertion of the proposition follows by Lemma \ref{lem:0et0}.
\end{proof}
\noindent Note that the morphism $A \to B$ need not be finite even when $A' \to B'$ is~so.

\begin{corollary} \label{cor:extending_uh}
Let $X$ be a scheme and let $X_{\mbQ} \to X'_{\mbQ}$ be a universal homeomorphism of schemes. Then a geometric pushout $X'$ of $X \leftarrow X_{\mbQ} \to X'_{\mbQ}$ exists as a scheme. The same statement holds for algebraic spaces if $X_{\mbQ} \to X'_{\mbQ}$  is representable. 
\end{corollary}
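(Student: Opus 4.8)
The plan is to reduce to the affine case by means of the local construction of pushouts and then to quote Proposition \ref{prop:extending_universal_homeo_algebras} verbatim. First I would invoke Lemma \ref{lem:construct_pushout_locally}, which guarantees that a geometric pushout of $X \xleftarrow{p} X_{\mbQ} \xrightarrow{g} X'_{\mbQ}$ exists as a scheme (an algebraic space, resp.) as soon as it exists after pulling back by every open immersion (\'etale morphism, resp.) $U \to X$ with $U = \Spec B$ an affine scheme. Thus it suffices to construct the pushout of each such pulled-back diagram $U \xleftarrow{} V \xrightarrow{} V'$.

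Next I would identify the pulled-back diagram explicitly. Setting $B' := B \otimes_{\mbZ} \mbQ$, the cancellation of fibre products gives $V := U \times_X X_{\mbQ} = U \times_{\Spec \mbZ} \Spec \mbQ = \Spec B'$, since $X_{\mbQ} = X \times_{\Spec \mbZ} \Spec \mbQ$. The third term $V'$ is by definition the (unique) \'etale scheme over $X'_{\mbQ}$ whose pullback along $g$ recovers $V \to X_{\mbQ}$, and the restriction $V \to V'$ of $g$ is again a (representable) universal homeomorphism; in particular it is integral and surjective. By Lemma \ref{lem:construct_pushout_locally} (using Remark \ref{rem:scheme_facts}(\ref{itm:07VV})) $V'$ is a scheme, and since $V$ is affine, $V'$ is affine by Remark \ref{rem:scheme_facts}(\ref{itm:05YU}). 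Hence $V' = \Spec A'$ for a $\mbQ$-algebra $A'$, and $A' \to B'$ is a universal homeomorphism of $\mbQ$-algebras.

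Finally I would feed this into Proposition \ref{prop:extending_universal_homeo_algebras}: with $A := B \times_{B'} A'$, that proposition asserts precisely that $A \to B$ is a universal homeomorphism and that $\Spec A$ is the geometric pushout of $\Spec B \leftarrow \Spec B' \to \Spec A'$, that is, of $U \leftarrow V \to V'$. Performing this for every affine $U$ and gluing through Lemma \ref{lem:construct_pushout_locally} produces the desired pushout $X'$, as a scheme in general and as an algebraic space when $g$ is representable.

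Since Proposition \ref{prop:extending_universal_homeo_algebras} already carries out all the substantive work, I do not expect a genuine obstacle here; the corollary is essentially a formal descent from the affine statement. The only points demanding care are the bookkeeping identifications $V = \Spec(B \otimes_{\mbZ} \mbQ)$ and $B' = B \otimes_{\mbZ} \mbQ$ (so that the hypothesis $B' = B \otimes_{\mbZ} \mbQ$ of the proposition is literally met), and the verification that $V'$ is an affine scheme so that the proposition is applicable. The representability hypothesis in the algebraic space case is exactly what is needed, via Remark \ref{rem:scheme_facts}(\ref{itm:04DZ})(\ref{itm:07VV}), to produce $V'$ and to conclude that it is a scheme.
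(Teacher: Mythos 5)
Your proof is correct and follows essentially the paper's own argument: reduce to the affine case via Lemma \ref{lem:construct_pushout_locally} (noting, as the paper does, that $X_{\mbQ}\to X$ is a base change of $\Spec\mbQ\to\Spec\mbZ$ and hence affine, quasi-compact and quasi-separated, so the lemma applies), identify the pulled-back diagram as $\Spec B \leftarrow \Spec(B\otimes_{\mbZ}\mbQ) \to \Spec A'$ with $A'$ affine by Remark \ref{rem:scheme_facts}(\ref{itm:05YU}), and conclude by Proposition \ref{prop:extending_universal_homeo_algebras}. The bookkeeping you spell out (the identification $V=\Spec(B\otimes_{\mbZ}\mbQ)$ so that the proposition's hypothesis is literally met, and the scheme/affineness of $V'$ via Remark \ref{rem:scheme_facts}(\ref{itm:07VV})(\ref{itm:05YU})) is exactly what the paper's terser proof leaves implicit.
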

\begin{proof}
The morphism $X_{\mbQ} \to X$ is a base change of $\Spec \mbQ \to \Spec \mbZ$, hence it is affine, quasi-compact, and quasi-separated. By Lemma \ref{lem:construct_pushout_locally} it is enough to construct the pushout locally, hence we can assume that $X$ and $X_{\mbQ}$ are affine. Then $X'_{\mbQ}$ is affine by Remark \ref{rem:scheme_facts}(\ref{itm:05YU}). Now, the corollary is a consequence of Proposition \ref{prop:extending_universal_homeo_algebras}.
\end{proof}

%\begin{proof}[Proof of Theorem \ref{thm:extending_universal_homeo}]
%This follows from Proposition \ref{prop:extending_universal_homeo_algebras} and Lemma \ref{lem:construct_pushout_locally}.
%\end{proof}

%\begin{proof}[Proof of Theorem \ref{theorem:main}]

%\end{proof}

\begin{example} \label{example:extending_fails_in_char_0} By \cite[Lemma 8.4]{kollar97}, Corollary \ref{cor:extending_uh} holds when $\mbZ$ and $\mbQ$ are replaced by $k[t]$ and $k(t)$ for a positive characteristic field $k$. However, when $k$ is of characteristic zero, Corollary \ref{cor:extending_uh} is false. To see this, take
\begin{align*}
R &:= \mbC[t],\\
B &:= \mbC[t][x,y],\\
B' &:= \mbC(t)[x, y], \text { and }\\
A' &:= \mbC(t)[x^2, x^3, x+ty].
\end{align*}
Assume that $\Spec B' \to \Spec A'$ extends to a universal homeomorphism $\Spec B \to \Spec C$. Then this morphism must factorise as $\Spec B \to \Spec A \to \Spec C$, where $A := B \times_{B'} A' \subseteq B$. Thus $\Spec B \to \Spec A$ is a universal homeomorphism as well, and, by Lemma \ref{lem:Eakin-Nagata}, $A$ is finitely generated over $R$. We shall show that this is not true. 

First, $x^n \in A'$ for all $n\geq 2$ as it is generated by $x^2$ and $x^3$. Moreover, $x^ny^k \in A'$ for all $n \geq 2$ and $k \geq 0$, by induction on $k$ and the formula
\[
x^n(x+ty)^k = x^{n}f(x,y) + t^kx^ny^k,
\]
where $f(x,y) \in B'$ and $\deg_y f(x,y) < k$. Thus
\[
x^2B' \subseteq A',
\]
is an ideal and $A' / x^2B' \simeq \mbC(t)[x+ty]$. Therefore, given $b' \in B'$, we have $b' \in A'$ if and only if 
\[
b' = x^2f(x,y) + a_0 + a_1(x+ty) + \ldots + a_m(mx(ty)^{m-1} + (ty)^m),
\]
for $m \in \mbN$, $f(x,y) \in B'$, and $a_k \in \mbC(t)$ where $0 \leq k \leq m$. 

This implies that given $b \in B$, we have $b \in A$ if and only if
\[
b = x^2f(x,y) + a_0 + a_1(x+ty) + \ldots + a_m(mxy^{m-1} + ty^m), 
\]
for $m \in \mbN$, $f(x,y) \in B$, and $a_k \in \mbC[t]$. In particular, we see that
\[
A / (xB \cap A) \simeq \mbC[t][ty, \ldots, ty^k, \ldots]
\]
which is not finitely generated over $R$, and so neither is $A$. The same holds true for $\mbC[t]_{(t)}$ instead of $\mbC[t]$.\\

This argument does not provide a counterexample to Corollary \ref{cor:extending_uh}, with $t$, from above, replaced by a prime number $p$, i.e.\ for 
\[
A' = \mbQ[x^2,x^3, x+py] \subseteq \mbQ[x,y] =: B', \text{ and } B = \mbZ_{(p)}[x,y].
\] 
What is different is that $xy^{p-1} + y^p \in A$ as
\[
(x+py)^p \equiv p^p(xy^{p-1} + y^p) \text{ mod } x^2.
\]
It is not difficult to see that, in this setting, $A$ is generated by $x^2y^i$ for $0 \leq i < p$, $x^3y^j$ for $0 \leq j < p$, $mxy^{m-1} + py^m$ for $1 \leq m < p$, and $xy^{p-1} + y^p$.
\end{example}

We are ready to give a proof of Theorem \ref{thm:general_pushouts_in_mixed_char_intro}. %The assumption that the spaces are quasi-compact and quasi-separated, as well as the assumption on the separatedness of $Y\to X$, are only used in the last step of the proof. In particular, the theorem is valid with these assumptions replaced by the conditions of Definition \ref{definition:geo_pushout}  if $Y \to X$ is affine or is a contraction.
\begin{comment}
\begin{theorem} \label{thm:general_pushouts_in_mixed_char} Let $X \leftarrow Y \to Y'$ be maps of quasi-compact and quasi-separated schemes (algebraic spaces, resp.) over $\Spec \Zpp$ satisfying the assumptions of Definition \ref{definition:geo_pushout} and such that $Y \to X$ is separated. Assume that a geometric pushout $Z$ of  $X_{\mu} \leftarrow Y_{\mu} \rightarrow Y'_{\mu}$ over $\mu$ exists. Then the following geometric pushout square $X'$ exists as a scheme (an algebraic space, resp.):
\begin{center} 
\begin{tikzcd}
X \arrow{d}{f}  & Y \arrow{l}{p} \arrow{d}{g} \\
X' & Y'.  \arrow{l}{q}
\end{tikzcd}
\end{center}
\end{theorem}
\end{comment}

\begin{theorem}[cf.\ Theorem \ref{thm:general_pushouts_in_mixed_char_intro}] \label{thm:general_pushouts_in_mixed_char} Let $X \xleftarrow{p} Y \xrightarrow{g} Y'$ be a diagram of schemes or algebraic spaces such that $p$ is representable, quasi-compact, and \emph{separated}, and $g$ is a representable universal homeomorphism. Assume that a topological pushout of $X_{\mbQ} \leftarrow Y_{\mbQ} \rightarrow Y'_{\mbQ}$ exists as a scheme or an algebraic space, respectively. Then a geometric pushout of $X \leftarrow Y \to Y'$ exists as a scheme or an algebraic space, respectively.
\end{theorem}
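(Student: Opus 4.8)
The plan is to reduce, via the local criterion and base change over the individual primes, to an explicit affine fibre product of rings, and to locate the entire content in the statement that the resulting comparison map $X\to X'$ is a universal homeomorphism.

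\emph{Reductions.} By Lemma~\ref{lem:construct_pushout_locally} it is enough to produce the geometric pushout after pulling back along every affine open immersion (\'etale morphism) $U\to X$, so I may assume $X=\Spec B$ is affine; as the comparison map $X\to X'$ is forced to be a universal homeomorphism, $X'$ is then affine as well (Remark~\ref{rem:scheme_facts}(\ref{itm:05YU})). Writing $A:=\Gamma(Y,\mcO_Y)$ and $C:=\Gamma(Y',\mcO_{Y'})$ (these pushforwards are quasi-coherent because $p$, and hence $q$, is quasi-compact and quasi-separated, Remark~\ref{remark:geo_pushouts_affine_and_finite}(1)), the geometric-pushout sheaf condition read on the affine $X'$ forces its ring to be the fibre product $\hat B:=B\times_A C$, where $B\to A$ is $p^\sharp$ and $C\to A$ is $g^\sharp$. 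By Lemma~\ref{lem:verify_uh_over_every_p} and the flatness of $\Zpp$ over $\mbZ$ (which makes $\otimes\Zpp$ commute with the fibre product), it suffices to check that $\hat B\to B$ is a universal homeomorphism after base change to $\Zpp$, so I work over $\Zpp$.

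\emph{The affine core.} Suppose first that $p$ is affine, so that $Y=\Spec A$ and $Y'=\Spec C$ (the latter affine since $g$ is integral with $Y$ affine) and $g^\sharp\colon C\to A$ is a genuine universal homeomorphism of rings. Then Lemma~\ref{lem:0et0} reduces everything to the single claim that $\hat B=B\times_A C\to B$ is a universal homeomorphism, and I would prove this by mimicking Proposition~\ref{prop:extending_universal_homeo_algebras} (which is the special case $B\to A$ equal to the localisation $B\to B\otimes\mbQ$). Namely, split $g^\sharp$ as a thickening $C\twoheadrightarrow C/\ker g^\sharp$ — which only introduces a locally nilpotent kernel into $\hat B\to B$ and is harmless — followed by the injective part, and then factor $g^\sharp$ through elementary extensions using Proposition~\ref{prop:universal_homeomorphisms_affine_extensions}, treating one adjoined generator $b$ at a time (with $b^2,b^3$, or $pb,b^p$, lying in the previous subring). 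For each such step one lifts $b$ to $B$ after multiplying by a suitable power of $p$ and runs the binomial $p$-power estimates exactly as in Lemma~\ref{lem:univ_homeo_iso_generic} and Proposition~\ref{prop:extending_universal_homeo_algebras}; the hypothesis that the characteristic-zero pushout exists enters precisely as the input that $\hat B\otimes\mbQ\to B\otimes\mbQ$ is already a universal homeomorphism, and is what makes these estimates close up (Example~\ref{example:extending_fails_in_char_0} shows it cannot be omitted). With $\hat B\to B$ a universal homeomorphism, Lemma~\ref{lem:0et0} exhibits $\Spec\hat B$ as the geometric pushout, and Lemma~\ref{lem:construct_pushout_locally} glues these affine-local pushouts into the global one; the algebraic-space case follows by the same criterion using \'etale charts and the representability of $g$ (Remark~\ref{rem:scheme_facts}(\ref{itm:04DZ})).

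\emph{From affine $p$ to separated $p$, and the main obstacle.} The remaining point is that $p$ is only separated and quasi-compact, so $Y$ and $Y'$ need not be affine. Since $g$ is integral and hence affine, I would cover $Y'$ by affines and assemble $\hat B\to B$ from the affine pieces by quasi-coherent descent, reducing the universal-homeomorphism check to the affine-$p$ case; alternatively one decomposes $p$ into an affine and a proper (Stein) part via Zariski--Nagata compactification and Stein factorisation, constructs a topological pushout for each, glues, and upgrades to a geometric pushout by Lemma~\ref{lemma:technical_pushouts}. I expect the genuine obstacle to be the affine core: showing that the \emph{integrally} defined fibre product $\hat B=B\times_A C$, and not merely its $\mbQ$-fibre, maps to $B$ by a universal homeomorphism. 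The elementary-extension factorisation is available over $\mbQ$, but a generator of $A$ need not lift to $B$ integrally, and one must exploit that over $\Zpp$ the divisibility of binomial coefficients by powers of $p$ plays the role of the Frobenius — exactly the mixed-characteristic phenomenon that fails in the equicharacteristic-zero Example~\ref{example:extending_fails_in_char_0}.
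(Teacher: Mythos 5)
Your reductions (localising via Lemma~\ref{lem:construct_pushout_locally}, identifying the candidate ring $\hat B=B\times_A C$, checking the universal-homeomorphism property prime by prime via Lemma~\ref{lem:verify_uh_over_every_p}) are all sound, and you have correctly isolated where all the content lies. But the proposal then stops exactly at that point: the ``affine core'' is asserted, not proved, and the proposed mimicry of Proposition~\ref{prop:extending_universal_homeo_algebras} does not go through. That proposition's proof is tied to the map $B\to B'$ being the \emph{localisation} $B\to B\otimes\mbQ$ in two essential places: (a) the fibre product is $p$-saturated in $B$ (if $p^l b$ lies in it, so does $b$), because $A'$ is a $\mbQ$-algebra and multiplying by $p^{-l}$ is harmless in $B'$; and (b) every generator of an elementary extension can, after multiplication by a power of $p$, be lifted along $B\to B'$, because $B'$ is obtained from $B$ by inverting $p$. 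For your general affine $p^\sharp\colon B\to A$ neither statement holds: the fibre product $(p^\sharp)^{-1}(C)$ need not be saturated (already $\Zpp\subseteq\Zpp\oplus\mbF_p b$ with $pb=0$ kills saturation), and a generator $b\in A$ of an elementary extension of $C$ need not have any $p$-power multiple in the image of $B$ at all, since $A$ is not a localisation of $B$. Your closing paragraph concedes precisely this (``a generator of $A$ need not lift to $B$ integrally''), so what you have is a strategy with its decisive step missing, together with an accurate diagnosis of why that step is hard.

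The idea you are missing is the paper's Step 1, which is designed so that the elementary-extension argument is \emph{never} run against a general fibre product. One first applies Proposition~\ref{prop:extending_universal_homeo_algebras} only in the one situation where it works --- pushing the characteristic-zero pushout $Z$ out along the genuine localisation $X_{\mbQ}\to X$ (Corollary~\ref{cor:extending_uh}) --- to obtain $\tilde X$, and then replaces $Y$ by the image $\tilde Y$ of $Y\to\tilde X\times Y'$. After this modification $\tilde Y_{\mbQ}\to Y'_{\mbQ}$ is an \emph{isomorphism}, and this is the hypothesis under which the multiplicative perfection machine applies: Lemma~\ref{lem:univ_homeo_iso_generic} gives $B^{\perf}\simeq B'^{\perf}$, whence $(A\times_B B')^{\perf}=A^{\perf}\times_{B^{\perf}}B'^{\perf}\simeq A^{\perf}$, and the fibre product map is a universal homeomorphism with no binomial analysis of a general $B\to A$ needed. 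Note also that your passage from affine $p$ to separated $p$ is incomplete: the Zariski--Nagata/Stein route requires a separate argument for the contraction case (the paper's Step 4), where the pushout is $\Spec H^0(Y',\mcO_{Y'})$ and the proof is again the perfection argument, hence again needs the Step 1 reduction; the alternative ``quasi-coherent descent over an affine cover of $Y'$'' does not make sense as stated, because the global-sections fibre product over a non-affine $Y$ does not decompose along such a cover, and $\Spec\Gamma(Y,\mcO_Y)\to\Spec\Gamma(Y',\mcO_{Y'})$ is not known to be a universal homeomorphism before the theorem is proved.
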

\begin{proof} By Lemma \ref{lemma:technical_pushouts} a geometric pushout of $X_{\mbQ} \leftarrow Y_{\mbQ} \rightarrow Y'_{\mbQ}$ exists, and by Lemma \ref{lem:construct_pushout_locally} we can assume that $X$, $Y$, $Y'$ are schemes, while preserving the fact that a geometric pushout of $X_{\mbQ} \leftarrow Y_{\mbQ} \rightarrow Y'_{\mbQ}$ exists. By Remark \ref{rem:scheme_facts}(\ref{itm:07VV}) the geometric pushout $Z$ of $X_{\mbQ} \leftarrow Y_{\mbQ} \to Y'_{\mbQ}$ is then also a scheme. We split the proof into four steps.\\

\noindent \textbf{Step 1.} We reduce to the case when $Y_{\mbQ} \to Y'_{\mbQ}$ is an isomorphism.\\

Let $\tilde X$ be the geometric pushout of $Z \leftarrow X_{\mbQ} \to X$ which exists by Corollary \ref{cor:extending_uh}. Let $\tilde Y \subseteq \tilde X \times Y'$ be the image of the map $Y \to \tilde X \times Y'$ induced by $Y \to X \to \tilde X$ and $Y \to Y'$.
\begin{center}
\begin{tikzcd}
Y_{\mbQ} \arrow{r} \arrow{d} & X_{\mbQ} \arrow{r} \arrow{d} & X \arrow{d} & Y \arrow{l} \arrow{dr} \arrow{dl} \arrow{d} & \\
Y'_{\mbQ} \arrow{r}  & Z \arrow{r} \arrow[ul, phantom, "\ulcorner", very near start] & \tilde X \arrow[ul, phantom, "\ulcorner", very near start] & \tilde Y \arrow{l}{\tilde p} \arrow{r}{\tilde g} & Y'
\end{tikzcd}
\end{center}
 
\begin{comment}
Now, consider the following commutative diagram
\begin{center}
\begin{tikzcd}
X \arrow{d} & U \arrow{l} \arrow{d} \arrow[bend left = 35]{dd}\\
\tilde X & \arrow{l} \tilde U \arrow{d}\\
& U',
\end{tikzcd}
\end{center}
where $\tilde U \subseteq \tilde X \times U'$ is the image of the map $U \to \tilde X \times U'$ induced by $U \to X \to \tilde X$ and $U \to U'$.
\end{comment}
By construction, $Y \to \tilde Y$ is surjective, and so both $Y \to \tilde Y$ and $\tilde Y \to Y'$ are universal homeomorphisms by Lemma \ref{lemma:factors_of_universal_homeomorphism}. Moreover, $Y_{\mbQ} \to \tilde X_{\mbQ}$ factorises through $Y'_{\mbQ} \to Z \simeq \tilde X_{\mbQ}$, thus 
\[
\tilde Y_{\mbQ} = \mathrm{im}(Y_{\mbQ} \to \tilde X_{\mbQ} \times Y'_{\mbQ}) \simeq  Y'_{\mbQ}.
\]
By Lemma \ref{lemma:technical_pushouts}, it is enough to construct a geometric pushout $\tilde X'$ of $\tilde X \leftarrow \tilde Y \to Y'$. Therefore, by replacing $X \leftarrow Y \to Y'$ by this diagram, we can assume that $Y_{\mbQ} \to Y'_{\mbQ}$ is an isomorphism. Note that $\tilde p$ is quasi-compact and separated by Remark \ref{rem:scheme_facts}(\ref{itm:03E4})(\ref{itm:09MQ}), and so the assumptions of Theorem \ref{thm:general_pushouts_in_mixed_char} are preserved.\\

%Assume that $X$, $X'$, and $Y'$ are quasi-compact quasi-separated and $q$ is separated. Then $\tilde X$ and $\tilde Y$ are quasi-compact and quasi-separated (Remark \ref{rem:scheme_facts}(\ref{itm:03E4})(\ref{itm:09MQ})). Moreover, the same reference implies that $\tilde Y \to \tilde X$ is separated.\\

%Note that $Y \to \tilde X$ and $Y \to Y'$ are separated, thus so is $Y \to \tilde X \times Y'$ and $Y \to \tilde Y$ (for the last assertion apply \stacksproj{01KV} to $Y \hookrightarrow \tilde Y \to \tilde X \times Y'$). Moreover, since $Y \to \tilde Y$ is a universal homeomorphism, $\tilde Y$ is quasi-separated and quasi-compact.\\

\noindent \textbf{Step 2.} We reduce to the case of $p$ being affine or a contraction.\\

By Lemma \ref{lem:construct_pushout_locally} we can assume that $X$ is affine, while preserving the fact that $p$ is quasi-compact separated and $Y_{\mbQ} \to Y'_{\mbQ}$ is an isomorphism. Since $X$ is affine, both $X$ and $Y$ are quasi-compact and separated. Thus, by the Zariski-Nagata compactification and the Stein factorisation (cf.\ \cite[Theorem 1.1.3]{temkin11}, \stacksproj{03H2}), the separated morphism $Y \to X$ can be factored as $Y \to X_1 \to X_0 \to X$, where the first and the third map are affine, and the second one is a contraction. Then, using Step 3 and Step 4, we can construct geometric pushouts $X_1'$, $X_0'$, and $X'$ of $X_1 \leftarrow Y \to Y'$, $X_0 \leftarrow X_1 \to X_1'$, and $X \leftarrow X_0 \to X_0'$, respectively. Note that these pushouts are trivial over $\mbQ$. 
\begin{center}
\begin{tikzcd}
X \arrow[dashed]{d}  & X_0 \arrow{l} \arrow[dashed]{d} &  X_1 \arrow{l} \arrow[dashed]{d} & Y \arrow{l} \arrow{d} \\
X' & X_0' \arrow[dashed]{l} &  X_1' \arrow[dashed]{l} &  Y' \arrow[dashed]{l} 
\end{tikzcd}
\end{center}
By standard diagram chase, $X'$ is the geometric pushout of $X \leftarrow Y \to Y'$.\\

\noindent \textbf{Step 3.} We assume that $p$ is affine (cf.\ \cite[Lemma 8.9]{kollar97}).\\

\begin{comment}Set $X' \coloneq \Spec B' \times_B A$, where $X \leftarrow U \rightarrow U'$ corresponds to $A \to B \leftarrow B'$.
\begin{center}
\begin{tikzcd}
  X \arrow{d}  & U \arrow{l} \arrow{d} \\
  X'  & U'  \arrow{l}
\end{tikzcd}
\end{center}
\end{comment}
%By construction, $X'$ is a pushout of affine schemes and we need to verify that $X \to X'$ is a universal homeomorphism. %By construction, $\ker(\mcO_{X'} \to \mcO_{X}) = \ker(\mcO_{U'} \to \mcO_U)$, and so it is nilpotent, and, in particular, $X \to X'$ is dominant. Moreover, since $X \to Z$ is integral, so is $X \to X'$. Therefore, $X \to X'$ is universally closed (see \stacksproj{01WM}) and surjective. Last, $X \to X'$ is universally injective as $X \to Z$ is universally injective. This concludes the proof that $X \to X'$ is a universal homeomorphism.
By Lemma \ref{lem:construct_pushout_locally} we can assume that $X$ is affine, while preserving the fact that $p$ is affine and $Y_{\mbQ} \to Y'_{\mbQ}$ is an isomorphism. In particular, $Y$ and $Y'$ are affine as well (see Remark \ref{rem:scheme_facts}(\ref{itm:05YU})). Let $X' = \Spec A \times_{B} B'$, where $X \leftarrow Y \to Y'$ corresponds to $A \to B \leftarrow B'$. By Lemma \ref{lem:0et0}, the diagram
\begin{center}
\begin{tikzcd}
\mathllap{X = }\ \Spec A  \arrow{d} & \Spec B\ \mathrlap{ = Y} \arrow{l} \arrow{d} \\
\mathllap{X' = }\ \Spec A \times_B B' &  \Spec B'\ \mathrlap{ = Y'}  \arrow{l}
\end{tikzcd}
\end{center}
is a geometric pushout provided that $X \to X'$ is a universal homeomorphism. To show that this is the case, we can assume that $X$ and $X'$ are defined over $\Zpp$ by Lemma \ref{lem:verify_uh_over_every_p}. Then Lemma \ref{lem:univ_homeo_iso_generic} shows that $B^{\perf} \simeq B'^{\perf}$. Thus
\[
A^{\perf} \leftarrow (A\times_B B')^{\perf} =A^{\perf} \times_{B^{\perf}} B'^{\perf} \simeq A^{\perf}  
\]
is an isomorphism, and so by Lemma \ref{lem:univ_homeo_iso_generic} again, $X \to X'$ is a universal homeomorphism.\\

\noindent \textbf{Step 4.} We assume that $p$ is a contraction.\\

By Lemma \ref{lem:construct_pushout_locally} we can assume that $X$ is affine, while preserving the fact that $p$ is a contraction and $Y_{\mbQ} \to Y'_{\mbQ}$ is an isomorphism. In particular, $Y$ and $Y'$ are quasi-compact and quasi-separated (see Remark \ref{rem:scheme_facts}(\ref{itm:qc})(\ref{itm:09MQ})). Set $X' \coloneq \Spec H^0(Y', \mcO_{Y'})$.
Since $H^0(Y, \mcO_Y)= H^0(X, \mcO_X)$, we get a commutative diagram (cf.\ \stacksproj{01I1}):
\begin{center}
\begin{tikzcd}
X  \arrow{d}{f} & Y \arrow{l}{p} \arrow{d}{g}  \\
X'  & Y' \arrow{l}{q}.  
\end{tikzcd}
\end{center}
 {We have that 
\begin{align*}
X'_{\mbQ} &=  \Spec H^0(Y',\mcO_{Y'})\otimes \mbQ \simeq \Spec H^0(Y'_{\mbQ},\mcO_{Y'_{\mbQ}}) \\
&\simeq \Spec H^0(Y_{\mbQ}, \mcO_{Y_{\mbQ}}) \simeq \Spec H^0(X_{\mbQ}, \mcO_{X_{\mbQ}}) = X_{\mbQ},
\end{align*}
where $X_{\mbQ} \simeq X'_{\mbQ}$ is induced by $f|_{X_{\mbQ}}$ and the first isomorphism follows from quasi-compactness of $Y'$.}

To show that $f \colon X \to X'$ is a universal homeomorphism, we can assume that the spaces are defined over $\Zpp$ by Lemma \ref{lem:verify_uh_over_every_p}. Then we have $H^0(X, \mcO_X)^{\perf} = H^0(Y, \mcO_Y)^{\perf} = H^0(Y', \mcO_{Y'})^{\perf}$ (by Lemma \ref{lem:univ_homeo_iso_generic} and quasi-compactness and quasi-separatedness of $Y$ and $Y'$). Hence, by Lemma \ref{lem:univ_homeo_iso_generic}, $X \to X'$ is a universal homeomorphism, and the geometric pushout exists by Lemma \ref{lemma:technical_pushouts}. \qedhere

\end{proof}

Using Corollary \ref{cor:extending_uh}, we also show the following lemma (generalising \cite[Lemma 2.1]{keel99}) which is essential in the proof of Theorem \ref{thm:quotients_algebraic_groups_intro}.
\begin{lemma} \label{lem:make_coequiliser_uni_homeo} Let $X$ be a quasi-compact quasi-separated algebraic space and let 
\[
R \xrightarrow{f} E \xrightrightarrows[q]{p} X
\]
be maps of algebraic spaces such that $p$, $q$ are representable quasi-compact quasi-separated, $f$ is a representable universal homeomorphism,  and $p \circ f = q \circ f$. Assume that there exists a representable universal homeomorphism $X_{\mbQ} \to X'_{\mbQ}$ such that the two composite morphisms $E_{\mbQ} \rightrightarrows X_{\mbQ} \to X'_{\mbQ}$ are identical. Then there exists a representable universal homeomorphism $X \to X'$ such that the two composite morphisms $E \rightrightarrows X \to X'$ are identical.
\end{lemma}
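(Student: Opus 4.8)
The plan is to reduce to a situation where the two maps from $E$ genuinely coincide, in two stages: first extend the given characteristic-zero universal homeomorphism over all of $\Spec\mbZ$, and then carve out of the extended target the locus on which the two maps agree. The extension makes this locus full over $\mbQ$, while the hypothesis $p\circ f=q\circ f$ — fed through the perfection machinery — makes the inclusion of this locus a universal homeomorphism.

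First I would apply Corollary \ref{cor:extending_uh} to the diagram $X\leftarrow X_{\mbQ}\to X'_{\mbQ}$, producing a representable universal homeomorphism $\psi\colon X\to\bar X$ whose base change to $\mbQ$ is the given map (so $\bar X_{\mbQ}\simeq X'_{\mbQ}$). Set $\bar p:=\psi\circ p$ and $\bar q:=\psi\circ q$. Since $f$ is surjective and $p\circ f=q\circ f$, the maps $p$ and $q$ agree on underlying topological spaces, hence so do $\bar p$ and $\bar q$; this guarantees that $\bar p_*$ and $\bar q_*$ have a common target and that the equalizer below is globally well-posed. By construction the two composites $E_{\mbQ}\rightrightarrows X_{\mbQ}\to X'_{\mbQ}$ coincide, so $\bar p_{\mbQ}=\bar q_{\mbQ}$, and clearly $\bar p\circ f=\bar q\circ f$.

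Next I would form the equalizer. Working étale-locally and then affine-locally, using the representability of $p$ exactly as at the end of the proof of Lemma \ref{lem:construct_pushout_locally}, I reduce to rings: $\bar X=\Spec\bar A$, $E=\Spec B$, $R=\Spec C$, with $\bar p,\bar q$ corresponding to $\alpha,\beta\colon\bar A\to B$ and $f$ to a universal homeomorphism $f^\sharp\colon B\to C$. I set $A':=\{x\in\bar A:\alpha(x)=\beta(x)\}$ and $X':=\Spec A'$, so that $\bar X\to X'$ tautologically coequalizes $\bar p,\bar q$. It remains to check that $\bar X\to X'$ is a universal homeomorphism. By Lemma \ref{lem:verify_uh_over_every_p} I may base change to $\Zpp$; since $\alpha[\tfrac1p]=\beta[\tfrac1p]$ we get $A'[\tfrac1p]=\bar A[\tfrac1p]$, so the map is an isomorphism over $\mbQ$, and by Lemma \ref{lem:univ_homeo_iso_generic} it then suffices to show $A'^{\perf}\to\bar A^{\perf}$ is an isomorphism, i.e. that for each $x\in\bar A$ one has $\alpha(x)^{p^m}=\beta(x)^{p^m}$ for $m\gg0$.

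This perfection computation is the technical heart, and the main obstacle is to make it interact correctly with two different sources of smallness. Writing $s:=\alpha(x)-\beta(x)$, the identity $\bar p\circ f=\bar q\circ f$ gives $f^\sharp(s)=0$, while $\alpha[\tfrac1p]=\beta[\tfrac1p]$ gives $p^Ns=0$ for some $N$. Reducing mod $p$ and applying Proposition \ref{prop:univ_homeo_in_char_p} to the universal homeomorphism $B/p\to C/p$, the class of $s$ is killed by a power of Frobenius, so $s^{p^k}=pt$ for some $t\in B$; together with $p^Ns=0$ this yields $s^{Np^k+1}=(pt)^Ns=0$, so $s$ is nilpotent. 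The expansion $\alpha(x)^{p^m}=(\beta(x)+s)^{p^m}=\beta(x)^{p^m}+\sum_{i=1}^{Np^k}\binom{p^m}{i}\beta(x)^{p^m-i}s^i$ then terminates, and for $m\gg0$ every remaining $\binom{p^m}{i}$ is divisible by $p^N$, so each term vanishes because $p^Ns^i=0$; hence $\alpha(x)^{p^m}=\beta(x)^{p^m}$, precisely the argument of Lemma \ref{lem:univ_homeo_iso_generic}. Finally, since the equalizer construction is compatible with the flat maps $\mbZ\to\Zpp$ and with localization on $\bar A$, the affine pieces patch to a scheme, and the étale equivalence relation defining $\bar X$ descends to $X'$ as in Lemma \ref{lem:construct_pushout_locally}, exhibiting $X'$ as an algebraic space with $\bar X\to X'$ a representable universal homeomorphism. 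Composing, $X\to\bar X\to X'$ is the desired representable universal homeomorphism coequalizing $E\rightrightarrows X$.
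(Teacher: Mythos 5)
Your main line coincides with the paper's: first replace $X$ by the geometric pushout of $X \leftarrow X_{\mbQ} \to X'_{\mbQ}$ (Corollary \ref{cor:extending_uh}) so that the two maps into it agree over $\mbQ$, then define $X'$ via the equalizer subsheaf $\ker(\bar p^{\,*} - \bar q^{\,*})$ and prove that $\bar X \to X'$ is a universal homeomorphism by a perfection computation. Your way of running that computation is a legitimate variant of the paper's: instead of the paper's further reduction (replacing $R$ by the geometric pushout of $R \leftarrow R_{\mbQ} \to E_{\mbQ}$, so that $f_{\mbQ}$ becomes an isomorphism and Lemma \ref{lem:univ_homeo_iso_generic} applies as a black box to give $\mcO^{\perf}_E \simeq f_*\mcO^{\perf}_R$), you inline the injectivity half of the proof of Lemma \ref{lem:univ_homeo_iso_generic}: from $f^\sharp(s)=0$, $p^N s=0$, and the mod-$p$ universal homeomorphism you get nilpotence of $s$ and then kill the binomial terms. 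That computation, and the deduction that the inclusion of the equalizer ring induces an isomorphism on perfections (surjectivity being the only nontrivial point), are correct.

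The gap is in the étale-local reduction for algebraic spaces. You say you ``reduce to rings'' with two maps $\alpha,\beta\colon \bar A \to B$, but pulling back $E$ along an étale chart $U \to \bar X$ via $\bar p$ and via $\bar q$ produces two a priori different spaces $E\times_{\bar p,\bar X}U$ and $E\times_{\bar q,\bar X}U$; without identifying them you cannot even write down two ring maps into a common $B$. This identification is not formal: the paper obtains it from the hypothesis that $f$ is a representable universal homeomorphism, because both pullbacks of $E$ pull back along $f$ to the same space $R\times_{\bar X}U$ (using $\bar p\circ f=\bar q\circ f$), and étale spaces over $E$ and over $R$ are equivalent (Remark \ref{rem:scheme_facts}(\ref{itm:07VW})). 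This is the second essential place where the hypothesis on $f$ enters, and your sketch never invokes it there; the end of the proof of Lemma \ref{lem:construct_pushout_locally}, which you cite, treats pullbacks along a single map and does not contain this step. Two smaller points in the same vein: your ``affine-locally'' step also presumes $E$ and $R$ become affine over an affine chart, but $\bar p$ is only quasi-compact and quasi-separated, so one must work with the quasi-coherent sheaf $\bar p_*\mcO_E$ (as the paper does) or with sections over quasi-compact quasi-separated preimages; and after assembling $X'$ from the cover you must still check that the two composites $E \rightrightarrows X \to X'$ themselves, not merely their pullbacks to the cover, coincide — the paper deduces this from faithful flatness of $E_U \to E$. (The paper in fact sidesteps re-running the gluing of Lemma \ref{lem:construct_pushout_locally}: having produced $U \to U'$ on the cover, it builds $X'$ directly as the geometric pushout of $X \leftarrow U \to U'$ via Theorem \ref{thm:general_pushouts_in_mixed_char}.)
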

The lemma also holds in the category of schemes in which case the assumption on the quasi-compactness and quasi-separatedness of $X$ is not necessary.
\begin{proof}
By replacing $X$ by the geometric pushout of $X \leftarrow X_{\mbQ} \to X'_{\mbQ}$, which exists by Corollary \ref{cor:extending_uh}, we can assume that $p|_{E_{\mbQ}} = q|_{E_{\mbQ}}$.

First, we deal with the case when the spaces in question are schemes. To this end, we reduce the lemma to when $f_{\mbQ} \colon R_{\mbQ} \to E_{\mbQ}$ is an isomorphism. Let $E'$ be the geometric pushout (and hence a categorical pushout) of $R \leftarrow R_{\mbQ} \to E_{\mbQ}$. Then the induced map $f' \colon E' \to E$ is a universal homeomorphism and an isomorphism over $\mbQ$ (see Lemma \ref{lemma:technical_pushouts}). Moreover, $p \circ f' = q \circ f'$  by the universal property of categorical pushouts as  $p \circ f = q \circ f$ and $p_{\mbQ} = q_{\mbQ}$. Thus, we can conclude the reduction process by replacing $R$ by $E'$.

Set $X'=X$ as topological spaces and endow $X'$ with a structure of a ringed space by setting
$
\mcO_{X'} \coloneq \ker(p^* - q^* \colon \mcO_{X} \xrightarrow{} p_* \mcO_{E}).
$
Note that since $R=E$ topologically, we have a natural identification $p_*\mcO_E = q_*\mcO_E$. We claim that $\mcO_{X'}(U) \to \mcO_X(U)$ is a universal homeomorphism for every affine open subset $U \subset X$. To show the claim we can assume that our spaces are defined over $\Zpp$ by Lemma \ref{lem:verify_uh_over_every_p}. Then, by Lemma \ref{lem:univ_homeo_iso_generic}, $\mcO^{\perf}_{E} = \mcO^{\perf}_{R}$, and so $\mcO^{\perf}_{X'} = \mcO^{\perf}_{X}$. The claim follows by Lemma \ref{lem:univ_homeo_iso_generic} again.

Now, we claim that $X'|_U = \Spec \mcO_{X'}(U)$, and so $X'$ is a scheme with the induced map $g \colon X \to X'$ being a universal homeomorphism. By the above paragraph, $U = X'|_U = \Spec \mcO_{X'}(U)$ topologically. Now, by quasi-coherence of $p_* \mcO_E$ and exactness of localisation, we get that 
\[
\mcO_{X'}(D(f)) = \mcO_{X'}(U)_{f'},
\] 
where $f' \in \mcO_{X'}(U)$, its image in $\mcO_{X}(U)$ is denoted by $f$, and $D(f) \subseteq U$ is the complement of the locus where $f=0$ (cf.\ \stacksproj{01Z8}). This concludes the proof of the claim. That $g \circ p = g \circ q$, follows by construction.\\

Now we show the lemma for algebraic spaces. Let $U \to X$ be a surjective \'etale morphism from an affine scheme $U$ (which exists by \stacksproj{03H6} as $X$ is quasi-compact), and let $R_U$, $E^p_U$, and $E^q_U$ be its pullbacks via $p \circ f$, $p$, and $q$, respectively. Since the pullbacks of $E^p_U$ and $E^q_U$ under $f$ are isomorphic to $R_U$, we have a natural isomorphism $E^p_U \simeq E^q_U =: E_U$ by Remark \ref{rem:scheme_facts}(\ref{itm:07VW}), and so two maps $p_U, q_U \colon E_U \rightrightarrows U$. Moreover, $(p_U)_{\mbQ} = (q_U)_{\mbQ}$. %by the same remark, $U_{\mbQ} \to X_{\mbQ}$ induces an \'etale map $U'_{\mbQ} \to X'_{\mbQ}$ and a universal homeomorphism $U_{\mbQ} \to U'_{\mbQ}$ which equalises $p_U$ and $q_U$. 

Therefore, by the above paragraph, we can construct a universal homeomorphism $g_U \colon U \to U'$ equalising $p_U$ and $q_U$, and such that $U_{\mbQ} \simeq U'_{\mbQ}$. Since $U$ is affine and $X$ is quasi-separated, the morphism $U \to X$ is representable quasi-compact and separated (Remark \ref{rem:scheme_facts}(\ref{itm:03KS})(\ref{itm:03KR})). By Theorem \ref{thm:general_pushouts_in_mixed_char}, we can construct a geometric pushout $X'$ of $X \leftarrow U \to U'$ sitting inside the following diagram:
\begin{center}
\begin{tikzcd}
R \arrow{r}{f}  & E \arrow[shift right = 2pt]{r}[swap]{q} \arrow[shift left = 2pt]{r}{p}  & X  \arrow[dashed]{r}{g} & X'\\
R_U \arrow{r}{f_U} \arrow{u}  & E_U \arrow[shift right = 2pt]{r}[swap]{q_U} \arrow[shift left = 2pt]{r}{p_U} \arrow{u}  & U \arrow{u}  \arrow[dashed]{r}{g_U} & U' \arrow{u}.
\end{tikzcd}
\end{center}
In particular, the two compositions $E_U \to E \rightrightarrows X \to X'$ are identical, and since $E_U \to E$ is faithfully flat (and thus $\mcO_E \to \mcO_{E_U}$ is injective by \stacksproj{08WP}), the two compositions $E \rightrightarrows X \to X'$ are identical, too.
\end{proof}

\section{Gluing of semiampleness}
In order to prove Theorem \ref{theorem:main_intro} and Corollary \ref{cor:bpf_intro} we need to understand semiampleness on non-irreducible schemes.
%The goal of this section is to prove the EWM case of Theorem \ref{theorem:main} and show Corollary \ref{cor:bpf}. To this end, we need to be able to glue semiampleness.
\subsection{Gluing} \label{ss:gluing}
The following propositions follow  by the strategy of Keel given our Theorem \ref{thm:general_pushouts_in_mixed_char} and Corollary \ref{cor:pic_for_general_pushouts}.
\begin{proposition}[{cf.\ \cite[Corollary 2.9]{keel99}}] \label{prop:gluing} Let $X$ be a reduced scheme projective over a Noetherian base scheme $S$ and such that $X = X_1 \cup X_2$ for two {reduced} closed subschemes $X_1$ and $X_2$. Let $L$ be a line bundle on $X$ such that $L|_{X_1}$, $L|_{X_2}$, and $L|_{X_{\mbQ}}$ are semiample (EWM, resp.). Let $g_2 \colon X_2 \to Z_2$ be a morphism associated to $L|_{X_2}$. Assume that $g_2|_{X_1\cap X_2}$ has geometrically connected fibres. Then $L$ is semiample (EWM, resp.).
\end{proposition}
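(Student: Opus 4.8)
The plan is to imitate Keel's gluing argument \cite[Corollary 2.9]{keel99}, feeding the geometry of the two contractions into the mixed characteristic pushout and descent machinery developed above. Set $W \coloneq X_1 \cap X_2$. Restricting the data, $L|_W$ is semiample (EWM, resp.) as a restriction of $L|_{X_2}$, and $h \coloneq g_2|_W \colon W \to V_2$ with $V_2 \coloneq g_2(W)$ is, by the connected-fibre hypothesis, exactly the contraction associated to $L|_W$. The first step is to compare this with $g_1|_W$: since $g_1 \colon X_1 \to Z_1$ contracts precisely the curves on which $L|_{X_1}$ has degree zero, $g_1|_W$ contracts every (connected) fibre of $h$ to a point, so by rigidity of the contraction $h$ (recall $h_*\mcO_W = \mcO_{V_2}$) it factors as $g_1|_W = \psi \circ h$ for a finite morphism $\psi \colon V_2 \to V_1 \coloneq g_1(W) \subseteq Z_1$. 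Because contractions attached to a semiample bundle are canonical in characteristic zero whereas in characteristic $p$ they are pinned down only up to Frobenius, $\psi$ is a finite universal homeomorphism restricting to an isomorphism over $\mbQ$ — precisely the situation the earlier sections are built to handle (cf.\ Lemma \ref{lem:univ_homeo_iso_generic}).

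Next I would glue $Z_1$ and $Z_2$ into a single target $Z$ by identifying $V_1 \subseteq Z_1$ with $V_2 \subseteq Z_2$ along $\psi$. Over $\mbQ$ this gluing is unambiguous and yields the contraction $Z_{\mbQ}$ of $L|_{X_{\mbQ}}$, whose existence is guaranteed by the hypothesis that $L|_{X_{\mbQ}}$ is semiample (EWM); this is the indispensable characteristic-zero input. The mixed characteristic space $Z$, together with a proper morphism $g \colon X \to Z$ restricting to $g_1$ and $g_2$, is then produced by Theorem \ref{thm:general_pushouts_in_mixed_char} (with Kollár's pinching Theorem \ref{thm:Kollar_pinching} handling the underlying closed gluing), the universal-homeomorphism leg being the identification $\psi$. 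For the EWM statement this already finishes the argument: the fibres of $g$ over the glued locus are the connected fibres of $h$ — here the connected-fibre hypothesis on $g_2|_W$ is exactly what prevents two distinct $g_2$-fibres from being forced together — so $g$ has geometrically connected fibres, and an integral $V \subseteq X$ is contracted by $g$ if and only if $L|_V$ is not big, as can be tested on $V \cap X_1$ and $V \cap X_2$ where $g_1$ and $g_2$ realise the EWM property.

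For the semiample statement it remains to descend $L$ to an ample bundle on $Z$. Writing $A_i$ for ample bundles on $Z_i$ with $g_i^*A_i \simeq L|_{X_i}^{\otimes m}$, the restrictions $A_1|_{V_1}$ and $A_2|_{V_2}$ have isomorphic pullbacks to $W$ (both equal $L|_W^{\otimes m}$), so they agree over $\mbQ$ and differ in characteristic $p$ only through the universal homeomorphism $\psi$. Applying Corollary \ref{cor:pic_for_general_pushouts} to the pushout square defining $Z$ — whose Picard groupoid is Cartesian over those of $Z_1$, $Z_2$, and the $V_i$ up to inverting $p$ — the pair $(A_1,A_2)$ glues, after replacing $m$ by a sufficiently large multiple of a power of $p$, to a line bundle $M$ on $Z$ with $g^*M \simeq L^{\otimes m'}$. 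Since $Z = Z_1 \cup Z_2$ is covered by the closed subschemes on which $M$ restricts to the ample $A_i$, the bundle $M$ is ample, and hence $L^{\otimes m'} = g^*M$ is semiample.

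The crux is the simultaneous construction of $Z$ and descent of the polarisation in mixed characteristic. Over a field of characteristic zero the statement is false, and in equal characteristic $p$ Keel obtains everything from Frobenius; what makes the mixed setting work is that Theorem \ref{thm:general_pushouts_in_mixed_char} and Corollary \ref{cor:pic_for_general_pushouts} (ultimately Theorem \ref{thm:univ_homeo_and_Pic}) let one reconcile the characteristic-$p$ picture, where $\psi$ is a genuine universal homeomorphism and $L$ descends only up to a $p$-th power, with the characteristic-zero picture, where $\psi$ is an honest isomorphism and the descent is exact. Forcing these two descriptions to agree — encoded in the Cartesian-up-to-$p$ diagrams of Picard groupoids — is the main obstacle, and it is exactly where the hypothesis on $L|_{X_{\mbQ}}$ enters.
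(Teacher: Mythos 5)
Your overall architecture is the paper's: contract the pieces, glue the targets over the intersection using Theorem \ref{thm:general_pushouts_in_mixed_char} with the characteristic-zero contraction as input, then pinch (Theorem \ref{thm:Kollar_pinching}) for EWM and descend line bundles for semiampleness. But your very first step contains a genuine gap, and it sits exactly where the difficulty of the proposition is concentrated. You claim that $h \coloneq g_2|_W \colon W \to V_2 \coloneq g_2(W)$ satisfies $h_*\mcO_W = \mcO_{V_2}$, hence is the contraction associated to $L|_W$, and you then factor $g_1|_W$ through $h$ by rigidity to obtain $\psi \colon V_2 \to V_1$. Geometric connectedness of the fibres of $h$ does \emph{not} give the Stein property: the contraction associated to $L|_W$ is the Stein factorisation $g_{1,2} \colon W \to V_{1,2}$ of $h$, and the hypothesis only makes the finite part $f_2 \colon V_{1,2} \to V_2$ a finite universal homeomorphism, in general not an isomorphism --- this discrepancy is the whole point of positive and mixed characteristic. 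Rigidity therefore factors $g_1|_W$ through $V_{1,2}$, not through $V_2$; producing your $\psi$ would require inverting $f_2$, which is impossible for morphisms of schemes. Concretely, if $W \cong \mbP^1_{k'}$ with $k'/k$ purely inseparable, $g_2$ contracts $W$ (inside a larger, geometrically connected fibre) to a $k$-point while $g_1$ contracts $W$ to a $k'$-point, then $V_2 = \Spec k$, $V_1 = \Spec k'$, and no morphism $V_2 \to V_1$ over $k$ exists. Moreover, even when $\psi$ exists it need not be a universal homeomorphism, because nothing in the hypotheses forces $g_1|_W$ to have connected fibres: $f_1 \colon V_{1,2} \to V_1$ (hence $\psi$) may identify distinct points of $V_2$, so Theorem \ref{thm:general_pushouts_in_mixed_char} could not be invoked with $\psi$ as its universal-homeomorphism leg in any case.

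The repair is precisely the paper's proof: introduce the contraction $g_{1,2} \colon W \to V_{1,2}$ associated to $L|_W$ itself and take the geometric pushout $V$ of the roof $V_1 \xleftarrow{f_1} V_{1,2} \xrightarrow{f_2} V_2$, with $f_2$ (not any map between $V_1$ and $V_2$) as the universal-homeomorphism leg, the characteristic-zero input being the image of $(X_{1,2})_{\mbQ}$ in the contraction of $L|_{X_{\mbQ}}$. The EWM case then pinches $Z_1$ and $Z_2$ onto $V$; the semiample case applies Corollary \ref{cor:pic_for_general_pushouts} to obtain an ample $A_V$ on $V$ and lifts sections through an $H^0$ fibre-product argument with Serre vanishing, rather than descending an ample bundle to the whole glued space $Z$. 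Note also that your appeal to Corollary \ref{cor:pic_for_general_pushouts} ``for the pushout square defining $Z$'' would not be licensed even granting $\psi$: that corollary requires both legs of the square to be finite universal homeomorphisms, whereas the squares assembling $Z$ out of $Z_1$ and $Z_2$ have closed immersions as legs, for which one needs Lemma \ref{lemma:Milnor_Pic}-type patching instead.
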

\begin{proof}
We can assume that $S$ is affine. Let $X_{1,2} \coloneq X_1 \cap X_2$ be the scheme theoretic interesection (in particular, it need not be reduced). Let $g_1 \colon X_1 \to Z_1$, $g_2 \colon X_2 \to Z_2$, and $g_{1,2} \colon X_{1,2} \to V_{1,2}$ be the morphisms associated to $L|_{X_1}$, $L|_{X_2}$, and $L|_{X_{1,2}}$, respectively. Let 
\[
V_{1,2} \xrightarrow{f_i} V_i \hookrightarrow Z_i
\]
be factorisations through the images $V_i$ of $V_{1,2}$ in $Z_i$. %By Remark \ref{rem:scheme_facts}(\ref{itm:035D}) and Stein factorisation (\stacksproj{0A1B}), $f_i$ are finite.  
Note that $f_i$ are proper (Remark \ref{rem:scheme_facts}(\ref{itm:04NX})) with finite fibres, hence they are finite (Remark \ref{rem:scheme_facts}(\ref{itm:0A4X})). 
Moreover, since $g_2|_{X_{1,2}}$ has geometrically connected fibres, we get that $f_2$ is a finite universal homeomorphism.

We claim that a topological pushout of $(V_1)_{\mbQ} \leftarrow (V_{1,2})_{\mbQ} \to (V_2)_{\mbQ}$ exists. Indeed, let $g_{\mbQ} \colon X_{\mbQ} \to Z'$ be a map associated to $L|_{X_{\mbQ}}$ and let $V' \subseteq Z'$ be the image of $(X_{1,2})_{\mbQ}$. By construction, we get maps $(V_1)_{\mbQ}, (V_2)_{\mbQ} \to V'$ such that $(V_1)_{\mbQ} \to V'$ is proper (Remark \ref{rem:scheme_facts}(\ref{itm:04NX})) and a bijection on points (as $g_2|_{(X_{1,2})_{\mbQ}}$ has geometrically connected fibres), hence it is a finite universal homeomorphism (Remark \ref{rem:scheme_facts}(\ref{itm:0A4X})). In particular, $V'$ is the sought-for topological pushout.

Therefore, by Theorem \ref{thm:general_pushouts_in_mixed_char}, there exists a geometric pushout $V$ of $V_1 \leftarrow V_{1,2} \to V_2$ sitting in the following diagram
\begin{center}
\begin{tikzcd}
X_1 \arrow{dd}{g_1} \arrow[hookleftarrow]{rr} && X_{1,2} \arrow{d}{g_{1,2}} \arrow[hookrightarrow]{rr} && X_2 \arrow{dd}{g_2} \\
&& \arrow{ld}[swap]{f_1} V_{1,2} \arrow{rd}{f_2} && \\
Z_1 \arrow[hookleftarrow]{r} & V_1 \arrow[dashed]{dr}[swap]{h_1} & & V_2 \arrow[dashed]{dl}{h_2} \arrow[hookrightarrow]{r} & Z_2 \\
& & V. & &
\end{tikzcd}
\end{center}
By definition $f_2^* \colon \mcO_{V_2} \to (f_2)_*\mcO_{V_{1,2}}$ is injective, and hence so is $h_1^* \colon \mcO_{V} \to (h_1)_* \mcO_{V_1}$ (Remark \ref{remark:geo_pushouts_affine_and_finite}(4)). Thus, by Lemma \ref{lem:Eakin-Nagata}, $V$ is proper over $S$. Since $V_1$ and $V_2$ are of finite type, the morphisms $h_1$ and $h_2$ are finite (cf.\ Remark \ref{remark:geo_pushouts_affine_and_finite}).
%\noindent Since $\mcO_{V_2} \to \mcO_{V_{1,2}}$ is injective, $V$ is of finite type over $S$ by Lemma \ref{lem:fin_gen_when_finite}.

First, we consider the EWM case. To this end, let $Z'_1$, $Z'_2$, and $Z$ be the pushouts of $Z_1 \hookleftarrow V_1 \to V$, $V \leftarrow V_2 \hookrightarrow Z_2$, and $Z'_1 \hookleftarrow V \hookrightarrow Z'_2$ (equivalently $V \leftarrow V \sqcup V \hookrightarrow Z'_1 \sqcup Z'_2$), respectively, which exist and are of finite type over $S$  by Theorem \ref{thm:Kollar_pinching}. By Remark \ref{rem:scheme_facts}(\ref{itm:09MQ})(\ref{itm:03GN}), $Z'_1$, $Z'_2$, and $Z$ are proper. {Since $X$ is a categorical pushout of $X_1 \hookleftarrow X_{1,2} \hookrightarrow X_2$ (\stacksproj{0C4J}) and the constructed maps $X_1 \to Z$, $X_2 \to Z$ agree on $X_{1,2}$, we get an induced map $g \colon X \to Z$, which is proper (as it is a map between proper spaces).} {Note that $g \colon X \to Z$ is associated to $L$; indeed, its restriction to $X_i$ agrees with $g_i$ for $i \in \{1,2\}$ up to a finite map, and so a closed integral subscheme $V \subseteq X_i$ is contracted by $g$ if and only if it is contracted by $g_i$ if and only if $L|_V$ is not big.} 

Now, we move on to the semiample case of the proposition in which case $V$ is a scheme. Up to replacing $L$ by some power, the line bundles $L|_{X_1}$, $L|_{X_{1,2}}$, and $L|_{X_2}$ induce ample line bundles $A_{Z_1}$, $A_{Z_2}$, $A_{V_{1,2}}$ on $Z_1$, $Z_2$, and $V_{1,2}$, respectively. Let $A_{V_1} \coloneq A_{Z_1}|_{V_1}$ and let $A_{V_2} \coloneq A_{Z_2}|_{V_2}$. By construction, these line bundles induce an element $(A_{V_1}, A_{V_2}, \phi) \in \PicS_{V_1} \times_{\PicS_{V_{1,2}}} \PicS_{V_2}$ where $\phi$ is an isomorphism of their restriction to $V_{1,2}$. Now let $A_{V_{\mbQ}} \in \PicS_{V_{\mbQ}}$ be a line bundle on $V_{\mbQ}$ given as a pullback via $V_{\mbQ} \to V' \subseteq Z'$ of the line bundle induced by the semiample fibration $g_{\mbQ} \colon X_{\mbQ} \to Z'$ of $L|_{X_{\mbQ}}$. These constructions provide an isomorphism between the restrictions of $(A_{V_1}, A_{V_2}, \phi)$ and $A_{V_{\mbQ}}$ to $\PicS_{(V_1)_{\mbQ}} \times_{\PicS_{(V_{1,2})_{\mbQ}}} \PicS_{(V_2)_{\mbQ}}$. Therefore, Corollary \ref{cor:pic_for_general_pushouts} implies the existence of a compatible line bundle $A_V \in \PicS_V$ up to replacing $L$ by some power. In particular, there is a map  
\[
A_V \to (h_1)_* A_{V_1} \times_{(h_1 \circ f_1)_*A_{V_{1,2}}} (h_2)_* A_{V_2}, 
\] 
of quasi-coherent sheaves. In fact, this is an isomorphism as can be checked locally in which case this is equivalent to $V$ being a geometric pushout. Moreover, $A_V$ is ample by \stacksproj{0B5V}.

%We have functorial (agreeing on $V_{1,2}$) isomorphisms $h_i^*A_{V_{\mbQ}} \simeq A_{V_i}|_{(V_i)_{\mbQ}}$. By Theorem \ref{thm:univ_homeo_and_Pic_intro} applied to $h_1 \colon V_1 \to V$, there exists a line bundle $A_V$ on $V$ such that $h_1^*A_V \simeq A_{V_1}$. In particular, $A_V$ is ample (\stacksproj{0B5V}). Moreover, by applying Theorem \ref{thm:univ_homeo_and_Pic_intro} to $f_2 \colon V_{1,2} \to V_2$, we get that $h_2^*A_{V} \simeq A_{V_2}$.
%By Lemma ?, $H^0(X,L)$ is a pullback of the diagram $H^0(X_1, L|_{X_1}) \to H^0(X_{1,2}, L|_{X_{1,2}}) \leftarrow H^0(X_2, L|_{X_2})$ which is equal to the diagram $H^0(Z_1, A_{Z_1}) \to H^0(V_{1,2}, A_{V_{1,2}}) \leftarrow H^0(Z_2, A_{Z_2})$.

We get the following diagram
\begin{center}
\begin{tikzcd}
H^0(Z_1, A_{Z_1}) \arrow{r} \arrow{d} & H^0(V_{1,2}, A_{V_{1,2}})  \arrow{d}{=} & \arrow{l} \arrow{d} H^0(Z_2, A_{Z_2})\\
H^0(V_1, A_{V_1}) \arrow{r} & H^0(V_{1,2}, A_{V_{1,2}}) & \arrow{l} H^0(V_2, A_{V_2}).
\end{tikzcd}
\end{center}
where the vertical arrows, up to replacing $L$ by a multiple, are surjective by Serre vanishing. The fibre product of the bottom row is $H^0(V,A_V)$ (as proved in the above paragraph), and since $H^0(X_i, L|_{X_i}) = H^0(Z_i, A_{Z_i})$ and $H^0(X_{1,2}, L|_{X_{1,2}}) = H^0(V_{1,2}, A_{V_{1,2}})$ the fibre product of the upper row is $H^0(X,L)$ (cf.\ \stacksproj{0B7M}). Hence, we get a surjective map between the fibre products of both rows
\[
H^0(X,L) \to H^0(V, A_V),
\] 
and so the base locus of $L$ is disjoint from $X_{1,2}$. When lifting sections via $H^0(X_i, L|_{X_i}) = H^0(Z_i, A_{Z_i}) \to H^0(V_i, A_{V_i})$ we can assume that they do not vanish at any given point disjoint from $X_{1,2}$, and hence $L$ is semiample. \qedhere

\end{proof}

\begin{proposition}[{cf.\ \cite[Lemma 2.10]{keel99}}] \label{prop:gluing_normalisation} Let $X$ be a reduced scheme projective over an excellent base scheme $S$. Let $\pi \colon Y \to X$ be its normalisation with $C \subseteq X$ and $D \subseteq Y$ being the conductors. Let $L$ be a line bundle on $X$ such that $\pi^*L$, $L|_C$, and $L|_{X_{\mbQ}}$ are semiample (EWM, resp.), and let $g \colon Y \to Z$ be the morphism associated to $\pi^*L$. Assume that $g|_D$ has geometrically connected fibres. Then $L$ is semiample (EWM, resp.).
\end{proposition}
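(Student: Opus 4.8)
The plan is to mimic the proof of Proposition \ref{prop:gluing}, replacing the decomposition $X = X_1 \cup X_2$ by the conductor square. We may assume $S$ is affine. Since $\pi \colon Y \to X$ is finite and birational, $X$ is the geometric (Milnor) pushout of $Y \xleftarrow{\iota} D \xrightarrow{\pi|_D} C$, where $\iota \colon D \hookrightarrow Y$ is the closed immersion and $\pi|_D \colon D \to C$ is a finite surjection; in particular $X$ is a categorical pushout of this diagram (\stacksproj{0C4J}). Write $g \colon Y \to Z$ for the morphism associated to $\pi^* L$, let $g_C \colon C \to V_C$ be the morphism associated to $L|_C$, and let $g_D \colon D \to V_D$ be the contraction associated to $L|_D = \iota^*\pi^*L$ (which is semiample, resp.\ EWM, being the restriction of $\pi^*L$). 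Let $\bar V_Z \subseteq Z$ be the scheme-theoretic image of $D$; the hypothesis that $g|_D$ has geometrically connected fibres makes the induced factorisation $f_Z \colon V_D \to \bar V_Z$ of $g|_D$ through $g_D$ a finite universal homeomorphism.

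The crucial use of the hypothesis is to build a finite map $f_C \colon V_D \to V_C$ factoring $g_C \circ \pi|_D$ through $g_D$. Indeed, since $g_D$ is a contraction (connected fibres), it suffices to check that $g_C \circ \pi|_D$ is constant on the fibres of $g_D$: a fibre $g_D^{-1}(v)$ is connected and $\pi^*L$ is numerically trivial on it, hence its image in $C$ is connected with $L|_C$ numerically trivial and is therefore contracted by $g_C$ to a point (using Lemma \ref{lem:pullback-of-big} for the finite map $\pi|_D$). The map $f_C$ is finite and surjective. I would then form the geometric pushout $V := V_C \sqcup_{V_D} \bar V_Z$ of $V_C \xleftarrow{f_C} V_D \xrightarrow{f_Z} \bar V_Z$ using Theorem \ref{thm:general_pushouts_in_mixed_char} (with $f_C$ the separated, quasi-compact leg and $f_Z$ the representable universal homeomorphism). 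Its hypothesis — existence of a topological pushout over $\mbQ$ — follows exactly as in Proposition \ref{prop:gluing}, by taking the image of $D_\mbQ$ under a map $g_\mbQ \colon X_\mbQ \to Z'$ associated to $L|_{X_\mbQ}$ and invoking the geometrically-connected-fibres hypothesis over $\mbQ$. Since $f_Z^* \colon \mcO_{\bar V_Z} \to (f_Z)_*\mcO_{V_D}$ is injective (scheme-theoretic image), Remark \ref{remark:geo_pushouts_affine_and_finite}(4) and Lemma \ref{lem:Eakin-Nagata} show that the induced $h_C \colon V_C \to V$ makes $V$ proper over $S$, with $h_Z \colon \bar V_Z \to V$ finite and surjective.

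For the EWM case I would then pinch: by Kollár's Theorem \ref{thm:Kollar_pinching} applied to the closed subspace $\bar V_Z \subseteq Z$ and the finite surjection $h_Z \colon \bar V_Z \to V$, there is a pushout $T := Z \sqcup_{\bar V_Z} V$, which is proper over $S$, with $Z \to T$ finite. The morphisms $Y \to Z \to T$ and $C \xrightarrow{g_C} V_C \xrightarrow{h_C} V \hookrightarrow T$ agree on $D$ by construction of $V$, so by the categorical pushout property of $X$ they glue to a proper $g_X \colon X \to T$. One checks on the pieces $Y$ and $C$ that an integral $W \subseteq X$ is contracted by $g_X$ if and only if $L|_W$ is not big (using that $Z \to T$ is finite over $V_C$ together with Lemma \ref{lem:pullback-of-big}), so $g_X$ is a morphism associated to $L$ and $L$ is EWM.

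For the semiample case $V$ is a scheme, and after replacing $L$ by a power the ample bundles $A_Z$, $A_{V_C}$, $A_{V_D}$ induced on $Z$, $V_C$, $V_D$ determine a compatible class in $\PicS_{V_C} \times_{\PicS_{V_D}} \PicS_{\bar V_Z}$ which, together with the $\mbQ$-fibration, descends to an ample bundle $A_V$ on $V$ by Corollary \ref{cor:pic_for_general_pushouts}. The conductor square identifies $H^0(X,L) = H^0(Z,A_Z) \times_{H^0(V_D, A_{V_D})} H^0(V_C, A_{V_C})$ and the geometric pushout identifies $H^0(V, A_V) = H^0(\bar V_Z, A_Z|_{\bar V_Z}) \times_{H^0(V_D,A_{V_D})} H^0(V_C, A_{V_C})$; since $H^0(Z,A_Z) \to H^0(\bar V_Z, A_Z|_{\bar V_Z})$ is surjective by Serre vanishing, so is $H^0(X,L) \to H^0(V, A_V)$. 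As $A_V$ is ample, the base locus of $L$ is disjoint from $C$; for a point off $C$ I would produce a non-vanishing section either supported away from $D$ (when its image avoids $\bar V_Z$) or coming from $A_V$ (otherwise), in each case using the Serre surjection to lift the datum on $\bar V_Z$ to $Z$. The main obstacle is constructing $f_C$ — this is precisely where geometric connectedness of the fibres of $g|_D$ is indispensable — together with the final, asymmetric section-lifting, which is slightly more delicate here than in Proposition \ref{prop:gluing} because the conductor $C$ carries no ambient target to lift sections into.
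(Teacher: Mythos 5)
Your proposal is correct and takes essentially the same route as the paper's own proof (which defers to Proposition \ref{prop:gluing} ``mutatis mutandis''): your $V_D$, $V_C$, $\bar V_Z$, $f_Z$, $f_C$, $V$, $T$ are precisely the paper's $V$, $V_2$, $V_1$, $f_1$, $f_2$, $V'$, $Z'$, built with the same tools (Theorem \ref{thm:general_pushouts_in_mixed_char}, Theorem \ref{thm:Kollar_pinching}, Lemma \ref{lem:pullback-of-big}, Corollary \ref{cor:pic_for_general_pushouts}, Serre vanishing). One expository slip worth noting: the geometrically-connected-fibres hypothesis on $g|_D$ is what makes $f_Z$ a universal homeomorphism and what yields the topological pushout over $\mbQ$ (both of which you use correctly), whereas the construction of $f_C$ does not need that hypothesis at all --- it only uses the automatic connectedness of the fibres of the contraction $g_D$ together with Lemma \ref{lem:pullback-of-big}.
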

\begin{proof}
%Set $L_Y \coloneq \pi^*L$. We will construct sections of $L$ by means of gluing via the following short exact sequence
%\[
%0 \to H^0(X,L) \to H^0(Y, L_Y) \oplus H^0(C, L|_C) \to H^0(D, (L_Y)|_{D}).
%\] 
Note that $X$ is a categorical pushout of $Y \hookleftarrow D \to C$ (see \cite[Proposition 2.29]{ct17} and \stacksproj{0E25}). %Then $L|_C$ is semiample (EWM, resp.) by Theorem \ref{thm:reduced_semiample}.

Let $g_{D} \colon D \to V$ and $g_C \colon C \to V_2$ be the morphisms associated to $\pi^*L|_{D}$ and $L|_{C}$, respectively. These morphisms lie in the following diagram
\begin{center}
\begin{tikzcd}
 &&   D \arrow{d}{g_D} \arrow{drr}{\pi|_D}  &  & \\
 Y \arrow{d}{g} \arrow[hookleftarrow]{rru} 			& &  V \arrow{dl}[swap]{f_1} \arrow{dr}{f_2} 			  & & C \arrow{d}{g_C}               \\{}
Z \arrow[hookleftarrow]{r} 			 &  V_1	&		          & V_2  \arrow["="]{r} & V_2,                    
\end{tikzcd}
\end{center}
\noindent where $V_1$ is the image of $D$ under $g$. Since $g|_{D}$ has geometrically connected fibres, we get that $f_1$ is a universal homeomorphism. 

Arguing as in the proof of the above proposition, we can construct pushouts $V'$ and $Z'$ of $V_1 \leftarrow V \to V_2$ and $Z \leftarrow V_1 \to V'$, respectively. We get an induced map $X \to Z'$ and it is associated to $L$ in the EWM case. {Indeed, $Y \to X \to Z'$ factorises into $Y \to Z$ and the finite map $Z \to Z'$; in particular, an integral subscheme $V \subseteq X$ is contracted by $X \to Z'$ if and only if a surjective-onto-$V$ integral component $V' \subseteq \pi^{-1}(V)$ is contracted by $Y \to Z$ if and only if $\pi^*L|_{V'}$ is not big if and only if $L|_V$ is not big (cf.\ Lemma \ref{lem:pullback-of-big}).}  

In the semiample case, we proceed mutatis mutandis as in the proof of the above proposition.
\begin{comment}
Pick a point $x \in V$.

\begin{claim} There exist sections $u_i \in H^0(V_i, A_{i})_{\perf}$ for $i=1,2$ such that $f_1^*u_1 = f_2^*u_2 =: u$ and $u$ does not vanish at $x$.
\end{claim}
\begin{proof}
By Lemma \ref{lemm:natural_inclusion}, it is enough to construct $u_i$ on the reduction on $V_i$ and then apply the natural inclusion $H^0(V^{\red}_i, A_i|_{V^{\red}_i})_{\perf} \hookrightarrow  H^0(V_i, A_i)_{\perf}$. Hence, exclusively for the purpose of showing this lemma, we may assume that $V_1$, $V_2$, and $V$ are reduced. In particular $(V_1)_{\mu} \simeq (V_2)_{\mu} \simeq V_{\mu} \simeq \Spec K$.
 
Since $A_{2}$ is ample, there exists $u_2 \in H^0(V_2, A_{2})_{\perf}$ which does not vanish at $f_2(x)$. Then $u \coloneq f_2^*u_2$ does not vanish at $x$. 

Since $f_1$ is a universal homeomorphism and $f_1|_{V_{\mu}}$ is an isomorphism, we can apply Lemma \ref{lem:univ_homeo_sections}, and so there exists $u_1 \in H^0(V_1, A_{1})$ such that $u = f_1^*u_1$, concluding the proof of the claim. 
\end{proof}
Since $A_Z$ is ample, there exists $z \in H^0(Z, A_Z)$ such that $z|_{V_1} = u_1$. Moreover, we can assume that $z$ do not vanish at any point of choice.

Set $s_Y \coloneq g^*z$ and $s_{C} = g_{C}^*(u_2)$. By construction $(s_Y)|_D = (\pi|_D)^*s_C$. By the short exact sequence from the beginning of the proof, $s_Y$ and $s_C$ glue to a global section $s \in H^0(X,L)$. Since, we could choose $s_Y$ and $s_C$ so that they do not vanish at any given point of $X$, this concludes the proof of semiampleness of $L$.
\end{comment}
\end{proof}

For the proof of Corollary \ref{cor:bpf_intro}, we also need the following result. %When the residue field is finite, the assumptions can be a tad weakened.
\begin{proposition}[{cf.\ \cite[Corollary 2.12 and 2.14]{keel99}}] \label{prop:gluing_residue_field_finite}
Proposition \ref{prop:gluing} and Proposition \ref{prop:gluing_normalisation} hold true for when $g_2|_{X_1\cap X_2}$ and $g|_D$, respectively, have all geometric fibres, except for a finite number over closed points, being connected, provided we assume in the semiample case that {positive characteristic closed points have locally finite residue fields.}%$S_{\overline{\mbF}_p}$ is of finite type over $\overline{\mbF}_p$ for every prime $p$.
\end{proposition}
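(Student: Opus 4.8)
The plan is to run the proofs of Proposition~\ref{prop:gluing} and Proposition~\ref{prop:gluing_normalisation} essentially unchanged, isolating the single place where geometric connectedness of the fibres was used: it guaranteed that the finite surjection $f_2 \colon V_{1,2} \to V_2$ (respectively $f_1 \colon V \to V_1$) is a \emph{universal homeomorphism}, which in turn produced the geometric pushout via Theorem~\ref{thm:general_pushouts_in_mixed_char} and let us descend the polarisation via Corollary~\ref{cor:pic_for_general_pushouts}. Under the present hypothesis $f_2$ is still finite and surjective, and is a universal homeomorphism over the complement of a finite set $B \subseteq V_2$ of closed points; over each $w \in B$ the fibre of $g_2|_{X_1\cap X_2}$ breaks into several connected components. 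Since $B$ consists of closed points in positive characteristic, all fibres over $\mbQ$ stay connected, so the characteristic-zero input (the topological pushout of the $X_{\mbQ} \leftarrow Y_{\mbQ} \to Y'_{\mbQ}$ diagram and the semiample/EWM fibration of $L|_{X_{\mbQ}}$) is exactly as before. As always the problem is local on $S$, so I may assume $S$ affine.

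For the EWM case no hypothesis on the residue fields is needed, because there is no line bundle to glue. One only has to produce the collapsing spaces and the contraction, and every gluing in the two previous proofs can be carried out with $f_2$ (respectively $f_1$) merely a finite surjection: the pushout $V$ of $V_1 \leftarrow V_{1,2} \to V_2$ is the geometric quotient of $V_1 \sqcup V_2$ by the finite set-theoretic equivalence relation generated by $f_1(z) \sim f_2(z)$, which exists by Theorem~\ref{thm:quotients_intro} since the corresponding quotient over $\mbQ$ is the clean pushout of the connected-fibre case, while the subsequent steps $Z'_1, Z'_2, Z$ are genuine pinchings of closed subspaces along finite surjections and exist by Theorem~\ref{thm:Kollar_pinching}. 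The resulting $g \colon X \to Z$ is proper over $S$, and it is the EWM morphism associated to $L$: identifying finitely many extra points over $B$ does not change which integral subschemes are contracted, and the criterion that $V$ is contracted if and only if $L|_V$ is not big is checked fibrewise exactly as before (using Lemma~\ref{lem:pullback-of-big} in the normalisation case).

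The semiample case is where local finiteness enters, and it is the main obstacle. The key input is Keel's observation that a nef numerically trivial line bundle on a proper scheme over a locally finite field $k \subseteq \overline{\mathbb{F}}_p$ is \emph{torsion} in the Picard group (its class lies in $\mathrm{Pic}^{\tau}$, which over $\overline{\mathbb{F}}_p$ is a torsion group). Over a bad point $w \in B$ the components of the fibre of $g_2|_{X_1\cap X_2}$ are all identified by $f_2$ to the single point $w$, and both building an ample $A_V$ on the genuinely collapsed pushout $V$ and, more importantly, matching the polarisations $A_{V_1}$ and $A_{V_2}$ across these components require killing a Picard-group obstruction supported on $B$. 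I would kill it by using the torsion property to replace $L$ by a power that is trivial on each fibre of $g_2$ over $B$, hence on every component; the triple $(A_{V_1}, A_{V_2}, \phi)$ then descends through the generalised conductor square (Lemma~\ref{lemma:Milnor_Pic}) to an ample $A_V$, ample by the pinching ampleness criterion. With $A_V$ available the concluding argument of Proposition~\ref{prop:gluing} goes through verbatim: Serre vanishing supplies the vertical surjections, $H^0(V,A_V)$ is the fibre product of the bottom row and $H^0(X,L)$ that of the top row, and the induced surjection $H^0(X,L) \to H^0(V,A_V)$ shows the base locus of $L$ avoids $X_1 \cap X_2$, whence semiampleness; the normalisation case is handled identically starting from the diagram of Proposition~\ref{prop:gluing_normalisation}.

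The hard part is precisely this matching of the polarisation across the disconnected components of a bad fibre. Without a locally finite residue field the obstruction is a non-torsion Picard class that survives every power of $L$, so the argument genuinely fails; this is exactly why the hypothesis is imposed only in the semiample case and why it parallels the role of finite fields in Keel's Corollaries~2.12 and~2.14.
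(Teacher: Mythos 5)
Your proposal has genuine gaps at exactly the points where the difficulty lies. In the EWM case, you replace the pushout of $V_1 \leftarrow V_{1,2} \to V_2$ by a quotient of $V_1 \sqcup V_2$ by ``the finite set-theoretic equivalence relation generated by $f_1(z)\sim f_2(z)$,'' invoking Theorem \ref{thm:quotients_intro}. But the equivalence \emph{closure} of this correspondence involves chains alternating between $V_1$ and $V_2$ through the bad points, and you never show it is a finite set-theoretic equivalence relation in the sense required by that theorem (finiteness of such closures is precisely the delicate point in Koll\'ar's theory, not a formality). Moreover, Theorem \ref{thm:quotients_intro} needs the quotient to exist over $\mbQ$, which you justify by asserting that the bad points ``are closed points in positive characteristic''; the statement only says they are closed points, and over a general Noetherian base (e.g.\ $S$ with characteristic-zero closed points) disconnected fibres can occur over $\mbQ$-points, so the characteristic-zero input is \emph{not} ``exactly as before.'' In the semiample case the pivotal step --- producing an ample $A_V$ on the identified space --- is attributed to Lemma \ref{lemma:Milnor_Pic}, but that lemma concerns the generalised conductor square of a single finite surjection of reduced schemes; descending a line bundle along a quotient by a finite equivalence relation requires descent data on the whole equivalence closure together with a cocycle condition, and making $L$ torsion on the bad fibres does not by itself supply this. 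The mechanism is left unspecified precisely where the proof is hardest.

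The paper avoids all of this with a short reduction you did not find: let $T \subseteq V_2$ be the finitely many bad closed points and $G \coloneq g_2^{-1}(T)$, then apply Proposition \ref{prop:gluing} (unchanged) to the decomposition $(X_1 \cup G)\cup X_2$, whose intersection now contains the \emph{full} fibres over $T$, so connectedness is restored. This reduces everything to showing $L|_{X_1\cup G}$ is semiample (resp.\ EWM), which is done by a second application of Proposition \ref{prop:gluing} to $X_1 \cup (G\cup G')$ with $G' \coloneq g_1^{-1}(g_1(X_1\cap G))$: the intersection is $G'$, a union of full $g_1$-fibres, so the connectedness hypothesis holds automatically, and $L|_{G\cup G'}$ is numerically trivial, hence semiample/EWM because each connected component lives over a locally finite field (Keel's torsion lemma, \cite[Lemma 2.16]{keel99}) or over a characteristic-zero field where $L|_{X_{\mbQ}}$ is semiample by hypothesis. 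You correctly identified that the locally-finite-field hypothesis enters through this torsion phenomenon, but the enlargement trick --- which turns the problem back into the already-proven propositions and removes any need for new pushout, quotient, or Picard-descent arguments --- is the missing idea.
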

\begin{proof}
We focus on the case of Proposition \ref{prop:gluing} as the case of Proposition \ref{prop:gluing_normalisation} is analogous. %By localising we can assume that $S$ is defined over $\Zpp$.  
Let $T \subseteq V_2$ be the finite set of closed points over which the fibres of $g_2|_{X_{1,2}}$ are not connected and set $G \coloneq g_2^{-1}(T)$. We would like to apply Proposition \ref{prop:gluing} to $(X_1 \cup G) \cup X_2$. To this end, we need to verify that $L|_{X_1 \cup G}$ is semiample (EWM, resp.). 

Let $g_1 \colon X_1 \to Z_1$ be the morphism associated to $L|_{X_1}$. Since $g_1(X_1 \cap G)$ is a finite number of {closed} points, we have that $L|_{G'}$ is numerically trivial where $G' \coloneq g_1^{-1}(g_1(X_1 \cap G))$. Now, we apply Proposition \ref{prop:gluing} again to $X_1 \cup (G \cup G')$, wherein $L|_{G \cup G'}$ is numerically trivial, and hence semiample (EWM, resp.) as each connected component of $G \cup G'$ is of finite type over {a locally finite field or a field of characteristic zero in the semiample case}, cf.\ \cite[Lemma 2.16]{keel99}. This concludes the proof.
\end{proof}
\section{Proofs of the main theorems}

\subsection{Keel's base point free theorem in mixed characteristic}
As pointed out in the introduction, the key to the proof of Theorem \ref{theorem:main_intro} is Theorem \ref{thm:reduced_semiample}. 

In what follows, we consider a category of pairs $(X,L_X)$ consisting of a scheme with a line bundle $L_X$ on it, and we denote by $f \colon (X,L_X) \to (Y,L_Y)$ a data of a morphism $f \colon X \to Y$ together with an isomorphism $f^*L_Y \simeq L_X$. 
\begin{proof}[Proof of Theorem \ref{thm:reduced_semiample}]
We start with the EWM case of the theorem. Let $g \colon X^{\red} \to Z$ be a map associated to $L|_{X^{\red}}$. We claim that there exists a topological pushout $Z'$ of $X \leftarrow X^{\red} \to Z$ which is proper over $S$. To this end, let $X_{\mbQ} \to Z'_{\mbQ}$ be a contraction associated to $L|_{X_{\mbQ}}$. The induced map $Z_{\mbQ} \to Z'_{\mbQ}$ is proper (Remark \ref{rem:scheme_facts}(\ref{itm:04NX})) and a bijection on geometric points, hence a finite universal homeomorphism (Remark \ref{rem:scheme_facts}(\ref{itm:0A4X})). Thus, $Z'_{\mbQ}$ is a topological pushout of $X_{\mbQ} \leftarrow X^{\red}_{\mbQ} \to Z_{\mbQ}$, and hence the claim follows by Theorem \ref{thm:general_pushouts_in_mixed_char} and Lemma \ref{lem:fin_gen_pushouts}. Now, the induced map $X \to Z'$ is one associated to $L$ {(the condition of being a map associated to a line bundle depends on the reduction only)}.

%By , it is enough to show that a topological pushout of $X_{\mbQ} \leftarrow X^{\red}_{\mbQ} \to Z^{\red}_{\mbQ}$ exists. This diagram admits a topological pushout (as $L|_{X_{\mbQ}}$ is EWM), and so a geometric one by Lemma \ref{lemma:technical_pushouts}. Now, the induced map $X  \to Z'$ is one associated to $L$.

%Since $L|_{X_{\mu}}$ is EWM, Lemma \ref{lemma:technical_pushouts} implies that there exists a geometric pushout of $Z'_{\mu} \leftarrow X^{\red}_{\mu} \to X_{\mu}$, and hence Theorem \ref{thm:general_pushouts_in_mixed_char} and Lemma \ref{lem:fin_gen_pushouts} show that a finite-type-over-$R$ topological pushout $Z$ of $Z' \leftarrow X^{\red} \to X$ exists. The induced map $X \to Z$ is one associated to $L$.

We move on to the semiample case. We can assume that $S$ is an affine Noetherian scheme over $\mathbb{Z}_{(p)}$ where $p$ is a prime number. The semiample line bundles $L|_{X^{\red}}$, $L|_{X^{\red}_{\mbQ}}$, and $L|_{X_{\mbQ}}$, up to replacing $L$ by some power, induce the following commutative diagram {(in the category of pairs as stated above which enforces compatiblity of line bundles and their isomorphisms)}

\begin{center}
\begin{tikzcd}
(X,L) & (X_{\mbQ}, L|_{X_{\mbQ}}) \arrow{l} \arrow[bend left = 60]{ddd} \\
(X^{\red}, L|_{X^{\red}}) \arrow{d} \arrow{u} & (X^{\red}_{\mbQ}, L|_{X^{\red}_{\mbQ}}) \arrow{u} \arrow{d} \arrow{l} \\
(Z, A) & (Z_{\mbQ}, A_{\mbQ}) \arrow{d} \arrow{l} \\
  & (Z'_{\mbQ}, A'_{\mbQ}), 
\end{tikzcd}
\end{center}
where $A$, $A_{\mbQ}$, and $A'_{\mbQ}$ are ample line bundles. Furthermore, since $X^{\red}_{\mbQ} \to X_{\mbQ}$ is a universal homeomorphism, so is $Z_{\mbQ} \to Z'_{\mbQ}$; { indeed, $|X^{\red}_{\mbQ}| =|X_{\mbQ}|$ and $X_{\mbQ} \to Z'_{\mbQ}$ has geometrically connected fibres, and so $X^{\red}_{\mbQ} \to Z'_{\mbQ}$ and the finite part $Z_{\mbQ} \to Z'_{\mbQ}$ of its Stein factorisation must have geometrically connected fibres as well} (however, $Z_{\mbQ} \to Z'_{\mbQ}$ need not necessary be a thickening). By Corollary \ref{cor:extending_uh} and Lemma \ref{lem:fin_gen_pushouts} we can construct a topological pushout $Z'$ of $Z \leftarrow Z_{\mbQ} \to Z'_{\mbQ}$ which is proper over $S$. Since $Z$ is of finite type, the induced map $Z \to Z'$ is a finite universal homeomorphism.  Thus, by Theorem \ref{thm:univ_homeo_and_Pic}, up to replacing $L$ by some power, we can extend the bottom left corner of the above diagram to a commutative square
\begin{center}
\begin{tikzcd}
(Z, A) \arrow{d} & (Z_{\mbQ}, A_{\mbQ}) \arrow{d} \arrow{l} \\
(Z', A') & (Z'_{\mbQ}, A'_{\mbQ}) \arrow{l}, 
\end{tikzcd}
\end{center}
such that $A'$ is ample (see \stacksproj{0B5V}). Applying $(H^0)^{\perf}$, we get a diagram
\begin{center}
\begin{tikzcd}
H^0(X,L)^{\perf} \arrow{r} \arrow{d} & H^0(X_{\mbQ}, L|_{X_{\mbQ}})^{\perf} \arrow{d}  \\
H^0(X^{\red},L|_{X^{\red}})^{\perf} \arrow{r} & H^0(X^{\red}_{\mbQ}, L|_{X^{\red}_{\mbQ}})^{\perf} \\
H^0(Z',A')^{\perf} \arrow[dashed, bend left = 90]{uu} \arrow{u} \arrow{r} & H^0(Z'_{\mbQ}, A'_{\mbQ})^{\perf}, \arrow{u} \arrow[bend right = 90]{uu}
\end{tikzcd}
\end{center}
where the left bent arrow exists by the Cartesianity of the upper square (see Proposition \ref{prop:sections_on_thickenings}) and the fact that $H^0(Z',A')^{\perf}$ maps compatibly to all other spaces in the above diagram. Since $A'$ is ample and $Z'$ is of finite type over $S$, we get that $A'$ is semiample (cf.\ \stacksproj{01VS}), and thus so is $L$.
\end{proof}
\noindent One could also tackle the semiample case of Theorem \ref{thm:reduced_semiample} by Theorem \ref{thm:general_pushouts_in_mixed_char} and Corollary \ref{cor:pic_for_general_pushouts}, but we believe that the above proof shows better what is really happening.  
\begin{theorem}[Theorem \ref{theorem:main_intro}] \label{theorem:main}  Let $L$ be a nef line bundle on a scheme $X$ projective over an excellent base scheme $S$. Then $L$ is semiample over $S$ if and only if both $L|_{\mathbb{E}(L)}$ and $L|_{X_{\mbQ}}$ are so. If $S$ is of finite type over a mixed characteristic Dedekind domain, then $L$ is EWM if and only if $L|_{\mathbb{E}(L)}$ and $L|_{X_{\mbQ}}$ are EWM.
\end{theorem}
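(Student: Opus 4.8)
The reverse implication is immediate, so the content lies in the forward direction. If $L$ is relatively semiample (resp.\ EWM), then so are all its restrictions: a base point free power restricts to a base point free power, and the morphism realising the EWM property restricts both to any closed subscheme and to $X_{\mbQ}$, contracting exactly the integral subschemes on which $L$ fails to be relatively big. For the forward direction, since semiampleness and the EWM property may be checked Zariski-locally on $S$ (\cite[Lemma 2.12]{ct17} and the paragraph on EWM in Section~\ref{s:preliminaries}), I would take $S$ affine and, after localising at a prime, assume $X$ is defined over $\Zpp$. By Theorem~\ref{thm:reduced_semiample} it then suffices to treat $X^{\red}$, because $\mathbb{E}(L|_{X^{\red}}) = \mathbb{E}(L)$ as reduced schemes (only integral subschemes enter the definition of $\mathbb{E}$) and $(X^{\red})_{\mbQ} = (X_{\mbQ})^{\red}$, so both hypotheses survive passage to the reduction. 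Hence I may assume $X$ reduced.

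The conceptual heart is Keel's reduction of semiampleness of $L$ to that of $L|_{\mathbb{E}(L)}$. Over a field, Birkar produced a thickening $\mathbb{E}(L)^{\rm th}$ of $\mathbb{E}(L)$ with $L$ semiample if and only if $L|_{\mathbb{E}(L)^{\rm th}}$ is so (\cite[Theorem 1.5]{birkar17}), and the same holds relatively over a Noetherian base via Keeler's Fujita vanishing. Granting such a thickening, the theorem follows immediately: $\mathbb{E}(L)^{\rm th}$ has reduction $\mathbb{E}(L)$ and $(\mathbb{E}(L)^{\rm th})_{\mbQ} \subseteq X_{\mbQ}$, so Theorem~\ref{thm:reduced_semiample} converts semiampleness of $L|_{\mathbb{E}(L)}$ together with that of $L|_{X_{\mbQ}}$ into semiampleness of $L|_{\mathbb{E}(L)^{\rm th}}$, and hence of $L$. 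This is precisely the step where Theorem~\ref{thm:reduced_semiample} supplies the missing mixed-characteristic input, circumventing the well-known difficulty that semiampleness on the non-reduced scheme $\mathbb{E}(L)^{\rm th}$ is intractable to verify directly.

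Producing the relative thickening, equivalently running Keel's inductive reduction to $\mathbb{E}(L)$ over the base, is where the gluing results of Subsection~\ref{ss:gluing} enter and where the main difficulty lies. Following Keel I would argue by induction on $\dim X$ together with Noetherian induction: a reducible $X = X_1 \cup X_2$ is treated by Proposition~\ref{prop:gluing}, since $\mathbb{E}(L|_{X_i}) \subseteq \mathbb{E}(L)$ and $(X_i)_{\mbQ} \subseteq X_{\mbQ}$ reduce each piece to the inductive hypothesis; an integral non-normal $X$ with $L$ big is treated by Proposition~\ref{prop:gluing_normalisation} applied to the normalisation $\nu \colon Y \to X$, whose conductor $C$ is lower-dimensional (so $L|_C$ is semiample by induction) and on which $\nu^{*}L$ is nef and big with $\mathbb{E}(\nu^{*}L) \subseteq \nu^{-1}(\mathbb{E}(L))$ by Lemma~\ref{lem:pullback-of-big}; and an integral $X$ on which $L$ is not big satisfies $\mathbb{E}(L) = X$, so there is nothing to prove. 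The delicate part throughout is arranging the connectivity of the restricted contractions demanded by Propositions~\ref{prop:gluing} and~\ref{prop:gluing_normalisation}, which I would secure by passing to Stein factorisations over the intersections; it is exactly because full connectivity can be arranged in the reduction to the \emph{entire} exceptional locus that Theorem~\ref{theorem:main} needs no hypothesis on the residue fields, whereas Corollary~\ref{cor:bpf_intro}, which reduces to a smaller locus, must allow finitely many disconnected fibres and hence invokes Proposition~\ref{prop:gluing_residue_field_finite} under the locally-finite assumption.

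The EWM case proceeds along the identical skeleton, using the EWM halves of Theorem~\ref{thm:reduced_semiample} and of the gluing propositions. It is in fact easier: the contracting morphism can be produced directly as a topological pushout via Theorem~\ref{thm:general_pushouts_in_mixed_char} together with the pinching Theorem~\ref{thm:Kollar_pinching}, so no ampleness, line-bundle descent, or residue-field bookkeeping is required, which is why the EWM statement holds over any excellent base of finite type over a mixed characteristic Dedekind domain without further restriction.
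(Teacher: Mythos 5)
Your outer reductions agree with the paper: checking the statement locally on $S$, passing to $X^{\red}$ via Theorem~\ref{thm:reduced_semiample}, and peeling off irreducible components on which $L$ is big by applying Proposition~\ref{prop:gluing} to the decomposition $X = X_1 \cup (X_2 \cup \mathbb{E}(L))$ --- the inclusion of $\mathbb{E}(L)$ in the second piece is what makes the connectivity hypothesis automatic. But your induction then has no base: the three cases you list (reducible; integral non-normal with $L$ big; integral with $L$ not big) omit the case of an \emph{integral} scheme on which $L$ \emph{is} big, and this is exactly the case everything else reduces to. Indeed, Proposition~\ref{prop:gluing_normalisation} takes the semiampleness of $\nu^{*}L$ on the normalisation as a \emph{hypothesis} (together with a morphism associated to it), so your non-normal case only reduces to the normal integral big case, which you never treat. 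That missing case is the heart of Keel's argument and of the paper's proof: one writes $L^{r} \simeq A \otimes E$ with $A$ ample and $E$ effective, takes $D$ to be the zero locus of a section of $E$, notes $\mathbb{E}(L|_{mD}) \subseteq \mathbb{E}(L)$ so that Noetherian induction gives semiampleness of $L|_{mD}$ for all $m$, and then lifts sections using Keeler's Fujita vanishing, $H^{1}(X, L^{k}(-mD)) = H^{1}(X, A^{m}\otimes L^{k-mr}) = 0$ for $k \gg m \gg 0$, with base point freeness away from $D$ supplied by the very ample factor $A^{k/r}$. (The paper never needs the normalisation in this proof; Proposition~\ref{prop:gluing_normalisation} only enters in Corollary~\ref{cor:bpf}.) Your alternative route through Birkar's thickening begs the same question: \cite[Theorem 1.5]{birkar17} is proved for schemes over a field, and the relative version over an excellent base that you invoke is precisely a statement of the strength of the missing step --- proving it requires the same induction and Fujita vanishing, so it cannot be cited as known.

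The EWM half has a second gap. Neither Theorem~\ref{thm:general_pushouts_in_mixed_char} nor Theorem~\ref{thm:Kollar_pinching} produces the contraction in the core case: the morphism $mD \to Z$ associated to the EWM bundle $L|_{mD}$ contracts positive-dimensional fibres, hence is neither finite (as pinching requires) nor a universal homeomorphism (as the pushout theorem requires), so there is no diagram to which your cited results apply; pushouts only handle the gluing and reduction steps. For the core case the paper follows \cite[Proposition 1.6]{keel99}, contracting a formal neighbourhood of $D$ and algebraizing via Artin's theorems on formal modifications (\cite[Theorem 3.1 and Theorem 6.2]{Artin70}); this is exactly where the hypothesis that $S$ be of finite type over a mixed characteristic Dedekind domain is consumed. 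Your reading that the EWM case is ``easier'' and needs no such input inverts the actual situation: it is the semiample case that works over an arbitrary excellent base, while the EWM case is the one that forces the extra hypothesis on $S$ because of Artin's theorem.
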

\begin{proof}
We can assume that $S$ is affine. We proceed by Noetherian induction on $X$ as in \cite{keel99}. By Theorem \ref{thm:reduced_semiample}, we can assume that $X$ is reduced.

%First, we reduce to the case of $X$ being reduced. To this end, assume that $L|_{X^{\red}}$ is EWM, where $X^{\red}$ is the reduction of $X$, and let $g \colon X^{\red} \to Z'$ be the associated map. Since $L|_{X_{\mu}}$ is EWM, Lemma \ref{lemma:technical_pushouts} implies that there exists a geometric pushout of $Z'_{\mu} \leftarrow X^{\red}_{\mu} \to X_{\mu}$, and hence Theorem \ref{thm:general_pushouts_in_mixed_char} and Lemma \ref{lem:fin_gen_pushouts} show that a finite-type-over-$R$ topological pushout $Z$ of $Z' \leftarrow X^{\red} \to X$ exists. The induced map $X \to Z$ is one associated to $L$.

First, we reduce to the case of $X$ being irreducible. If $\mathbb{E}(L)=X$, then we are done, so may assume that there exists an irreducible component $X_1 \subseteq X$ such that $L|_{X_1}$ is big. Let $X_2 \subseteq X$ be the union of all the other irreducible components. Write
\[
X = X_1 \cup (X_2 \cup \mathbb{E}(L)).
\]
Assume that $L|_{X_1}$ is semiample (EWM, resp.) and let $g_1 \colon X_1 \to Z_1$ be an associated morphism. The exceptional locus of $g_1$ is contained in $\mathbb{E}(L)$, and hence $g_1$ has geometrically connected fibres on $X_1 \cap (X_2 \cup \mathbb{E}(L))$. Thus $L$ is semiample (EWM, resp.) if $L|_{X_2 \cup \mathbb{E}(L)}$ is semiample (EWM, resp.) by Proposition \ref{prop:gluing}. Repeating this process for $X_2 \cup \mathbb{E}(L)$ we see that it is enough to show the theorem for $X$ being irreducible. In particular, we can assume that $S$ is integral.

Since $L$ is big, we have that {$L^r \simeq A \otimes E$} for some $r \in \mbN$ where $A$ is an ample line bundle and {$E$ is a line bundle for which $H^0(X, E) \neq 0$. Let $D$ be a zero locus of some section $0 \neq s \in H^0(X,E)$.} {By definition, $\mathbb{E}(L|_{mD}) \subseteq \mathbb{E}(L)$; hence $L|_{\mathbb{E}(L|_{mD})}$ is semiample (EWM, resp.), and so by Noetherian induction,} $L|_{mD}$ is semiample (EWM, resp.) for every $m \in \mbN$ as well. 

In the semiample case, pick $k \gg m \gg 0$ divisible enough so that $L^k|_{mD}$ is base point free and $A^k$ is very ample. Consider the following exact sequence{
\[
H^0(X, L^k) \to H^0(mD, L^k|_{mD}) \to H^1(X, L^k(-mD))=0,
\]}
wherein the last cohomology group is zero by the Fujita vanishing (\cite[Theorem 1.5]{keeler03} and \cite{keeler03errata}) as {$L^k(-mD) \simeq A^m \otimes L^{k-mr}$}. Thus $L^k$ has no base points along $D$ and hence is base point free as {$L^k \simeq A^{k/r} \otimes E^{k/r}$ and $A^{k/r}$} is very ample.

The EWM case follows from \cite[Theorem 3.1 and Theorem 6.2]{Artin70} as in \cite[Proposition 1.6]{keel99}. Here, we need to assume that $S$ is of finite type over an excellent Dedekind domain to apply \cite{Artin70}.
\end{proof}   

\subsection{Quotients by finite equivalence relations in mixed characteristic}
As in Subsection \ref{ss:finite_quotients}, all geometric quotients are assumed to be separated and of finite type over a Noetherian base scheme $S$. The following lemma allows for constructing quotients of non-reduced schemes.
\begin{lemma} \label{lem:finite_quotients_after_reduction}
Let $X$ be a separated algebraic space of finite type over a Noetherian base scheme $S$. Let $E \rightrightarrows X$ be a finite, set theoretical equivalence relation and assume that the quotients $X_{\mbQ} / E_{\mbQ}$ and $X_{\red}/E_{\red}$ exist as separated algebraic spaces of finite type over $S$, where $X_{\red}$ and $E_{\red}$ are reductions of $X$ and $E$, respectively. Then the geometric quotient $X / E$ exists as a separated algebraic space of finite type over $S$.
\end{lemma}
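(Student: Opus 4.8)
The plan is to reduce to Proposition~\ref{thm:kollar_quotients_exist} by producing an algebraic space $Y'$, of finite type over $S$, together with a \emph{finite} (hence integral) morphism $\pi'\colon X \to Y'$ such that $\pi'\circ\sigma_1 = \pi'\circ\sigma_2$, where $\sigma_1,\sigma_2\colon E\rightrightarrows X$ are the two structure maps; once this is achieved, $E\rightrightarrows X$ is a finite set theoretic equivalence relation over $Y'$ and Proposition~\ref{thm:kollar_quotients_exist} delivers $X/E$ as a separated algebraic space of finite type over $S$. Write $Y_0 := X_{\red}/E_{\red}$ and $Q_{\mbQ} := X_{\mbQ}/E_{\mbQ}$ for the two quotients that exist by hypothesis, with finite coequalising maps $q_0\colon X_{\red}\to Y_0$ and $q_{\mbQ}\colon X_{\mbQ}\to Q_{\mbQ}$. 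The difficulty is that there is no morphism $X\to Y_0$ extending $q_0$ (and $X$ is strictly smaller than $X_{\red}\times_{(X_{\red})_{\mbQ}}X_{\mbQ}$ because of $p$-torsion nilpotents, cf.\ Remark~\ref{remark:pushout_need_not_be_noetherian}), so one cannot apply Proposition~\ref{thm:kollar_quotients_exist} to $Y_0$ directly; the point is to build a thickening of $Y_0$ over which $E$ becomes an honest relation.

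First I would form the pinching $Y := X\sqcup_{X_{\red}}Y_0$ of the closed immersion $X_{\red}\hookrightarrow X$ along the finite surjection $q_0$, using Theorem~\ref{thm:Kollar_pinching}. This yields a finite morphism $\pi\colon X\to Y$, with $Y$ of finite type over $S$, which is topologically the quotient map $q_0$ and satisfies $Y_{\red}=Y_0$. Next I would observe that the two composites $\pi\sigma_1,\pi\sigma_2\colon E\to Y$ agree on $E_{\red}$: restricted to the reduction they both equal $Y_0\hookrightarrow Y$ precomposed with $q_0\circ\sigma_i^{\red}$, and $q_0$ coequalises $E_{\red}\rightrightarrows X_{\red}$. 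In particular $\pi\sigma_1$ and $\pi\sigma_2$ coincide topologically. This is exactly the input needed to run the coequaliser machinery of Lemma~\ref{lem:make_coequiliser_uni_homeo}: take $R:=E_{\red}$ with $f\colon E_{\red}\hookrightarrow E$ a representable universal homeomorphism, and $p:=\pi\sigma_1$, $q:=\pi\sigma_2$ (both finite, hence representable, quasi-compact and separated), so that $p\circ f=q\circ f$.

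It then remains to supply the characteristic-zero input of Lemma~\ref{lem:make_coequiliser_uni_homeo}, namely a representable universal homeomorphism $Y_{\mbQ}\to Y'_{\mbQ}$ coequalising $E_{\mbQ}\rightrightarrows Y_{\mbQ}$. Since pinching commutes with the flat base change $\Spec\mbQ\to\Spec\mbZ$ (Lemma~\ref{lem:properties_of_geo_pushouts}), one has $Y_{\mbQ}=X_{\mbQ}\sqcup_{(X_{\mbQ})_{\red}}(Y_0)_{\mbQ}$, and because quotients by finite equivalence relations commute both with flat base change and, in characteristic zero, with reduction, $(Y_0)_{\mbQ}=(X_{\mbQ})_{\red}/(E_{\mbQ})_{\red}=(Q_{\mbQ})_{\red}$. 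The universal property of this pushout, applied to $q_{\mbQ}\colon X_{\mbQ}\to Q_{\mbQ}$ together with $(Q_{\mbQ})_{\red}\hookrightarrow Q_{\mbQ}$, produces $\psi\colon Y_{\mbQ}\to Q_{\mbQ}$ with $\psi\circ\pi_{\mbQ}=q_{\mbQ}$; as $q_{\mbQ}$ is finite and $\pi_{\mbQ}$ surjective, $\psi$ is proper with singleton fibres, hence finite (Remark~\ref{rem:scheme_facts}(\ref{itm:0A4X})), and being topologically an isomorphism inducing an isomorphism on reductions it is a universal homeomorphism, which coequalises $E_{\mbQ}$ because $q_{\mbQ}$ does. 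With all hypotheses in place, Lemma~\ref{lem:make_coequiliser_uni_homeo} yields a representable universal homeomorphism $\Phi\colon Y\to Y'$ with $\Phi\pi\sigma_1=\Phi\pi\sigma_2$; then $Y'$ is of finite type over $S$ (Lemma~\ref{lem:Eakin-Nagata}, since $\mcO_{Y'}\to\mcO_Y$ is injective), and $\pi':=\Phi\circ\pi\colon X\to Y'$ is finite and coequalises $E$, so Proposition~\ref{thm:kollar_quotients_exist} concludes. The main obstacle is conceptual rather than computational: arranging the construction so that the genuinely non-reduced, torsion-nilpotent discrepancy between $X$ and its reduced and generic fibres is absorbed by the universal-homeomorphism coequaliser $\Phi$; the fiddly verifications are that $\psi$ is a universal homeomorphism, that pinching base-changes as claimed, and that finiteness and finite-typeness propagate through each step.
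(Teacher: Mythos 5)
Your proof has the same skeleton as the paper's: form the pushout $Y$ of $X \leftarrow X_{\red} \to X_{\red}/E_{\red}$ so that $X \to Y$ is finite and the two composites $E \rightrightarrows Y$ agree on $E_{\red}$; feed this into Lemma \ref{lem:make_coequiliser_uni_homeo}, with the characteristic-zero input coming from $X_{\mbQ}/E_{\mbQ}$, to get a representable universal homeomorphism $Y \to Y'$ coequalising $E$; conclude by Proposition \ref{thm:kollar_quotients_exist}. The genuine difference is how $Y$ and its comparison map over $\mbQ$ are produced. The paper constructs $Y$ by Theorem \ref{thm:general_pushouts_in_mixed_char}, so the characteristic-zero hypothesis is needed already at that stage (the topological pushout over $\mbQ$ being $X_{\mbQ}/E_{\mbQ}$), and Lemma \ref{lemma:technical_pushouts} then hands over the universal homeomorphism $Y_{\mbQ} \to X_{\mbQ}/E_{\mbQ}$ for free. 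You instead invoke Koll\'ar's pinching theorem (Theorem \ref{thm:Kollar_pinching}), which applies unconditionally since $X_{\red} \subseteq X$ is closed and $q_0$ is a finite surjection, and then manufacture $\psi \colon Y_{\mbQ} \to Q_{\mbQ}$ by hand from the pushout's universal property (here you should note that the pinching square is a geometric pushout square in the sense of Definition \ref{definition:geo_pushout} — true, as it is affine-locally a Milnor square — so that Lemmas \ref{lem:properties_of_geo_pushouts} and \ref{lemma:geometric_pushout_is_a_categorical_pushout} apply) together with properness and geometric-point arguments. Your route buys a cleaner separation of concerns: the pushout exists in any characteristic, and the hypothesis on $X_{\mbQ}/E_{\mbQ}$ enters only through Lemma \ref{lem:make_coequiliser_uni_homeo}. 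Also, your closing Eakin--Nagata step is unnecessary: Proposition \ref{thm:kollar_quotients_exist} does not require the target of the integral morphism to be of finite type over $S$.

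One claim needs a flag: you assert that quotients in characteristic zero commute with reduction, i.e.\ $(X_{\mbQ})_{\red}/(E_{\mbQ})_{\red}=(Q_{\mbQ})_{\red}$. This is unjustified and stronger than what is available. For general finite set-theoretic equivalence relations there is no averaging argument; what one gets is that the natural map $(X_{\mbQ})_{\red}/(E_{\mbQ})_{\red} \to X_{\mbQ}/E_{\mbQ}$ is proper and bijective on geometric points, hence a finite universal homeomorphism (Remark \ref{rem:scheme_facts}(\ref{itm:08AJ})(\ref{itm:0A4X})) — which is exactly the statement the paper uses — and a finite universal homeomorphism of reduced $\mbQ$-schemes need not be an isomorphism (think of a cuspidal normalisation). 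Fortunately the overclaim is not load-bearing: the map $(Y_0)_{\mbQ} \to Q_{\mbQ}$ you need for the pushout property is already supplied by the categorical property of the geometric quotient $(Y_0)_{\mbQ}=(X_{\mbQ})_{\red}/(E_{\mbQ})_{\red}$ (flat base change of the kernel description) applied to $q_{\mbQ}|_{(X_{\mbQ})_{\red}}$, or equivalently by the universal homeomorphism above; and your verification that $\psi$ is a finite universal homeomorphism uses only properness and bijectivity on geometric points, not the identification. With that sentence repaired, the proof is correct.
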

\begin{proof}
Consider the following commutative diagram
\begin{center}
\begin{tikzcd}
E_{\red} \arrow{d} \arrow[shift right = 2pt]{r} \arrow[shift left = 2pt]{r} & X_{\red} \arrow{d} \arrow{r} & X_{\red}/E_{\red} \arrow[dashed]{d}\\
E \arrow[shift right = 2pt]{r} \arrow[shift left = 2pt]{r} & X \arrow[dashed]{r} & Y,
\end{tikzcd}
\end{center}
where $Y$ is the geometric pushout of $X \leftarrow X_{\red} \to X_{\red}/E_{\red}$. Such a pushout exists by Theorem \ref{thm:general_pushouts_in_mixed_char} as $X_{\mbQ} \leftarrow X_{\mbQ,\red} \to X_{\mbQ,\red}/E_{\mbQ, \red}$ admits a topological pushout in the form of $X_{\mbQ}/E_{\mbQ}$. Here, the map 
\[
X_{\mbQ,\red}/E_{\mbQ, \red} \to X_{\mbQ}/E_{\mbQ}
\]
is proper (Remark \ref{rem:scheme_facts}(\ref{itm:08AJ})), and a bijection on geometric points, hence a finite universal homeomorphism (Remark \ref{rem:scheme_facts}(\ref{itm:0A4X})). Moreover the map $X \to Y$ is integral (Remark \ref{remark:geo_pushouts_affine_and_finite}(2)) and the map  $Y_{\mbQ} \to X_{\mbQ}/E_{\mbQ}$ is a representable universal homeomorphism (Lemma \ref{lemma:technical_pushouts}). 

We know that two compositions $E_{\red} \to E \rightrightarrows Y$ coincide. Moreover, the compositions $E_{\mbQ} \rightrightarrows Y_{\mbQ} \to X_{\mbQ}/E_{\mbQ}$ coincide as well and $Y$ is quasi-compact quasi-separated (Remark \ref{remark:geo_pushouts_affine_and_finite}(3)), so by Lemma \ref{lem:make_coequiliser_uni_homeo} there exists a representable universal homeomorphism $Y \to Y'$ such that the compositions $E \rightrightarrows X \to Y \to Y'$ coincide. Thus a geometric quotient $X/E$ exists by Theorem \ref{thm:kollar_quotients_exist}.
\end{proof}

Our proof of Theorem \ref{thm:quotients_intro} follows closely the strategy of Koll\'ar from \cite[Section 4]{kollar12} with the new component being Theorem \ref{thm:general_pushouts_in_mixed_char}.
\begin{comment}\begin{theorem}[{cf.\ Theorem \ref{thm:quotients_main}}] \label{thm:quotients_main} Let $X$ be an algebraic space (essentially) of finite type over a Noetherian algebraic space $S$. Let $E \rightrightarrows X$ be a finite, set theoretical equivalence relation over $\Spec R$ and assume that the geometric quotient $X_{\mbQ} / E_{\mbQ}$ exists as an algebraic space. Then the geometric quotient $X / E$ exists as an algebraic space.
\end{theorem}
\end{comment}
\begin{proof}[Proof of Theorem \ref{thm:quotients_intro}]
We prove the theorem by induction on dimension. Set $d = \dim X$. By Lemma \ref{lem:finite_quotients_after_reduction}, we can assume that $X$ and $E$ are reduced.

First we show the theorem under the assumption that $X$ is normal. To this end, we set $E^{d} \subseteq E$ and $X^{d}\subseteq X$ to be the unions of $d$-dimensional irreducible components of $E$ and $X$, respectively. Write $X = X^d \sqcup X^{<d}$, where $X^{<d}$ is the union of all the other components of $X$. By \cite[Lemma 28]{kollar12}, $E^d \rightrightarrows X^d$ is a set theoretic finite equivalence relation and the geometric quotient $X^d/E^d$ exists by \cite[Lemma 21]{kollar12}. Define $X/E^d := X^d/E^d \sqcup X^{<d}$.

Let $Z \subseteq X$ be a reduced closed subscheme of dimension lower than $d$ such that $Z$ is closed under $E$ and the equivalences $E|_{X\backslash Z}$ and $E^d|_{X\backslash Z}$ coincide. For example, set {$Z = \sigma_2(\sigma_1^{-1}(X^{<d} \cup \sigma_2(E^{<d})))$, where $\sigma_1,\sigma_2 \colon E \rightrightarrows X$ gives the equivalence relation and $E = E^d \cup E^{<d}$ for $E^{<d}$ being the union of irreducible components of $E$ of dimension at most $d-1$. Since $\sigma_2(\sigma_1^{-1}(T))$ is stable under $E$ for any subset $T$ of $X$ by transitivity of equivalence relations, $Z$ is stable under $E$.}  Consider the following diagram
\begin{center}
\begin{tikzcd}
Z \arrow[hookrightarrow]{r} \arrow{d} &  X \arrow{d} \\
Z' \arrow[hookrightarrow]{r}    	   & X/E^d,
\end{tikzcd} 
\end{center}
where $Z'$ is the image of $Z$ in $X/E^d$, and $Z \to Z'$ is finite. We have that $E|_Z \rightrightarrows Z$ is a finite set theoretic equivalence relation on $Z$ and since the geometric fibres of $Z \to Z'$ are subsets of $E$-equivalence classes, we get an induced equivalence relation $E_{Z'} \rightrightarrows Z'$ (see \cite[Definition 26]{kollar12}).

Since $X_{\mbQ}/E_{\mbQ}$ exists, \cite[Corollary 18]{kollar12} implies that $Z'_{\mbQ}/E_{Z',\mbQ}$ exists too, and, by induction, so does the geometric quotient $Z'/E_{Z'}$. By Theorem \ref{thm:Kollar_pinching}, there exists a pushout
\begin{center}
\begin{tikzcd}
Z' \arrow{r} \arrow{d} &  X / E^d \arrow{d} \\
Z'/ E_{Z'} \arrow{r}					   & Y,
\end{tikzcd} 
\end{center}
with $X/E^d \to Y$ finite and $Y$ being an algebraic space of finite type over $S$ (and separated by Remark \ref{rem:scheme_facts}(\ref{itm:05Z2})). Since $X/E^d \to Y$ equalises $E \rightrightarrows X/E^d$, Theorem \ref{thm:kollar_quotients_exist} implies the existence of $X/E$, which, in fact, coincides with $Y$ as being a categorical pushout of the above diagram is equivalent to being a categorical quotient of $X/E^d$ by $E$ (see \cite[Proposition 25]{kollar12}). %  By construction, $X/E^d \to Y$ equalises $E \rightrightarrows X/E^d$ and is a quotient by $E$ on geometric points. Hence by Theorem \ref{thm:kollar_quotients_exist} and universality of the above diagram, $Y$ is the geometric quotient $X/E$ (see \cite[Proposition 25]{kollar12}).

% and Proposition 25
We move to the case when $X$ is not necessarily normal. Let $g \colon \tilde X \to X$ be its normalisation (which is finite by \stacksproj{0BB5}), let $\tilde E$ be the pullback of $E$ (see \cite[Definition 26]{kollar12}), and let $q \colon \tilde X \to \tilde X / \tilde E$ be the geometric quotient which exists by the above paragraphs. Set $X^*$ to be the image of $\tilde X$ under the diagonal map $(q,g) \colon \tilde X \to (\tilde X/\tilde E) \times_S X$.

Since $\tilde X$ is separated (as so is $X$), the diagonal map $\tilde X \to \tilde X \times_S \tilde X$ is a closed immersion and $(q,g)$ is finite. Thus $\tilde X \to X^*$ is proper (Remark \ref{rem:scheme_facts}(\ref{itm:04NX})), and so $X^* \to \tilde X / \tilde E$ and $X^* \to X$ are proper as well (Remark \ref{rem:scheme_facts}(\ref{itm:08AJ})(\ref{itm:05Z2})). Since the fibres of $\tilde X \to X$ are contained in the equivalence classes of $\tilde E$, the map $X^* \to X$ is a bijection on geometric points, and so a finite universal homeomorphism (Remark \ref{rem:scheme_facts}(\ref{itm:0A4X})). 

The diagram $\tilde X_{\mbQ} / \tilde E_{\mbQ} \leftarrow X^*_{\mbQ} \rightarrow X_{\mbQ}$ admits a topological pushout in the form of $X_{\mbQ}/E_{\mbQ}$. Indeed, the composite map $\tilde X_{\mbQ} \to \tilde X_{\mbQ} / \tilde E_{\mbQ} \to X_{\mbQ}/E_{\mbQ}$ is finite, and so $\tilde X_{\mbQ} / \tilde E_{\mbQ} \to X_{\mbQ}/E_{\mbQ}$ 
is proper (Remark \ref{rem:scheme_facts}(\ref{itm:08AJ})); as it is also a bijection on geometric points, it must be a finite universal homeomorphism (Remark \ref{rem:scheme_facts}(\ref{itm:0A4X})). Thus Theorem \ref{thm:general_pushouts_in_mixed_char} implies that the geometric pushout, say $W$, of $\tilde X / \tilde E \leftarrow X^* \rightarrow X$ exists.
\begin{center}
\begin{tikzcd}
\tilde X \arrow{dr} \arrow[bend right = 5]{ddr} \arrow[bend left = 5]{drr}  & & \\
& X^* \arrow{d} \arrow{r} & X \arrow[dashed]{d} \\
& \tilde X / \tilde E \arrow[dashed]{r} &  W.
\end{tikzcd}
\end{center} 
% Since $X$ is reduced, the map $\mcO_X \to \mcO_{X^*}$ is injective, and so is $\mcO_W \to \mcO_{\tilde X / \tilde E}$ by Remark \ref{remark:geo_pushouts_affine_and_finite}. Hence, by Lemma \ref{lem:fin_gen_when_finite}, $W$ is separated and of finite type over $S$.

Moreover, $E$ is a set theoretic equivalence relation over $W$ (as $\mcO_{E} \to \mcO_{\tilde E}$ is injective due to $X$ and $E$ being reduced), and so the geometric quotient $X/E$ exists by Theorem \ref{thm:kollar_quotients_exist}. Note that $X \to W$ is integral by Remark \ref{remark:geo_pushouts_affine_and_finite}.
\end{proof}

\subsection{Quotients by affine algebraic groups in mixed characteristic}
Now, we move on to the proof of Theorem \ref{thm:quotients_algebraic_groups_intro}. To this end, we need the following lemma.
\begin{lemma} \label{lemma:quotients_group_schemes_universal_homeomorphism} Let $G$ be an affine algebraic group scheme, flat and of finite type over a Noetherian base scheme $S$, let $X$ and $Y$ be separated algebraic spaces of finite type over $S$ admitting a proper action of $G$, and let $f \colon X \to Y$ be a finite and universal $G$-homeomorphism. If the geometric quotient $Y/G$ exists, then so does $X/G$. Conversely, if both $X/G$ and $Y_{\mbQ}/G_{\mbQ}$ exist, then so does $Y/G$.
\end{lemma}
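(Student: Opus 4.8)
The plan is to deduce both implications from Theorem \ref{thm:kollar_quotients_group_schemes_exist}: it suffices in each case to produce a \emph{topological quotient} of the relevant space, which then upgrades automatically to a geometric one. Throughout I write $m_X,\mathrm{pr}_X\colon G\times X\rightrightarrows X$ and $m_Y,\mathrm{pr}_Y\colon G\times Y\rightrightarrows Y$ for the action and projection maps, I use that $f$ being a $G$-morphism means $f\circ m_X=m_Y\circ(\mathrm{id}_G\times f)$ and $f\circ\mathrm{pr}_X=\mathrm{pr}_Y\circ(\mathrm{id}_G\times f)$, and I note that $\mathrm{id}_G\times f$ is again a finite universal homeomorphism, being a base change of $f$.

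For the first implication, suppose $q_Y\colon Y\to Y/G$ exists. I claim the composite $q_Y\circ f\colon X\to Y/G$ is a topological quotient of $X$ by $G$. It equalises the action, since
\[
(q_Y\circ f)\circ m_X=q_Y\circ m_Y\circ(\mathrm{id}_G\times f)=q_Y\circ\mathrm{pr}_Y\circ(\mathrm{id}_G\times f)=(q_Y\circ f)\circ\mathrm{pr}_X,
\]
using $G$-equivariance of $f$ and $G$-invariance of $q_Y$; it is locally of finite type because $X$ and $Y/G$ are of finite type over $S$ (Remark \ref{rem:scheme_facts}(\ref{itm:0462})); it is universally submersive as a composite of the universally submersive $q_Y$ with the universal homeomorphism $f$; and it is a set quotient on geometric points because $f$ is an equivariant bijection on geometric points while the fibres of $q_Y$ are the $G$-orbits. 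Theorem \ref{thm:kollar_quotients_group_schemes_exist} then produces $X/G$.

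For the converse I would first build a candidate base. Since $q_X\colon X\to X/G$ is affine (Remark \ref{remark:group_action_facts}(2)) hence representable, quasi-compact and separated, and $f$ is a representable universal homeomorphism, Theorem \ref{thm:general_pushouts_in_mixed_char} and Lemma \ref{lem:fin_gen_pushouts} yield a topological pushout $W$ of $X/G\xleftarrow{q_X}X\xrightarrow{f}Y$ which is of finite type over $S$; let $h\colon X/G\to W$ be the resulting universal homeomorphism and $q\colon Y\to W$ the induced affine map, so that $q\circ f=h\circ q_X$. Exactly as in the first implication, the two maps $q\circ m_Y,\,q\circ\mathrm{pr}_Y\colon G\times Y\rightrightarrows W$ agree after precomposition with $\mathrm{id}_G\times f$, because $q\circ f=h\circ q_X$ and $q_X$ is $G$-invariant. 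The main subtlety now appears: a universal homeomorphism need not be an epimorphism, so I cannot simply cancel $\mathrm{id}_G\times f$ to conclude that $q$ equalises the action; this is precisely where the hypothesis on $Y_{\mbQ}/G_{\mbQ}$ enters, through Lemma \ref{lem:make_coequiliser_uni_homeo}. Base-changing to $\mbQ$ and using that quotients commute with the flat base change $\Spec\mbQ\to\Spec\mbZ$ gives $(X/G)_{\mbQ}\simeq X_{\mbQ}/G_{\mbQ}$, and Remark \ref{remark:group_action_facts}(5) shows $X_{\mbQ}/G_{\mbQ}\to Y_{\mbQ}/G_{\mbQ}$ is a finite universal homeomorphism; hence $Y_{\mbQ}/G_{\mbQ}$ is a topological pushout of the $\mbQ$-diagram and receives a universal homeomorphism from $W_{\mbQ}$ through which $q_{\mbQ}$ becomes the $G$-invariant quotient map. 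I then apply Lemma \ref{lem:make_coequiliser_uni_homeo} to $q\circ m_Y,\,q\circ\mathrm{pr}_Y\colon G\times Y\rightrightarrows W$, with $\mathrm{id}_G\times f$ witnessing their coincidence and $W_{\mbQ}\to Y_{\mbQ}/G_{\mbQ}$ supplying the characteristic-zero coequaliser, obtaining a representable universal homeomorphism $W\to W'$ for which $q'\colon Y\xrightarrow{q}W\to W'$ equalises the $G$-action.

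Finally I would check that $q'$ is a topological quotient and invoke Theorem \ref{thm:kollar_quotients_group_schemes_exist}. Finite type of $W'$ follows from Lemma \ref{lem:Eakin-Nagata}, since $W\to W'$ is integral, $W$ is of finite type, and $\mcO_{W'}\hookrightarrow\mcO_W$ by construction; thus $q'$ is locally of finite type. It is a $G$-morphism by the preceding paragraph, universally submersive by the same cancellation argument as before (applied to $q'\circ f=(W\to W')\circ h\circ q_X$ together with surjectivity of $f$), and a set quotient on geometric points because $f$ is an equivariant bijection while $q_X$ has the $G$-orbits as fibres and $h$, $W\to W'$ are injective on points. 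Theorem \ref{thm:kollar_quotients_group_schemes_exist} then gives $Y/G=\Spec_{W'}(q'_*\mcO_Y)^G$. I expect the coequaliser correction supplied by Lemma \ref{lem:make_coequiliser_uni_homeo} to be the crux, the remaining verifications being formal consequences of the pushout formalism.
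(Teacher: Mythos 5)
Your overall architecture matches the paper's: the first implication is exactly the paper's one-line argument (apply Theorem \ref{thm:kollar_quotients_group_schemes_exist} to $X \to Y \to Y/G$), and for the converse you build a pushout of $X/G \leftarrow X \to Y$, correct it by Lemma \ref{lem:make_coequiliser_uni_homeo}, and finish with Theorem \ref{thm:kollar_quotients_group_schemes_exist}. Your way of feeding Lemma \ref{lem:make_coequiliser_uni_homeo} the data $G\times X \to G\times Y \rightrightarrows W$ is a legitimate and slightly leaner variant of the paper's, which instead constructs an induced action map $m_Z \colon Z\times G \to Z$ on the pushout by functoriality (using flat base change of pushouts along $G \times -$) and applies the lemma to $X/G\times G \to Z\times G \rightrightarrows Z$.

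There is, however, a genuine gap in your verification of the characteristic-zero hypothesis of Lemma \ref{lem:make_coequiliser_uni_homeo}. You take $W$ to be the finite-type \emph{topological} pushout produced by Lemma \ref{lem:fin_gen_pushouts}, and then assert that $Y_{\mbQ}/G_{\mbQ}$ ``receives a universal homeomorphism from $W_{\mbQ}$''. A topological pushout has no universal property, and the Noetherian approximation in Lemma \ref{lem:fin_gen_pushouts} is chosen with no reference to $Y_{\mbQ}/G_{\mbQ}$: one only knows $\mcO_{W}\subseteq \mcO_{Z}$ for $Z$ the geometric pushout, and there is no reason why $\mcO_{Y_{\mbQ}/G_{\mbQ}}$, viewed inside $\mcO_{Z_{\mbQ}}$, should be contained in $\mcO_{W_{\mbQ}}$. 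So the map you need does not obviously exist, and without it the lemma cannot be invoked. The fix is to work with the \emph{geometric} pushout $Z$ given by Theorem \ref{thm:general_pushouts_in_mixed_char}, as the paper does: its formation commutes with the flat base change to $\mbQ$ (Lemma \ref{lem:properties_of_geo_pushouts}), it is a categorical pushout (Lemma \ref{lemma:geometric_pushout_is_a_categorical_pushout}), and Lemma \ref{lemma:technical_pushouts} applied to the topological pushout $Y_{\mbQ}/G_{\mbQ}$ then yields the representable universal homeomorphism $Z_{\mbQ}\to Y_{\mbQ}/G_{\mbQ}$, under which $Y_{\mbQ}\to Z_{\mbQ}\to Y_{\mbQ}/G_{\mbQ}$ is the $G$-invariant quotient map. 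With this fix your finite-type bookkeeping (Lemma \ref{lem:fin_gen_pushouts}, Eakin--Nagata) becomes unnecessary: $Y\to W'$ is automatically locally of finite type because $Y$ is of finite type over $S$ (Remark \ref{rem:scheme_facts}(\ref{itm:0462})), and separatedness and finite type of the resulting quotient are supplied by Theorem \ref{thm:kollar_quotients_group_schemes_exist} together with Remark \ref{remark:group_action_facts}. A smaller ordering slip: Theorem \ref{thm:general_pushouts_in_mixed_char} requires, as a hypothesis, the existence of a topological pushout of the $\mbQ$-fibre diagram, so the identification of $Y_{\mbQ}/G_{\mbQ}$ as such a pushout (via $(X/G)_{\mbQ}\simeq X_{\mbQ}/G_{\mbQ}$ and Remark \ref{remark:group_action_facts}(5)) must come before you construct the pushout, not after.
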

\begin{proof}
If the geometric quotient $Y/G$ exists, then $X/G$ exists by applying Theorem \ref{thm:kollar_quotients_group_schemes_exist} to $X \to Y \to Y/G$. 

Thus, we can assume that $X/G$ and $Y_{\mbQ}/G_{\mbQ}$ exist. Since $Y_{\mbQ}/G_{\mbQ}$ is a topological pushout of $X_{\mbQ}/G_{\mbQ} \leftarrow X_{\mbQ} \rightarrow Y_{\mbQ}$ (see Remark \ref{remark:group_action_facts}(5)), a geometric pushout $Z$ of $X/G \leftarrow X \to Y$ exists by Theorem \ref{thm:general_pushouts_in_mixed_char}.

We claim that there exists a representable universal homeomorphism $Z \to Z'$ such that the composite map $Y \to Z \to Z'$ is a $G$-morphism with $Z$ endowed with a trivial $G$-action. To this end, we consider the following commutative diagram:
\begin{center}
\begin{tikzcd}
X/G \times G \arrow{d}{m_{X/G}} & \arrow{l} X \times G \arrow{d}{m_X} \arrow{r} & Y \times G \arrow{d}{m_Y} \\
X/G & \arrow{l} X \arrow{r} & Y,
\end{tikzcd}
\end{center} 
where the vertical arrows are given by $G$-actions. In particular, we get an induced map $m_Z \colon Z \times G$ between the pushouts of both rows, such that the following diagram
\begin{center}
\begin{tikzcd}{}
X/G \times G \arrow{d}{m_{X/G}} \arrow{r} & Z \times G \arrow{d}{m_Z} \\
X/G \arrow{r} & Z
\end{tikzcd}
\end{center}
is commutative. Since $m_{X/G}$ is a projection, the two composite maps 
\[
X/G \times G \to Z \times G \xrightrightarrows[m_Z]{\pi} Z
\]
are identical, where $\pi$ is a projection. 

By Lemma \ref{lemma:technical_pushouts}, there exists a representable universal homeomorphism $Z_{\mbQ} \to Y_{\mbQ}/G_{\mbQ}$. Further, the two composite maps
\[
Z_{\mbQ} \times G_{\mbQ} \xrightrightarrows[m_Z]{\pi} Z_{\mbQ} \to Y_{\mbQ}/G_{\mbQ}
\]
are identical (here we used that $m_{Z_{\mbQ}} \colon Z_{\mbQ} \times G_{\mbQ} \to Z_{\mbQ}$ is compatible with $m_{Y_{\mbQ}/G_{\mbQ}} \colon Y_{\mbQ} / G_{\mbQ} \times G_{\mbQ} \to Y_{\mbQ}/ G_{\mbQ}$). Hence, we can invoke Lemma \ref{lem:make_coequiliser_uni_homeo} to get a representable universal homeomorphism $Z \to Z'$ such that the two composite maps $Z \times G \rightrightarrows Z \to Z'$ are identical. This concludes the proof of the claim. %Moreover, by the same argument as in Lemma \ref{lem:fin_gen_pushouts}, we can assume that $Z'$ is of finite type over $S$ (since $X/G$ is so).

Given the claim, the geometric quotient $Y/G$ exists by Theorem \ref{thm:kollar_quotients_group_schemes_exist} applied to $Y \to Z'$.
\end{proof}

Note that a normalisation of an excellent scheme is finite (\stacksproj{07QV and 035S}).
\begin{proof}[Proof of Theorem \ref{thm:quotients_algebraic_groups_intro}]
Note that $X_{\mbQ}/G_{\mbQ}$ exists by the characteristic zero case of the theorem (see \cite[Theorem 9.16]{Viehweg95} for when $S_{\mbQ}$ is of finite type over a field). 

We follow the strategy described in \cite[5.7]{kollar97}. By \cite[Theorem 5.6]{kollar97}, the action of $G$ on $X$ lifts to the seminormalisation $X^{\mathrm{sn}}$ of the reduction of $X$. By Lemma \ref{lemma:quotients_group_schemes_universal_homeomorphism}, it is enough to show that a geometric quotient $X^{\mathrm{sn}}/G$ exists. Hence we can assume that $X$ is seminormal and reduced.

By \cite[Proposition 4.1]{kollar97}, the action of $G$ on $X$ lifts to the normalisation $X^n$ of $X$ and, by \cite[Theorem 4.3]{kollar97}, the geometric quotient $X^n/G$ exists. Let $C \subseteq X$ and $D \subseteq X^n$ be the conductor schemes. We must have that $C$ is $G$-invariant, and so $D$ admits a proper action of $G$. Moreover, it admits a topological quotient $D \to D_{X^n/G}$, where $D_{X^n/G}$ is the image of $D$ in $X^n/G$. Hence the geometric quotient $D/G$ exists by Theorem \ref{thm:kollar_quotients_group_schemes_exist} and the induced map $D/G \to D_{X^n/G}$ is a finite universal homeomorphism. We can assume that the geometric quotient $C/G$ exists by Noetherian induction. The induced map $D/G \to C/G$ is finite by Remark \ref{remark:group_action_facts}.

In \cite[Theorem 5.8]{kollar97}, it is shown that the geometric quotient $X/G$ exists provided that the geometric quotients $X^n/G$, $C/G$, and a topological pushout of $C/G \leftarrow D/G \to D_{X^n/G}$ (with the maps from $C/G$ and $D_{X^n/G}$ to the pushout being finite) exist.  Note that the image  $C_{X_{\mbQ}/G_{\mbQ}}$ of $C_{\mbQ}$ in $X_{\mbQ}/G_{\mbQ}$ is a topological pushout of $C_{\mbQ}/G_{\mbQ} \leftarrow D_{\mbQ}/G_{\mbQ} \to (D_{X^n/G})_{\mbQ}$, as $C_{\mbQ}/G_{\mbQ} \to C_{X_{\mbQ}/G_{\mbQ}}$ is a finite universal homeomorphism (Theorem \ref{thm:kollar_quotients_group_schemes_exist}). Hence, we can invoke Theorem \ref{thm:general_pushouts_in_mixed_char} to get a geometric pushout and Lemma \ref{lem:fin_gen_pushouts} to get a topological pushout $Z$ of $C/G \leftarrow D/G \to D_{X^n/G}$ of finite type over $S$. Then $C/G \to Z$ and $D_{X^n/G} \to Z$ are integral (Remark \ref{remark:geo_pushouts_affine_and_finite}) and hence finite as $C/G$ and $D_{X^n/G}$  are of finite type over $S$.  \qedhere
%To show the a topological pushout of the above diagram exists, we invoke Theorem \ref{thm:general_pushouts_in_mixed_char} and Lemma \ref{lem:fin_gen_pushouts}, for which we need to show that a geometric pushout of $(C/G)_{\mbQ} \leftarrow (D/G)_{\mbQ} \to (D_{X^n/G})_{\mbQ}$ exists. This diagram admits a topological pushout in the form of the image $C_{X_{\mbQ}/G_{\mbQ}}$ of $C_{\mbQ}$ in $X_{\mbQ}/G_{\mbQ}$ and $(C/G)_{\mbQ} \to C_{X_{\mbQ}/G_{\mbQ}}$ is a finite universal homeomorphism (Theorem \ref{thm:kollar_quotients_group_schemes_exist}). Hence the diagram admits a geometric pushout by Lemma \ref{lemma:technical_pushouts}\footnote{Jakub: ponder why the maps are finite}. %[check finiteness of the maps] 
\end{proof}

\begin{remark} \label{rem:flat_group_quotients}
Let $h \colon G' \to G$ be a universal homeomorphism of flat group schemes of finite type over a Noetherian base scheme $S$. By the same argument as in Lemma \ref{lemma:quotients_group_schemes_universal_homeomorphism}, one can show that a geometric quotient by a proper action of $G$ exists if and only if a geometric quotient by a proper action of $G'$ exists, provided that both quotients exist over $\mbQ$. This allows for weakening the assumptions of Theorem \ref{thm:quotients_algebraic_groups_intro}.
\end{remark}

\subsection{Base point free theorem in mixed characteristic}
Throughout this subsection, we assume that $S$ is a quasi-projective scheme defined over a {mixed characteristic} Dedekind domain. Theorem \ref{theorem:main} immediately implies the existence of plt contractions for mixed characteristic threefolds.
\begin{corollary} \label{cor:bpf_plt} let $(X,D+B)$ be a plt pair on a normal integral mixed characteristic scheme $X$ of (absolute) dimension three which is projective over $S$, with $D$ being a normal irreducible divisor {and $B$ being an effective $\mathbb{Q}$-divisor}. Let $L$ be a nef Cartier divisor on $X$ such that $L-(K_X+D+B)$ is ample and $\mathbb{E}(L) \subseteq D$. Then $L$ is semiample.
\end{corollary}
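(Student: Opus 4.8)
The plan is to apply Theorem \ref{theorem:main} (the mixed characteristic Keel theorem) to the nef Cartier divisor $L$, so that it suffices to verify that both $L|_{\mathbb{E}(L)}$ and $L|_{X_{\mbQ}}$ are semiample. These are two genuinely different tasks: the restriction to $\mathbb{E}(L)$ is a statement about a lower-dimensional subscheme living in mixed characteristic, whereas the restriction to $X_{\mbQ}$ is a statement in characteristic zero where classical vanishing theorems are available.

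First I would treat the characteristic zero fibre. Restricting the pair $(X,D+B)$ to $X_{\mbQ}$ preserves the klt/plt property, and $L_{\mbQ}-(K_{X_{\mbQ}}+D_{\mbQ}+B_{\mbQ})$ remains ample; hence the classical Kawamata--Shokurov base point free theorem (valid over a characteristic zero base, with $L$ nef and $L-(K+\Delta)$ nef and big) shows that $L|_{X_{\mbQ}}$ is semiample over $S_{\mbQ}$. This step is essentially automatic given the hypothesis that $L-(K_X+D+B)$ is ample. Next I would treat $\mathbb{E}(L)$. By hypothesis $\mathbb{E}(L)\subseteq D$, so it is enough to prove that $L|_{D}$ is semiample, since $\mathbb{E}(L|_D) \subseteq \mathbb{E}(L)$ and semiampleness descends to closed subschemes. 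By adjunction, since $(X,D+B)$ is plt with $D$ a normal irreducible divisor, the different $B_D$ makes $(D,B_D)$ a klt pair with $K_D+B_D = (K_X+D+B)|_D$. Here $D$ is a normal integral scheme of absolute dimension two, projective over $S$.

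The key point is then to invoke the \emph{surface} (two-dimensional) case of the base point free theorem for $L|_D$. On $D$ we have that $L|_D$ is nef and $(L-(K_X+D+B))|_D = L|_D - (K_D+B_D)$ is ample, hence nef and big. Thus $L|_D$ is a nef divisor on a klt surface pair with $L|_D-(K_D+B_D)$ nef and big, which is precisely the setting of Corollary \ref{cor:bpf_intro} (or rather its lower-dimensional, already-established instance, cf.\ the two-dimensional MMP of \cite{tanaka16_excellent}). Applying this gives semiampleness of $L|_D$, and hence of $L|_{\mathbb{E}(L)}$. With both $L|_{\mathbb{E}(L)}$ and $L|_{X_{\mbQ}}$ semiample, Theorem \ref{theorem:main} yields that $L$ is semiample.

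The main obstacle I anticipate is the surface step: establishing semiampleness of a nef-and-big-adjoint divisor on a klt surface pair in \emph{mixed characteristic}. This is not covered by classical characteristic-zero theory, and one must either appeal to the two-dimensional mixed characteristic base point free theorem (for which $L|_D$ itself is big, so $\mathbb{E}(L|_D)$ is a proper closed subset and induction on dimension via Theorem \ref{theorem:main} applies, the one-dimensional case being standard) or carefully reduce to it. The subtlety is bookkeeping the adjunction and ensuring the positivity $L|_D-(K_D+B_D)$ nef and big persists, together with confirming that $L|_D$ is big so that $\mathbb{E}(L|_D)\subsetneq D$ and the Noetherian induction underlying Theorem \ref{theorem:main} terminates; I would verify this bigness from $\mathbb{E}(L)\subseteq D$, which forces $L|_{D'}$ to be big for the generic situation, or handle the non-big case separately by noting $D\subseteq \mathbb{E}(L)$ reduces matters to still lower dimension.
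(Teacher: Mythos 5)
Your proposal takes essentially the same route as the paper: the paper verifies the two hypotheses of Theorem \ref{theorem:main} exactly as you do, using adjunction to $D$ together with Tanaka's two-dimensional base point free theorem over excellent bases (Theorem 4.2 of his paper on excellent surfaces, supplemented by his result over imperfect fields) to conclude that $L|_D$, and hence $L|_{\mathbb{E}(L)}$, is semiample, and the characteristic-zero base point free theorem for $L|_{X_{\mathbb{Q}}}$. The surface step you flag as the main obstacle is not handled by induction via Theorem \ref{theorem:main} in the paper (which would indeed run into trouble with non-torsion numerically trivial bundles in the one-dimensional case); it is simply quoted from the existing two-dimensional mixed characteristic MMP literature, which is your route (a).
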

\begin{proof}
By adjunction, \cite[Theorem 4.2]{tanaka16_excellent}, and \cite[Theorem 1.1]{tanakaimperfect}, $L|_D$ is semiample, and so $L|_{\mathbb{E}(L)}$ is semiample as well. Moreover, $L|_{X_{\mbQ}}$ is semiample by the base point free theorem in characteristic zero (cf.\ \cite[Theorem 3.9.1]{bchm06} or \cite[Theorem 5.1]{HK10}). Hence, $L$ is semiample by Theorem \ref{theorem:main}. 
\end{proof}

We move on to the proof of Corollary \ref{cor:bpf_intro}. To this end, we need the following result. {Here, $X$ can be of positive characteristic.}
\begin{proposition} \label{prop:arithmetic_surfaces}Let $L$ be a nef line bundle on a normal integral scheme $X$ admitting a projective morphism $\pi \colon X \to S$. Assume that the (absolute) dimension of $X$ is two, {$\dim \pi(X) \geq 1$ and $L|_{X_{\eta}}$ is semiample, where $\eta$ is the generic point of $\pi(X)$}. Then $L$ is EWM, and if {positive characteristic closed points of $S$ are locally finite fields,} then $L$ semiample.
\end{proposition}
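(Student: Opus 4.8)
The plan is to deduce the result from Keel's theorem (Theorem \ref{theorem:main}), exploiting that the relative dimension of $X$ over $\pi(X)$ is at most one. Since $X$ is integral we may work with it directly, and Theorem \ref{theorem:main} reduces the statement to proving that $L|_{\mathbb{E}(L)}$ and $L|_{X_{\mbQ}}$ are EWM (resp.\ semiample). Because $\dim X = 2$ and $\dim\pi(X)\geq 1$, the generic fibre $X_\eta$ is either a point or a curve over $\kappa(\eta)$; I would split the argument according to whether $L|_{X_\eta}$ is big.

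Assume first that $L|_{X_\eta}$ is big (automatic if $X_\eta$ is $0$-dimensional, i.e.\ if $\pi$ is generically finite). Then $L$ is relatively big over $S$, since relative bigness is detected on the generic fibre of the image, so $\mathbb{E}(L)$ is a proper closed subset of $X$ of dimension at most one. I would then show that every irreducible curve $C\subseteq\mathbb{E}(L)$ is vertical, i.e.\ maps to a closed point of $S$: a curve dominating a positive-dimensional subset of $S$ has a $0$-dimensional, hence big, generic fibre and so cannot lie in $\mathbb{E}(L)$. For a vertical curve $C$ over a point $s$, relative bigness is just $\deg(L|_C)>0$, so membership in $\mathbb{E}(L)$ forces $\deg(L|_C)=0$; thus $L$ restricts to a numerically trivial nef bundle on each component of $\mathbb{E}(L)$. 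Such a bundle is always EWM, and over a locally finite field it is torsion and hence semiample by \cite[Lemma 2.16]{keel99}. Since $\mathbb{E}(L)$ is a disjoint union, over the closed points $s$, of curves of a single residue characteristic, its positive-characteristic part is handled in this way and its characteristic-zero part is contained in $X_{\mbQ}$; together with the next paragraph this establishes that $L|_{\mathbb{E}(L)}$ is EWM (resp.\ semiample), and Theorem \ref{theorem:main} concludes.

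For $L|_{X_{\mbQ}}$ note that $X_{\mbQ}$ is empty when $X$ has positive characteristic, in which case there is nothing to prove. Otherwise $X$ is of mixed characteristic, $X_{\mbQ}$ has dimension at most one, and $\eta$ lies over the characteristic-zero generic point of the Dedekind base, so that $\kappa(\eta)$ has characteristic zero. In the relevant case $\dim\pi(X)=1$ the curve $X_{\mbQ}$ is obtained from the curve $X_\eta$ by a field extension, under which semiampleness is preserved, so $L|_{X_{\mbQ}}$ is semiample; in general one reduces the semiampleness (and EWM-ness) of the at-most-one-dimensional $X_{\mbQ}$ to the hypothesis on $X_\eta$ and the characteristic-zero base point free theorem.

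Finally, if $L|_{X_\eta}$ is semiample but not big then $\dim\pi(X)=1$, $X_\eta$ is a curve, and its semiample fibration contracts it, so $L$ is numerically trivial on the general fibre of $\pi$. I would spread out the generic-fibre fibration and extend it to a contraction over all of $S$ by the mixed-characteristic pushout machinery (Theorem \ref{thm:general_pushouts_in_mixed_char}), descending a power of $L$ to a nef line bundle on the base curve $\pi(X)$ via Corollary \ref{cor:pic_for_general_pushouts}, thereby reducing to the one-dimensional base, where the Keel-plus-\cite[Lemma 2.16]{keel99} argument applies by induction on dimension. I expect this non-big case to be the main obstacle: extending the relative contraction globally and controlling the finitely many special fibres of $\pi$ on which $L$ need not be fibrewise numerically trivial, which is exactly where the gluing Proposition \ref{prop:gluing} and the pushout results are used to patch the pulled-back generic part with these exceptional vertical curves.
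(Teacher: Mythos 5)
Your reduction via Theorem \ref{theorem:main} and your treatment of the big case follow the paper's own proof (the paper first takes a Stein factorisation so that $\pi_*\mcO_X=\mcO_S$ with $S$ integral and normal, after which your observations --- $\mathbb{E}(L)$ is at most one-dimensional, its curves are vertical and $L$-numerically trivial, and $X_{\mbQ}$ is empty or an integral normal scheme of dimension at most one --- become clean statements). The genuine gap is the non-big case, which you yourself flag as ``the main obstacle'' and never close. The paper settles it in two lines: if $L|_{X_\eta}$ is semiample but not big on the generic fibre curve, then $L|_{X_\eta}\sim_{\mbQ}0$, because a degree-zero semiample line bundle on an integral projective curve is torsion; and since after Stein factorisation the base is a normal, hence regular, one-dimensional scheme, \cite[Lemma 2.17]{ct17} promotes generic $\mbQ$-triviality of a nef line bundle to ``$L$ is relatively torsion,'' which yields semiampleness and EWM simultaneously. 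None of the machinery you invoke is needed, and in fact it does not apply: Theorem \ref{thm:general_pushouts_in_mixed_char} and Corollary \ref{cor:pic_for_general_pushouts} concern pushouts along representable \emph{universal homeomorphisms}, whereas $\pi$ contracts curves, so there is no universal homeomorphism along which to ``descend a power of $L$''; and Proposition \ref{prop:gluing} requires a decomposition $X=X_1\cup X_2$ into closed subschemes, which is vacuous for the irreducible $X$ at hand.

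Two further inaccuracies in your sketch of that case. First, the difficulty you predict --- special fibres on which $L$ fails to be fibrewise numerically trivial --- does not occur: $X$ is integral and dominates the regular one-dimensional base, so $\pi$ is flat, the degree of $L$ on fibres is constant, and nefness together with total degree zero on a fibre forces $\deg(L|_C)=0$ for \emph{every} vertical curve $C$ (Zariski's lemma); in particular $\pi$ itself is already a map witnessing that $L$ is EWM, so that half of the non-big case is immediate. Second, and more importantly, the real difficulty is not numerical at all: a nef line bundle that is numerically trivial on every fibre need not be relatively semiample (take $L=\mathrm{pr}_1^*P$ on $C\times\mathbb{P}^1\to\mathbb{P}^1$ with $P$ a non-torsion degree-zero line bundle on a curve $C$), so even after spreading out and descending you would still owe an argument that the \emph{torsion}-ness of $L|_{X_\eta}$ --- which is exactly what ``semiample and not big'' gives you, and which your plan never extracts --- propagates to relative torsion-ness over $S$. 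That propagation is precisely the content of the Cascini--Tanaka lemma the paper cites, and without it the induction you propose has nothing to run on.
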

\begin{proof}
By Stein factorisation, we can assume that {$\pi$ is surjective and} $\pi_* \mcO_X = \mcO_{S}$, where $\pi \colon X \to S$ is the projection. In particular, we may assume that $S$ is integral and normal. We divide the proof into two cases depending on whether {$L$ is big} or not.

In the former case, we can apply Theorem \ref{theorem:main} and reduce to showing that  $L|_{\mathbb{E}(L)}$ is EWM (semiample, resp.). But $\mathbb{E}(L)$ is a scheme of dimension at most one, and so $L|_{\mathbb{E}(L)}$ is EWM (semiample if {positive characteristic closed points of $S$ are locally finite fields). Here, we used that $L|_{X_{\mbQ}}$ is semiample as either $X_{\mbQ} = \emptyset$, or  $L|_{X_{\mbQ}}$ is big and $X_{\mbQ}$ is an integral normal scheme of dimension at most one.}

In the latter case, $\dim S=1$, {and since $S$ is normal, it is regular by \cite[Tag 0BX2]{stacks-project}}.
%and so $S$ is an open subset of a spectrum of a Dedekind domain {\color{red}(as $S$ is quasi-projective and normal, it is an open subset of a normal, projective, hence finite by dimension reasons, scheme over a Dedekind domain, which thus must be a spectrum of a Dedekind domain by \stacksproj{09IG})}. 
Moreover, {$L|_{X_{\eta}} \sim_{\mbQ} 0$}, so we can apply \cite[Lemma 2.17]{ct17} to deduce that in fact $L$ is relatively torsion. 
\end{proof}

The proof of Corollary \ref{cor:bpf} follows  exactly the same strategy as in \cite{keel99}. For the convenience of the reader, we attach a sketch of the proof below, following a slight reformulation of it as written in \cite{MNW15}.
\begin{corollary}[{Corollary \ref{cor:bpf_intro}}] \label{cor:bpf} Let $(X,\Delta)$ be a klt pair on a normal integral scheme $X$ of (absolute) dimension three which is projective {and surjective} over a spectrum $S$ of a mixed characteristic Dedekind domain with perfect residue fields of closed points. Let $L$ be a nef {and big} Cartier divisor on $X$ such that $L-(K_X+\Delta)$ is nef and big. Then $L$ is EWM. If the residue fields of { positive characteristic closed points of $S$ are locally finite}, then $L$ is semiample.
\end{corollary}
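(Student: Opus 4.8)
The plan is to follow Keel's strategy \cite{keel99} in the reformulation of \cite{MNW15}, using Theorem \ref{theorem:main} to reduce to the exceptional locus $\mathbb{E}(L)$ and to the characteristic zero fibre $X_{\mbQ}$, and then disposing of the resulting surfaces with Proposition \ref{prop:arithmetic_surfaces} and the gluing results of Subsection \ref{ss:gluing}. First I would invoke Theorem \ref{theorem:main}: since $L$ is nef, $L$ is EWM (resp.\ semiample) if and only if both $L|_{\mathbb{E}(L)}$ and $L|_{X_{\mbQ}}$ are EWM (resp.\ semiample). The characteristic zero piece is immediate: as $R$ is a mixed characteristic Dedekind domain, $R\otimes_{\mbZ}\mbQ = K := \operatorname{Frac}(R)$, so $X_{\mbQ} = X_K$ is the generic fibre, a normal surface over the characteristic zero field $K$ on which $(X_{\mbQ},\Delta|_{X_{\mbQ}})$ is klt and $L|_{X_{\mbQ}} - (K_{X_{\mbQ}} + \Delta|_{X_{\mbQ}})$ is nef and big. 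Hence $L|_{X_{\mbQ}}$ is semiample, and in particular EWM, by the characteristic zero base point free theorem (\cite[Theorem 3.9.1]{bchm06}, \cite[Theorem 5.1]{HK10}), exactly as in the proof of Corollary \ref{cor:bpf_plt}.

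The substance of the argument is the locus $\mathbb{E}(L)$. Since $L$ is big, $\mathbb{E}(L) \subsetneq X$ is a proper closed subset, so $\dim \mathbb{E}(L) \leq 2$, and I would prove by Noetherian induction that $L|_{\mathbb{E}(L)}$ is EWM (resp.\ semiample). Applying the gluing Propositions \ref{prop:gluing}, \ref{prop:gluing_normalisation} and \ref{prop:gluing_residue_field_finite} to a decomposition of $\mathbb{E}(L)$ into irreducible components and to the normalisations and conductors of those components, the question reduces to normal integral surfaces and to curves; the connectedness hypotheses of those propositions are verified through the Stein factorisations of the maps associated to $L$ on the components, conductors and pairwise intersections, as in \cite[Section 2]{keel99}. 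A normal integral component $W$ that dominates $S$ is an arithmetic surface with $\dim \pi(W) = 1$ (where $\pi\colon X\to S$), and $L|_{W_{\eta}}$, being the restriction of the semiample $L|_{X_{\mbQ}}$, is semiample; thus Proposition \ref{prop:arithmetic_surfaces} shows that $L|_W$ is EWM, and semiample when the positive characteristic closed points of $S$ have locally finite residue fields. A normal integral component contained in a positive characteristic fibre $X_s$ is a projective surface over the perfect field $\kappa(s)$ (locally finite in the semiample case), to which Keel's original theorem \cite{keel99} applies; curves over a field are handled directly, being EWM by contracting the numerically trivial locus, and semiample over a locally finite field since there numerically trivial line bundles are torsion (\cite[Lemma 2.16]{keel99}).

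The step I expect to be the main obstacle is the treatment of the vertical surface components, namely those contained in a single positive characteristic fibre, on which $L$ is nef but need not be big: here Theorem \ref{theorem:main} no longer lowers the dimension, since $\mathbb{E}(L|_W) = W$, so one must instead analyse nef non-big line bundles on surfaces over a field through their numerical dimension. Concretely, one uses that every curve $C \subseteq \mathbb{E}(L)$ with $L\cdot C = 0$ satisfies $(K_X+\Delta)\cdot C \leq 0$ (because $L-(K_X+\Delta)$ is nef), so that $\mathbb{E}(L)$ is covered by such curves and the vertical surfaces carry a fibration along which $L$ is numerically trivial; this yields the EWM contraction unconditionally and, over locally finite residue fields, semiampleness via the torsion statement above. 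The remaining care is to check that the connectivity assumptions of Propositions \ref{prop:gluing} and \ref{prop:gluing_residue_field_finite} persist when gluing these vertical pieces back to the arithmetic components of $\mathbb{E}(L)$, which is exactly where the locally finite hypothesis is needed for the semiample conclusion, the EWM conclusion requiring only the perfectness already assumed.
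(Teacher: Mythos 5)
Your high-level skeleton (reduce via Theorem \ref{theorem:main} to $X_{\mbQ}$ plus a two-dimensional locus, handle horizontal surfaces by Proposition \ref{prop:arithmetic_surfaces}, vertical ones by Keel, and reassemble with the gluing propositions) is indeed the paper's skeleton, but the two steps you dispose of in a sentence are where the entire content of the proof lies, and your treatment of both has genuine gaps. First, the connectedness hypotheses of Propositions \ref{prop:gluing}, \ref{prop:gluing_normalisation} and \ref{prop:gluing_residue_field_finite} do \emph{not} follow from formal properties of Stein factorisations: when $L$ restricted to a component has numerical dimension one, a fibre of the associated fibration can a priori meet the conductor, or the previously glued components, in several points, and then the propositions simply do not apply. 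The paper's mechanism for ruling this out is the heart of the argument: it works not with $\mathbb{E}(L)$ but with $D_{\red}$ where $L \sim_{\mbQ} A + D$ ($A$ ample, $D$ effective Cartier); it orders the prime components $D_i$ of $D_{\red}$ by the thresholds $\lambda_i$ for which $\Delta + \lambda_i D$ contains $D_i$ with coefficient one (klt forces $\lambda_i > 0$, whence $\sum_{j<i} D_j \leq \Gamma_i$); it glues inductively along $U_i = U_{i-1} \cup D_i$; and on the normalisation $\overline{D}_i$ it uses adjunction $(K_X + D_i + \Gamma_i)|_{\overline{D}_i} \sim K_{\overline{D}_i} + \Delta_{\overline{D}_i}$ together with ampleness of $M_i = (1+\lambda_i)L - (K_X + \Delta + \lambda_i D)$ to get the degree bound $\deg \Delta_{\overline{D}_i}|_F < 2$ on a generic fibre $F$ (Lemma \ref{lemma:FDelta<2}). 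Since both the conductor and the preimage of $U_{i-1} \cap D_i$ lie in $\lfloor \Delta_{\overline{D}_i} \rfloor$, this one bound verifies every connectedness hypothesis. This is exactly where the klt assumption and the nefness of $L-(K_X+\Delta)$ are used; your proposal never invokes them outside the characteristic zero fibre.

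Second, for a vertical surface on which $L$ has numerical dimension one, your argument --- $\mathbb{E}(L)$ is covered by $L$-trivial curves, hence the surface ``carries a fibration along which $L$ is numerically trivial'' --- is circular: producing that fibration \emph{is} the EWM/semiampleness statement, and a nef line bundle of numerical dimension one on a surface need not admit one. In characteristic zero this fails already for the blow-up of $\mathbb{P}^2$ at nine general points on a cubic, and even over $\overline{\mathbb{F}}_p$ there are nef line bundles on surfaces that are not semiample (answering a question of Keel negatively), so some positive-characteristic input beyond nefness is mandatory. The paper supplies it via Keel's Riemann--Roch argument (\cite[p.~280]{keel99}): adjunction and ampleness of $M_i$ give $\deg K_{\overline{D}_i}|_F < 0$, hence $\chi(\mcO_{\overline{D}_i}(mL|_{\overline{D}_i}))$ grows linearly in $m$; Keel's Lemma 5.4 bounds $h^2$, so some multiple of $L|_{\overline{D}_i}$ is effective; and Keel's Lemma 5.2 then yields semiampleness after base change to an algebraically closed field. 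Both of your gaps thus trace back to the same missing idea --- the threshold ordering and the adjunction degree bound --- without which neither the gluing nor the vertical surfaces can be handled.
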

\begin{proof}
{We prove that $L$ is semiample (resp.\ EWM). In the semiample case, we assume that positive characteristic closed points of $S$ (and so also of $X$ as it is projective over $S$) are locally finite.} By taking a Stein factorisation, we can assume that $\pi_*\mcO_X=\mcO_S$, where $\pi \colon X \to S$ is the projection. By the base point free theorem in characteristic zero, we have that $L|_{X_{\mbQ}}$ is semiample. Since $L$ is a big Cartier divisor, up to multiplying $L$ by some number, we can decompose it as $L \sim_{\mathbb{Q}} A + D$, 
where $A$ is an ample and $D$ is an effective Cartier divisor. By Theorem \ref{theorem:main},
it is enough to show that $L|_{D_{\red}}$ is semiample, where $D_{\red}$ is the reduction of $D$.

Write $D_{\red} = \sum_{i=1}^m D_i$, 
where $D_i$ are prime divisors and define $\lambda_i \in \mathbb{Q}$ so that $\Delta + \lambda_i D$ contains $D_i$ with coefficient one. In particular, there exists 
an effective $\mathbb{Q}$-divisor $\Gamma_i$ such that 
\[
\Delta + \lambda_i D = D_i + \Gamma_i
\]
and $D_i \not \subset \Supp(\Gamma_i)$. 
Since $(X,\Delta)$ is klt, 
it follows that $\lambda_i > 0$.
By rearranging indices, we may assume without loss of generality that $\lambda_1 \leq \lambda_2 \leq \ldots \leq \lambda_m$, so we have 
\[
\sum_{1 \le j \le i-1} D_j \leq \Gamma_i 
\]
for each $i$.  We define $U_0 \coloneq \emptyset$ and $U_i \coloneq U_{i-1} \cup D_i$ for $i>0$. We prove that $L|_{U_i}$ is semiample {(resp.\ EWM)} by induction on $i$. By adjunction, 
there exists an effective $\mbQ$-divisor $\Delta _{\overline{D}_i}$ such that 
$
(K_X + D_i + \Gamma _i)|_{\overline{D}_i} \sim K_{\overline{D}_i} + \Delta _{\overline{D}_i},
$
where $\overline D_i \to D_i$ is the normalisation. {Note that  $(1+\lambda_i)L|_{\overline D_i} = K_{\overline D_i}+\Delta _{\overline{D}_i}+A'$ for $A'$ ample (see e.g.\ the proof of Lemma \ref{lemma:FDelta<2}).}
 
Let us assume that $L|_{U_{i-1}}$ is semiample {(resp.\ EWM)}. We first prove that $L|_{\overline D_i}$ is semiample {(resp.\ EWM)}. If $D_i$ is of mixed characteristic, then $L|_{\overline D_i}$ is semiample (resp.\ EWM) by Proposition~\ref{prop:arithmetic_surfaces}. {If $D_i$ is of positive characteristic but $\dim \pi(D_i)\geq 1$, then $L|_{(\overline D_i)_{\eta}}=K_{(\overline{D}_i)_{\eta}} + \Delta_{(\overline{D}_i)_{\eta}} + A'|_{(\overline{D}_i)_{\eta}}$ is semiample for the generic point $\eta$ of $\pi(D_i)$ by the base point free theorem for curves (equiv.\ classification of curves), and so $L|_{\overline D_i}$ is semiample (resp.\ EWM) by Proposition~\ref{prop:arithmetic_surfaces}.} If $D_i$ is {projective} over a positive characteristic field, then $L|_{\overline D_i}$ is semiample (resp.\ EWM) by an analogous argument to that in \cite[p.\ 279]{keel99}. 
For the convenience of the reader, we summarise this argument briefly. When $L|_{\overline D_i}$ has numerical dimension zero, we are done by assumptions. When the numerical dimension is two, this follows by Theorem \ref{theorem:main}. When the numerical dimension is one, a Riemann-Roch calculation as in \cite[p.\ 280]{keel99} shows that $\chi(\mcO_{\overline D_i}(mL|_{\overline D_i}))$ grows linearly with $m$. Up to a base change, we can assume that $D_i$ is defined over an algebraically closed field. Thus, {$L|_{\overline D_i}$} is semiample {(in both cases)} by \cite[Lemma 5.2 and 5.4]{keel99} {(the latter reference gives a bound on $H^2(\overline D_i, \mcO_{\overline D_i}(mL|_{\overline D_i}))$, which then implies that some multiple of $L|_{\overline D_i}$ is linearly equivalent to an effective divisor; the former reference states that $\mbQ$-effective nef line bundles of numerical dimension one on normal projective surfaces over algebraically closed fields are semiample). }

 \begin{comment}
\begin{lemma}\label{lemma:semiample_normal}
$L|_{\overline{D_i}}$ is semiample.
\end{lemma}

\begin{proof}
We define auxiliary divisors $M_i$ as follows:
\[
	M_i \coloneq (1 + \lambda_i)L - (K_X + \Delta + \lambda_i D).  
\]
Observe that
\begin{align*}
	M_i & =L - (K_X + \Delta) + \lambda_i (L - D) \\
		&\sim_{\mathbb{Q}} (L - (K_X + \Delta)) + \lambda_i A,
\end{align*}
and so $M_i$ is ample, because $L - (K_X + \Delta)$ is nef and $\lambda_i A$ is ample. 
Hence,
\[
M_i |_{\overline{D_i}} = (1 + \lambda_i)L| _{\overline{D_i}} - (K_{\overline{D_i}} + \Delta_{\overline{D_i}})
\]
is nef. Since $(1+\lambda_i)L|_{\overline{D_i}}$ is also nef, 
the semiampleness of $L|_{\overline{D_i}}$ follows from Proposition \ref{prop:arithmetic_surfaces}.
\end{proof}
\end{comment}
Assume $\kappa(L|_{\overline{D}_i})$ is equal to $0$ or $2$. 
Then the assumptions of Proposition \ref{prop:gluing_residue_field_finite} are satisfied, and so $L|_{D_i}$ is semiample {(resp.\ EWM)}. 
Using the same proposition again for $X_1 = U_{i-1}$ and $X_2 = D_i$, we get that $L|_{U_i}$ is semiample {(resp.\ EWM)}. 

In what follows, we assume $\kappa(L|_{\overline{D}_i}) = 1$.
\begin{lemma}\label{lemma:FDelta<2}
Let $\pi _i \colon \overline{D}_i \to Z_i$ be 
the map associated to the semiample line bundle $L|_{\overline{D}_i}$ and 
let $F$ be {the generic} fibre of $\pi _i$. 
Further, let $C_i \subset \overline{D}_i$ be the the reduction of the conductor of the normalisation 
$p_i \colon \overline{D}_i \to D_i$.
Then $C_i|_F$ is {a geometrically connected zero-dimensional scheme}.  
\end{lemma}
\noindent {In what follows the degrees of line bundles on $F$ are taken with respect to $L=H^0(F, \mcO_F)$.} 
\begin{proof}
For $M_i \coloneq (1 + \lambda_i)L - (K_X + \Delta + \lambda_i D)$, it holds that
\begin{align*}
	M_i & =L - (K_X + \Delta) + \lambda_i (L - D) \\
		&\sim_{\mathbb{Q}} (L - (K_X + \Delta)) + \lambda_i A,
\end{align*}
and so $M_i$ is ample, because $L - (K_X + \Delta)$ is nef and $\lambda_i A$ is ample. In particular, {$\deg M_i|_{F} > 0$}. 
Since {$\deg L|_{F}=0$}, 
we get {$\deg (K_{\overline{D}_i} + \Delta_{\overline{D}_i})|_F < 0$.}
Hence, 
\[
{\deg \Delta_{\overline{D}_i}|_F < -\deg K_{\overline{D}_i}|_F = -\deg K_F=2,}
\]
{where the last equality follows from the fact that $F$ is a conic over $L$ (cf.\ \cite[Chapter 9, Proposition 3.16]{liu02}).} By the adjunction formula, 
the one-dimensional part of $C_i$ is contained in $\Supp (\lfloor \Delta_{\overline{D}_i} \rfloor)$. 
Hence, we get $\deg C_i|_F \leq {\deg \Delta_{\overline{D}_i}|_F} < 2$. 
\end{proof}
% In particular, $\deg M_i|_{\overline{D_i}} > 0$. 
% Since $F \cdot L|_{\overline{D}_i}=0$, 
% we get $F \cdot K_{\overline{D}_i} + F \cdot \Delta_{\overline{D}_i} < 0.$
% Hence, 
% \[
% F \cdot \Delta_{\overline{D}_i} < -F \cdot K_{\overline{D}_i} 
% = 2 - 2 h^1 (F, \mcO_F) \leq 2
% \]
% holds. By the adjunction formula, 
% the one-dimensional part of $C_i$ is contained in $\Supp (\lfloor \Delta_{\overline{D}_i} \rfloor)$. 
% Hence, we get $\# (F \cap C_i) \leq F \cdot \Delta_{\overline{D}_i} < 2$. 

By this lemma, the assumptions of Proposition \ref{prop:gluing_residue_field_finite} are satisfied, and so $L|_{D_i}$ is semiample {(resp.\ EWM)}. 
Let $\rho _i \colon D_i \to Z_i'$ be the map associated to $L|_{D_i}$ 
and let $G$ be a {generic fibre} of $\rho _i$. We get the following commutative diagram, where $\pi_i \colon \overline{D}_i \to Z_i$ is the map associated to $L|_{\overline{D}_i}$:
\begin{center}
\begin{tikzcd}
\overline{D_i} \ar{d}{\pi _i} \ar{r}{p_i} & D_i \ar{d}{\rho _i}  \\
Z_i \ar{r}  & Z_i'.
\end{tikzcd}
\end{center}

We want to apply Proposition \ref{prop:gluing_residue_field_finite} to $X_1 = U_{i-1}$ and 
$X_2 = D_i$ to show that $L|_{U_i}$ is semiample {(resp.\ EWM)}. 
It is sufficient to prove that $G$ intersects $U_{i-1}\cap D_i$ in at most one point. 

By definition of $U_i$ and the adjunction formula, the one-dimensional part $\Lambda$ of 
$p_i^{-1}(U_{i-1} \cap D_i)$ is contained in $\Supp(\lfloor \Delta_{\overline{D}_i} \rfloor)$.  By the proof of Lemma \ref{lemma:FDelta<2}, we can conclude 
%\begin{align*}
{\[
 \deg \Lambda|_G  
%&= \# \bigg( p_i \big(p_i ^{-1}(U_{i-1} \cap D_i) \cap F \big) \bigg) \\
%\leq \# \big(\Lambda \cap F \big) 
\leq  \deg \Delta_{\overline{D}_i}|_F  < 2, 
%\end{align*}
\]}
which completes the proof. 
\end{proof}

\section*{Acknowledgements}
We are indebted to Piotr Achinger for numerous helpful conversations, to Paolo Cascini for pointing out the question of constructing mixed characteristic contractions and for his constant support throughout the project, to J{\'a}nos Koll{\'a}r for many fruitful discussions and his help in constructing Example \ref{example:extending_fails_in_char_0}, and to Maciek Zdanowicz for important conversations at the onset of the project which allowed us to kickstart it. We are also very grateful to Bhargav Bhatt, Elden Elmanto, Christopher Hacon, Giovanni Inchiostro, Kiran Kedlaya, Linquan Ma, Akhil Mathew, Zsolt Patakfalvi, Karl Schwede, Hiromu Tanaka, Joe Waldron, and Shou Yoshikawa for comments and helpful suggestions. We thank the referees for their very valuable comments and for reading the article thoroughly.

The author was supported by the Engineering and Physical Sciences Research Council [EP/L015234/1] during his PhD at Imperial College London, by the National Science Foundation under Grant No.\ DMS-1638352 at the Institute for Advanced Study in Princeton, and by the National Science Foundation under Grant No.\ DMS-1440140 while the author was in residence at the Mathematical Sciences Research Institute in Berkeley, California, during the Spring 2019 semester.
The author is currently supported by the NSF research grant no: DMS-2101897.
%\cleardoublepage
%\phantomsection
%\addcontentsline{toc}{part}{\protect\numberline {3}Bibliography} 
\bibliography{final}

\renewcommand{\refname}{\rule{2cm}{0.4pt}}

\end{document}